\documentclass[a4paper,reqno,11pt]{amsart}

\usepackage[foot]{amsaddr}
\usepackage[margin=2.42cm]{geometry}
\usepackage{mathtools}
\usepackage{amsthm}
\usepackage{dsfont}
\usepackage{amssymb}
\usepackage{bm}
\usepackage{enumitem}
\usepackage{tipa}
\usepackage{upgreek}
\usepackage{graphicx}
\usepackage{color}
\usepackage{xcolor}
\usepackage[colorlinks,linkcolor={black},citecolor={blue},urlcolor={black}]{hyperref}
\usepackage{epsfig,color}
\usepackage{tikz}
\usetikzlibrary{decorations.pathreplacing,positioning}
\usepackage{caption}
\usepackage{subcaption}
\usepackage{multirow}
\usepackage{hyperref}
\usepackage[capitalise]{cleveref}
\usepackage{glossaries}
\expandafter\def\csname ver@etex.sty\endcsname{3000/12/31}

\usepackage{autonum}
\usepackage{xpatch} 
\usepackage[square,numbers]{natbib}

\numberwithin{equation}{section}
\newtheorem{theorem}{Theorem}[section]
\newtheorem{lemma}[theorem]{Lemma}
\newtheorem{corollary}[theorem]{Corollary}
\newtheorem{proposition}[theorem]{Proposition}

\xpatchcmd{\proof} 
  {\itshape}
  {\bfseries}
  {}
  {}

\theoremstyle{definition}
\newtheorem{definition}[theorem]{Definition}

\theoremstyle{definition}
\newtheorem{remark}[theorem]{Remark}
\newtheorem{problem}{Problem}
\newtheorem{example}[theorem]{Example}

\let\emptyset\varnothing

\renewcommand{\d}{{\mathrm{\,d}}}
\DeclareMathAlphabet{\mathbbmsl}{U}{bbm}{m}{sl}

\makeatletter
\renewcommand\subsubsection{\@startsection{subsubsection}{3}{\z@}%
                                     {.5\linespacing\@plus.7\linespacing}{-.5em}
                                     {\normalfont\bfseries}}

\title[{Fractional Sobolev processes on Wasserstein spaces and energy minimization}]{
{Fractional Sobolev processes on Wasserstein spaces and their energy-minimizing particle representations \\with applications
}
}
\author{Ehsan Abedi$^\dagger$}

\address{$^\dagger$Faculty of Mathematics \\
         Bielefeld University \\
         Postfach 10 01 31 \\
         33501 Bielefeld \\
         Germany.}
\email{ehsan.abedi@math.uni-bielefeld.de}

\thanks{}

\keywords{Spaces of random probability measures, probability measure-valued processes, optimal transportation, stochastic superposition principle, fractional Sobolev regularity, Besov regularity, stochastic Fokker--Planck--Kolmogorov equations.}

\subjclass[2020]{46E35, 49Q22, 60G17, 60G57}

\begin{document}

\begin{abstract}    
    Given a probability-measure-valued process $(\mu_t)$, we aim to find, among all path-continuous stochastic processes whose one-dimensional time marginals coincide almost surely with $(\mu_t)$ (if there is any), a process that minimizes a given energy in expectation. Building on our recent study (arXiv:2502.12068), where the minimization of fractional Sobolev energy was investigated for deterministic paths on Wasserstein spaces, we now extend the results to the stochastic setting to address some applications that originally motivated our study. Two applications are given. We construct minimizing particle representations for processes on Wasserstein spaces on $\mathbb{R}$ with H\"{o}lder regularity, using optimal transportation. We prove the existence of minimizing particle representations for solutions to stochastic Fokker--Planck--Kolmogorov equations on $\mathbb{R}^\mathrm{d}$ satisfying an integrability condition, using the stochastic superposition principle of Lacker--Shkolnikov--Zhang (J. Eur. Math. Soc. 25, 3229--3288 (2023)). 
\end{abstract}

\maketitle


\section{Introduction and main results}\label{sec:introduction_s}
\subsection{A preliminary result}
    This paper continues our recent work in the deterministic setting \cite{Abedi2025paths} by studying the stochastic setting.
    Let $(\mathcal{X},d)$ be a complete separable metric space, and $(\Omega,\mathcal{F},\mathbb{P})$ be a probability space.
    We denote by $\mathcal{B}(\mathcal{X})$ the $\sigma$-algebra of Borel sets of $\mathcal{X}$.
    Let $P(\mathcal{X})$ and $P_{\Omega}(\mathcal{X})$ denote the set of deterministic and random Borel probability measures on $\mathcal{X}$, respectively. 
    A random probability measure is a map $\mu: \mathcal{B}(\mathcal{X}) \times \Omega \to [0,1]$ such that it is a probability measure when the second argument is fixed and is $\mathcal{F}$-measurable when the first argument is fixed.
    A (probability-)measure-valued process is a collection $(\mu_t)\coloneqq (\mu_t)_{t \in I} \subset P_{\Omega} (\mathcal{X})$ of random measures indexed by time $t \in I \coloneqq [0,T] \subset \mathbb{R}$.
    Given a measure-valued process $(\mu_t)$, we aim to find, among all path-continuous stochastic processes whose one-dimensional time marginals coincide $\mathbb{P}$-almost surely with $(\mu_t)$ (if there is any), a process that minimizes a given energy functional in expectation.
    The path law of such a process is a random path measure $\pi \in P_{\Omega}(\Gamma_T)$, referred to as a random lift.
    Throughout the paper, $\Gamma_T \coloneqq C([0, T ]; \mathcal{X}) $ denotes the space of continuous paths, equipped with the supremum distance, whose induced topology is known as the supremum 
    topology and whose induced Borel $\sigma$-algebra coincides with the $\sigma$-algebra generated by evaluation maps. 
    We formulate the aforementioned question as a variational problem on the space of random path measures, which can be regarded as a generalization of the Monge--Kantorovich transport problem to a time-dependent and stochastic setting:

    \vspace{6pt}
    
    \begin{minipage}{0.97\textwidth}
        \begin{problem}\label{prob:variational_problem_random}
        Given $(\mu_t)_{t \in I} \subset P_\Omega (\mathcal{X})$ with $I \coloneqq [0,T] \subset \mathbb{R}$ defined on a probability space $(\Omega, \mathcal{F}, \mathbb{P})$ and a measurable functional $\Psi: \Gamma_T \to [0,+\infty]$, consider the variational problem
            \begin{equation}\label{eq:variational_problem_random}
                \inf_{\pi \in \mathrm{Lift}_\Omega(\mu_t)} \mathbb{E} \left[\int_{\Gamma_T} \Psi (\gamma) \d \pi (\gamma) \right]
            \end{equation}
            where the infimum is taken over the set of \emph{random lifts} of $(\mu_t)$ defined by
            \begin{equation}\label{eq:def:random_lift_intro}
            \mathrm{Lift}_\Omega (\mu_t) \coloneqq \Big\{\pi \in P_\Omega(\Gamma_T) : \quad (e_t)_{\#} \pi^\omega  = \mu_t^\omega \,\, \mathbb{P}\text{-a.s. } \text{for all } t \in I \Big\},
        \end{equation}
             where $e_t: \Gamma_T \to \mathcal{X}$ is the evaluation map defined by $e_t(\gamma) \coloneqq \gamma_t$ for $\gamma \in \Gamma_T$. 
        \end{problem}
    \end{minipage}
    
    \vspace{6pt}

    If $\mathrm{Lift}_\Omega (\mu_t) = \emptyset$, the infimum  \eqref{eq:variational_problem_random} is set to $+\infty$, by the usual convention.
    We refer to the expectation $\mathbb{E}[\int \Psi \d \pi ]$ as the \emph{energy of the random lift} $\pi$ with respect to the \emph{energy functional} $\Psi$.
    We note that the application $\omega \mapsto \int \Psi \d \pi^\omega$ is measurable (see e.g. \cite[Proposition 3.3]{Crauel2002}), and thus the expectation is well-defined.
    As a first step, we prove the existence of a minimizer for the problem above using the direct method in the calculus of variations and Prokhorov's theorem for random measures (see e.g. \cite[Theorem 4.29]{Crauel2002}). Below is Proposition \ref{prop:existence_of_minimizer_random}. 

    \begin{proposition}[Existence of a random minimizer]\label{prop:existence_of_minimizer_random_intro}
         Let $(\mathcal{X}, d)$ be a complete separable metric space, $(\Omega, \mathcal{F},\mathbb{P})$ be a probability space, and $I \coloneqq [0,T] \subset \mathbb{R}$.
         Let $\Psi: C(I;\mathcal{X}) \to [0,+\infty]$ be a map whose sublevels are compact in $C(I;\mathcal{X})$.
         Assume that the infimum  \eqref{eq:variational_problem_random} is finite. Then there exists a random minimizer $\pi \in P_\Omega(C(I;\mathcal{X}) ) $ to Problem \ref{prob:variational_problem_random}. 
    \end{proposition}    

    We will apply this result to concrete energy functionals. Before that, we need to fix some notations for studying processes on Wasserstein spaces.

    \subsection{The space \texorpdfstring{$(P_{p,\Omega}(\mathcal{X}),\mathbb{W}_p)$}{(PpOmega(X),Wp)} for random measures}\label{subsec:prob_measure_valued_process_PpOmega_intro}
    This section aims to formulate a metric space for random measures similar to the Wasserstein space for deterministic measures.

    Given $p \in [1, \infty) $, let $P_p(\mathcal{X}) \subset P(\mathcal{X})$ denote the subset of probability measures with finite $p$-th moment, equipped with the $p$-(Kantorovitch–Rubinstein–)Wasserstein distance $W_p$. Given $\mu,\nu \in P_p(\mathcal{X})$, we denote by $\mathrm{Cpl}(\mu,\nu)$ the set of their couplings and by $\mathrm{OptCpl}(\mu,\nu)$ the set of optimal couplings for $W_p$. The space $(P_p(\mathcal{X}),W_p)$ is usually called the $p$-Wasserstein space.
    
    Given $p \in [1,\infty)$, we define a subset $P_{p,\Omega}(\mathcal{X}) \subset P_{\Omega}(\mathcal{X})$ of random probability measures:
    \begin{equation}\label{eq:P_p_Omega(X)_def_intro}
                P_{p,\Omega} (\mathcal{X}) \coloneqq \left\{ \mu \in P_{\Omega} (\mathcal{X}) : \, \mathbb{E} \left[  \int_{\mathcal{X}} d(x, \bar{x})^p \d \mu (x) \right] < + \infty \right\}, 
    \end{equation}
    where $\bar{x} \in \mathcal{X}$ is an arbitrary (non-random) point. The condition above means that a random measure $\mu$ lies in $P_{p,\Omega}(\mathcal{X})$ if and only if its average measure $\mathbb{E} \mu \coloneqq \int_{\Omega} \mu \d \mathbb{P}$ lies in $P_p(\mathcal{X})$. 
    Between two random measures $\mu, \nu \in P_{p,\Omega}(\mathcal{X})$, we set
    \begin{equation}\label{eq:def:mathbbW_p_min_intro}
        \mathbb{W}_p(\mu,\nu) \coloneqq \left( \min_{\Upsilon \in \mathrm{Cpl}_{\Omega}(\mu,\nu)} \mathbb{E} \left[ \int_{\mathcal{X}\times\mathcal{X}} d(x,y)^p \d \Upsilon (x,y) \right] \right)^{1/p},
    \end{equation}
    where the minimum is taken over the set of random couplings of $(\mu,\nu)$ defined by 
    \begin{equation}
        \mathrm{Cpl}_\Omega(\mu,\nu) \coloneqq \Big\{ \Upsilon \in P_\Omega(\mathcal{X}^2) : \quad \Upsilon^\omega \in \mathrm{Cpl}(\mu^\omega,\nu^\omega) \,\, \mathbb{P}\text{-a.s.} \Big\}.
    \end{equation}
    Since it is possible to select optimal couplings between random measures in a measurable way \cite[Corollary 5.22]{Villani2009} (formulated here as Lemma  \ref{lemma:measurable_selection_opt_cpl_two}), we have the existence of a random coupling that is optimal a.s. Consequently, a minimizer to \eqref{eq:def:mathbbW_p_min_intro} exists.
    For the set of random optimal couplings, we write $\Upsilon \in \mathrm{OptCpl}_{\Omega}(\mu,\nu)$, meaning again that $\Upsilon^\omega \in \mathrm{OptCpl}(\mu^\omega,\nu^\omega)$ $\mathbb{P}$-a.s. 
    Using this optimal coupling, \eqref{eq:def:mathbbW_p_min_intro} can be equivalently  expressed as
    \begin{equation}\label{eq:mathbbW_p_def_intro}
        \mathbb{W}_p (\mu,\nu) = \Big( \mathbb{E} \big[ W^p_p(\mu,\nu) \big] \Big)^{1/p}. 
    \end{equation}
    It is easy to verify that $\mathbb{W}_p$ defines a metric on $P_{p,\Omega}(\mathcal{X})$ (Proposition \ref{prop:mathbbWp_metric_result}). 
    The relationship between the topology induced by $\mathbb{W}_p$ and the topology of narrow convergence on $P_{\Omega} (\mathcal{X})$ are discussed in  \cref{subsec:random_probability_measures_finite_moment}, using the references  \cite{Crauel2002,HauslerLuschgy2015}.

    \subsection{Measure-valued processes in \texorpdfstring{$(P_{p,\Omega}(\mathcal{X}),\mathbb{W}_p)$}{(PpOmega(X),Wp)}}\label{subsec:Measure_valued_processes_intro}
    In this paper, we study measure-valued processes $(\mu_t)_{t \in I}$ that lie in the subspace $P_{p,\Omega} (\mathcal{X})$ and have certain path regularity with respect to the metric $\mathbb{W}_p$. 
    We thus view such a measure-valued process as a ``single curve'' in the metric space $(P_{p,\Omega}(\mathcal{X}),\mathbb{W}_p)$, with its regularity always understood with respect to the metric $\mathbb{W}_p$. For clarity, all path regularities computed with respect to $\mathbb{W}_p$ will be denoted by $\Vert \cdot \Vert$, distinguishing them from regularities $|\cdot|$ computed with respect to ${W}_p$.

    It is worth noting that while $(\mu_t)_{t \in I} \subset P_{p,\Omega} (\mathcal{X})$ implies that $\mu_t^\omega \in P_p(\mathcal{X})$ a.s. for all $t \in I$, it does not imply that $\mu_t^\omega \in P_p(\mathcal{X})$ for all $t \in I$ a.s. 
    In other words, one cannot say that the sample paths $t \mapsto \mu_t^\omega$ lie in the Wasserstein space $P_p(\mathcal{X})$ a.s. Even if they do, the continuity of $(\mu_t)$ with respect to $\mathbb{W}_p$ is obviously not enough to imply the continuity of the sample paths $(\mu_t^\omega)$ with respect to ${W}_p$.
     
    \subsection{Main theorems}\label{subsec:introduction_s_main_theorems}
    We come back to Problem \ref{prob:variational_problem_random} and now focus on energy functionals $\Psi$ of the form $
    \Psi (\gamma) \coloneqq d(\gamma_0,\bar{x}) + | \gamma |$ where $\bar{x} \in \mathcal{X}$ is an arbitrary point and  $| \cdot | : \Gamma_T \to [0,+\infty]$ is a (semi-)norm on the space of continuous paths.
    As discussed in \cite[Section 1 and Lemma 2.21]{Abedi2025paths}, a candidate for $| \cdot |$ in the low-regularity setting that makes the energy functional $\Psi$ satisfy the compactness requirement in Proposition \ref{prop:existence_of_minimizer_random_intro} is \emph{fractional Sobolev} regularity $|\cdot|_{W^{\alpha,p}}$ or certain \emph{Besov} regularity $|\cdot|_{b^{\alpha,p}}$ (at least when $(\mathcal{X},d)$ has the additional structure that closed bounded sets are compact).
    These norms (given in Definitions \ref{def:Walphap_regularity} and \ref{def:balphap_regularity}) are equivalent on the space of continuous paths under the condition $1<p<\infty$ and $1/p<\alpha < 1$, thanks to the characterization by \cite{LiuPromelTeichmann2020}. The key computations in this paper are performed using $|\cdot|_{b^{\alpha,p}}$, since as a discrete object, it has many technical advantages, including measurability.
    
    As a first step, we start from a random path measure $\pi$ with finite $W^{\alpha,p}$-energy and study the regularity of its one-dimensional time marginals, which is a measure-valued process. This is also analyzed when $\pi$ has finite H\"{o}lder or $p$-variation energy in \cref{subsec:from_pi_to_mu_random}.  
    Below is Theorem \ref{thm:lift_to_mu_Walphap_stoch}.
    \begin{theorem}\label{thm:lift_to_mu_Walphap_stoch_intro}
    Let $(\mathcal{X},d)$ be a complete separable metric space, and $(\Omega, \mathcal{F}, \mathbb{P})$ be a probability space. Let $\pi \in P_{\Omega} (C([0,T];\mathcal{X})) $ satisfy
    \begin{equation}\label{eq:lift_to_mu_Walphap_integrability_stoch_intro}
            \mathbb{E} \left[  \int_{\Gamma_T} \Big( d(\gamma_0,\bar{x})^p + | \gamma |_{W^{\alpha,p}}^p  \Big) \d \pi(\gamma) \right] < + \infty
        \end{equation}
        for some $1<p<\infty$ and $ \frac{1}{p} < \alpha < 1$ and $\bar{x}\in \mathcal{X}$.
        Then, for $\mathbb{P}$-a.e. $\omega \in \Omega$, the curve $t \mapsto  \mu_t^\omega \coloneqq {(e_t)}_{\#} \pi^\omega$ is in $W^{\alpha, p} ([0,T];P_p(\mathcal{X}))$ with
        \begin{equation}\label{eq:lift_to_mu_Walphap_random_ineq_pointwise_intro} 
            |\mu^\omega|_{W^{\alpha,p}}^p \leq \int_{\Gamma_T} |\gamma|_{W^{\alpha,p}}^p \d \pi^\omega(\gamma).
        \end{equation}
        In addition, $(\mu_t) \in W^{\alpha, p} ([0,T];P_{p,\Omega}(\mathcal{X}))$ with 
        \begin{equation}\label{eq:lift_to_mu_Walphap_random_ineq_exp_intro} 
            \Vert\mu\Vert_{W^{\alpha,p}}^p \leq \mathbb{E} \left[\int_{\Gamma_T} |\gamma|_{W^{\alpha,p}}^p \d \pi (\gamma) \right].
        \end{equation}
        The same statement holds for $|\cdot|_{b^{\alpha,p}}$.
    \end{theorem}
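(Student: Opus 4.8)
The plan is to reduce everything to a pointwise-in-$\omega$ statement and then integrate. First I would recall the definition of $|\cdot|_{b^{\alpha,p}}$ as a (countable) discrete object: something like a dyadic sum $|\gamma|_{b^{\alpha,p}}^p = \sum_{n\geq 0} 2^{n(\alpha p - 1)} \sum_{i} d(\gamma_{t_{i+1}^n},\gamma_{t_i^n})^p$ over dyadic partitions of $[0,T]$. The crucial observation is that for each pair of times $s,t$, the map $\gamma \mapsto d(\gamma_s,\gamma_t)^p$ on $\Gamma_T$ pushes forward under $(e_s,e_t)$ to give, after integration against $\pi^\omega$, a coupling of $\mu_s^\omega$ and $\mu_t^\omega$; hence $W_p^p(\mu_s^\omega,\mu_t^\omega) \leq \int_{\Gamma_T} d(\gamma_s,\gamma_t)^p \d\pi^\omega(\gamma)$. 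Summing this inequality against the dyadic weights $2^{n(\alpha p-1)}$ and using Tonelli to exchange the (countable, nonnegative) sum with the integral against $\pi^\omega$ yields exactly $|\mu^\omega|_{b^{\alpha,p}}^p \leq \int_{\Gamma_T}|\gamma|_{b^{\alpha,p}}^p\d\pi^\omega(\gamma)$. This is essentially the deterministic statement from \cite{Abedi2025deterministic}, applied $\omega$ by $\omega$; I would invoke it directly rather than reprove it. The equivalence of $|\cdot|_{b^{\alpha,p}}$ and $|\cdot|_{W^{\alpha,p}}$ on continuous paths under $1<p<\infty$, $1/p<\alpha<1$ (from \cite{LiuPromelTeichmann2020}) then transfers the inequality to $|\cdot|_{W^{\alpha,p}}$, up to the harmless change that equivalence gives $\leq$ with a constant; but the sharp inequality \eqref{eq:lift_to_mu_Walphap_random_ineq_pointwise_intro} without constant should follow from the analogous deterministic sharp inequality, which presumably is what \cite{Abedi2025deterministic} proves for $W^{\alpha,p}$ directly by a Jensen/convexity argument on the double integral defining the Gagliardo seminorm.

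Next I would address measurability, which is the only genuinely stochastic point. I need $\omega \mapsto \int_{\Gamma_T}|\gamma|_{b^{\alpha,p}}^p\d\pi^\omega(\gamma)$ to be $\mathcal{F}$-measurable so that \eqref{eq:lift_to_mu_Walphap_random_ineq_exp_intro} even makes sense, and likewise $\omega\mapsto|\mu^\omega|_{b^{\alpha,p}}^p$. For the former: $\gamma\mapsto|\gamma|_{b^{\alpha,p}}^p$ is a countable sum of continuous functions on $\Gamma_T$, hence Borel, and the map $\omega \mapsto \int \Psi\d\pi^\omega$ is measurable for Borel $\Psi\geq 0$ by the remark already made in the excerpt (citing \cite[Proposition 3.3]{Crauel2002}); so measurability of the right-hand side of \eqref{eq:lift_to_mu_Walphap_random_ineq_exp_intro} is immediate, and this is exactly why the paper insists on working with $|\cdot|_{b^{\alpha,p}}$ as a discrete object. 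For $\omega\mapsto|\mu^\omega|_{b^{\alpha,p}}^p$ one uses that $\omega\mapsto\mu_t^\omega$ is $\mathcal{F}$-measurable into $P_p(\mathcal{X})$ (narrow measurability together with the moment bound), that $(\mu,\nu)\mapsto W_p(\mu,\nu)$ is continuous, and that $|\mu^\omega|_{b^{\alpha,p}}^p$ is again a countable sum of such continuous expressions in the $\mu_{t_i^n}^\omega$. Then \eqref{eq:lift_to_mu_Walphap_random_ineq_exp_intro} is obtained simply by taking $\mathbb{E}$ of \eqref{eq:lift_to_mu_Walphap_random_ineq_pointwise_intro}, and the finiteness of the right-hand side is precisely hypothesis \eqref{eq:lift_to_mu_Walphap_integrability_stoch_intro} (the $d(\gamma_0,\bar x)^p$ term being there only to guarantee $(\mu_t)\in P_{p,\Omega}(\mathcal{X})$, via $\mathbb{E}\int d(x,\bar x)^p\d\mu_0 = \mathbb{E}\int d(\gamma_0,\bar x)^p\d\pi$).

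Finally I would tie up the membership claims. The pointwise inequality shows $|\mu^\omega|_{W^{\alpha,p}}<\infty$ for $\mathbb{P}$-a.e.\ $\omega$; combined with $\mu_t^\omega\in P_p(\mathcal{X})$ for all $t$ (for a.e.\ $\omega$, using continuity of $t\mapsto\mu_t^\omega$ in $W_p$, which itself follows because a finite $W^{\alpha,p}$-seminorm forces continuity for $\alpha>1/p$ — the Besov/fractional-Sobolev embedding into $C^{\alpha-1/p}$) this gives $\mu^\omega\in W^{\alpha,p}([0,T];P_p(\mathcal{X}))$. For the statement $(\mu_t)\in W^{\alpha,p}([0,T];P_{p,\Omega}(\mathcal{X}))$ one checks $(\mu_t)\subset P_{p,\Omega}(\mathcal{X})$ as above and that $\|\mu\|_{W^{\alpha,p}}^p$, defined via the $\mathbb{W}_p$ metric, is finite; here I expect $\|\mu\|_{b^{\alpha,p}}^p = \sum_n 2^{n(\alpha p-1)}\sum_i \mathbb{W}_p^p(\mu_{t_{i+1}^n},\mu_{t_i^n})$ and, since $\mathbb{W}_p^p(\mu_s,\mu_t)=\mathbb{E}[W_p^p(\mu_s^\omega,\mu_t^\omega)]$, another application of Tonelli identifies $\|\mu\|_{b^{\alpha,p}}^p = \mathbb{E}[|\mu^\omega|_{b^{\alpha,p}}^p] \leq \mathbb{E}\int|\gamma|_{b^{\alpha,p}}^p\d\pi$, and the $W^{\alpha,p}$ version follows by equivalence. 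The main obstacle, such as it is, is bookkeeping around measurability and the interchange of the countable dyadic sums with both the path integral $\d\pi^\omega$ and the expectation $\mathbb{E}$ — all justified by Tonelli since every term is nonnegative — rather than any deep new estimate; the analytic content is inherited wholesale from the deterministic theory in \cite{Abedi2025deterministic}.
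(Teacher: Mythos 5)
Your overall strategy is the same as the paper's: invoke the deterministic result of \cite{Abedi2025deterministic} pathwise in $\omega$ to get the moment bound and the pointwise inequalities, use the discrete Besov seminorm to sidestep measurability, and then pass to expectations by Tonelli. The pointwise part of your argument, the measurability of $\omega \mapsto \int_{\Gamma_T}|\gamma|^p_{b^{\alpha,p}}\,\mathrm{d}\pi^\omega$, and the identification $\Vert\mu\Vert_{b^{\alpha,p}}^p = \mathbb{E}[|\mu|^p_{b^{\alpha,p}}]$ (which is exactly \cref{lemma:Exp_balphap_mu}) all match the paper.

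There is, however, one step where your justification does not go through as written: the derivation of \eqref{eq:lift_to_mu_Walphap_random_ineq_exp_intro}. You propose to obtain it ``simply by taking $\mathbb{E}$ of \eqref{eq:lift_to_mu_Walphap_random_ineq_pointwise_intro}.'' This presupposes that $\omega \mapsto |\mu^\omega|^p_{W^{\alpha,p}}$ is measurable and that $\Vert\mu\Vert_{W^{\alpha,p}}^p = \mathbb{E}\big[|\mu|^p_{W^{\alpha,p}}\big]$; the paper explicitly warns in \cref{rmk:Exp_Walphap_mu} that neither can be taken for granted, which is precisely why your own measurability discussion is confined to the Besov quantities. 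Your fallback --- deducing the $W^{\alpha,p}$ bound from the $b^{\alpha,p}$ bound ``by equivalence'' --- only yields \eqref{eq:lift_to_mu_Walphap_random_ineq_exp_intro} up to the multiplicative constants $c_1,c_2$ of \cref{thm:Walphap_balphap_r_paper}, not the constant-free inequality claimed. The paper avoids both problems by never forming $\mathbb{E}[|\mu|^p_{W^{\alpha,p}}]$: it writes $\Vert\mu\Vert_{W^{\alpha,p}}^p = \iint \mathbb{E}[W_p^p(\mu_u,\mu_v)]\,|v-u|^{-1-\alpha p}\,\mathrm{d}u\,\mathrm{d}v$, bounds the integrand pair by pair via the coupling $(e_u,e_v)_\#\pi^\omega$ (measurable in $\omega$ for each fixed $(u,v)$), and applies Tonelli to the nonnegative, jointly measurable function $(u,v,\gamma,\omega)\mapsto d(\gamma_u,\gamma_v)^p|v-u|^{-1-\alpha p}$ on $[0,T]^2\times\Gamma_T\times\Omega$. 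Your route can be salvaged --- once you know the sample curves $t\mapsto\mu_t^\omega$ are a.s.\ $W_p$-continuous, $(u,v,\omega)\mapsto W_p^p(\mu_u^\omega,\mu_v^\omega)$ is a Carath\'eodory function, hence jointly measurable, and the exchange becomes legitimate --- but that argument is absent from your proposal and must be supplied; as it stands this step is a gap.
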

    
    By combining Proposition \ref{prop:existence_of_minimizer_random_intro} and Theorem \ref{thm:lift_to_mu_Walphap_stoch_intro}, we immediately arrive at our first main result: 

    \begin{theorem}[Existence of a minimizing random lift]\label{thm:existence_of_Walphap_minimizer_random_intro}
        Let $(\mathcal{X},d)$ be a complete separable metric space in which closed bounded sets are compact, and $I \coloneqq [0,T] \subset \mathbb{R}$.
        Let $(\mu_t)_{t \in I}$ be a probability measure-valued stochastic process defined on a probability space $(\Omega, \mathcal{F}, \mathbb{P})$ such that $\mu_0 \in P_{p,\Omega}(\mathcal{X})$ and it has a random lift with finite $W^{\alpha,p}$-energy with $1<p<\infty$ and $\frac{1}{p}< \alpha <  1$, i.e., \eqref{eq:lift_to_mu_Walphap_integrability_stoch_intro} holds. 
        Then, there exists a random minimizer $\pi \in P_{\Omega}(C(I;\mathcal{X}) ) $ to Problem \ref{prob:variational_problem_random} for the energy $\Psi(\gamma)  = |\gamma|^p_{W^{\alpha,p}} $. In particular, 
        \begin{enumerate}[label=(\roman*), font=\normalfont]
            \item $\pi^\omega$ is concentrated on $W^{\alpha,p}(I;\mathcal{X})$ and $t \mapsto {(e_t)}_{\#} \pi^\omega$ is in $ W^{\alpha, p} (I;P_p(\mathcal{X}))$ a.s.;
            \item $(e_t)_\#\pi^\omega =\mu_t^{\omega} $ a.s. for all $t\in I$;
            \item $\pi$ satisfies 
            \begin{equation}\label{eq:minimizer_pi_Walphap_random}
                \Vert\mu\Vert_{W^{\alpha,p}}^p \leq \mathbb{E} \left[\int_{\Gamma_T} |\gamma|_{W^{\alpha,p}}^p \d \pi (\gamma) \right] < + \infty.
            \end{equation}
        \end{enumerate}
     The same statement holds for $|\cdot|_{b^{\alpha,p}}$.
    \end{theorem}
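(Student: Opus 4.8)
The plan is to invoke \cref{prop:existence_of_minimizer_random_intro} for an auxiliary functional whose sublevels are compact, and then to observe that the time-$0$ marginal constraint turns that functional into $\Psi=|\cdot|^p_{W^{\alpha,p}}$ up to an additive constant, so the minimizer is unchanged; properties (i)--(iii) then follow by feeding $\pi$ back into \cref{thm:lift_to_mu_Walphap_stoch_intro}. Concretely, set $\tilde{\Psi}(\gamma)\coloneqq d(\gamma_0,\bar{x})^p+|\gamma|^p_{W^{\alpha,p}}$. First I would verify that the sublevels $\{\tilde{\Psi}\le c\}$ are compact in $C(I;\mathcal{X})$: a bound on $d(\gamma_0,\bar{x})$ together with a bound on $|\gamma|_{W^{\alpha,p}}$ places $\gamma$, through the Garsia--Rodemich--Rumsey-type embedding underlying the characterization of \cite{LiuPromelTeichmann2020} (valid since $1<p<\infty$ and $1/p<\alpha<1$), inside a bounded, uniformly $(\alpha-1/p)$-H\"older family of paths; the hypothesis that closed bounded subsets of $\mathcal{X}$ are compact then gives relative compactness in $C(I;\mathcal{X})$ via Arzel\`a--Ascoli, while lower semicontinuity of $\gamma\mapsto|\gamma|_{W^{\alpha,p}}$ (Fatou applied to the defining double integral) gives closedness. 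This is essentially \cite[Lemma 2.21]{Abedi2025deterministic}; in particular $\tilde{\Psi}$ is lower semicontinuous, hence Borel.

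The infimum \eqref{eq:variational_problem_random} for the energy $\tilde{\Psi}$ is finite, because by the standing hypothesis \eqref{eq:lift_to_mu_Walphap_integrability_stoch_intro} there is a lift $\pi_0\in\mathrm{Lift}_\Omega(\mu_t)$ with $\mathbb{E}[\int_{\Gamma_T}\tilde{\Psi}(\gamma) \d \pi_0(\gamma)]<+\infty$. Hence \cref{prop:existence_of_minimizer_random_intro} produces a random minimizer $\pi\in P_\Omega(C(I;\mathcal{X}))$ of $\pi'\mapsto\mathbb{E}[\int_{\Gamma_T}\tilde{\Psi} \d \pi']$ over $\mathrm{Lift}_\Omega(\mu_t)$. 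The key elementary point is that the first term of $\tilde{\Psi}$ is frozen along admissible competitors: for every $\pi'\in\mathrm{Lift}_\Omega(\mu_t)$ the constraint $(e_0)_\#(\pi')^\omega=\mu_0^\omega$ $\mathbb{P}$-a.s.\ gives
\[
  \mathbb{E}\Big[\int_{\Gamma_T}d(\gamma_0,\bar{x})^p \d \pi'(\gamma)\Big]=\mathbb{E}\Big[\int_{\mathcal{X}}d(x,\bar{x})^p \d \mu_0(x)\Big]=:c_0,
\]
which is finite because $\mu_0\in P_{p,\Omega}(\mathcal{X})$. Thus $\mathbb{E}[\int_{\Gamma_T}\tilde{\Psi} \d \pi']=c_0+\mathbb{E}[\int_{\Gamma_T}|\gamma|^p_{W^{\alpha,p}} \d \pi']$ for all such $\pi'$, and subtracting the constant $c_0$ shows that $\pi$ is also a minimizer of $\pi'\mapsto\mathbb{E}[\int_{\Gamma_T}|\gamma|^p_{W^{\alpha,p}} \d \pi']$, i.e.\ a random minimizer of \cref{prob:variational_problem_random} for $\Psi=|\cdot|^p_{W^{\alpha,p}}$.

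It remains to read off (i)--(iii). Item (ii) is just the definition of $\mathrm{Lift}_\Omega(\mu_t)$. For (i) and (iii), minimality gives $\mathbb{E}[\int_{\Gamma_T}|\gamma|^p_{W^{\alpha,p}} \d \pi]\le\mathbb{E}[\int_{\Gamma_T}|\gamma|^p_{W^{\alpha,p}} \d \pi_0]<+\infty$, so together with $\mathbb{E}[\int_{\Gamma_T}d(\gamma_0,\bar{x})^p \d \pi]=c_0<+\infty$ the integrability hypothesis \eqref{eq:lift_to_mu_Walphap_integrability_stoch_intro} of \cref{thm:lift_to_mu_Walphap_stoch_intro} holds for $\pi$. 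That theorem then yields, for $\mathbb{P}$-a.e.\ $\omega$, that $\int_{\Gamma_T}|\gamma|^p_{W^{\alpha,p}} \d \pi^\omega<+\infty$ --- hence $\pi^\omega$ is concentrated on $\{\gamma:|\gamma|_{W^{\alpha,p}}<\infty\}=W^{\alpha,p}(I;\mathcal{X})$ --- and that $t\mapsto(e_t)_\#\pi^\omega\in W^{\alpha,p}(I;P_p(\mathcal{X}))$, which is (i); and it yields $\Vert\mu\Vert_{W^{\alpha,p}}^p\le\mathbb{E}[\int_{\Gamma_T}|\gamma|^p_{W^{\alpha,p}} \d \pi]<+\infty$, which is (iii).

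Finally, the statement for $|\cdot|_{b^{\alpha,p}}$ follows by repeating the three steps with $\tilde{\Psi}_b(\gamma)\coloneqq d(\gamma_0,\bar{x})^p+|\gamma|^p_{b^{\alpha,p}}$ and $\Psi=|\cdot|^p_{b^{\alpha,p}}$; compactness of the sublevels of $\tilde{\Psi}_b$ reduces to the case just treated via the equivalence of $|\cdot|_{b^{\alpha,p}}$ and $|\cdot|_{W^{\alpha,p}}$ on $C(I;\mathcal{X})$ \cite{LiuPromelTeichmann2020}, and the conclusion of \cref{thm:lift_to_mu_Walphap_stoch_intro} is already stated for $|\cdot|_{b^{\alpha,p}}$. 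I expect the only genuinely delicate ingredient to be this sublevel-compactness and lower-semicontinuity input; the rest is the routine observation that adding a constant does not change a minimizer, which is precisely what allows \cref{prop:existence_of_minimizer_random_intro} --- whose compactness requirement forces the inclusion of the $d(\gamma_0,\bar{x})^p$ term --- to deliver a minimizer for the bare energy $\Psi=|\cdot|^p_{W^{\alpha,p}}$.
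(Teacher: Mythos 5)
Your proposal is correct and follows essentially the same route as the paper, which obtains this theorem by combining \cref{prop:existence_of_minimizer_random_intro} (applied to the augmented functional $d(\gamma_0,\bar x)^p+|\gamma|^p_{W^{\alpha,p}}$, whose sublevels are compact) with \cref{thm:lift_to_mu_Walphap_stoch_intro}. Your explicit observation that the $d(\gamma_0,\bar x)^p$ term contributes the same finite constant $c_0=\mathbb{E}[\int d(x,\bar x)^p\d\mu_0]$ to every admissible random lift — so that minimizing the augmented energy is equivalent to minimizing the bare energy $\Psi=|\cdot|^p_{W^{\alpha,p}}$ — is exactly the step the paper leaves implicit in the word ``immediately.''
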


    In the deterministic setting, it is observed in \cite[Proposition 1.4]{Abedi2025paths} that the existence of a lift satisfying the equality
    $|\mu|_{W^{\alpha,p}}^p = \int |\gamma|_{W^{\alpha,p}}^p \d \pi$ imposes a strong condition on $(\mu_t)$, called \emph{compatibility}. This is the case in the stochastic setting as well.
    In Proposition \ref{prop:equality_implies_compatibility_random}, we show that the existence of a random lift $\pi$ achieving equality in \eqref{eq:minimizer_pi_Walphap_random} implies compatibility of $(\mu_t)$ in $P_{p,\Omega}(\mathcal{X})$ in the sense defined below and spelled out after Definition \ref{def:compatibility_random}.
    \begin{definition}[Compatibility of random measures in $P_{p,\Omega}(\mathcal{X})$]\label{def:compatibility_random_intro}
    We say a collection of random measures $\mathcal{M} \subset P_{p,\Omega}(\mathcal{X})$ is compatible in $P_{p,\Omega}(\mathcal{X})$, if, for every finite subcollection of $\mathcal{M}$, there exists a random multi-coupling such that all of its two-dimensional marginals are optimal $\mathbb{P}$-a.s.
    \end{definition}

    Our next goal is to construct a lift that achieves equality in \eqref{eq:minimizer_pi_Walphap_random}, for which it is natural to assume compatibility.
    Our construction is the random version of
    \cite{Abedi2025paths}, which itself was an adapted version of Lisini's construction \cite{Lisini2007}. Let $T=1$ for simplicity in notation. 
    
    \vspace{6pt}
    
    \noindent
    \textbf{Construction {\footnotesize$\bigstar$}.}
    Let $(\mu_t)_{t\in [0,1]} \subset P_{p,\Omega}(\mathcal{X})$ be a compatible collection on a complete, separable, and locally compact length metric space $(\mathcal{X},d)$. For each $n \in \mathbb{N}_0$,
    \begin{enumerate}\label{itm:ConstructionB_random}
        \item[1.] Divide the time interval $[0,1]$ into the dyadic dissection $t_i^{(n)} \coloneqq \frac{i}{2^n}, i \in \{0,1,\cdots 2^n\}$. 
        \item[2.] Let $\Upsilon_{n} \in P_{\Omega}(\mathcal{X}^{2^n+1})$ be a random multi-coupling such that for $\mathbb{P}$-a.e. $\omega \in \Omega$,
        \begin{equation}
        (\mathrm{Pr}^{i,j})_\#\Upsilon_n^\omega
        \in
        \mathrm{OptCpl}
        \bigl(
            \mu^\omega_{t_i^{(n)}},
            \mu^\omega_{t_j^{(n)}}
        \bigr)
        \quad
        \forall i,j\in\{0,\ldots,2^n\},
        \end{equation}
        where $\text{Pr}^{i,j}: \mathcal{X}^{2^n+1} \to {\mathcal{X}^2}$ is the projection map to $(i,j)$-th component.

        \item[3.] Construct the random path measure $\pi_n \coloneqq (\ell)_{\#} \Upsilon_n \in P_{\Omega}(\Gamma_1)$, where $\ell : \mathcal{X}^{2^n+1} \to \Gamma_1$ is a measurable geodesic selection and interpolation map connecting the points with constant-speed geodesics. (See Remark \ref{rmk:measurable_geodesic_selection} on the measurability of this map.)
        \item[4.] Take the limit $n \to \infty$ and verify the narrow convergence of the sequence of random path measures $\{\pi_n\}_{n \in \mathbb{N}}$ in $P_{\Omega}(\Gamma_1)$. 
    \end{enumerate}

    The next theorem can be seen as a stochastic superposition principle under the strong assumption of compatibility  (which is crucial here; otherwise, a finite $W^{\alpha,p}$-energy lift may not exist \cite[Example 4.5]{Abedi2025paths}). It is a continuation of deterministic superposition principles based on optimal transport \cite{LottVillani2009,Villani2009,Sturm2006I,AGS2008GFs,Lisini2007,Lisini2016,AbediLiSchultz2024}.
    When applied in the deterministic setting, the theorems in this section recover our previous results in \cite{Abedi2025paths}. Our second main result is Theorem \ref{thm:optimal_lift_mu_Walphap_compatible_random}. 
    
    \begin{theorem}[Construction of a realizing random lift]\label{thm:optimal_lift_mu_Walphap_compatible_random_intro}
        Let $(\mathcal{X}, d)$ be a complete, separable, and locally compact length metric space (e.g. $\mathbb{R}^\mathrm{d}$), and $I \coloneqq [0,T] \subset \mathbb{R}$.
        Let $(\mu_t)_{t \in I}$ be a probability measure-valued stochastic process defined on a probability space $(\Omega, \mathcal{F}, \mathbb{P})$ such that $(\mu_t) \in W^{\alpha,p} (I; P_{p,\Omega}(\mathcal{X}))$ for some $1<p<\infty$ and $\frac{1}{p}< \alpha <  1$. 
        Assume that $(\mu_t)_{t \in I}$ is compatible in $P_{p,\Omega}(\mathcal{X})$.
        Then, construction \hyperref[itm:ConstructionB_random]{{\footnotesize$\bigstar$}} converges narrowly (up to a subsequence) to a random probability measure $\pi \in P_{\Omega} (C(I;\mathcal{X}))$ satisfying
        \begin{enumerate}[label=(\roman*), font=\normalfont]
            \item $\pi^\omega$ is concentrated on $W^{\alpha,p}(I;\mathcal{X})$ and $t \mapsto {(e_t)}_{\#} \pi^\omega$ is in $ W^{\alpha, p} (I;P_p(\mathcal{X}))$ a.s.;
            \item $(e_t)_\#\pi^\omega =\mu_t^{\omega} $ a.s. for all $t\in I$;
            \item $(e_s,e_t)_\# \pi^\omega \in \mathrm{OptCpl}(\mu_s^{\omega}, \mu_t^{\omega})$ a.s. for all $s,t \in I$; and in particular,
            \begin{equation}\label{eq:optimality_pi_1_random_intro}
                \Vert\mu\Vert_{W^{\alpha,p}}^p = \mathbb{E} \left[\int_{\Gamma_T} |\gamma|_{W^{\alpha,p}}^p \d \pi (\gamma) \right].
            \end{equation}
        \end{enumerate}
    The same statement holds for $|\cdot|_{b^{\alpha,p}}$.
    \end{theorem}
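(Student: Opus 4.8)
The plan is to follow Construction~\hyperref[itm:ConstructionB_random]{{\footnotesize$\bigstar$}} step by step, transferring the deterministic argument of \cite[Construction~\textbf{{\normalfont \textcircled{\small{B}}}}]{Abedi2025deterministic} to the random setting fibrewise over $\omega$, while controlling measurability at each stage. First I would address Step~2: by \cref{def:compatibility_random_intro} (compatibility), for each $n$ there is a \emph{deterministic} finite multi-coupling structure on $\{\mu_{t_i^{(n)}}\}$ whose two-dimensional marginals are optimal; I would upgrade this to a \emph{random} multi-coupling $\Upsilon_n \in P_\Omega(\mathcal{X}^{2^n+1})$ satisfying \eqref{eq:compatibility_dyadic_random} $\mathbb{P}$-a.s., invoking the measurable selection of optimal couplings (the cited \cite[Corollary~5.22]{Villani2009}, formulated as \cref{lemma:measurable_selection_opt_cpl_two}) together with the fact that the dyadic compatibility constraint only involves finitely many pairwise-optimality conditions, so a measurable selection of a multi-coupling realizing all of them exists. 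Then $\pi_n \coloneqq (\ell)_\#\Upsilon_n$ is a well-defined random path measure by measurability of the geodesic interpolation map $\ell$ (\cite[Remark~2.47]{Abedi2025deterministic}).

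Next I would establish tightness of $\{\pi_n\}$ in $P_\Omega(\Gamma_1)$ and extract a narrowly convergent subsequence via Prokhorov's theorem for random measures (\cite[Theorem~4.29]{Crauel2002}). The key uniform estimate comes from the deterministic theory: fibrewise, $\pi_n^\omega$ is supported on constant-speed-geodesic interpolations of points sampled from $\mu_{t_i^{(n)}}^\omega$, and the $W^{\alpha,p}$- (equivalently $b^{\alpha,p}$-) energy of $\pi_n^\omega$ is controlled by the discrete $b^{\alpha,p}$-seminorm of the sampled curve, which in turn is dominated by $|\mu^\omega|_{b^{\alpha,p}}^p$ uniformly in $n$ — this is exactly where the discrete nature of $|\cdot|_{b^{\alpha,p}}$ pays off, since the dyadic optimality in \eqref{eq:compatibility_dyadic_random} lets one telescope the increments. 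Taking expectations and using $(\mu_t)\in W^{\alpha,p}(I;P_{p,\Omega}(\mathcal{X}))$ gives $\sup_n \mathbb{E}[\int|\gamma|_{b^{\alpha,p}}^p\d\pi_n] < \infty$; combined with the tightness of $\mathbb{E}\mu_0$ and the compactness of $b^{\alpha,p}$-sublevels in $C(I;\mathcal{X})$ (using local compactness of $\mathcal{X}$, as in \cite{Abedi2025deterministic}), this yields tightness of $\{\mathbb{E}\pi_n\}$, hence of $\{\pi_n\}$ as random measures. Let $\pi$ be a narrow limit along a subsequence.

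It remains to verify properties (i)–(iii) for $\pi$. For (ii), the marginal identity $(e_t)_\#\pi_n^\omega = \mu_t^\omega$ holds a.s. at every dyadic $t$ (by construction), and passes to the limit by narrow convergence of $(e_t)_\#\pi_n^\omega$ together with continuity of $t\mapsto\mu_t^\omega$ in $W_p$ along a full-measure set of $\omega$; then it extends from dyadic $t$ to all $t\in I$. For (iii), fix $s,t$; the two-dimensional marginal $(e_s,e_t)_\#\pi^\omega$ is a coupling of $(\mu_s^\omega,\mu_t^\omega)$, and I would show it is optimal by passing dyadic optimality to the limit: $(e_{s_n},e_{t_n})_\#\pi_n^\omega$ is optimal for dyadic approximants $s_n\to s$, $t_n\to t$, optimality is closed under narrow convergence of couplings with convergent marginals, and lower semicontinuity of the transport cost plus convergence of $W_p(\mu_{s_n}^\omega,\mu_{t_n}^\omega)\to W_p(\mu_s^\omega,\mu_t^\omega)$ forces optimality of the limit. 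For (i), fibrewise optimality of all pair-marginals together with \cref{thm:lift_to_mu_Walphap_stoch_intro} applied to $\pi^\omega$ gives that $t\mapsto(e_t)_\#\pi^\omega$ lies in $W^{\alpha,p}(I;P_p(\mathcal{X}))$ and $\pi^\omega$ is concentrated on $W^{\alpha,p}(I;\mathcal{X})$; moreover, since consecutive increments along any subdivision are now realized optimally, the chain of inequalities in \eqref{eq:lift_to_mu_Walphap_random_ineq_pointwise_intro}–\eqref{eq:lift_to_mu_Walphap_random_ineq_exp_intro} becomes an equality, yielding \eqref{eq:optimality_pi_1_random_intro}. The case of $|\cdot|_{b^{\alpha,p}}$ is handled identically, and the $W^{\alpha,p}$ case follows by the equivalence of the two seminorms on $C(I;\mathcal{X})$ from \cite{LiuPromelTeichmann2020}.

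The main obstacle I anticipate is Step~4 carried out uniformly in $\omega$: narrow convergence of $\pi_n$ as random measures is genuinely weaker than fibrewise narrow convergence of $\pi_n^\omega$, so one cannot directly say $\pi^\omega$ is a fibrewise limit of $\pi_n^\omega$ along a single subsequence. The fix is the standard one — pass to a further subsequence so that convergence holds along a countable determining class of bounded continuous test functions, then use a diagonal argument over dyadic times and a dense countable family of test functions to pin down $\pi^\omega$'s marginals and pair-optimality for $\mathbb{P}$-a.e.\ $\omega$ simultaneously; equivalently, realize the random measures on a common probability space and argue via a.s.\ convergence there. This bookkeeping — ensuring all the a.s.\ statements hold on a single full-measure set across uncountably many $(s,t)$ — is where the care is needed, but it is routine given the deterministic estimates and the measurable-selection lemmas already cited.
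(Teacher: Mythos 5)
Your overall architecture (random multi-couplings from compatibility, geodesic interpolation, a uniform $b^{\alpha,p}$-energy bound, Prokhorov for random measures, then identification of marginals and pair-optimality in the limit) matches the paper's, and your tightness step is essentially the paper's Step 1, including the reliance on the discrete seminorm $|\cdot|_{b^{\alpha,p}}$ for measurability and the bound by $\Vert\mu\Vert_{b^{\alpha,p}}^p$. (A minor point: \cref{def:compatibility_random_intro} already asserts the existence of a \emph{random} multi-coupling with a.s.\ optimal two-dimensional marginals, so no "upgrade" from a deterministic structure via measurable selection is needed.)

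However, there is a genuine gap in how you pass to the limit in (ii) and (iii). Your argument is fibrewise: you invoke ``continuity of $t\mapsto\mu_t^\omega$ in $W_p$ along a full-measure set of $\omega$'' and, implicitly, a.s.\ narrow convergence of $\pi_{n_k}^\omega$ to $\pi^\omega$. Neither is available. The hypothesis $(\mu_t)\in W^{\alpha,p}(I;P_{p,\Omega}(\mathcal{X}))$ concerns only the metric $\mathbb{W}_p$; as stressed in \cref{rmk:random_curves_intro} and in the paper's own proof, it does not imply that the sample paths $t\mapsto\mu_t^\omega$ are $W_p$-continuous, nor even that $\mu_t^\omega\in P_p(\mathcal{X})$ for all $t$ simultaneously on a single full-measure set, so ``marginal identity at dyadic $t$ plus sample-path continuity'' cannot identify $(e_t)_\#\pi^\omega$ at non-dyadic $t$. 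Moreover, narrow convergence in $P_\Omega(\Gamma_T)$ (\cref{def:narrow_convergence_random}) is convergence of the joint integrals $\int_\Omega\int_{\Gamma_T} f\,\d\pi_n^\omega\,\d\mathbb{P}$ and does not yield, even after further subsequences or a change of probability space, a.s.\ fibrewise narrow convergence; your closing paragraph correctly senses this obstacle, but the proposed diagonal/representation fix does not supply the missing sample-path continuity. The paper circumvents both problems by never arguing fibrewise: a.s.\ equality of random measures is characterized through joint integrals against test functions $\mathds{1}_{F}(\omega)\phi(x)$ with $F\in\mathcal{F}$ and $\phi\in\mathrm{Lip}_b(\mathcal{X})$ (via \cite[Lemma 3.14]{Crauel2002}); the time-discretization error is controlled by the Garsia--Rodemich--Rumsey bound \eqref{eq:distance_estimate_GRR_r_paper} integrated against $\pi_{n_k}$ \emph{in expectation}; and the final identification uses only the continuity of $t\mapsto\mu_t$ in $(P_{p,\Omega}(\mathcal{X}),\mathbb{W}_p)$. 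Pair-optimality in (iii) is likewise obtained from a Fatou argument on the product space $\Gamma_T\times\Omega$ combined with the fact that $\int_F X\,\d\mathbb{P}=\int_F Y\,\d\mathbb{P}$ for all $F\in\mathcal{F}$ forces $X=Y$ a.s. You would need to restructure (ii) and (iii) along these lines for the proof to go through.
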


    \begin{remark}\label{rmk:random_curves_intro}
        We would like to stress that $(\mu_t) \in W^{\alpha,p} (I; P_{p,\Omega}(\mathcal{X}))$ does not necessarily imply $(\mu_t^\omega) \in W^{\alpha,p} (I; P_{p}(\mathcal{X}))$ a.s.
        However, this holds under the stronger assumption that $\mu$ is given by a measurable map $\mu: (\Omega,\mathcal{F},\mathbb{P}) \to C(I;P_p(\mathcal{X}))$, which might be the case in some applications. Under this extra assumption, we can strengthen the statements above as justified in Remark \ref{rmk:random_curves}:
        \textit{
         \begin{enumerate}[label=(\roman*), font=\normalfont]
            \item[(ii)] $(e_t)_\#\pi^\omega =\mu_t^{\omega} $ for all $t\in I$ a.s.;
            \item[(iii)] $(e_s,e_t)_\# \pi^\omega \in \mathrm{OptCpl}(\mu_s^{\omega}, \mu_t^{\omega})$ for all $s,t \in I$ a.s.; and,
            \begin{equation}\label{eq:pointwise_Walphap_energy_equality}
            |\mu^\omega|_{W^{\alpha,p}}^p = \int_{\Gamma_T} |\gamma|_{W^{\alpha,p}}^p \d \pi^\omega(\gamma) \quad \textrm{a.s}.
            \end{equation}
        \end{enumerate}
        }
    \end{remark}
    
\subsection{Applications} We now focus on measure-valued processes $(\mu_t)$ that satisfy           
 \begin{equation}\label{eq:applications_Holder_condition_2_intro}
            \mu_0 \in P_{p,\Omega} (\mathcal{X}), \qquad \textrm{and} \qquad
            \mathbb{W}^p_p (\mu_s,\mu_t) \leq c^p |t-s|^{\upgamma p } \quad \forall s,t \in I ,
    \end{equation}
    for some exponents $p \in [1,\infty)$, $\upgamma \in [0,1]$, and a positive constant $c$. 
    By the triangle inequality for the metric $\mathbb{W}_p$, we have that $\mu_t \in P_{p,\Omega}(\mathcal{X})$ for all $t\in I$.
    Thus, the condition above can be equivalently expressed in the following compact way:
    \begin{equation}\label{eq:applications_Holder_condition_3_intro}
        (\mu_t) \in C^{\upgamma\textrm{-} \mathrm{H\ddot{o}l}}(I;P_{p,\Omega}(\mathcal{X})).
    \end{equation}
    We will use the fact that $C^{\upgamma\textrm{-}\mathrm{H\ddot{o}l}} \subset W^{\alpha, p}$ holds trivially on any metric space when $0 < \alpha < \upgamma \leq 1$.

\begin{figure}
\centering
\begin{subfigure}{.430\textwidth}
  \centering
  \includegraphics[width=0.87\linewidth]{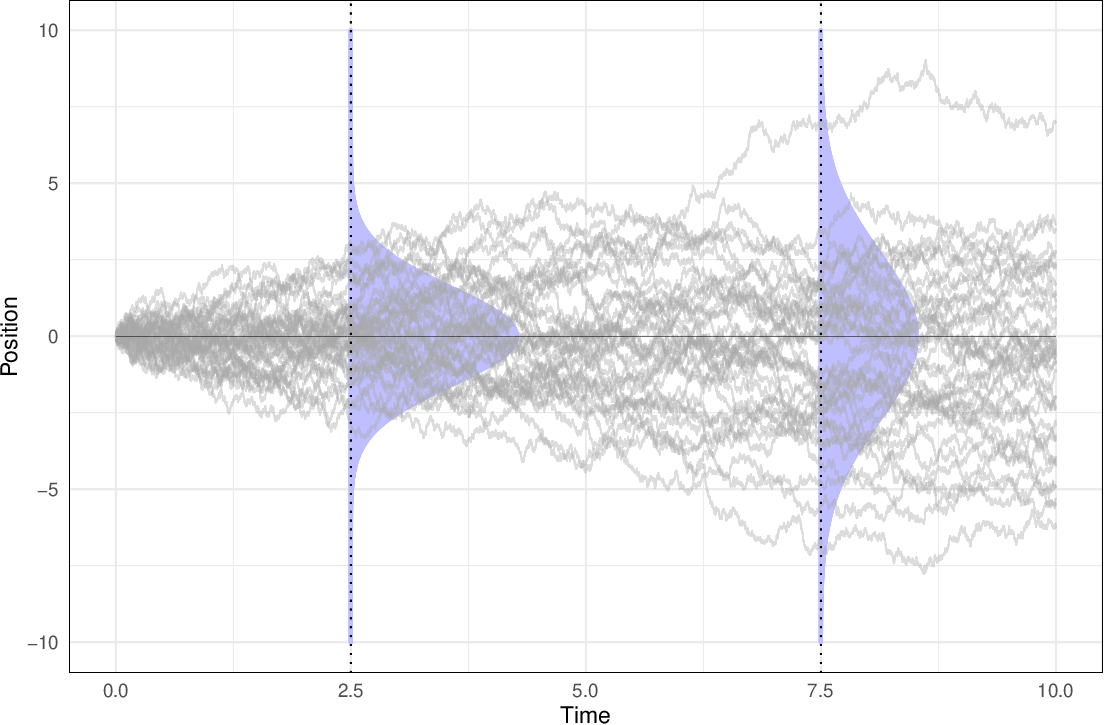}
  \captionsetup{font=footnotesize,justification=centering}
\end{subfigure}
\begin{subfigure}{.430\textwidth}
  \centering
  \includegraphics[width=0.87\linewidth]{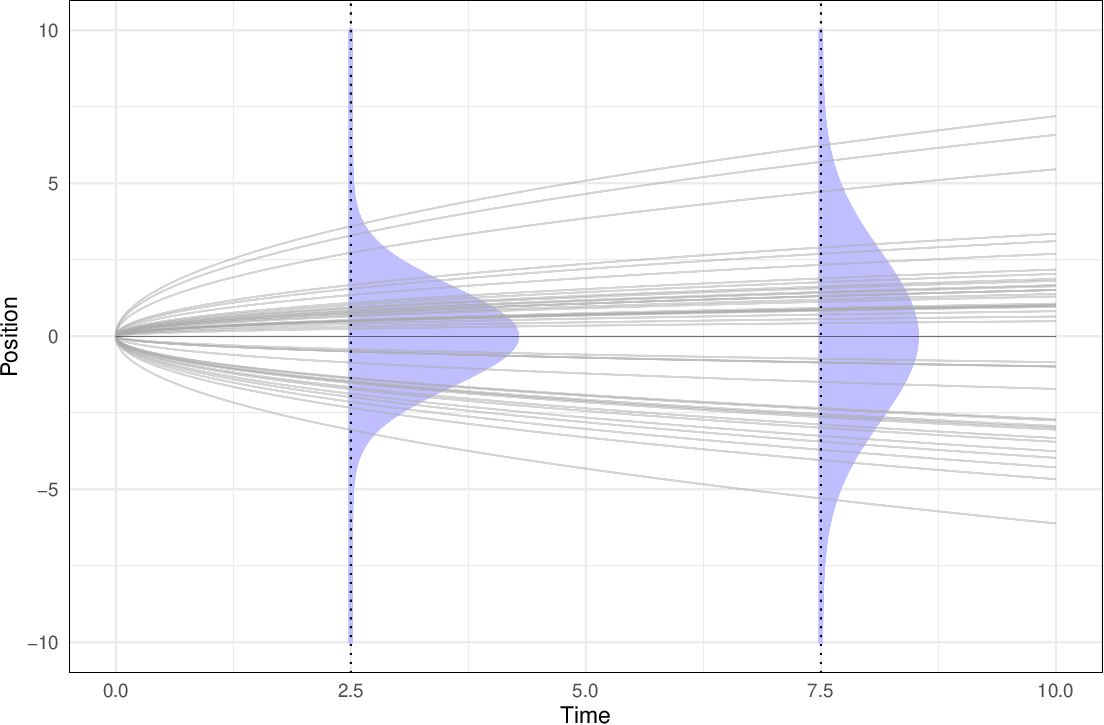}
  \captionsetup{font=footnotesize,justification=centering}
\end{subfigure}
\vspace{-5pt}
\captionsetup{font=footnotesize}
\caption{Two different stochastic processes for the fundamental solution of the heat equation $\partial_t \mu_t\mkern-1mu=\mkern-1mu\frac12 \Delta \mu_t$  with $\mu_0 \mkern-1mu=\mkern-1mu\delta_0 $ on $\mathbb{R}$. The \textbf{left} shows sample paths of the Wiener measure \cite{Wiener1923}--Brownian motions $B_t$. We know $\mu_t = \mathrm{Law} (B_t)$.
The \textbf{right} corresponds to the process obtained from optimal transport, studied e.g. in \cite{AGS2008GFs,Lisini2007,Taghvaei2016}. The path law $\pi$ of this process minimizes the kinetic energy and relates 2-Wasserstein metric speed of the heat flow to its Fisher information: $|\dot\mu_t|^2 = \int_{\Gamma_T} |\dot \gamma_t |^2 \d \pi =  \int_{\mathbb{R} } \frac{|\nabla \rho_t|^2}{4\rho_t} \d x$ for all $t>0$, where $\rho_t$ is the density of $\mu_t$. The lift $\pi$ also minimizes the fractional Sobolev energy: $ |\mu|_{W^{\alpha,p}}^p = \int_{\Gamma_T} |\gamma|_{W^{\alpha,p}}^p \d \pi$ e.g. for $p=2$ and $\alpha \in (\frac1p,1)$, as a result of Corollary \ref{crl:lift_mu_Walphap_R1} and the path regularity of the heat flow in the Wasserstein space.
}
\label{fig:heat_eq_lifts}
\vspace{13 pt}
\centering
\begin{subfigure}{.430\textwidth}
  \centering
  \includegraphics[width=0.87\linewidth]{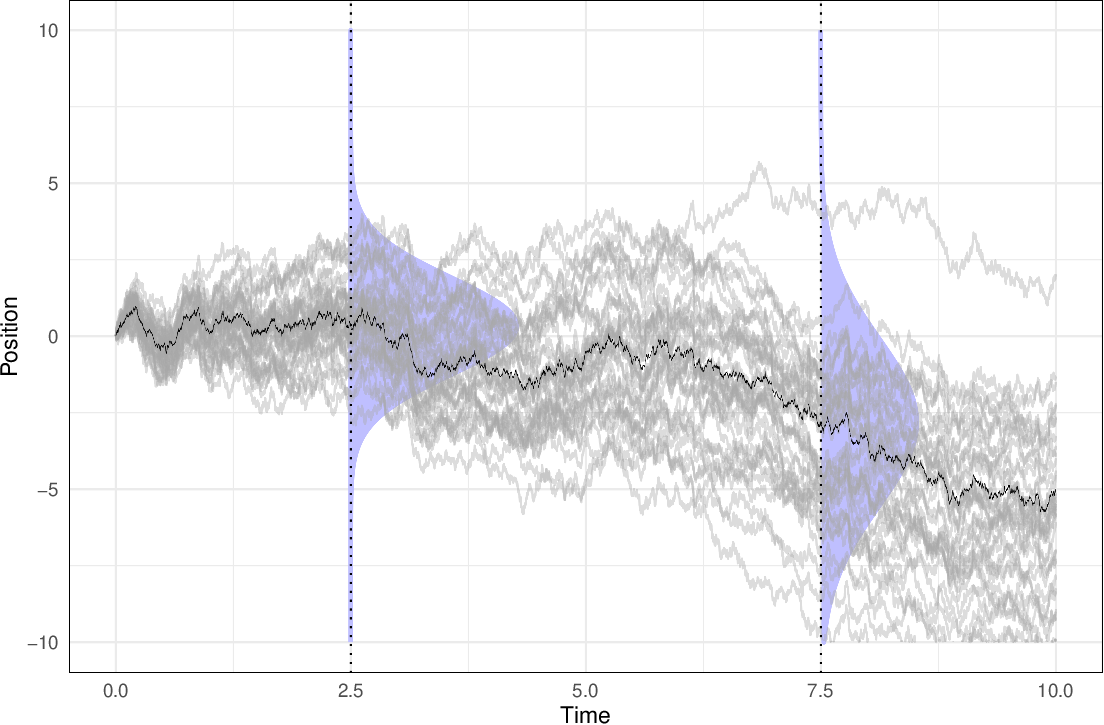}
\end{subfigure}
\begin{subfigure}{.430\textwidth}
  \centering
\includegraphics[width=0.87\linewidth]{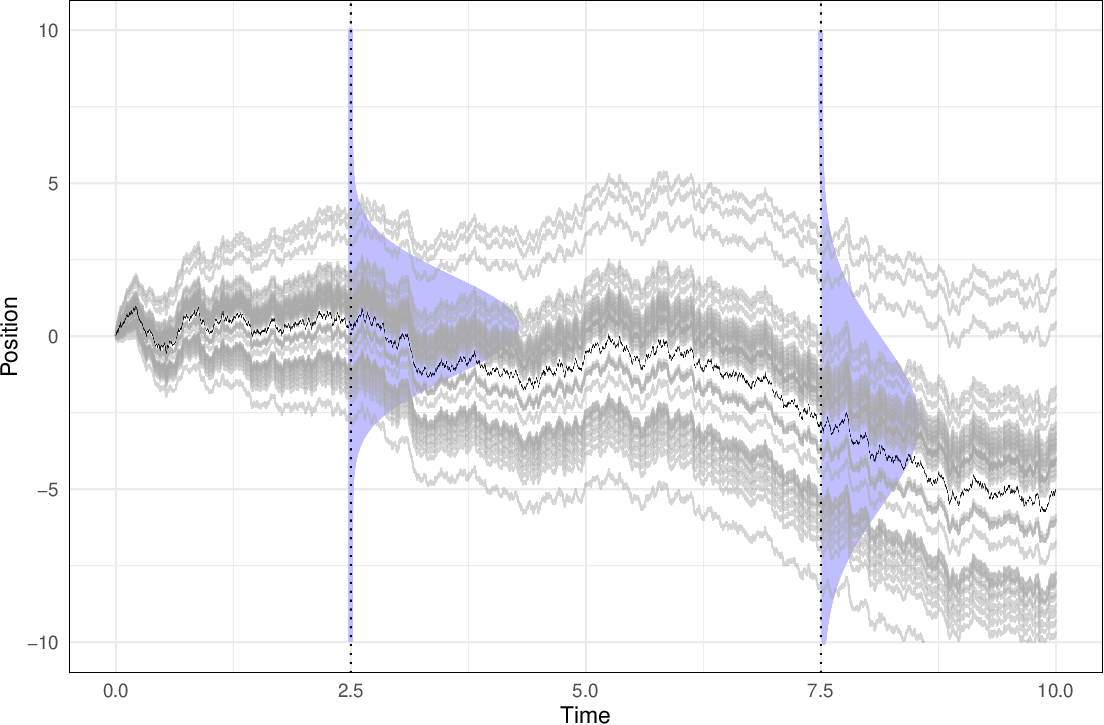}
\end{subfigure}
\vspace{-5pt}
\captionsetup{font=footnotesize}
\caption{Two different stochastic processes for the solution of the stochastic heat equation $\d \mu_t\mkern-2mu=\mkern-2mu \Delta \mu_t \d t - \nabla\mkern-4mu\cdot\mkern-4mu \mu_t \d W_t$ with $\mu_0\mkern-2mu = \mkern-2mu\delta_0$ on $\mathbb{R}$, perturbed by a standard Brownian\,\,motion\,\,$W_t$. The \textbf{left} shows realizations of $B_t + W_t$, where $B_t$ is a Brownian motion independent of the (highlighted) common noise $W_t$. We have $\mu_t = \mathrm{Law} (B_t + W_t| \mathcal{F}_t^W)$ a.s., where $\mathcal{F}_t^W \coloneqq \sigma (W_s: 0\leq s \leq t)$, and thus
$(\mu_t) \in C^{\frac{1}{2}\textrm{-} \mathrm{H\ddot{o}l}}(I;P_{p,\Omega}(\mathcal{\mathbb{R}}))$ for all $p \in [1,\infty)$. 
The \textbf{right} corresponds to the process obtained from optimal transport, as a result of Corollary \ref{crl:lift_mu_Holder_R1_random}.
The random path law $\pi$ of this process minimizes the fractional Sobolev energy in expectation and, by taking Remark \ref{rmk:random_curves_intro} into account, even point-wisely: $|\mu|_{W^{\alpha,p}}^p = \int_{\Gamma_T} |\gamma|_{W^{\alpha,p}}^p \d \pi $ a.s. for all $p\in (2,\infty)$ and $\alpha \in (\frac1p,\frac12)$.
}
\label{fig:heat_eq_stoch_lifts}
\end{figure}
    
\subsubsection{Applications for measure-valued processes on \texorpdfstring{$\mathbb{R}$}{R}}\label{subsec:application_R}

As a first application, we apply Theorem \ref{thm:optimal_lift_mu_Walphap_compatible_random_intro} to the case $\mathcal{X} = \mathbb{R}$, on which all probability measures with finite $p$-moments form a compatible family. Below is an excerpt from Corollary \ref{crl:lift_mu_Holder_R1_random}.

\begin{corollary}\label{crl:lift_mu_Holder_R1_random_intro}
        Let $(\mu_t)_{t \in I}$ with $I \coloneqq [0,T]\subset \mathbb{R}$ be a probability measure-valued stochastic process defined on a probability space $(\Omega, \mathcal{F}, \mathbb{P})$ such that $(\mu_t) \in C^{\upgamma\textrm{-} \mathrm{H\ddot{o}l}}(I;P_{p,\Omega}(\mathbb{R}))$ for some $1<p<\infty$ and $\frac{1}{p}<\upgamma \leq 1$.
        Denote by $F_t$ the CDF of $\mu_t$ and by $F_t^{-1}$ its generalized inverse.
        Then there exists a random probability measure $\pi \in P_{\Omega}(C(I;\mathbb{R}))$ such that
        \begin{equation}\label{eq:lift_mu_Holder_R1_random_intro}
            (e_{t_1}, \cdots, e_{t_j})_\#\pi^\omega = ((F_{t_1}^\omega)^{-1}, \cdots, (F_{t_j}^\omega)^{-1} )_\# \mathrm{Leb}|_{[0,1]} \quad \mathbb{P}\textrm{-a.s. }
        \end{equation}
         for any finite sequence $t_1, \cdots, t_j \in I$. In particular, $\pi$ satisfies
    \begin{enumerate}[label=(\roman*), font=\normalfont]
                \item $\pi^\omega$ is concentrated on $W^{\alpha,p}(I;\mathbb{R}) \subset C^{\alpha - \frac{1}{p}\textrm{-}\mathrm{H\ddot{o}l}} (I;\mathbb{R})$ a.s. for any $\alpha \in (\frac{1}{p},\upgamma)$;
                \item $(e_t)_\#\pi^\omega=\mu_t^\omega$ a.s. for all $t\in I$;
                \item $(e_s,e_t)_\# \pi^\omega \in \mathrm{OptCpl}(\mu_s^\omega, \mu_t^\omega)$ a.s. for all $s,t \in I$; and \eqref{eq:optimality_pi_1_random_intro} holds for any $\alpha \in (\frac{1}{p},\upgamma)$.   
            \end{enumerate}
\end{corollary}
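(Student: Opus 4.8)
The plan is to deduce \cref{crl:lift_mu_Holder_R1_random_intro} directly from \cref{thm:optimal_lift_mu_Walphap_compatible_random_intro} by verifying its three hypotheses (the metric-space structure, the $W^{\alpha,p}$-regularity of $(\mu_t)$, and compatibility in $P_{p,\Omega}(\mathbb{R})$) and then identifying the resulting random lift explicitly via monotone rearrangement. First I would fix $\alpha \in (\frac{1}{p},\upgamma)$ arbitrary and record that $\mathbb{R}$ with the Euclidean distance is a complete, separable, locally compact length space, so the ambient-space hypothesis is automatic. Next, from $(\mu_t) \in C^{\upgamma\text{-}\mathrm{H\ddot{o}l}}(I;P_{p,\Omega}(\mathbb{R}))$ and the trivial embedding $C^{\upgamma\text{-}\mathrm{H\ddot{o}l}} \subset W^{\alpha,p}$ (valid on any metric space for $0<\alpha<\upgamma\leq 1$, as already noted in the excerpt), I obtain $(\mu_t) \in W^{\alpha,p}(I;P_{p,\Omega}(\mathbb{R}))$, so \eqref{eq:lift_to_mu_Walphap_integrability_stoch_intro}-type regularity is in place.

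The one genuinely one-dimensional input is compatibility: I would argue that for any finite subcollection $\mu_{t_1},\dots,\mu_{t_j}$, the random multi-coupling $\Upsilon^\omega \coloneqq ((F_{t_1}^\omega)^{-1},\dots,(F_{t_j}^\omega)^{-1})_\#\,\mathrm{Leb}|_{[0,1]}$ has all two-dimensional marginals optimal $\mathbb{P}$-a.s., because on $\mathbb{R}$ the monotone (quantile) coupling is the unique optimal coupling for $W_p$ with $p>1$ between any two measures with finite $p$-moments. Measurability in $\omega$ follows since $\omega \mapsto F_t^\omega$ and hence $\omega\mapsto (F_t^\omega)^{-1}$ are measurable (generalized inverses of measurable CDFs), so $\Upsilon$ is indeed a random multi-coupling in the sense of \cref{def:compatibility_random_intro}; this gives compatibility in $P_{p,\Omega}(\mathbb{R})$. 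Applying \cref{thm:optimal_lift_mu_Walphap_compatible_random_intro} then yields a random lift $\pi$ with properties (i)–(iii) of that theorem (including the energy equality \eqref{eq:optimality_pi_1_random_intro}), and the Hölder embedding $W^{\alpha,p}(I;\mathbb{R})\subset C^{\alpha-\frac1p\text{-}\mathrm{H\ddot{o}l}}(I;\mathbb{R})$ (Morrey–Sobolev, valid since $\alpha p>1$) refines (i) to the stated form.

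It remains to upgrade (iii), which only asserts pairwise optimality of the two-dimensional marginals, to the full finite-dimensional identity \eqref{eq:lift_mu_Holder_R1_random_intro}. Here I would use that on $\mathbb{R}$ the bound \eqref{eq:optimality_pi_1_random_intro} combined with the a.s. pairwise optimality of $(e_s,e_t)_\#\pi^\omega$ forces, for a.e.\ $\omega$, each $(e_{t_i},e_{t_k})_\#\pi^\omega$ to be the monotone coupling of $(\mu_{t_i}^\omega,\mu_{t_k}^\omega)$; since $(e_{t_1})_\#\pi^\omega=\mu_{t_1}^\omega$ a.s.\ and the monotone coupling is induced by a map, a routine gluing argument (the support of $\pi^\omega$ projects onto a monotone subset of $\mathbb{R}^j$, which is the graph of $u\mapsto ((F_{t_1}^\omega)^{-1}(u),\dots,(F_{t_j}^\omega)^{-1}(u))$ under a common parameter) yields \eqref{eq:lift_mu_Holder_R1_random_intro} $\mathbb{P}$-a.s.; one first fixes a countable dense set of times, handles it on a full-measure event, and then extends to all finite tuples by continuity of the paths. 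I expect this last gluing/monotonicity step to be the main obstacle, since it requires care to ensure the exceptional $\mathbb{P}$-null set can be chosen uniformly over all (uncountably many) finite time tuples; everything else is a direct citation of \cref{thm:optimal_lift_mu_Walphap_compatible_random_intro} together with standard facts about quantile couplings on $\mathbb{R}$.
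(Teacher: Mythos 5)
Your proposal is correct, but it reaches the finite-dimensional identity \eqref{eq:lift_mu_Holder_R1_random_intro} by a genuinely different route than the paper. The paper does \emph{not} verify compatibility on $\mathbb{R}$ and then upgrade pairwise optimality; instead it proves the general $\nu$-based result \cref{crl:lift_mu_Walphap_Rd_random} by lifting the Dirac-valued process $\mu_t^{\omega,y}=\delta_{T_t^\omega(y)}$ on the enlarged space $(\Omega\times\mathcal{Y},\mathbb{P}\otimes\nu)$, averaging over $y$, and then computing the finite-dimensional marginals by passing to the limit along the dyadic approximants $\pi_{n_k}$ (the three Claims in the proof of \cref{crl:lift_mu_Walphap_Rd_random}); the $\mathbb{R}$ corollaries are then read off by taking $\nu=\mathrm{Leb}|_{[0,1]}$ and $T_t=(F_t)^{-1}$. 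Your route---verify compatibility via the quantile multi-coupling, invoke \cref{thm:optimal_lift_mu_Walphap_compatible_random_intro} directly, and then argue that a measure on $\mathbb{R}^j$ all of whose two-dimensional marginals are the (unique, $p>1$) monotone couplings must itself be the quantile multi-coupling---is sound: pairwise comonotonicity of the supports forces the joint support to be a chain for the componentwise order, and a comonotone law with prescribed one-dimensional marginals is uniquely $((F_{t_1}^\omega)^{-1},\dots,(F_{t_j}^\omega)^{-1})_\#\mathrm{Leb}|_{[0,1]}$. This is more elementary and intrinsic to $\mathbb{R}$, but it does not generalize to the $\nu$-based corollaries in $\mathbb{R}^\mathrm{d}$, where pairwise $W_p$-optimality no longer pins down the joint law; the paper's construction buys that generality at the cost of the longer limiting argument. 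One correction to your final step: the statement only asserts \eqref{eq:lift_mu_Holder_R1_random_intro} $\mathbb{P}$-a.s.\ \emph{for each fixed} finite tuple (a uniform null set over all tuples would be the stronger conclusion of \cref{rmk:random_curves_intro}, which needs the extra hypothesis that $\omega\mapsto\mu^\omega$ is a measurable map into $C(I;P_p(\mathbb{R}))$). So you may simply take the union of the finitely many null sets coming from item (iii) for the pairs $(t_i,t_k)$ of the given tuple; the detour through a countable dense set of times and path continuity is unnecessary, and would in fact run into the very issue the paper flags, namely that $t\mapsto\mu_t^\omega$ need not be $W_p$-continuous samplewise.
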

The associated process to $\pi$ is usually called the \emph{quantile process} of $(\mu_t)$, see also \cite{BoubelJuillet2022, BoubelJuillet2025}. Note that $\pi$ here does not depend on $(\upgamma,\alpha,p)$. In the deterministic setting (\cref{subsec:corollaries_R_deterministic}), \eqref{eq:lift_mu_Holder_R1_random_intro} gives a unique lift, since finite-dimensional marginals uniquely determine the path measures on $C(I;\mathcal{X})$.  
The application of these corollaries to the solution of the heat equation and a stochastic heat equation on $\mathbb{R}$ is illustrated in Figs. \ref{fig:heat_eq_lifts} and \ref{fig:heat_eq_stoch_lifts}, respectively, and will be discussed in \cref{subsec:example_sHE}.

\subsubsection{Applications for stochastic Fokker--Planck equations on \texorpdfstring{$\mathbb{R}^\mathrm{d}$}{Rd}}\label{subsec:application_sFPE}
   We consider measure-valued solutions $(\mu_t)_{t \in [0,T]}$ to the stochastic Fokker--Planck(--Kolmogorov) equations of the form 
    \begin{equation}\label{eq:stochastic_Fokker-Planck}\tag{S-FPE}
    \d \mu_t  \,\, =   - \nabla \cdot (\mu_t b_t) \d t    + \nabla^2 : (\mu_t a_t ) \d t - \nabla \cdot (\mu_t \sigma_t \d W_t) \qquad \text{in }  (0,T) \times \mathbb{R}^{\mathrm{d}},
    \end{equation}
    with a (possibly random) probability measure $\mu_0$ as initial condition.
    This stochastic PDE is posed on a filtered probability space  $(\Omega,\mathcal{F},\mathbb{F}\coloneqq (\mathcal{F}_t)_{t \in [0,T]}, \mathbb{P})$  supporting a standard $\mathbb{R}^\mathrm{d}$-valued $\mathbb{F}$-adapted Brownian motion $(W_t)$.
    The coefficients $(b,a,\sigma): [0,T]\mkern-1mu\times\mkern-1mu\mathbb{R}^\mathrm{d} \times \Omega \to \mathbb{R}^{\mathrm{d} }\times\mkern-1mu\mathbb{S}^\mathrm{d}_{\geq 0}\mkern-1mu\times \mathbb{R}^{\mathrm{d} \times \mathrm{d} }$, where $\mathbb{S}^\mathrm{d}_{\geq 0}$ denotes the set of symmetric nonnegative definite $\mathrm{d} \times \mathrm{d}$-matrices, are measurable with respect to the product of the $\mathbb{F}$-progressive $\sigma$-algebra on $[0,T]\times \Omega$ and $\mathcal{B}(\mathbb{R}^\mathrm{d})$.  
    We sometimes write $b_t(x,\omega) \coloneqq b(t,x,\omega)$, and similarly for the others. This framework also covers the important case where the dependence on $\omega$ is through the measure $\mu_t$ itself (see \cite[Remark 1.1]{LackerShkolnikovZhang2023}).
    An $\mathbb{F}$-adapted probability measure-valued process $(\mu_t)$ with a.s. continuous sample paths with respect to narrow convergence in $P(\mathbb R^\mathrm{d})$ is called here a solution to \eqref{eq:stochastic_Fokker-Planck} if the quadruplet $(\mu,b,a,\sigma)$ satisfies
    \begin{equation} \label{eq:a.s._integrability_assumption_SFPE}
         \mathbb{E} \left[\int_{0}^{T} \int_{\mathbb{R}^\mathrm{d}} \big( |b_t| + |a_t| + |\sigma_t \sigma_t^\top| \big) \d \mu_t  \d t \right] < + \infty ,
    \end{equation}
    and \eqref{eq:stochastic_Fokker-Planck} holds in the distributional sense, i.e.,
    \begin{multline}
        \int_{\mathbb{R}^\mathrm{d}} \varphi \d \mu_t - \int_{\mathbb{R}^\mathrm{d}} \varphi \d \mu_0  \\ =  \int_0^t \int_{\mathbb{R}^\mathrm{d}}  \nabla \varphi \cdot b_s  \d \mu_s \d s +  \int_0^t  \int_{\mathbb{R}^\mathrm{d}}  \nabla^2 \varphi : a_s \d \mu_s \d s + \sum\nolimits_{i=1}^{\mathrm{d}}  \int_0^t \int_{\mathbb{R}^\mathrm{d}} \nabla \varphi \cdot \sigma^{\cdot i}_s  \d \mu_s \d W_s^i
    \end{multline}
    holds $\mathbb{P}$-a.s. for each $t \in [0,T]$ and $\varphi \in C^\infty_c (\mathbb{R}^\mathrm{d})$
    (see also \cite[Definition 4.1 and Remark 4.2]{Rehmeier2023} and \cite[Lemma 4.3]{Rehmeier2023} for the conservation of mass). All stochastic integrals are in the It\^{o} sense. The following notation is used. Given vectors $v, w \in \mathbb{R}^{\mathrm{d} \times 1}$ and matrices $A, B \in \mathbb{R}^{\mathrm{d} \times \mathrm{d}}$, we set: 
    \begin{align}
    v \cdot w   \coloneqq  \sum\nolimits_{i=1}^{\mathrm{d}} v^i w^i,   \qquad
    A : B   \coloneqq \sum\nolimits_{i,j =1 }^{\mathrm{d}} A^{ij} B^{ij}, \qquad
    |v|   \coloneqq   \sqrt{v.v}, \qquad\, |A|  \coloneqq   \sqrt{A:A}.
    \end{align}
    
    These types of SPDEs naturally arise as the governing equations for the conditional time-marginals of solutions to McKean–Vlasov SDEs with two independent noises, conditioned on the common noise (see \cite{Carmona2016Lectures,KurtzXiong1999,CoghiGess2019,GessGvalaniKonarovskyi2025}).
    In this setting, the diffusion matrix $a$ carries contributions from both sources of randomness. Consequently, a condition that naturally arises in this SDE-to-SPDE connection is that the couple $(a,\sigma)$ satisfies the so-called \emph{parabolicity condition}:  
    \begin{equation}\label{eq:parabolicity_condition}
        (2 a  - \sigma \sigma^\top ) \in \mathbb{S}^\mathrm{d}_{\geq 0} \quad \forall(t,x,\omega).
    \end{equation}
    This ensures the existence of a well-defined diffusion coefficient corresponding to the individual noise, given by $\alpha \coloneqq ( 2 a  - \sigma \sigma^\top )^{1/2}$.

    Conversely, Lacker--Shkolnikov--Zhang \cite{LackerShkolnikovZhang2023} recently established an SPDE-to-SDE connection by developing a stochastic superposition principle (as a continuation of the Ambrosio--Figalli--Trevisan superposition principle and subsequent works \cite{Ambrosio2008,AGS2008GFs,Figalli2008,Trevisan2016,Bogachev2021,BarbuRockner2020nonlinearsuperpositionprinciple,RoecknerXieZhang2020nonlocalsuperpositionprinciple}) that asserts that any solution $(\mu_t)$ to \eqref{eq:stochastic_Fokker-Planck} can be lifted to a solution of a conditional SDE if the couple $(a,\sigma)$ satisfies \eqref{eq:parabolicity_condition} and the triple $(\mu,b,a)$ satisfies the \emph{$p$-integrability condition}:
    \begin{equation} \label{eq:p_integrability_assumption_SFPE}
         \mathcal{E}_{p,T} (\mu,b,a) \coloneqq T^{\frac{p-1}{2}} \, \mathbb{E} \left[ \int_{0}^{T}\int_{\mathbb{R}^\mathrm{d}} |b_t|^p \d \mu_t  \d t  \right]  + \left( \mathbb{E} \left[ \int_{0}^{T} \int_{\mathbb{R}^\mathrm{d}} |a_t|^p \d \mu_t  \d t  \right]  \right)^{1/2} < + \infty,
    \end{equation}
    for some $p>1$ (note that \eqref{eq:p_integrability_assumption_SFPE} and \eqref{eq:parabolicity_condition} guarantee \eqref{eq:a.s._integrability_assumption_SFPE}, but we define the $p$-energy $\mathcal{E}_{p,T}$ this way for reasons that will soon be clear).
    More precisely, \cite[Theorem 1.3]{LackerShkolnikovZhang2023} states that there exists a complete filtered probability space $(\tilde{\Omega},\tilde{\mathcal{F}},\tilde{\mathbb{F}}, \tilde{\mathbb{P}})$, extending $(\Omega,\mathcal{F},\mathbb{F},\mathbb{P})$, which supports a standard $\mathbb{R}^\mathrm{d}$-valued $\tilde{\mathbb{F}}$-adapted Brownian motion $(B_t)$ independent of $\mathcal{F}_T$ and a continuous $\tilde{\mathbb{F}}$-adapted $\mathbb{R}^\mathrm{d}$-valued process $(X_t)$ such that 
    \begin{enumerate}[label=(\roman*), font=\normalfont]
            \item the sample paths of $(X_t)$ are integral solutions to the SDE
            \begin{align}\label{eq:SDE_associated_sFPE}
                  \d X_t   = b_t(X_t,\omega) \d t  + \alpha_t (X_t,\omega) \d B_t + \sigma_t(X_t,\omega) \d W_t,
        \end{align}
        \item $\mu_t = \mathrm{Law} (X_t | \mathcal{F}_t) = \mathrm{Law} (X_t | \mathcal{F}_T)$ a.s. for all $t \in [0,T]$.
        \end{enumerate}
    Moreover, $(W_t)$ is also an $\tilde{\mathbb{F}}$-adapted Brownian motion. In applications, $\mathbb{F}$ is usually the filtration generated by the common noise, i.e., $\mathcal{F}_t = \mathcal{F}^W_t \coloneqq \sigma (W_s : 0\leq s \leq t )$ for all $t \in [0,T]$.

    The particle solution \eqref{eq:SDE_associated_sFPE} provides us with a random lift given by $\tilde{\pi}  \coloneqq \mathrm{Law} (X | \mathcal{F}_T)$ a.s. However, this lift does not a priori have finite fractional Sobolev energy, nor does it necessarily minimize the energy.
    We first show that, under the additional assumption that $p > 3$, this random lift has finite energy.   
    This allows us to apply Theorem \ref{thm:existence_of_Walphap_minimizer_random_intro} and immediately conclude the existence of an energy-minimizing process in the next corollary. Its proof is given in \cref{sec:appendix_s}.

    \begin{corollary}[Existence of minimizing processes for S-FPEs]\label{crl:minimizing_processes_sFPE_intro}
        Let $(\Omega,\mathcal{F},\mathbb{F}\coloneqq(\mathcal{F}_t)_{t \in [0,T]}, \mathbb{P})$ be a filtered probability space supporting a standard $\mathbb{R}^\mathrm{d}$-valued $\mathbb{F}$-adapted Brownian motion $(W_t)$ and suppose that $\mathcal{F}_T$ is countably generated. 
        Let $(\mu_t)_{t \in [0,T]}$ be an a.s. continuous $($with respect to narrow convergence in $P(\mathbb
        R^\mathrm{d}))$ $\mathbb{F}$-adapted probability measure-valued process satisfying \eqref{eq:stochastic_Fokker-Planck} with coefficients $(b,a,\sigma): [0,T] \times \mathbb{R}^\mathrm{d} \times \Omega \to \mathbb{R}^{\mathrm{d} }\times \mathbb{S}^\mathrm{d}_{\geq 0} \times \mathbb{R}^{\mathrm{d} \times \mathrm{d} }$ that are measurable with respect to the product of the $\mathbb{F}$-progressive $\sigma$-algebra on $[0,T]\times \Omega$ and $\mathcal{B}(\mathbb{R}^\mathrm{d})$.
        Assume that the couple $(a,\sigma)$ satisfies \eqref{eq:parabolicity_condition}, the triple $(\mu,b,a)$ satisfies the $p$-integrability condition \eqref{eq:p_integrability_assumption_SFPE}, and the initial condition satisfies $\mu_0 \in P_{p,\Omega}(\mathbb{R}^\mathrm{d})$ for some $p>1$. 
        Then, we have $(\mu_t) \in C^{\upgamma\textrm{-} \mathrm{H\ddot{o}l}}([0,T];P_{p,\Omega}(\mathbb{R}^\mathrm{d}))$ with $\upgamma \coloneqq \frac{1}{2} - \frac{1}{2p}.$
        If further $p>3$, then for each $\alpha \in (\frac{1}{p},\upgamma)$, we have $(\mu_t) \in W^{\alpha,p}([0,T];P_{p,\Omega}(\mathbb{R}^\mathrm{d}))$ and there exists a random minimizer $\pi \in P_{\Omega}(C([0,T];\mathbb{R}^\mathrm{d}) ) $ to Problem \ref{prob:variational_problem_random} for the energy $\Psi(\gamma)  = |\gamma|^p_{W^{\alpha,p}} $. In particular, 
        \begin{enumerate}[label=(\roman*), font=\normalfont]
            \item $\pi^\omega$ is concentrated on $W^{\alpha,p}([0,T];\mathbb{R}^\mathrm{d}) \subset  C^{\alpha - \frac{1}{p}\textrm{-}\mathrm{H\ddot{o}l}} ([0,T];\mathbb{R}^\mathrm{d}) $ a.s.;
            \item $(e_t)_\#\pi^\omega =\mu_t^{\omega} $ a.s. for all $t\in [0,T]$;
            \item $\pi$ satisfies 
            \begin{equation}\label{eq:minimizer_to_SFPEs}
                \Vert\mu\Vert_{W^{\alpha,p}}^p \leq \mathbb{E} \left[\int_{\Gamma_T} |\gamma|_{W^{\alpha,p}}^p \d \pi (\gamma) \right] \leq c \, \mathcal{E}_{p,T} (\mu,b,a),
            \end{equation}
            where $c = c (\alpha,p,\mathrm{d},T)$ is a positive constant. 
        \end{enumerate}
    \end{corollary}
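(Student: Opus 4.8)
The plan is to feed the conditional particle SDE provided by the stochastic superposition principle of Lacker--Shkolnikov--Zhang into \cref{thm:existence_of_Walphap_minimizer_random_intro}, the only genuinely new analytic input being an estimate on the $\mathbb{W}_p$-modulus of continuity of $(\mu_t)$ obtained from moment bounds on the particle. \textbf{Step 1 (superposition principle).} Since $\mathcal{F}_T$ is countably generated, $(a,\sigma)$ satisfies \eqref{eq:parabolicity_condition}, and $(\mu,b,a)$ satisfies the $p$-integrability condition \eqref{eq:p_integrability_assumption_SFPE} (which, with \eqref{eq:parabolicity_condition}, yields \eqref{eq:a.s._integrability_assumption_SFPE}), \cite[Theorem~1.3]{LackerShkolnikovZhang2023} provides an extension of the probability space carrying a Brownian motion $(B_t)$ independent of $\mathcal{F}_T$ and a continuous adapted process $(X_t)$ solving \eqref{eq:SDE_associated_sFPE} with $\alpha=(2a-\sigma\sigma^\top)^{1/2}$, such that $\mu_t=\mathrm{Law}(X_t\mid\mathcal{F}_T)$ $\mathbb{P}$-a.s.\ for every $t$. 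Taking $\bar{x}=0$, the $\mathcal{F}_T$-measurable random measure $\tilde{\pi}\coloneqq\mathrm{Law}(X\mid\mathcal{F}_T)$, viewed on the original $(\Omega,\mathcal{F},\mathbb{P})$, is a random lift of $(\mu_t)$ in the sense of \eqref{eq:def:random_lift_intro}.

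\textbf{Step 2 (the key moment bound).} I would establish
\[
\mathbb{E}\big[\,|X_t-X_s|^p\,\big]\ \le\ c(p,\mathrm{d})\,|t-s|^{\upgamma p}\,\mathcal{E}_{p,T}(\mu,b,a),\qquad \upgamma\coloneqq\tfrac12-\tfrac1{2p}.
\]
Write $X_t-X_s=\int_s^t b_r(X_r)\,\d r+M_{s,t}$, where $M_{s,t}$ is the stochastic integral of $\alpha_r(X_r),\sigma_r(X_r)$ against $(B,W)$. H\"older's inequality in time bounds the $p$-th power of the drift term by $|t-s|^{p-1}\int_s^t|b_r(X_r)|^p\,\d r$; for $M_{s,t}$, the Burkholder--Davis--Gundy inequality, the pointwise bound $\mathrm{tr}(\alpha_r\alpha_r^\top+\sigma_r\sigma_r^\top)\le 4\,\mathrm{tr}(a_r)\lesssim|a_r|$ coming from \eqref{eq:parabolicity_condition}, and a further H\"older in time give $\mathbb{E}|M_{s,t}|^p\lesssim|t-s|^{(p-1)/2}\,\mathbb{E}\big[(\int_s^t|a_r(X_r)|^p\,\d r)^{1/2}\big]$. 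Passing to expectations, using the identity $\mathbb{E}[g(X_r)]=\mathbb{E}\big[\int_{\mathbb{R}^\mathrm{d}} g\,\d\mu_r\big]$ (valid since $b_r,a_r$ are $\mathcal{F}_T$-measurable and $\mathrm{Law}(X_r\mid\mathcal{F}_T)=\mu_r$), Jensen for $\sqrt{\cdot}$, and $|t-s|\le T$ to replace the drift factor $|t-s|^{p-1}$ by $T^{(p-1)/2}|t-s|^{(p-1)/2}$, the two contributions reassemble exactly into the two summands of $\mathcal{E}_{p,T}$ — this is precisely why $\mathcal{E}_{p,T}$ carries the weight $T^{(p-1)/2}$ on the $b$-term and a square root on the $a$-term — and $(p-1)/2=\upgamma p$.

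\textbf{Step 3 (regularity and minimizer).} For each $\omega$, $\mathrm{Law}((X_s,X_t)\mid\mathcal{F}_T)(\omega)$ is a coupling of $\mu_s^\omega$ and $\mu_t^\omega$, so $W_p^p(\mu_s^\omega,\mu_t^\omega)\le\mathbb{E}[|X_t-X_s|^p\mid\mathcal{F}_T](\omega)$; averaging and invoking \eqref{eq:mathbbW_p_def_intro} and Step~2 gives $\mathbb{W}_p^p(\mu_s,\mu_t)\le\mathbb{E}|X_t-X_s|^p\le c\,|t-s|^{\upgamma p}\mathcal{E}_{p,T}$, which together with $\mu_0\in P_{p,\Omega}(\mathbb{R}^\mathrm{d})$ and the triangle inequality for $\mathbb{W}_p$ is exactly $(\mu_t)\in C^{\upgamma\textrm{-} \mathrm{H\ddot{o}l}}([0,T];P_{p,\Omega}(\mathbb{R}^\mathrm{d}))$ with $\upgamma=\tfrac12-\tfrac1{2p}$. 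Now assume $p>3$; this is precisely the range in which the interval $(\tfrac1p,\upgamma)$ is nonempty. Fix $\alpha\in(\tfrac1p,\upgamma)$; the trivial embedding $C^{\upgamma\textrm{-} \mathrm{H\ddot{o}l}}\subset W^{\alpha,p}$ gives $(\mu_t)\in W^{\alpha,p}([0,T];P_{p,\Omega}(\mathbb{R}^\mathrm{d}))$. Unwinding the definition of $|\cdot|_{W^{\alpha,p}}$, the tower property, and Tonelli, together with Step~2,
\[
\mathbb{E}\!\left[\int_{\Gamma_T}\!|\gamma|_{W^{\alpha,p}}^p\,\d\tilde{\pi}\right]=\int_0^T\!\!\int_0^T\frac{\mathbb{E}|X_t-X_s|^p}{|t-s|^{1+\alpha p}}\,\d s\,\d t\ \le\ c\,\mathcal{E}_{p,T}\int_0^T\!\!\int_0^T|t-s|^{(\upgamma-\alpha)p-1}\,\d s\,\d t=c(\alpha,p,\mathrm{d},T)\,\mathcal{E}_{p,T},
\]
which is finite since $(\upgamma-\alpha)p-1>-1$; moreover $\mathbb{E}\int_{\Gamma_T}|\gamma_0|^p\,\d\tilde{\pi}=\mathbb{E}\int|x|^p\,\d\mu_0<\infty$ as $\mu_0\in P_{p,\Omega}$. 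Hence \eqref{eq:lift_to_mu_Walphap_integrability_stoch_intro} holds for $\tilde{\pi}$, and \cref{thm:existence_of_Walphap_minimizer_random_intro} applied on $\mathbb{R}^\mathrm{d}$ (in which closed bounded sets are compact) produces a random minimizer $\pi$ with properties (i)--(ii), using the Sobolev embedding $W^{\alpha,p}(I;\mathbb{R}^\mathrm{d})\subset C^{\alpha-\frac1p\textrm{-}\mathrm{H\ddot{o}l}}(I;\mathbb{R}^\mathrm{d})$. Finally, minimality of $\pi$ gives $\mathbb{E}[\int|\gamma|_{W^{\alpha,p}}^p\,\d\pi]\le\mathbb{E}[\int|\gamma|_{W^{\alpha,p}}^p\,\d\tilde{\pi}]\le c(\alpha,p,\mathrm{d},T)\,\mathcal{E}_{p,T}$, while \cref{thm:lift_to_mu_Walphap_stoch_intro} applied to $\pi$ gives $\Vert\mu\Vert_{W^{\alpha,p}}^p\le\mathbb{E}[\int|\gamma|_{W^{\alpha,p}}^p\,\d\pi]$; chaining these two estimates is \eqref{eq:minimizer_to_SFPEs}.

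\textbf{Expected main difficulty.} The crux is Step~2: extracting the sharp exponent $\upgamma p=(p-1)/2$ \emph{and} simultaneously reproducing the exact shape of $\mathcal{E}_{p,T}$ (weight $T^{(p-1)/2}$ on the drift, square root on the diffusion term) forces a careful interleaving of the time-H\"older inequalities with BDG, and one must be cautious about when $\int_s^t$ is enlarged to $\int_0^T$. The remaining points — existence and regularity of the conditional laws $\mathrm{Law}((X_s,X_t)\mid\mathcal{F}_T)$, $\mathcal{F}_T$-measurability of the coefficients so that $\mathbb{E}[g(X_r)]=\mathbb{E}[\int g\,\d\mu_r]$, measurability of $\omega\mapsto\tilde{\pi}^\omega$, and the bookkeeping needed to regard $\tilde{\pi}$ as a random measure on the original $(\Omega,\mathcal{F},\mathbb{P})$ rather than on the extension — are routine given the setup of \cite{LackerShkolnikovZhang2023} but should be spelled out.
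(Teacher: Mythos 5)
Your proposal is correct and follows essentially the same route as the paper's proof: the Lacker--Shkolnikov--Zhang particle representation yields the lift $\tilde{\pi}=\mathrm{Law}(X\mid\mathcal{F}_T)$, the H\"older/BDG moment bound $\tilde{\mathbb{E}}[|X_t-X_s|^p]\le c\,|t-s|^{(p-1)/2}\mathcal{E}_{p,T}$ gives the $\upgamma$-H\"older regularity in $\mathbb{W}_p$ and the finiteness of the $W^{\alpha,p}$-energy of $\tilde\pi$ for $\alpha\in(\tfrac1p,\upgamma)$ (nonempty exactly when $p>3$), and \cref{thm:existence_of_Walphap_minimizer_random_intro} then produces the minimizer, with \eqref{eq:minimizer_to_SFPEs} obtained by chaining minimality against $\tilde\pi$ with the lower bound from \cref{thm:lift_to_mu_Walphap_stoch_intro}. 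The only deviations are cosmetic (e.g.\ the constant in $\mathrm{tr}(\alpha\alpha^\top+\sigma\sigma^\top)=2\,\mathrm{tr}(a)$), and your identification of where the precise shape of $\mathcal{E}_{p,T}$ enters matches the paper's computation.
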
 

    We emphasize that the first inequality in \eqref{eq:minimizer_to_SFPEs} is \emph{not} expected to be an equality in general. As already discussed, this happens only under the strong condition of compatibility.
    In the second inequality, the $p$-energy $\mathcal{E}_{p,T} (\mu,b,a)$ in the form of \eqref{eq:p_integrability_assumption_SFPE} arises in the energy estimate as a result of applying the Burkholder--Davis--Gundy inequality and H\"{o}lder's inequality.
    
    Given a solution $(\mu_t)$ to \eqref{eq:stochastic_Fokker-Planck}, there can be many choices of coefficients $(b,a,\sigma)$ that have finite $p$-energy and satisfy \eqref{eq:stochastic_Fokker-Planck}. Each provides us with a particle representation of the form \eqref{eq:SDE_associated_sFPE}.
    It is an open question---to the best of our knowledge---whether $W^{\alpha,p}$-energy-minimizing particle representations are of this form, i.e., whether the sample paths of $\pi$ above also solve an SDE like \eqref{eq:SDE_associated_sFPE} for some choice of $(b,a,\sigma)$. 
    This is true in simple cases, such as the next example in $\mathbb{R}$, but in full generality, it is not clear to us.
    Nevertheless, we would like to emphasize that even in such cases, \emph{$W^{\alpha,p}$-energy-minimizing processes do not necessarily correspond to the choice of $(b,a,\sigma)$ that minimizes the energy $\mathcal{E}_{p,T} (\mu,b,a)$.}
    This can already be seen in the deterministic setting (i.e. $\sigma = 0$), for example, in the case of the heat equation on $\mathbb{R}$.
    If it turns out that $W^{\alpha,p}$-energy-minimizing processes for \eqref{eq:stochastic_Fokker-Planck} do solve an SDE, 
    we speculate that this corresponds to a choice of $(b,a,\sigma)$ that at least makes $\alpha = 0$, eliminating the additional noise. This is what we observe in the next example.
    
    \subsubsection{Example: a stochastic heat equation on \texorpdfstring{$\mathbb{R}$}{R}}\label{subsec:example_sHE}
    We consider probability measure-valued solutions $(\mu_t)_{t \in [0,T]}$ to the following stochastic heat equation on the real line
    \begin{align}\label{eq:stoch_heat_eq_1d}\tag{S-HE}
        \begin{cases}
        \d \mu_t  =  \Delta \mu_t \d t - \nabla \cdot ( \mu_t \d W_t ), \qquad \textrm{in }  (0, T) \times \mathbb{R} ,\\
        \mu_0  = \delta_0,
        \end{cases}
    \end{align}
    where $(W_t)$ is a standard $\mathbb{R}$-valued Brownian motion defined on a filtered probability space  $(\Omega,\mathcal{F},\mathbb{F}\coloneqq (\mathcal{F}_t)_{t \in [0,T]}, \mathbb{P})$. We write $\mathcal{F}_t^W \coloneqq \sigma (W_s: 0\leq s \leq t)$ for all $t \in [0,T]$. We discuss two particle solutions to the equation.

    \vspace{2pt}
    
    \noindent
    \textbf{Particle representation 1.}
    \eqref{eq:stoch_heat_eq_1d} is a particular case of \eqref{eq:stochastic_Fokker-Planck} with coefficients $b=0$, $a=1$, $\sigma=1$, and thus $\alpha \coloneqq \sqrt{2a -\sigma^2} = 1$. 
    Then, \eqref{eq:SDE_associated_sFPE} gives the following  particle solution
    \begin{equation}
        \begin{cases}
        \d X_t = \d B_t + \d W_t,\\
        X_0 = 0,
        \end{cases}
    \end{equation}
    where $(B_t)$ is another standard Brownian motion on $\mathbb{R}$ independent of the common noise $(W_t)$. 
    The fact that
    $\mu_t = \mathrm{Law} (X_t| \mathcal{F}_t^W)$ a.s. 
    can be also easily verified as a result of It\^{o}'s formula, taking conditional expectation with respect to $\mathcal{F}_t^W$, and using properties of the It\^{o} integral (see e.g. \cite[Lemma B.2]{CoghiGess2019}, \cite[Lemma 2]{YangMehtaMeyn2011}, \cite[Lemma 10]{AbediSuracePfister2022}). In particular, we conclude $\mu_t = \mathcal{N}(W_t, t)$ (and this is actually the unique solution of \eqref{eq:stoch_heat_eq_1d} by \cite[Theorem 5.4]{CoghiGess2019}).
    Fig. \ref{fig:heat_eq_stoch_lifts} (left) shows the sample paths of $(X_t)$. 

    \vspace{2pt}
    
    \noindent
    \textbf{Particle representation 2.}
    We now present a particle solution $(Y_t)$ that minimizes the fractional Sobolev energy.
    First, using $(X_t)$ and a Gaussian computation, we estimate the path regularity of $(\mu_t)$ and conclude that
    $(\mu_t) \in C^{\frac{1}{2}\textrm{-} \mathrm{H\ddot{o}l}}([0,T];P_{p,\Omega}(\mathbb{R}))$
    for all $p\geq 1$. 
    Therefore, Corollary \ref{crl:lift_mu_Holder_R1_random_intro} applies for $p> 2$ and we have the existence of a minimizing lift $\pi$ that achieves the equality \eqref{eq:optimality_pi_1_random_intro} even point-wisely here as in \eqref{eq:pointwise_Walphap_energy_equality} for $\alpha \in (\frac1p,\frac12)$.
    In this example, we can even show that the sample paths of $\pi$ are integral solutions to an SDE.
    Let $ \rho_t$ and $F_t$ be the density and the cumulative distribution function of the random measure $\mu_t$, respectively.
    To construct $\pi$ according to \eqref{eq:lift_mu_Holder_R1_random_intro}, we label the particles with a uniformly distributed random variable $q \sim \mathrm{Leb}|_{[0,1]}$ (say on the same probability space) and define $Y_t^q \coloneqq F_t^{-1} (q)$ for all time. Then, for fixed $q$, the stochastic differential of $Y_t^q$ is given by
    \begin{align}
    \d Y_t^q & = \d F_t^{-1} (q) = \d \left(c(q) \sqrt{t}  + W_t \right) =  \frac{1}{2\sqrt{t}} \frac{Y_t^q - W_t}{\sqrt{t}} \d t + \d W_t, 
    \end{align}
    where $c(q) \coloneqq \sqrt{2} \, \mathrm{erf}^{-1}(2q -1 )$.
    Since the logarithmic gradient of the density is given here by $\nabla \log \rho_t(x) = - \frac{x - W_t}{t}$, we can shortly write:
    \begin{equation}\label{eq:SDE_Yt}
        \begin{cases}
        \d Y_t = - \frac12 \nabla \log \rho_t (Y_t) \d t + \d W_t,\\
        Y_0 = 0.
        \end{cases}
    \end{equation}
    By its very construction, $(Y_t)$ is a particle solution to \eqref{eq:stoch_heat_eq_1d}, i.e., 
    $
    \mu_t = \mathrm{Law} (Y_t| \mathcal{F}_t^W)$ a.s. 
    and we have 
    $\pi =  \mathrm{Law} (Y| \mathcal{F}_T^W) \textrm{ a.s. }$ Fig. \ref{fig:heat_eq_stoch_lifts} (right) shows the sample paths of $(Y_t)$. 

    \vspace{2pt}
    
    \noindent
    \textbf{Remark on SDE \eqref{eq:SDE_Yt}.}
    We would like to add that the particle solution \eqref{eq:SDE_Yt} can be recovered in a different way by rewriting \eqref{eq:stoch_heat_eq_1d} in an alternative form:
    \begin{equation}
        \d \rho_t  =  \frac12 \nabla \cdot \left(\rho_t  \nabla \log \rho_t \right) \d t  + \frac12 \Delta \rho_t \d t -  \nabla \cdot (\rho_t \d W_t), \qquad \textrm{in }  (0, T) \times \mathbb{R}.
    \end{equation}
    This corresponds to \eqref{eq:stochastic_Fokker-Planck} with coefficients of Nemytskii-type: $b=- \frac12 \nabla \log \rho $, $a= \frac12$, $\sigma=1$, and thus $\alpha \coloneqq \sqrt{2a -\sigma^2} = 0$.
    One can verify that these coefficients also satisfy the $p$-integrability condition \eqref{eq:p_integrability_assumption_SFPE} for some $p>1$. 
    Then, the superposition principle of Lacker--Shkolnikov--Zhang \cite{LackerShkolnikovZhang2023} immediately recovers \eqref{eq:SDE_Yt}.  
    We note that this choice forces the diffusion coefficient $\alpha$ of the additional noise to be zero, which may give insight into why it corresponds to the energy-minimizing process.
    The SDE \eqref{eq:SDE_Yt} has infinitely many strong solutions (of the form $c\sqrt{t}+W_t$, where $c$ can be any constant) and the path measure $\pi$ is precisely concentrated on such paths. We observe that although the drift coefficient $b$ is highly singular at $t=0$, it poses no issue. This is the power of the superposition principle, which needs no regularity in the coefficients.

    \subsection{Comments} We end this section with a remark on the treatment of the stochastic setting.

    \begin{remark}[Going from deterministic to stochastic setting]\label{rmk:treatment}
    Although this paper relies on our work on the deterministic case \cite{Abedi2025paths}, we would like to emphasize that a separate treatment is needed in the stochastic setting to avoid measurability issues and loss of regularities.
    \\
    To illustrate the issues, suppose we aim to construct an optimal random lift for a measure-valued process $(\mu_t)$ satisfying $\mathbb{E} [W^p_p (\mu_s,\mu_t)] \leq c^p |t-s|^{\upgamma p }$ for some $1/p<\upgamma \leq 1$. For the moment, assume that all sample paths $t \mapsto \mu_t^\omega$ lie in the $p$-Wasserstein space and are compatible. At first, we may apply the Kolmogorov–\v Centsov continuity theorem on the metric space $(P_p(\mathcal{X}), W_p)$ to obtain an a.s. continuous modification $(\tilde{\mu}_t)$. The sample paths $t \mapsto \tilde{\mu}_t^\omega$ can be chosen to have H\"{o}lder continuity with an exponent less than $\upgamma - 1/p$. Now we can apply our deterministic result \cite[Corollary 1.8]{Abedi2025paths} in a ``path-wise'' way, meaning that for each $(\tilde{\mu}^\omega_t)$, we obtain a lift $\pi^\omega$, depending on $\omega$.
    By this corollary,
    $\pi^\omega$ will then be concentrated on H\"{o}lder curves with an exponent less than $\upgamma - 2/p$. 
    In other words, we have lost regularity twice, so the stronger condition $2/p<\upgamma \leq 1$ must be in force from the beginning. The main issue, however, is that it is not clear whether the map $\omega \mapsto \pi^\omega$ is measurable, i.e., whether $\pi$ is actually a random measure.
    \\
    This necessitates a separate approach in the random setting, for which we are inspired by a stochastic superposition principle developed earlier by Flandoli \cite{Flandoli2009}. 
    Although the constructions differ, 
    the key steps are constructing a family of random path measures and then passing it to the limit. 
    This requires the notions of tightness and narrow convergence for random measures, along with a generalization of Prokhorov's theorem relating them. For this, we follow the books by Crauel \cite{Crauel2002} and H\"{a}usler--Luschgy \cite{HauslerLuschgy2015}.
    \end{remark}

    \subsection{Organization of the paper}
    The rest of the paper is structured as follows:
    \begin{itemize}
     \item \textbf{\cref{sec:preliminaries_s} (Preliminaries).} We collect the required notions and results. In addition, the following two elementary results may be of independent interest: 
      \begin{itemize}[label=$\circ$]
      \item Proposition \ref{prop:narrow convergence_vs_Ewpp} and Remark \ref{rk:narrow convergence_vs_Ewpp}. The relationship between the topology of narrow convergence on the space of random measures $P_{\Omega}(\mathcal{X})$ and the one induced by $\mathbb{W}_p$.
      \item Corollary \ref{crl:tightness_Holder_discrete_random}. A discrete relaxation of the well-known Kolmogorov--Lamperti tightness condition for random path measures.
      \end{itemize} 
    \item \textbf{\cref{sec:results_s} (Main results).} We present the main theorems in four subsections.
    \item \textbf{\cref{sec:corollaries_s} (Corollaries).} We apply the theorem of the construction of a realizing lift to Euclidean spaces, both in deterministic and stochastic settings.
    For $\mathcal{X} = \mathbb{R}$, this is applicable as all probability measures with finite $p$-moments are compatible. 
    For higher dimensions $\mathcal{X} = \mathbb{R}^\mathrm{d}$, where this property can easily fail, we give an alternative assumption to compatibility by replacing the Wasserstein metric in the regularity assumptions with the so-called $\nu$-based Wasserstein metric. 
    \item \textbf{\cref{sec:appendix_s} (Appendix).} We give the proof of Corollary \ref{crl:minimizing_processes_sFPE_intro}. 
    \end{itemize}

    \subsection{Acknowledgements} 
    The author would like to thank his supervisor, Matthias Erbar, for invaluable guidance and numerous discussions.
    He also thanks Vitalii Konarovskyi for insightful discussions and for pointing out the work \cite{LackerShkolnikovZhang2023}. He expresses appreciation to Zhenhao Li and Timo Schultz for the helpful conversations. 

    \noindent
    \textbf{Funding declaration.}
    This work is supported by the Deutsche Forschungsgemeinschaft (DFG, German Research Foundation) -- Project-ID 317210226 -- SFB 1283.

\section{Preliminaries}\label{sec:preliminaries_s}
\subsection{Path spaces}
    In this paper, $(\mathcal{X},d)$ is a metric space (with any additional structure explicitly stated), and $[0,T] \subset \mathbb{R}$ is a time interval. We denote by $\Gamma_T \coloneqq C([0, T ]; \mathcal{X})$ the space of all continuous paths equipped with the supremum distance, whose induced topology is known as the supremum topology or uniform topology. When $T=1$, we write $\Gamma \coloneqq \Gamma_1$.
    In this section, we fix the notation for the path spaces used in this paper.
    
    \begin{definition}[{$\upgamma$-H\"{o}lder} continuity]\label{def:Holder_regularity_r_paper}
        Given $\upgamma \in [0,1]$, the $\upgamma$-H\"{o}lder continuity of a  path $X \in C ([0,T] ; \mathcal{X})$ over $[s,t] \subset [0,T]$ is defined by 
        \begin{equation}
            | X |_{\upgamma\textrm{-}\mathrm{H\ddot{o}l};[s,t]} \, \coloneqq \sup_{s \leq u < v \leq t} \frac{d(X_u,X_v)}{|v-u|^\upgamma}.
        \end{equation}
        $C^{\upgamma\textrm{-} \mathrm{H\ddot{o}l}} ([0,T];\mathcal{X})$ denotes the set of all paths
        $X \in C ([0,T] ; \mathcal{X})$ such that 
        \begin{equation}
            | X |_{\upgamma\textrm{-}\mathrm{H\ddot{o}l}}  \coloneqq | X |_{\upgamma\textrm{-}\mathrm{H\ddot{o}l};[0,T]} < \infty.
        \end{equation}
    \end{definition}
    
    Let $D \coloneqq \big\{ s = t_0 < t_1 < \cdots < t_N = t \big\}$ be a dissection of the time interval $[s,t]$, where $N \in \mathbb{N}$. The $p$-variation ($p\geq 1$) of a path $X:[0,T]\to \mathcal{X}$ over a fixed dissection $D$ is defined as
    \begin{equation}
        \sum_{t_i \in D} d(X_{t_{i} }, X_{t_{i+1}} )^p,
    \end{equation}
    with the convention $t_{N+1} = t_{N}$.
    We let $\mathcal{D}([s,t])$ denote the set of all partitions of $[s,t]$. 
    \begin{definition}[{$p$-variation}]\label{def:p-variation_r_paper}
         Given  $p\geq 1$, the $p$-variation of a path $X\in C([0,T]; \mathcal{X})$ over $[s,t] \subset [0,T]$ is defined by 
         \begin{equation}
             | X |_{p\textrm{-}\mathrm{var};[s,t]} \coloneqq \left( \sup_{t_i \in \mathcal{D}([s,t])} \sum_{i} d(X_{t_i}, X_{t_{i+1}})^p\right)^{1/p}. 
         \end{equation}
         $C^{p\textrm{-} \mathrm{var}} ([0,T];\mathcal{X})$ denotes the set of all continuous paths $X$ such that
        \begin{equation}
            | X |_{p\textrm{-}\mathrm{var}}  \coloneqq | X |_{p\textrm{-}\mathrm{var};[0,T]} < \infty.
        \end{equation}
    \end{definition}
    
    \begin{definition}[{Fractional  Sobolev regularity $W^{\alpha, p}$}]\label{def:Walphap_regularity}
        Given $1\leq p<\infty$ and $0<\alpha < 1$, the fractional Sobolev regularity of a measurable function $X: [0,T] \to \mathcal{X}$ over $[s,t] \subset [0,T]$ is defined by  
        \begin{equation}
            | X |_{W^{\alpha,p};[s,t]} \coloneqq \left( \iint_{[s,t]^2} \frac{d(X_u,X_v)^p}{|v-u|^{1+\alpha p}} \d u \d v \right)^{1/p}.
        \end{equation}
        The fractional Sobolev space $W^{\alpha, p}([0,T]; \mathcal{X})$ is the space of measurable functions $X$ such that
        $
            | X |_{W^{\alpha,p}} \coloneqq | X |_{W^{\alpha,p};[0,T]} < \infty.
        $
    \end{definition}
    
    \begin{definition}[{Besov regularity $b^{\alpha, p}$}]\label{def:balphap_regularity}
        Given $1\leq p<\infty$ and $0<\alpha < 1$, the Besov regularity of a function $X: [0,1] \to \mathcal{X}$ over $[0,1]$ is defined by
        \begin{equation}\label{eq:Walphap_sum_r_paper}
                | X |_{b^{\alpha,p}} \coloneqq \left( \sum_{m=0}^{\infty} 2^{m(\alpha p -1)} \sum_{k=0}^{2^m-1} d \big(X_{t_{k}^{(m)}}, X_{t_{k+1}^{(m)}}\big)^p \right)^{1/p},
        \end{equation}
        where $t_k^{(m)} \coloneqq \frac{k}{2^m}$.
        \\
        The Besov space $b^{\alpha,p}([0,1];\mathcal{X})$ is the space of continuous functions $X$ such that
        $
            | X |_{b^{\alpha,p}} < \infty.
        $
    \end{definition}

    As will be shortly seen, under the parameter range $1<p<\infty$ and $\frac{1}{p}<\alpha<1$, the elements of $W^{\alpha,p}([0,1];\mathcal{X})$ are automatically continuous by an embedding theorem. On the other hand, in the definition of $b^{\alpha,p}([0,1];\mathcal{X})$, continuity is an additional assumption, since the dyadic Besov seminorm only sees the function on a countable subset of the interval.
    
    \begin{theorem}[{\cite[Theorem 2.2]{LiuPromelTeichmann2020}}]\label{thm:Walphap_balphap_r_paper}
            Given $1<p<\infty$ and $\frac{1}{p}<\alpha < 1$, we have
            $$
            W^{\alpha, p}([0,1]; \mathcal{X}) = b^{\alpha, p}([0,1]; \mathcal{X})
            $$
            as subsets of $C([0,1]; \mathcal{X})$.
            Furthermore, $| \cdot |_{W^{\alpha,p}}$ and $| \cdot |_{b^{\alpha,p}} $  are equivalent on the set of continuous paths, i.e., there exist positive constants  $c_1,c_2$ depending only on $(\alpha, p)$ such that
            \begin{equation}\label{eq:equiv_norm_Walphap_r_paper}
                 c_1 | X |_{W^{\alpha,p}} \leq | X |_{b^{\alpha,p}}   \leq c_2  | X |_{W^{\alpha,p}}
            \end{equation}
            for all $X \in C([0,1];\mathcal{X})$. 
    \end{theorem}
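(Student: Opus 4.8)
The statement is quoted from \cite{LiuPromelTeichmann2020}, so one legitimate option is simply to invoke that reference; for completeness I sketch a direct proof. First, the identity of the two spaces reduces to the seminorm comparison \eqref{eq:equiv_norm_Walphap_r_paper}: for $\alpha p>1$ a measurable $X\colon[0,1]\to\mathcal{X}$ with finite Gagliardo seminorm automatically admits a continuous — in fact $(\alpha-\tfrac1p)$-Hölder — representative, by the standard Morrey/Garsia–Rodemich–Rumsey-type estimate, so there is no loss in working on $C([0,1];\mathcal{X})$ throughout. It thus suffices to prove the two-sided bound for continuous $X$. I would abbreviate $S_m:=\sum_{k=0}^{2^m-1} d(X_{t_k^{(m)}},X_{t_{k+1}^{(m)}})^p$, so that $|X|_{b^{\alpha,p}}^p=\sum_{m\ge0}2^{m(\alpha p-1)}S_m$, and record the change of variables $|X|_{W^{\alpha,p}}^p=2\int_0^1 h^{-1-\alpha p}\!\int_0^{1-h} d(X_t,X_{t+h})^p\,\d t\,\d h$.

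For the inequality $c_1|X|_{W^{\alpha,p}}\le|X|_{b^{\alpha,p}}$ (Gagliardo seminorm dominated by the dyadic one), I would split the $h$-integral over dyadic shells $h\in(2^{-m_0-1},2^{-m_0}]$ and, on each shell, estimate a generic increment $d(X_t,X_{t+h})$ by dyadic chaining: telescoping $X_t$ and $X_{t+h}$ through their nested dyadic approximations $X_{\lfloor t\rfloor_m}$, $m\ge m_0$, writes $d(X_t,X_{t+h})$ as a sum of boundedly many level-$m$ dyadic increments for each $m\ge m_0$. Raising to the $p$-th power, applying Hölder's inequality with geometric weights $2^{-(m-m_0)\delta}$ (legitimate for any $\delta>0$ since $p>1$), integrating in $t$ — which turns $\int_0^1 e_m(t)^p\,\d t$, with $e_m(t)$ the level-$m$ dyadic increment containing $t$, into $2^{-m}S_m$ — then in $h$ over the shell, summing over $m_0$, and interchanging the order of summation, collapses everything onto a constant multiple of $\sum_m 2^{m(\alpha p-1)}S_m$; the inner geometric series converges precisely because one may take $\delta<\alpha$. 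This gives $|X|_{W^{\alpha,p}}^p\le C\,|X|_{b^{\alpha,p}}^p$.

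For the reverse inequality $|X|_{b^{\alpha,p}}\le c_2|X|_{W^{\alpha,p}}$, I would fix a dyadic interval $I=[a,a+h]$ with $h=2^{-m}$, split it into thirds, and use $d(X_a,X_{a+h})\le d(X_a,X_u)+d(X_u,X_v)+d(X_v,X_{a+h})$ with $u$ in the first third and $v$ in the last third. Averaging $u,v$ over their thirds and using convexity of $t\mapsto t^p$ bounds $d(X_a,X_{a+h})^p$ by (i) $h^{\alpha p-1}$ times the \emph{off-diagonal} Gagliardo energy $\iint d(X_u,X_v)^p|u-v|^{-1-\alpha p}\,\d u\,\d v$ over (first third)$\times$(last third) of $I$ — which only sees pairs at scale comparable to $h$, so that, ranging over all dyadic $I$ and all levels, these regions overlap only boundedly often and their total is at most $C\,|X|_{W^{\alpha,p}}^p$ — plus (ii) two ``boundary'' terms of the form $h^{-1}\!\int_0^{h/3} d(X_a,X_{a+s})^p\,\d s$ anchored at the endpoints of $I$. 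The boundary terms are self-similar: one more thirds-split controls a level-$m$ boundary term by a level-$(m{+}1)$ off-diagonal Gagliardo piece plus level-$(m{+}1)$ boundary terms, so after weighting by $2^{m(\alpha p-1)}$ the boundary contributions form a geometric series — convergent since $\alpha<1$ and $p>1$ — that again telescopes to at most $C\,|X|_{W^{\alpha,p}}^p$. Summing over $m$ and $k$ with the weights yields $|X|_{b^{\alpha,p}}^p\le C\,|X|_{W^{\alpha,p}}^p$, which, together with the first remark, also gives the set identity $W^{\alpha,p}([0,1];\mathcal{X})=b^{\alpha,p}([0,1];\mathcal{X})$.

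The delicate point is the bookkeeping in the reverse direction: one must \emph{not} bound each dyadic increment by the full Gagliardo energy of the corresponding dyadic interval, since summing those over all scales counts each pair $(u,v)$ roughly $\log(1/|u-v|)$ times and diverges. The remedy — isolating the scale-$h$ (off-diagonal) portion of the energy and absorbing the near-endpoint portion into a convergent geometric recursion — is exactly where the hypotheses $p>1$ and $\tfrac1p<\alpha<1$ are used. The forward direction, by contrast, is routine once the dyadic-chaining-plus-weighted-Hölder device is in place.
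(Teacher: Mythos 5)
The paper offers no proof of this statement: it is quoted verbatim from \cite{LiuPromelTeichmann2020}, so invoking that reference is all the paper does, and your sketch must be judged on its own merits. Your forward direction (Gagliardo seminorm dominated by the dyadic one) is sound: shell decomposition in $h$, dyadic chaining, and a single application of H\"older's inequality with geometric weights $2^{-(m-m_0)\delta}$ for some $0<\delta<\alpha$ is the standard argument, and the bookkeeping closes.

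The reverse direction, however, has a genuine gap at exactly the point you flag as delicate. Your boundary-term recursion reads, schematically, $\beta_m\le c_p\,G_{m+1}+c_p\,\beta_{m+1}$, where the constant $c_p$ produced by raising the triangle inequality to the $p$-th power is at least $2^{p-1}$. After reweighting, each iteration gains a factor $2^{-(\alpha p-1)}$ (or $3^{-(\alpha p-1)}$ for thirds) but costs $c_p$, so the ratio of your ``geometric series'' is $2^{p-1}\cdot 2^{-(\alpha p-1)}=2^{p(1-\alpha)}>1$ for every $\alpha<1$: the series diverges throughout the admissible parameter range (take $p=2$, $\alpha=3/5$), and in particular its convergence is not a consequence of ``$\alpha<1$ and $p>1$'' as you assert. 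The correct mechanism is different and is precisely where $\alpha>1/p$ enters: either replace the crude power-mean inequality by $(a+b)^p\le(1+\varepsilon)a^p+C_\varepsilon b^p$ with $\varepsilon$ chosen so that $(1+\varepsilon)2^{-(\alpha p-1)}<1$ (possible iff $\alpha p>1$), and kill the tail $(1+\varepsilon)^J\beta_{m+J}$ using the Garsia--Rodemich--Rumsey bound $\beta_{m+J}\lesssim 2^{-(m+J)(\alpha p-1)}|X|^p_{W^{\alpha,p}}$; or avoid iteration entirely by chaining the endpoint to $u$ through a geometrically shrinking sequence of intermediate points and applying H\"older once with weights $\mu^j$ for some $\mu\in\bigl(3^{-(\alpha-1/p)},1\bigr)$, mirroring your forward direction. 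As written, the reverse inequality is not established.
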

    
    The estimate \eqref{eq:distance_estimate_GRR_r_paper} below and the results thereafter can be derived by the \emph{Garsia--Rodemich--Rumsey inequality}. See \cite{Garsia_Rodemich_Rumsey1970}, \cite[Theorem A.1 and Corollary A.2-3]{FrizVictoir2010book}, \cite[Theorem 2]{FrizVictoir2006}. 
    
    \begin{theorem}[Fractional Sobolev-H\"{o}lder and -variation embeddings]\label{thm:FractionalSobolev-Holder_r_paper}  
        Given $1<p<\infty$ and $ \frac{1}{p} < \alpha < 1$, let $X \in W^{\alpha, p} ([0,T];\mathcal{X})$.
        Then there exists a constant $\bar{c}$ depending only on $(\alpha, p)$ such that for all $0 \leq s < t \leq T$,
        \begin{equation}\label{eq:distance_estimate_GRR_r_paper}
            d (X_s,X_t) \leq \bar{c} |t-s|^{\alpha - \frac{1}{p}} | X |_{W^{\alpha,p};[s,t]},
        \end{equation}
         and in particular,
        \begin{align}
            | X |_{\alpha - \frac{1}{p}\textrm{-}\mathrm{H\ddot{o}l};[s,t]}  & \leq \bar{c} | X |_{W^{\alpha,p};[s,t]}, \\
            | X |_{\frac{1}{\alpha}\textrm{-}\mathrm{var};[s,t]}
            & \leq \bar{c} |t-s|^{\alpha-\frac{1}{p}} | X |_{W^{\alpha,p};[s,t]}, 
        \end{align}
        where a possible choice of the constant is $\bar{c} = \big( 32 \frac{\alpha p +1}{\alpha p -1} \big)^{1/p}$.
        \\
        In particular, the continuous embeddings holds:        \begin{equation}\label{eq:FS_H_v_embeddings_r_paper}
            W^{\alpha, p} \subset C^{\alpha - \frac{1}{p}\textrm{-}\mathrm{H\ddot{o}l}} \quad \textrm{and} \quad W^{\alpha, p} \subset C^{\frac{1}{\alpha}\textrm{-}\mathrm{var}}. 
        \end{equation}
    \end{theorem}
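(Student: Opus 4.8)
The plan is to obtain all three assertions from the Garsia--Rodemich--Rumsey (GRR) inequality, applied to $X$ restricted to an arbitrary subinterval $[s,t]\subseteq[0,T]$. Recall its metric-space-valued form (see \cite{Garsia_Rodemich_Rumsey1970}, \cite[Theorem A.1 and Corollary A.2-3]{FrizVictoir2010book}, \cite[Theorem 2]{FrizVictoir2006}): if $\Psi,q:[0,\infty)\to[0,\infty)$ are continuous, strictly increasing, with $q(0)=0$ and $\Psi(r)\to\infty$ as $r\to\infty$, and $f\in C([s,t];\mathcal{X})$, then for all $s\le u<v\le t$
\[
  d(f_u,f_v)\ \le\ 8\int_{0}^{v-u}\Psi^{-1}\!\Big(\tfrac{4U}{r^{2}}\Big)\,\d q(r),
  \qquad
  U:=\iint_{[s,t]^{2}}\Psi\!\Big(\tfrac{d(f_\sigma,f_\tau)}{q(|\sigma-\tau|)}\Big)\,\d\sigma\,\d\tau .
\]
First I would specialize to $\Psi(r)=r^{p}$ and $q(r)=r^{\alpha+1/p}$, which are admissible since $\alpha+\tfrac1p>0$; then $U=|X|_{W^{\alpha,p};[s,t]}^{p}$ exactly, while $\Psi^{-1}(4U/r^{2})=(4U)^{1/p}r^{-2/p}$ and $\d q(r)=(\alpha+\tfrac1p)\,r^{\alpha+1/p-1}\,\d r$, so
\[
  d(X_u,X_v)\ \le\ 8\,(4U)^{1/p}\Big(\alpha+\tfrac1p\Big)\int_{0}^{v-u}r^{\alpha-\frac1p-1}\,\d r
  \ =\ \bar c\,|X|_{W^{\alpha,p};[s,t]}\,(v-u)^{\alpha-\frac1p},
\]
the integral converging precisely because $\alpha>\tfrac1p$. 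Taking $u=s$, $v=t$ gives \eqref{eq:distance_estimate_GRR_r_paper} with some $\bar c=\bar c(\alpha,p)$; tracking constants carefully (equivalently, quoting the corollaries of GRR cited above) yields the recorded value $\bar c=\big(32\,\tfrac{\alpha p+1}{\alpha p-1}\big)^{1/p}$, which in any case depends only on $(\alpha,p)$.

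Next, the H\"older bound follows by applying \eqref{eq:distance_estimate_GRR_r_paper} on $[u,v]$, dividing by $(v-u)^{\alpha-\frac1p}$, using $|X|_{W^{\alpha,p};[u,v]}\le|X|_{W^{\alpha,p};[s,t]}$ (since $[u,v]^{2}\subseteq[s,t]^{2}$), and taking the supremum over $s\le u<v\le t$. For the $\tfrac1\alpha$-variation, fix a dissection $s=t_{0}<\dots<t_{N}=t$ and set $\ell_{i}:=t_{i+1}-t_{i}$, $\omega_{i}:=|X|_{W^{\alpha,p};[t_{i},t_{i+1}]}^{p}$. Applying \eqref{eq:distance_estimate_GRR_r_paper} on each $[t_{i},t_{i+1}]$ and raising to the power $\tfrac1\alpha$,
\[
  \sum_{i}d(X_{t_{i}},X_{t_{i+1}})^{1/\alpha}
  \ \le\ \bar c^{\,1/\alpha}\sum_{i}\ell_{i}^{\,1-\frac{1}{\alpha p}}\,\omega_{i}^{\frac{1}{\alpha p}}
  \ \le\ \bar c^{\,1/\alpha}\Big(\sum_{i}\ell_{i}\Big)^{1-\frac{1}{\alpha p}}\Big(\sum_{i}\omega_{i}\Big)^{\frac{1}{\alpha p}},
\]
by H\"older's inequality with conjugate exponents $\tfrac{\alpha p}{\alpha p-1}$ and $\alpha p$ (both $>1$ since $\alpha p>1$). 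The squares $[t_{i},t_{i+1}]^{2}$ are pairwise disjoint up to a Lebesgue-null set inside $[s,t]^{2}$, so $\sum_{i}\omega_{i}\le|X|_{W^{\alpha,p};[s,t]}^{p}$, while $\sum_{i}\ell_{i}=t-s$; taking the supremum over dissections and then the $\alpha$-th power yields $|X|_{\frac{1}{\alpha}\textrm{-}\mathrm{var};[s,t]}\le\bar c\,(t-s)^{\alpha-\frac1p}|X|_{W^{\alpha,p};[s,t]}$. The continuous embeddings \eqref{eq:FS_H_v_embeddings_r_paper} are then the special case $[s,t]=[0,T]$.

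The one point that needs a word of care is the standing regularity of $X$: GRR is stated for continuous $f$, whereas $W^{\alpha,p}([0,T];\mathcal{X})$ a priori consists of measurable maps. I would handle this in the usual way --- apply GRR to a regularization of $X$ and pass to the limit, or simply note that a measurable $X$ with $|X|_{W^{\alpha,p}}<\infty$ admits a (unique) continuous modification for which all of the above holds verbatim, and it is this modification that serves as the relevant representative throughout. Beyond this there is no genuine obstacle; the only mildly delicate step is the bookkeeping of the numerical constant $\bar c$, which is sensitive to the precise normalization of the GRR inequality (or its corollaries) one quotes.
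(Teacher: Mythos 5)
Your proof is correct and follows the same route the paper takes: the theorem is stated there without an in-text proof, with a pointer to the Garsia--Rodemich--Rumsey inequality and its corollaries in Friz--Victoir, and your specialization $\Psi(r)=r^{p}$, $q(r)=r^{\alpha+1/p}$ together with the H\"older-inequality argument for the $\tfrac1\alpha$-variation and the continuity-of-the-representative remark is exactly the standard derivation those references contain. The only caveat, which you already flag, is that your direct computation yields the constant $8\cdot 4^{1/p}\,\tfrac{\alpha p+1}{\alpha p-1}$ rather than the quoted $\big(32\,\tfrac{\alpha p+1}{\alpha p-1}\big)^{1/p}$; both depend only on $(\alpha,p)$, so nothing downstream is affected.
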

    
    \begin{remark}[A (trivial) H\"{o}lder-Fractional Sobolev embedding]\label{rmk:Holder-FractionalSobolev_r_paper}
        Given $0 < \alpha < \upgamma \leq 1$, one can easily compute for any $X \in C^{\upgamma\textrm{-} \mathrm{H\ddot{o}l}}([0,T];\mathcal{X} )$ that
        \begin{equation}
            | X |_{W^{\alpha,p}}^p \leq | X |_{\upgamma\textrm{-}\mathrm{H\ddot{o}l}}^p \frac{2 T^{(\upgamma p - \alpha p+1)} }{(\upgamma p - \alpha p)(\upgamma p - \alpha p+1)}  < + \infty
        \end{equation}
         In particular, we have the continuous embedding
        \begin{equation}
        C^{\upgamma\textrm{-}\mathrm{H\ddot{o}l}} \subset W^{\alpha, p}.
        \end{equation}
    \end{remark}

\subsection{Random probability measures, narrow convergence, and tightness}\label{subsec:random_probability_measures}
    The ultimate goal of this section is to present a generalization of Prokhorov's theorem for random probability measures (see Theorem \ref{thm:Prokhorov_random}). We begin by recalling the definition of random probability measures and then introduce a notion of narrow convergence and tightness for random measures, which ultimately allows us to have the Prokhorov result in the random setting. We primarily follow the references \cite{Crauel2002,HauslerLuschgy2015}.
    
    Let $(\mathcal{X},d)$ be a complete separable metric space. We let $\mathcal{B}(\mathcal{X})$ be the $\sigma$-algebra of Borel sets of $\mathcal{X}$ (generated by open balls in $\mathcal{X}$) and we let $P(\mathcal{X})$ be the set of all Borel probability measures on $(\mathcal{X}, \mathcal{B}(\mathcal{X}))$.
    In addition, we consider $(\Omega,\mathcal{F},\mathbb{P})$ to be a probability space. The product space $\mathcal{X} \times \Omega $ is regarded as a measurable space with the product $\sigma$-algebra $\mathcal{B}(\mathcal{X})\otimes\mathcal{F}$.
     \begin{definition}[Random probability measure]\label{def:random_measure}
       A random probability measure $\mu$ on $\mathcal{X}$ is a map
       \begin{align}
        \mu: \mathcal{B}(\mathcal{X}) \times \Omega & \to \, [0,1] \\
        (A, \omega) \,  & \mapsto \,  \mu^{\omega}  (A)
       \end{align}
       such that 
       \begin{enumerate}[label=(\roman*), font=\normalfont]
           \item for any $A \in \mathcal{B}(\mathcal{X})$, the function $\omega \mapsto \mu^{\omega}  (A) $ is $\mathcal{F}$-$\mathcal{B}([0,1])$-measurable;
           \item for $\mathbb{P}$-a.e.
           $\omega \in \Omega$, the function $A \mapsto \mu^{\omega}  (A) $ is a Borel probability measure, i.e., $\mu^{\omega} \in P(\mathcal{X})$. 
       \end{enumerate}
       We use $P_{\Omega} (\mathcal{X})$ 
       to denote the set of all random probability measures on $\mathcal{X}$. 
    \end{definition}

    To any random probability measure $\omega \mapsto \mu^{\omega} \in P(\mathcal{X})$ and probability measure $\mathbb{P} \in P(\Omega) $, two measures are naturally associated.
    First, the formula
    \begin{equation}\label{eq:bold_mu_def}
         \bm{\mu} (\bm{A}) \coloneqq \int_{\Omega} \int_{\mathcal{X}} \mathds{1}_{\bm{A}} (x,\omega) \d \mu^{\omega} (x) \d \mathbb{P} (\omega), \qquad \forall \bm{A}\in \mathcal{B}(\mathcal{X})\otimes\mathcal{F},
    \end{equation}
    uniquely defines a measure $\bm{\mu} \in P(\mathcal{X} \times \Omega)$ on the product space. Second, in a similar way,
    \begin{equation}\label{eq:exp_mu_def}
            \mathbb{E}[\mu] (A) \coloneqq \int_{\Omega} \int_{\mathcal{X}} \mathds{1}_{A} (x) \d \mu^{\omega} (x) \d \mathbb{P} (\omega), \qquad \forall A\in \mathcal{B}(\mathcal{X}),
        \end{equation}
     also results in a probability measure $\mathbb{E}[\mu] \in P(\mathcal{X})$. Note that \eqref{eq:exp_mu_def} can be simply written as $\mathbb{E}[\mu] (A) \coloneqq \mathbb{E}[\mu(A)]$, which explains the choice of notation. 
     We will refer to this measure as the \emph{average measure}.
     For any measurable and  $\mathbb{E}[\mu]$-integrable function $\varphi : \mathcal{X} \to \mathbb{R}$, we have (see \cite[Lemma 3.22]{Crauel2002})
     \begin{equation}\label{eq:Exp_int_phi_mu}
        \mathbb{E} \left[\int \varphi (x) \d \mu (x) \right] = \int \varphi (x) \d \mathbb{E}[\mu] (x).
    \end{equation}

    \begin{definition}[Random continuous bounded  functions] \label{def:random_cb_functions}
        A random continuous bounded function is a mapping
        \begin{align}
            f: \mathcal{X} \times \Omega & \to \, \mathbb{R} \\
            (x, \omega)  & \mapsto \,  f(x, \omega)
       \end{align}
       such that
       \begin{enumerate}[label=(\roman*), font=\normalfont]
           \item for all $x \in \mathcal{X}$, the function $\omega \mapsto f (x,\omega)$ is $\mathcal{F}$-$\mathcal{B}(\mathbb{R})$-measurable;
           \item for all $\omega \in \Omega $, the function $x \mapsto f (x,\omega)$ is continuous and bounded, i.e.,  $f (\cdot, \omega) \in C_b(\mathcal{X})$;
           \item it satisfies
           \begin{equation}
               \lVert f \rVert_{\infty \times 1} \coloneqq \int \sup_{x \in \mathcal{X}} |f(x,\omega)| \d \mathbb{P} (\omega) < \infty. 
           \end{equation}
        \end{enumerate}
        We use $C_{b, \Omega} (\mathcal{X})$ to denote the space of all random continuous bounded functions, equipped with the norm $ \lVert \cdot \rVert_{\infty \times 1}$.
    \end{definition}

    The functions from $C_{b, \Omega} (\mathcal{X})$ generate a topology on $P_{\Omega} (\mathcal{X})$, which is called the narrow topology on the space of random probability measures, similar to the deterministic case. 

    \begin{definition}[Narrow convergence of random measures]\label{def:narrow_convergence_random}
        A sequence $(\mu_n) \subset P_{\Omega}(\mathcal{X})$ narrowly converges to $\mu \in P_{\Omega}(\mathcal{X})$  if
        \begin{equation}\label{eq:narrow_convergence_random_f}
            \lim_{n\to \infty} \int_{\Omega} \int_{\mathcal{X}} f (x,\omega) \d \mu_n^\omega (x) \d \mathbb{P} (\omega) = \int_{\Omega} \int_{\mathcal{X}} f (x,\omega) \d \mu^{\omega} (x) \d \mathbb{P} (\omega)
        \end{equation}
        for every $f \in C_{b,\Omega}(\mathcal{X})$. 
    \end{definition}

    Narrow convergence for random measures can be characterized (similar to the deterministic case) by a subset of $C_{b,\Omega}(\mathcal{X})$, namely, the space of Random Lipschitz bounded  functions defined as below (for a proof, see \cite[Proposition 4.9 and Corollary 4.10]{Crauel2002}) \begin{equation}\label{eq:random_Lip_functions}
            \mathrm{Lip}_{b,\Omega} (\mathcal{X}) \coloneqq \{ g \in C_{b,\Omega} (\mathcal{X}) : \textrm{for } \mathbb{P}\textrm{-a.e. } \omega, \, \mathrm{Lip} (g (\cdot,\omega)) \leq L \textrm{ for some } L \in \mathbb{R} \} .
        \end{equation}
    We also recall that if $(\mu_n) \subset P_\Omega(\mathcal{X})$ narrowly converges to $\mu \in P_\Omega(\mathcal{X})$ and $f: \mathcal{X} \times \Omega \to [0,+\infty]$ is a measurable function such that $f(\cdot, \omega)$ is lower semi-continuous in first variable for every $\omega \in \Omega$, then (see \cite[Theorem 2.6 (iv)]{HauslerLuschgy2015})
    \begin{equation}\label{eq:narrow_conv_lsc_func_random}
        \liminf_{n \to \infty } \int_{\Omega} \int_{\mathcal{X}} f(x,\omega) \d \mu^\omega_n (x) \d \mathbb{P} (\omega) \geq \int_{\Omega} \int_{\mathcal{X}} f(x,\omega) \d \mu^\omega (x) \d \mathbb{P} (\omega). 
    \end{equation}
    \begin{definition}[Tightness for random measures]\label{def:tightness_random}
        A family of random measures $ \mathcal{K} \subset P_\Omega(\mathcal{X})$ is said to be tight if for any $ \varepsilon>0$, there exists a (non-random) compact set $K_\varepsilon \subset \mathcal{X}$ such that 
        \begin{equation}\label{eq:tightness_random_exp}
            \sup_{\mu \in \mathcal{K}} \mathbb{E}[\mu] (K_\varepsilon^{\complement}) \leq \varepsilon.
        \end{equation}
    \end{definition}
    In other words, a family of random measures is tight in the sense above if the family of (non-random) average measures is tight in the classical sense.
    This means that, in practice, we only need to prove the classical tightness of the average measure, and no extra condition is needed.
    For example, by combining \eqref{eq:Exp_int_phi_mu} with the integral tightness criterion for deterministic measures (see \cite[Remark 5.1.5]{AGS2008GFs}), we immediately obtain an integral tightness criterion for random measures:
    
    \begin{lemma}[A tightness criterion for random measures]\label{lemma:tightness_criterion_random}
     A family of random measures $ \mathcal{K} \subset P_{\Omega}(\mathcal{X})$ is tight if and only if there exists a function 
     $\Psi: \mathcal{X} \to [0, + \infty]$ such that
    \begin{enumerate}[font=\normalfont]
        \item its sublevels $\lambda_c (\Psi) \coloneqq \{ |\Psi| \leq c \} \subset \mathcal{X}$ are compact for any $c \geq 0$;
        \item it satisfies the bound
        \begin{equation}\label{eq:tightness_e_criterion_bound}
            \sup_{\mu \in \mathcal{K}} \mathbb{E} \left[  \int_{\mathcal{X}}  \Psi(x) \d \mu (x) \right] < + \infty.
        \end{equation}
    \end{enumerate}
    \end{lemma}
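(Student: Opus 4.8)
The plan is to reduce the statement to the classical integral tightness criterion for deterministic probability measures by passing to the family of average measures. Recall from \cref{def:tightness_random} and the observation recorded immediately after it that a family $\mathcal{K} \subset P_\Omega(\mathcal{X})$ is tight precisely when the family of average measures $\{\mathbb{E}[\mu] : \mu \in \mathcal{K}\} \subset P(\mathcal{X})$ is tight in the usual sense, since for a non-random compact set $K_\varepsilon$ the quantity $\mathbb{E}[\mu](K_\varepsilon^\complement)$ appearing in \eqref{eq:tightness_random_exp} is literally the $K_\varepsilon^\complement$-mass of $\mathbb{E}[\mu]$. Hence it suffices to characterize tightness of $\{\mathbb{E}[\mu] : \mu \in \mathcal{K}\}$.

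For this I would invoke the deterministic integral tightness criterion \cite[Remark 5.1.5]{AGS2008GFs}: a family $\mathcal{M} \subset P(\mathcal{X})$ on a complete separable metric space is tight if and only if there exists $\Psi : \mathcal{X} \to [0,+\infty]$ whose sublevels $\lambda_c(\Psi)$ are compact for every $c \geq 0$ and which satisfies $\sup_{\nu \in \mathcal{M}} \int_{\mathcal{X}} \Psi \, \d\nu < +\infty$. (For completeness: the ``if'' direction is Markov's inequality, $\nu(\lambda_c(\Psi)^\complement) \leq c^{-1}\int \Psi \, \d\nu$, uniformly over $\nu \in \mathcal{M}$; the ``only if'' direction builds such a $\Psi$ from a sequence of compact sets $K_n$ with $\sup_{\nu \in \mathcal{M}} \nu(K_n^\complement) \leq 2^{-n}$, taking a suitable series of indicator-type functions whose sublevels are compact.) Applying this with $\mathcal{M} = \{\mathbb{E}[\mu] : \mu \in \mathcal{K}\}$ reduces the problem to rewriting the integral condition in terms of $\mu$ rather than $\mathbb{E}[\mu]$.

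The final step is exactly the identity \eqref{eq:Exp_int_phi_mu}: for any measurable $\Psi \geq 0$,
\[
\mathbb{E}\!\left[\int_{\mathcal{X}} \Psi(x)\,\d\mu(x)\right] = \int_{\mathcal{X}} \Psi(x)\,\d\mathbb{E}[\mu](x),
\]
so that $\sup_{\mu \in \mathcal{K}} \mathbb{E}[\int \Psi\,\d\mu] < +\infty$ and $\sup_{\mu \in \mathcal{K}} \int \Psi\,\d\mathbb{E}[\mu] < +\infty$ are the same condition, and $\Psi$ has compact sublevels in one statement iff it does in the other. Chaining the three equivalences yields the claim. I do not anticipate a genuine obstacle; the only points that need a line of care are that \eqref{eq:Exp_int_phi_mu} is legitimate in the non-integrable regime (valid for $\Psi \geq 0$ by Tonelli/monotone convergence, with both sides simultaneously finite or $+\infty$), and that the deterministic criterion indeed produces a $\Psi$ with \emph{compact} — not merely closed — sublevels, which is part of the cited statement and is preserved under the identification above.
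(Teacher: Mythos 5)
Your proposal is correct and follows essentially the same route as the paper: the paper obtains this lemma exactly by noting that tightness of $\mathcal{K}$ is by definition tightness of the average measures $\{\mathbb{E}[\mu]\}$, invoking the deterministic integral criterion of \cite[Remark 5.1.5]{AGS2008GFs}, and translating the integral condition via the identity \eqref{eq:Exp_int_phi_mu}. Your added remarks on Tonelli for nonnegative $\Psi$ and on preserving compactness of sublevels are fine but not needed beyond what the paper already asserts.
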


    Lastly, we give Prokhorov's theorem for random measures. See \cite[Theorem 4.29]{Crauel2002} for a proof.
  
    \begin{theorem}[Prokhorov for random measures]\label{thm:Prokhorov_random}
        A family of random measures $\mathcal{K} \subset P_{\Omega} (\mathcal{X})$ is tight if and only if it is relatively compact with respect to the narrow topology of $P_{\Omega}(\mathcal{X})$. 
    \end{theorem}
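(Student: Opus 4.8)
\medskip
\noindent\textbf{Proof strategy.}
This is Prokhorov's theorem in the random setting, so the plan is to prove the two implications separately and to reduce each to the classical Prokhorov theorem on the Polish space $\mathcal{X}$ through the average measure $\mu\mapsto\mathbb{E}[\mu]$. By \cref{def:tightness_random} a family $\mathcal{K}\subset P_\Omega(\mathcal{X})$ is tight exactly when $\{\mathbb{E}[\mu]:\mu\in\mathcal{K}\}\subset P(\mathcal{X})$ is tight, so the classical theorem lets one freely interchange ``$\mathcal{K}$ tight'' with ``$\{\mathbb{E}[\mu]\}$ relatively compact in $(P(\mathcal{X}),\text{narrow})$''. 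I would fix once and for all a countable, convergence-determining family $\{g_j\}_{j\in\mathbb{N}}\subset C_b(\mathcal{X})$ for narrow convergence on $P(\mathcal{X})$ (such a family exists because $\mathcal{X}$ is Polish), chosen to contain the constant $1$ and to be closed under rational linear combinations and the lattice operations, so that it generates the narrow Borel $\sigma$-algebra of $P(\mathcal{X})$. The aim is then to show that any sequence in a tight family admits a subsequence converging narrowly in $P_\Omega(\mathcal{X})$, which is what relative compactness means here.

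For the easy implication, ``relatively compact $\Rightarrow$ tight'', I would argue by contradiction: if $\mathcal{K}$ were relatively compact but $\{\mathbb{E}[\mu]\}$ not tight, classical Prokhorov would give a sequence $(\mu_n)\subset\mathcal{K}$ whose averages have no narrowly convergent subsequence; relative compactness then gives $\mu_{n_k}\to\mu$ narrowly in $P_\Omega(\mathcal{X})$, and testing \eqref{eq:narrow_convergence_random_f} against the deterministic functions $f(x,\omega):=g(x)$ with $g\in C_b(\mathcal{X})\subset C_{b,\Omega}(\mathcal{X})$ and invoking \eqref{eq:Exp_int_phi_mu} yields $\mathbb{E}[\mu_{n_k}]\to\mathbb{E}[\mu]$ narrowly --- a contradiction.

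The substantive implication is ``tight $\Rightarrow$ relatively compact''. Given a sequence $(\mu_n)$ in a tight family, I would first scalarize: for each $j$ the random variables $h_{j,n}(\omega):=\int_{\mathcal{X}} g_j \d\mu_n^\omega$ are uniformly bounded by $\|g_j\|_\infty$, hence uniformly integrable on $(\Omega,\mathcal{F},\mathbb{P})$, so by the Dunford--Pettis and Eberlein--\v{S}mulian theorems $(h_{j,n})_n$ is relatively weakly sequentially compact in $L^1(\Omega,\mathbb{P})$; a diagonal extraction over $j$ produces one subsequence $(\mu_{n_k})$ with $h_{j,n_k}\rightharpoonup h_j$ weakly in $L^1$ for every $j$. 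Testing against $\psi=\mathds{1}_A\in L^\infty$, the positivity, monotonicity and rational linearity of $g\mapsto\int g \d\mu_{n_k}^\omega$ pass to the weak-$L^1$ limits, so off a single $\mathbb{P}$-null set the map $g_j\mapsto h_j(\omega)$ is a positive, unital, $\mathbb{Q}$-linear functional on $\{g_j\}$ (only countably many exceptional relations can occur). Next, tightness prevents loss of mass: since $\{\mathbb{E}[\mu_{n_k}]\}$ is tight and $\int_{\mathcal{X}} g_j \d\mathbb{E}[\mu_{n_k}]=\mathbb{E}[h_{j,n_k}]\to\mathbb{E}[h_j]$ for the determining family, classical Prokhorov forces $\mathbb{E}[\mu_{n_k}]\to\bar\mu$ narrowly for some $\bar\mu\in P(\mathcal{X})$ with $\int_{\mathcal{X}} g_j \d\bar\mu=\mathbb{E}[h_j]$; a standard measure-extension argument (Riesz--Markov, resp.\ Daniell--Stone, on compacts exhausting $\mathcal{X}$) then represents $g_j\mapsto h_j(\omega)$ by a Borel \emph{sub}-probability $\mu^\omega$ on $\mathcal{X}$ for $\mathbb{P}$-a.e.\ $\omega$, and integrating the identity $\int_{\mathcal{X}} g_j \d\mu^\omega=h_j(\omega)$ against $\mathbb{P}$ and comparing with $\bar\mu(\mathcal{X})=1$ upgrades $\mu^\omega$ to a genuine probability measure almost surely. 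Since each $\omega\mapsto h_j(\omega)$ is $\mathcal{F}$-measurable and $\{g_j\}$ generates the narrow Borel $\sigma$-algebra of $P(\mathcal{X})$, the map $\omega\mapsto\mu^\omega$ is measurable, so $\mu:=(\mu^\omega)\in P_\Omega(\mathcal{X})$. Finally I would upgrade the scalar convergence to narrow convergence $\mu_{n_k}\to\mu$ in $P_\Omega(\mathcal{X})$ by extending $\mathbb{E}[\psi\int_{\mathcal{X}} g_j \d\mu_{n_k}^\omega]\to\mathbb{E}[\psi\int_{\mathcal{X}} g_j \d\mu^\omega]$ from the $g_j$ and $\psi\in L^\infty$ to arbitrary $f\in C_{b,\Omega}(\mathcal{X})$, approximating $f(\cdot,\omega)$ uniformly on a large compact $K_\varepsilon$ (on which $C(K_\varepsilon)$ is separable) and controlling the tail via tightness and the lower semicontinuity bound \eqref{eq:narrow_conv_lsc_func_random}.

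The hard part will be this realization step --- turning the scalar weak-$L^1$ limits $h_j$ into a bona fide \emph{random probability measure}. The delicate points are (i) controlling the countably many $\mathbb{P}$-null sets on which positivity, rational linearity or $\sigma$-additivity of the pointwise functional could fail, which is why the determining family must be fixed as a countable lattice at the outset; (ii) excluding escape of mass in the Riesz representation, which is precisely where the tightness hypothesis is indispensable and is the only place the classical Prokhorov theorem enters nontrivially; and (iii) the $\mathcal{F}$-measurability of $\omega\mapsto\mu^\omega$, which is immediate once the determining family is fixed. By contrast, the passage from the $g_j$ to general test functions in $C_{b,\Omega}(\mathcal{X})$, and the ``relatively compact $\Rightarrow$ tight'' direction, are routine.
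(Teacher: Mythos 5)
The paper does not prove this theorem at all: it is quoted from Crauel's book and the text points to \cite[Theorem~4.29]{Crauel2002} for the proof. Your proposal is therefore necessarily a different route, and as a strategy it is sound. Crauel's argument embeds the Polish space $\mathcal{X}$ into a metrizable compactification $\hat{\mathcal{X}}$, identifies $P_\Omega(\hat{\mathcal{X}})$ with a weak-$*$ closed subset of the dual of $L^1(\Omega,\mathbb{P};C(\hat{\mathcal{X}}))$ so that Alaoglu gives compactness for free, and then uses tightness only to show that the limit charges $\mathcal{X}$ itself. Your scalarization (uniform boundedness of $h_{j,n}=\int g_j\,\mathrm{d}\mu_n^\omega$, Dunford--Pettis, diagonal extraction, pointwise Riesz representation off a single null set, measurability of $\omega\mapsto\mu^\omega$ from the countable determining family) is essentially a bare-hands version of the same duality, trading one invocation of Banach-space theory for more bookkeeping with null sets. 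Both proofs rest on the same two pillars: weak compactness coming from the $L^1$--$L^\infty$ duality in $\omega$, and tightness to exclude escape of mass in $x$. Your identification of where tightness enters, and of the realization step as the hard part, matches the structure of the cited proof.

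Two soft spots deserve attention. First, for the Riesz/Daniell step the family $\{g_j\}$ must be more than convergence-determining and closed under lattice and rational-linear operations: to represent the functional $g_j\mapsto h_j(\omega)$ by a measure you need the $g_j$ to be (restrictions of) a sup-norm dense subset of $C(\hat{\mathcal{X}})$ for a metrizable compactification $\hat{\mathcal{X}}\supset\mathcal{X}$, or to verify Daniell's continuity condition directly; merely generating the narrow Borel $\sigma$-algebra of $P(\mathcal{X})$ ensures uniqueness and measurability of $\mu^\omega$ but not its existence. Also, a Polish space need not be $\sigma$-compact, so ``compacts exhausting $\mathcal{X}$'' should be replaced by the compacts $K_{1/m}$ supplied by tightness (whose union carries all the mass in expectation) or by the compactification. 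Second, the narrow topology on $P_\Omega(\mathcal{X})$ is in general not metrizable (cf.\ the discussion before \cref{prop:narrow convergence_vs_Ewpp}), so ``relatively compact'' and ``relatively sequentially compact'' are a priori different; your forward direction proves sequential compactness, and your converse extracts convergent subsequences from a relatively compact set, which is not automatic in a non-metrizable space. The converse is repaired cleanly without sequences: $\mu\mapsto\mathbb{E}[\mu]$ is continuous from $P_\Omega(\mathcal{X})$ to $P(\mathcal{X})$ (test against deterministic $f(x,\omega)=g(x)$ and use \eqref{eq:Exp_int_phi_mu}), so it maps relatively compact sets to relatively compact sets, and classical Prokhorov gives tightness of the averages, which is \cref{def:tightness_random}. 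Since the paper only ever uses the sequential form of the theorem (see the proofs of \cref{prop:existence_of_minimizer_random} and \cref{thm:optimal_lift_mu_Walphap_compatible_random}), these caveats do not affect its applications.
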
 

\subsection{Tightness conditions for random path measures}

Combining the observation in Lemma \ref{lemma:tightness_criterion_random} with the four tightness conditions for deterministic path measures formulated in \cite[Section 2.10]{Abedi2025paths}, we immediately obtain the following conditions for random path measures. In this paper, we only use Corollary \ref{prop:tightness_balphap_random}.
The last condition, Corollary \ref{crl:tightness_Holder_discrete_random}, can be regarded as a discrete relaxation of the well-known \emph{Kolmogorov--Lamperti} tightness condition generalized to random path measures.  

    \begin{corollary}[A tightness condition via $W^{\alpha,p}$]\label{prop:tightness_Walphap_random}
         Let $(\mathcal{X},d)$ be a complete metric space in which closed bounded sets are compact, and let $(\Omega, \mathcal{F}, \mathbb{P})$ be a probability space.
         Let the family of random measures $ \mathcal{K} \subset P_{\Omega}(C([0,T];\mathcal{X}))$ satisfy
         \begin{equation}
             \sup_{\pi \in \mathcal{K}} \mathbb{E} \left[  \int_{\Gamma_T} \Big(  d(\gamma_0, \bar{x}) +  | \gamma |_{W^{\alpha,p}} \Big) \d \pi (\gamma) \right] < +\infty
         \end{equation}
         for some $1<p<\infty$ and $\frac{1}{p}<\alpha < 1$ and $\bar{x} \in \mathcal{X}$. Then $\mathcal{K}$ is tight in $P_\Omega(C([0,T];\mathcal{X}))$.  
    \end{corollary}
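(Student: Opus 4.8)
The plan is to invoke the tightness criterion for random measures, \cref{lemma:tightness_criterion_random}, applied on the complete separable metric space $\Gamma_T = C([0,T];\mathcal{X})$ with the choice
\[
  \Psi(\gamma) \coloneqq d(\gamma_0,\bar{x}) + |\gamma|_{W^{\alpha,p}}, \qquad \gamma \in \Gamma_T .
\]
Condition (2) of that lemma --- the uniform bound $\sup_{\pi\in\mathcal{K}}\mathbb{E}\big[\int_{\Gamma_T}\Psi\,\d\pi\big]<+\infty$ --- is nothing but the hypothesis. Hence everything reduces to verifying condition (1): for every $c\geq 0$ the sublevel set $\lambda_c(\Psi)=\{\gamma\in\Gamma_T : \Psi(\gamma)\leq c\}$ is compact in $\Gamma_T$.

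For relative compactness I would use the fractional Sobolev--H\"older embedding \cref{thm:FractionalSobolev-Holder_r_paper}. If $\gamma\in\lambda_c(\Psi)$ then $|\gamma|_{W^{\alpha,p}}\leq c$, so $|\gamma|_{\alpha-\frac1p\textrm{-}\mathrm{H\ddot{o}l}}\leq \bar{c}\,c$ with $\bar{c}=\bar{c}(\alpha,p)$ as in that theorem; in particular the paths in $\lambda_c(\Psi)$ are equi-H\"older-continuous of exponent $\alpha-\frac1p>0$, and since also $d(\gamma_0,\bar{x})\leq c$ one gets the uniform bound $\sup_{t\in[0,T]}d(\gamma_t,\bar{x})\leq c + \bar{c}\,c\,T^{\alpha-\frac1p}$. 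Thus for each fixed $t$ the set $\{\gamma_t : \gamma\in\lambda_c(\Psi)\}$ is a bounded subset of $\mathcal{X}$, hence relatively compact by the assumption that closed bounded sets of $(\mathcal{X},d)$ are compact. The Arzel\`a--Ascoli theorem for $C([0,T];\mathcal{X})$ then yields that $\lambda_c(\Psi)$ is relatively compact in $\Gamma_T$.

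It remains to see $\lambda_c(\Psi)$ is closed in $\Gamma_T$, for which it suffices that $\Psi$ be lower semicontinuous with respect to uniform convergence. The map $\gamma\mapsto d(\gamma_0,\bar{x})$ is continuous, and for $\gamma\mapsto |\gamma|_{W^{\alpha,p}}$ one applies Fatou's lemma to the double integral $\iint_{[0,T]^2} d(\gamma_u,\gamma_v)^p\,|v-u|^{-1-\alpha p}\,\d u\,\d v$: if $\gamma^{(n)}\to\gamma$ uniformly, the integrands converge pointwise in $(u,v)$, so $|\gamma|_{W^{\alpha,p}}\leq\liminf_n|\gamma^{(n)}|_{W^{\alpha,p}}$. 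Hence $\Psi$ is lower semicontinuous, $\lambda_c(\Psi)$ is closed, and together with the previous step it is compact, so \cref{lemma:tightness_criterion_random} applies and $\mathcal{K}$ is tight in $P_\Omega(C([0,T];\mathcal{X}))$. The only mildly delicate point is the compactness of the sublevels --- essentially the deterministic $W^{\alpha,p}$-tightness condition of \cite[Section 2.10]{Abedi2025deterministic} --- and in particular the interplay of the H\"older equicontinuity coming from \cref{thm:FractionalSobolev-Holder_r_paper} with the Heine--Borel property of $\mathcal{X}$; once this is in hand the passage from deterministic to random tightness is automatic through \cref{lemma:tightness_criterion_random}, since the latter only requires classical tightness of the average measures.
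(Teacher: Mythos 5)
Your proposal is correct and follows essentially the same route as the paper: the paper derives this corollary by combining \cref{lemma:tightness_criterion_random} (tightness of random measures reduces to an integral criterion on the average measures) with the deterministic $W^{\alpha,p}$-tightness condition of \cite[Section 2.10]{Abedi2025deterministic}, which is exactly the compactness of the sublevels of $\Psi(\gamma)=d(\gamma_0,\bar{x})+|\gamma|_{W^{\alpha,p}}$ that you verify via the embedding of \cref{thm:FractionalSobolev-Holder_r_paper}, Arzel\`a--Ascoli, and lower semicontinuity of the seminorm. The only difference is that you spell out the deterministic compactness argument rather than citing it, and your verification is sound.
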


    \begin{corollary}[A tightness condition via $b^{\alpha,p}$]\label{prop:tightness_balphap_random}
         Let $(\mathcal{X},d)$ be a complete metric space in which closed bounded sets are compact, and let $(\Omega, \mathcal{F}, \mathbb{P})$ be a probability space.
         Let the family of random measures $ \mathcal{K} \subset P_{\Omega}(C([0,1];\mathcal{X}))$ satisfy
         \begin{equation}
             \sup_{\pi \in \mathcal{K}} \mathbb{E} \left[ \int_{\Gamma_1} \Big(  d(\gamma_0, \bar{x}) +  | \gamma |_{b^{\alpha,p}} \Big) \d \pi (\gamma) \right] < +\infty
         \end{equation}
         for some $1<p<\infty$ and $\frac{1}{p}<\alpha < 1$ and $\bar{x} \in \mathcal{X}$. Then $\mathcal{K}$ is tight in $P_\Omega(C([0,1];\mathcal{X}))$.  
    \end{corollary}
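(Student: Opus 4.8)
The plan is to reduce the statement entirely to the random-measure tightness criterion \cref{lemma:tightness_criterion_random}, applied with the complete separable metric space $C([0,1];\mathcal{X})$ (complete and separable since $\mathcal{X}$ is) playing the role of ``$\mathcal{X}$'' and with the functional
\[
\Psi(\gamma) \coloneqq d(\gamma_0,\bar x) + |\gamma|_{b^{\alpha,p}}, \qquad \gamma \in C([0,1];\mathcal{X}).
\]
Condition (2) of that lemma --- namely $\sup_{\pi\in\mathcal{K}}\mathbb{E}\big[\int_{\Gamma_1}\Psi\,\d\pi\big]<+\infty$ --- is literally the hypothesis of the corollary, so the whole matter reduces to verifying condition (1): that for every $c\geq 0$ the sublevel $\lambda_c(\Psi)=\{\gamma:\Psi(\gamma)\leq c\}$ is compact in $C([0,1];\mathcal{X})$.

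To verify this compactness, I would argue as follows. By \cref{thm:Walphap_balphap_r_paper} the seminorms $|\cdot|_{b^{\alpha,p}}$ and $|\cdot|_{W^{\alpha,p}}$ are equivalent on continuous paths, so on $\lambda_c(\Psi)$ one has both $d(\gamma_0,\bar x)\leq c$ and a uniform bound $|\gamma|_{W^{\alpha,p}}\leq c/c_1$. Feeding this into the Garsia--Rodemich--Rumsey embedding \cref{thm:FractionalSobolev-Holder_r_paper} produces a uniform H\"{o}lder estimate $d(\gamma_s,\gamma_t)\leq \bar c\,(c/c_1)\,|t-s|^{\alpha-1/p}$ valid for all $\gamma\in\lambda_c(\Psi)$ and all $s,t\in[0,1]$; combined with the bound at $t=0$ this gives $\sup_{t}d(\gamma_t,\bar x)\leq R$ for $R\coloneqq c+\bar c\,c/c_1$, so every such $\gamma$ takes values in the closed ball $\overline{B}(\bar x,R)$, which is compact by the standing assumption that closed bounded subsets of $\mathcal{X}$ are compact. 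Equiboundedness together with the uniform equicontinuity just obtained makes $\lambda_c(\Psi)$ relatively compact in $C([0,1];\mathcal{X})$ by Arzel\`a--Ascoli. Finally, $\lambda_c(\Psi)$ is closed: under uniform convergence every evaluation $\gamma\mapsto\gamma_t$ is continuous, hence each summand $d\big(\gamma_{t_k^{(m)}},\gamma_{t_{k+1}^{(m)}}\big)^p$ in \eqref{eq:Walphap_sum_r_paper} is continuous, so $|\cdot|_{b^{\alpha,p}}$ --- a countable sum of nonnegative continuous functionals --- is lower semicontinuous, and together with continuity of $\gamma\mapsto d(\gamma_0,\bar x)$ this makes $\Psi$ lower semicontinuous and $\lambda_c(\Psi)$ closed, hence compact. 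This is exactly the deterministic sublevel-compactness computation of \cite[Section 2.10]{Abedi2025deterministic}.

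With both hypotheses of \cref{lemma:tightness_criterion_random} checked, the lemma immediately yields tightness of $\mathcal{K}$ in $P_\Omega(C([0,1];\mathcal{X}))$, which is the claim; \cref{prop:tightness_Walphap_random} follows by the identical argument (or simply by the norm equivalence of \cref{thm:Walphap_balphap_r_paper}) with $b^{\alpha,p}$ replaced by $W^{\alpha,p}$. I do not expect any genuine obstacle here: the entire stochastic content is absorbed into \cref{lemma:tightness_criterion_random}, which transfers tightness of the random family to classical tightness of the single deterministic average measure $\mathbb{E}[\pi]$, and the only point requiring a little care is the lower semicontinuity of $|\cdot|_{b^{\alpha,p}}$ under uniform convergence, needed to know the sublevels are closed.
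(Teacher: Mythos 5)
Your proof is correct and follows essentially the same route as the paper, which obtains this corollary by combining \cref{lemma:tightness_criterion_random} with the deterministic sublevel-compactness computation of \cite[Section 2.10]{Abedi2025deterministic} — precisely the reduction and the Arzel\`a--Ascoli/GRR/lower-semicontinuity argument you spell out. No gaps.
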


    \begin{corollary}[A tightness condition via H\"{o}lder regularity]
        Let $(\mathcal{X},d)$ be a complete metric space in which closed bounded sets are compact, and let $(\Omega, \mathcal{F}, \mathbb{P})$ be a probability space.
        Let the family of random measures $ \mathcal{K} \subset P_{\Omega}(C([0,T];\mathcal{X}))$ satisfy
         \begin{equation}
        \sup_{\pi \in \mathcal{K}} \mathbb{E} \left[ \int_{\Gamma_T}  d(\gamma_0, \bar{x}) \d \pi (\gamma) \right] < +\infty
        \end{equation}
        for some $\bar{x} \in \mathcal{X}$ and 
        \begin{equation}\label{eq:tightness_Holder_random}
            \sup_{\pi \in \mathcal{K}} \mathbb{E} \left[ \int_{\Gamma_T} d(\gamma_t, \gamma_s )^p \d \pi (\gamma) \right]  \leq c |t-s|^{p \upgamma} \quad\qquad \forall t,s \in [0,T],
        \end{equation}
        for some constant $c$ and $1<p<\infty$ and $\frac{1}{p}<\upgamma \leq 1$. Then $\mathcal{K}$ is tight in $P_{\Omega}(C([0,T];\mathcal{X}))$.
    \end{corollary}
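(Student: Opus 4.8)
The plan is to reduce the statement to the already-established tightness condition via fractional Sobolev regularity, namely \cref{prop:tightness_Walphap_random}. Since $\frac1p<\upgamma\le 1$, I may fix an exponent $\alpha$ with $\frac1p<\alpha<\upgamma$ and consider the functional $\gamma\mapsto d(\gamma_0,\bar x)+|\gamma|_{W^{\alpha,p}}$ on $C([0,T];\mathcal{X})$. It then suffices to verify that $\sup_{\pi\in\mathcal{K}}\mathbb{E}\big[\int_{\Gamma_T}\big(d(\gamma_0,\bar x)+|\gamma|_{W^{\alpha,p}}\big)\d\pi(\gamma)\big]<+\infty$, because \cref{prop:tightness_Walphap_random} will then yield tightness of $\mathcal{K}$ in $P_\Omega(C([0,T];\mathcal{X}))$ at once. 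The first summand is bounded uniformly over $\mathcal{K}$ by hypothesis, so the whole point is to control the $|\gamma|_{W^{\alpha,p}}$ term.

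For that term I would first estimate the $p$-th power. Using \cref{def:Walphap_regularity} and Tonelli's theorem — all integrands are nonnegative and $(u,v,\gamma)\mapsto d(\gamma_u,\gamma_v)^p$ is jointly measurable — I exchange the operator $\mathbb{E}[\int\cdot\,\d\pi]$ (integration against the averaged path measure, cf. \eqref{eq:Exp_int_phi_mu}) with the double time-integral, and then invoke the moment bound \eqref{eq:tightness_Holder_random}:
\begin{equation}
\begin{aligned}
\mathbb{E}\left[\int_{\Gamma_T}|\gamma|_{W^{\alpha,p}}^{p}\,\d\pi(\gamma)\right]
&=\iint_{[0,T]^2}\frac{1}{|v-u|^{1+\alpha p}}\;\mathbb{E}\!\left[\int_{\Gamma_T}d(\gamma_u,\gamma_v)^{p}\,\d\pi(\gamma)\right]\d u\,\d v\\
&\le c\iint_{[0,T]^2}|v-u|^{p(\upgamma-\alpha)-1}\,\d u\,\d v .
\end{aligned}
\end{equation}
The last double integral is finite precisely because $p(\upgamma-\alpha)>0$, which is exactly why the choice $\alpha<\upgamma$ is made, and the resulting bound depends only on $(c,p,\upgamma,\alpha,T)$, not on $\pi\in\mathcal{K}$. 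Applying Jensen's inequality for the averaged probability measure on $\Gamma_T$ (equivalently, concavity of $x\mapsto x^{1/p}$ for $p\ge 1$) then gives $\mathbb{E}[\int_{\Gamma_T}|\gamma|_{W^{\alpha,p}}\d\pi]\le\big(\mathbb{E}[\int_{\Gamma_T}|\gamma|_{W^{\alpha,p}}^{p}\d\pi]\big)^{1/p}$, again uniformly over $\mathcal{K}$.

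Combining the two bounds verifies the hypothesis of \cref{prop:tightness_Walphap_random}, and the conclusion follows. Alternatively, one could bypass that corollary and apply \cref{lemma:tightness_criterion_random} directly with $\Psi(\gamma)=d(\gamma_0,\bar x)+|\gamma|_{W^{\alpha,p}}$, whose sublevels are compact in $C([0,T];\mathcal{X})$ by the fractional Sobolev–Hölder embedding of \cref{thm:FractionalSobolev-Holder_r_paper}, the Arzel\`a–Ascoli theorem, and the assumption that closed bounded subsets of $\mathcal{X}$ are compact. I do not expect a serious obstacle here: the only genuine constraint is the finiteness of the double time-integral, which forces $\alpha<\upgamma$ and hence requires $\frac1p<\upgamma$ from the outset (explaining that hypothesis); the Tonelli exchange and the Jensen step are routine.
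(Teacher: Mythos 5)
Your proposal is correct and follows essentially the route the paper intends: the paper derives this corollary by combining \cref{lemma:tightness_criterion_random} with the deterministic Hölder tightness condition of \cite[Section 2.10]{Abedi2025deterministic}, which rests on exactly the reduction you carry out — fix $\alpha\in(\tfrac1p,\upgamma)$, use Tonelli to convert the uniform moment bound \eqref{eq:tightness_Holder_random} into a uniform bound on $\mathbb{E}[\int|\gamma|_{W^{\alpha,p}}^p\d\pi]$ (finite precisely because $p(\upgamma-\alpha)>0$), and conclude via the compact-sublevel/$W^{\alpha,p}$ criterion. The Jensen step passing from the $p$-th power to the first power is the right way to match the hypothesis of \cref{prop:tightness_Walphap_random}, and no gap remains.
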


    \begin{corollary}[A tightness condition via H\"{o}lder regularity on a countable set]\label{crl:tightness_Holder_discrete_random}
    Let $(\mathcal{X},d)$ be a complete metric space in which closed bounded sets are compact, and let $(\Omega, \mathcal{F}, \mathbb{P})$ be a probability space.
        Let the family of random measures $ \mathcal{K} \subset P_{\Omega}(C([0,T];\mathcal{X}))$ satisfy
         \begin{equation}
        \sup_{\pi \in \mathcal{K}} \mathbb{E} \left[ \int_{\Gamma_T}  d(\gamma_0, \bar{x}) \d \pi (\gamma) \right] < +\infty
        \end{equation}
        for some $\bar{x} \in \mathcal{X}$ and 
        \begin{equation}\label{eq:tightness_Holder_discrete_random}
         \sup_{\pi \in \mathcal{K}} \mathbb{E} \left[\int_{\Gamma_1}  d (\gamma_{t_{k}^{(m)}},\gamma_{t_{k+1}^{(m)}})^p \d \pi (\gamma) \right] \leq \tilde{c} |\Delta t_m |^{p\upgamma} \quad\qquad \forall m \in \mathbb{N}_0, \, k \in \{0,1,\cdots, 2^m-1\},
    \end{equation}
    for some constant $\tilde{c}$ and $1<p<\infty$ and $\frac{1}{p}<\upgamma \leq 1$, where $t_k^{(m)} \coloneqq \frac{k}{2^m} $ and $\Delta t_m \coloneqq \frac{1}{2^m}$. Then $\mathcal{K}$ is tight in $P_{\Omega}(C([0,T];\mathcal{X}))$.
    \end{corollary}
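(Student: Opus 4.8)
The plan is to reduce the statement to the Besov tightness criterion \cref{prop:tightness_balphap_random} by estimating the seminorm $|\cdot|_{b^{\alpha,p}}$ directly in terms of the discrete H\"{o}lder data \eqref{eq:tightness_Holder_discrete_random}. Since $\tfrac{1}{p}<\upgamma\le 1$, I would fix once and for all an exponent $\alpha$ with $\tfrac{1}{p}<\alpha<\upgamma$ (so that $\alpha<1$ automatically); this is the only place where the strict inequality $\tfrac{1}{p}<\upgamma$ is used.

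First I would rewrite expectations of path functionals as integrals against the deterministic average measure: by \eqref{eq:Exp_int_phi_mu}, $\mathbb{E}\bigl[\int_{\Gamma_1}\Phi\,\d\pi\bigr]=\int_{\Gamma_1}\Phi\,\d\mathbb{E}[\pi]$ for every measurable $\Phi:\Gamma_1\to[0,+\infty]$, where $\mathbb{E}[\pi]\in P(\Gamma_1)$. By \cref{def:balphap_regularity}, $\gamma\mapsto|\gamma|_{b^{\alpha,p}}^p$ is a countable sum of nonnegative continuous functions of the evaluation maps $e_{t_k^{(m)}}$ (recall that $\Gamma_1$ carries the supremum topology), hence lower semicontinuous and in particular measurable. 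Applying Tonelli's theorem on the product of $(\Gamma_1,\mathbb{E}[\pi])$ with the counting measure over $\{(m,k):m\in\mathbb{N}_0,\ 0\le k\le 2^m-1\}$ and then using \eqref{eq:tightness_Holder_discrete_random} to bound each of the $2^m$ scale-$m$ increments by $\tilde c\,2^{-mp\upgamma}$, I obtain
\begin{align*}
\mathbb{E}\Bigl[\int_{\Gamma_1}|\gamma|_{b^{\alpha,p}}^p\,\d\pi\Bigr]
&=\sum_{m=0}^{\infty}2^{m(\alpha p-1)}\sum_{k=0}^{2^m-1}\mathbb{E}\Bigl[\int_{\Gamma_1} d\bigl(\gamma_{t_k^{(m)}},\gamma_{t_{k+1}^{(m)}}\bigr)^p\,\d\pi\Bigr]\\
&\le\;\tilde c\sum_{m=0}^{\infty}2^{m(\alpha p-1)}\,2^m\,2^{-mp\upgamma}
\;=\;\tilde c\sum_{m=0}^{\infty}2^{mp(\alpha-\upgamma)},
\end{align*}
which is a convergent geometric series since $\alpha<\upgamma$, with a bound independent of $\pi\in\mathcal{K}$.

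Next, by concavity of $t\mapsto t^{1/p}$ and Jensen's inequality against the probability measure $\mathbb{E}[\pi]$, one has $\mathbb{E}\bigl[\int|\gamma|_{b^{\alpha,p}}\,\d\pi\bigr]\le\bigl(\mathbb{E}\bigl[\int|\gamma|_{b^{\alpha,p}}^p\,\d\pi\bigr]\bigr)^{1/p}$, so together with the first hypothesis on $d(\gamma_0,\bar{x})$ this yields
\[
\sup_{\pi\in\mathcal{K}}\mathbb{E}\Bigl[\int_{\Gamma_1}\bigl(d(\gamma_0,\bar{x})+|\gamma|_{b^{\alpha,p}}\bigr)\,\d\pi\Bigr]<\infty,
\]
and the conclusion then follows from \cref{prop:tightness_balphap_random} applied with this $\alpha$. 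Equivalently, one applies \cref{lemma:tightness_criterion_random} to $\Psi(\gamma)\coloneqq d(\gamma_0,\bar{x})+|\gamma|_{b^{\alpha,p}}$: its sublevels are bounded and, by \cref{thm:Walphap_balphap_r_paper} and \cref{thm:FractionalSobolev-Holder_r_paper}, equi-H\"{o}lder of exponent $\alpha-\tfrac{1}{p}$, hence relatively compact in $C([0,1];\mathcal{X})$ by the Arzel\`{a}--Ascoli theorem (using that closed bounded subsets of $\mathcal{X}$ are compact), and closed by lower semicontinuity of $\Psi$, so they are compact.

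I do not expect a genuine obstacle: the whole content is the displayed chain of (in)equalities. The one point requiring care is the admissible range for $\alpha$, which must lie in $(\tfrac{1}{p},1)$ for the identification $W^{\alpha,p}=b^{\alpha,p}$ and the Besov--H\"{o}lder embedding to be available, and strictly below $\upgamma$ for the dyadic series $\sum_m 2^{mp(\alpha-\upgamma)}$ to be summable; this window is nonempty precisely because $\tfrac{1}{p}<\upgamma$. The Tonelli interchange is justified by nonnegativity, and the measurability issues are harmless since $|\cdot|_{b^{\alpha,p}}$ is, by design, a ``discrete object'' assembled from countably many evaluation maps.
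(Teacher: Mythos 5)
Your proposal is correct and follows essentially the route the paper intends: the corollary is obtained by reducing the discrete H\"{o}lder hypothesis to the $b^{\alpha,p}$ criterion of \cref{prop:tightness_balphap_random} via the dyadic summation $\sum_m 2^{m(\alpha p-1)}\cdot 2^m\cdot 2^{-mp\upgamma}=\sum_m 2^{mp(\alpha-\upgamma)}$ for any $\alpha\in(\tfrac1p,\upgamma)$, which is exactly the combination of \cref{lemma:tightness_criterion_random} with the deterministic tightness conditions that the paper invokes. Your explicit justification of the Tonelli interchange, the Jensen step, and the compactness of the sublevels of $d(\gamma_0,\bar x)+|\gamma|_{b^{\alpha,p}}$ fills in the details the paper leaves implicit, with no gaps.
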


    \subsection{Probability measures with finite \texorpdfstring{$p$}{p}-moment}
    Let $(\mathcal{X},d)$ be a complete separable metric space. Given $p \in [1, \infty) $, $P_p(\mathcal{X}) \subset P(\mathcal{X})$ denotes the subset of Borel probability measures with finite $p$-th moment:
        \begin{equation}\label{eq:def:P_p(X)}
            P_p (\mathcal{X}) \coloneqq \left\{ \mu \in P (\mathcal{X}) : \, \int_{\mathcal{X}} d(x, \bar{x})^p \d \mu (x) < + \infty \right\}, 
    \end{equation}
    where $\bar{x} \in \mathcal{X}$ is an arbitarty point.
    \\
    We equip the space $P_p (\mathcal{X})$ with the $p$-(Kantorovitch–Rubinstein–)Wasserstein distance defined as below between any $\mu,\nu \in P_p(\mathcal{X})$, 
    \begin{equation}\label{eq:def:Wp}
        W_p(\mu,\nu) \coloneqq \left( \min_{\Upsilon \in \mathrm{Cpl}(\mu,\nu)} \int_{\mathcal{X}\times\mathcal{X}} d(x,y)^p \d \Upsilon (x,y) \right)^{1/p},
    \end{equation}
    where the minimum runs over all couplings between $\mu$ and $\nu$, denoted by
    \begin{equation}
        \mathrm{Cpl}(\mu,\nu) \coloneqq \Big\{ \Upsilon \in P(\mathcal{X}^2) : \quad (\mathrm{Pr}^{1})_{\#} \Upsilon  = \mu, \,(\mathrm{Pr}^{2})_{\#} \Upsilon  = \nu \Big\}.
    \end{equation}
    The set of optimal couplings for $W_p$ will be denoted by $\mathrm{OptCpl}(\mu,\nu)$.

    \subsection{Random probability measure with finite \texorpdfstring{$p$}{p}-moments in expectation}\label{subsec:random_probability_measures_finite_moment}
    We define a subset $P_{p,\Omega} (\mathcal{X})\subset P_{\Omega} (\mathcal{X})$ of random Borel probability measures whose $p$-th moments are finite in expectation:
    \begin{definition}[Space $P_{p,\Omega}(\mathcal{X})$]\label{def:P_p_Omega(X)}
         Let $(\mathcal{X},d)$ be a complete separable metric space and $(\Omega, \mathcal{F}, \mathbb{P})$ be a probability space. Given $p \in [1,\infty)$, we define
        \begin{equation}\label{eq:P_p_Omega(X)_def}
            P_{p,\Omega} (\mathcal{X}) \coloneqq \left\{ \mu \in P_{\Omega} (\mathcal{X}) : \, \mathbb{E} \left[  \int_{\mathcal{X}} d(x, \bar{x})^p \d \mu (x) \right] < + \infty \right\}, 
        \end{equation}
        where $\bar{x} \in \mathcal{X}$ is an arbitarty (non-random) point.
    \end{definition}
    The defining condition above can be formulated in terms of measures $\bm{\mu}$ or $\mathbb{E} \mu $, defined in \eqref{eq:bold_mu_def}-\eqref{eq:exp_mu_def}. By disintegration and \eqref{eq:Exp_int_phi_mu}, we have 
       \begin{align}
        \mathbb{E} \left[  \int_{\mathcal{X}} d(x, \bar{x})^p \d \mu (x) \right]  & = \int_{\Omega}  \int_{\mathcal{X}} d(x, \bar{x})^p \d \mu^\omega (x) \d \mathbb{P} (\omega) \\
        & = \int_{\mathcal{X}\times \Omega } d(x, \bar{x})^p \d \bm{\mu} (x,\omega)= \int_{\mathcal{X}}  d(x, \bar{x})^p \d \mathbb{E} \mu (x). \label{eq:E_p_moment_bold}
    \end{align}
    Thus, $\mu \in P_{p,\Omega}(\mathcal{X})$ if and only $\mathbb{E}\mu \in P_{p}(\mathcal{X})$. In other words, a random measure is in $P_{p,\Omega}(\mathcal{X})$ if and only if its average measure has a finite $p$-moment.

    We equip the space $P_{p,\Omega} (\mathcal{X})$ with a distance akin to the $p$-Wasserstein distance in the deterministic setting \eqref{eq:def:Wp}.
    \begin{definition}[Distance $\mathbb{W}_p (\cdot,\cdot)$]
    \label{def:mathbbW_p}
    Given any two random measures $\mu, \nu \in P_{p,\Omega}(\mathcal{X})$, we set
    \begin{equation}\label{eq:def:mathbbW_p_inf}
        \mathbb{W}_p(\mu,\nu) \coloneqq \left( \inf_{\Upsilon \in \mathrm{Cpl}_{\Omega}(\mu,\nu)} \mathbb{E} \left[ \int_{\mathcal{X}\times\mathcal{X}} d(x,y)^p \d \Upsilon (x,y) \right] \right)^{1/p},
    \end{equation}
    where the infimum is taken over the set of random couplings of $(\mu,\nu)$ defined by 
    \begin{equation}
        \mathrm{Cpl}_\Omega(\mu,\nu) \coloneqq \Big\{ \Upsilon \in P_\Omega(\mathcal{X}^2) : \quad \Upsilon^\omega \in \mathrm{Cpl}(\mu^\omega,\nu^\omega) \,\, \mathbb{P}\text{-a.s.} \Big\}.
    \end{equation}
    \end{definition}

    We note that $\mu,\nu \in P_{p,\Omega}(\mathcal{X})$ implies $ \mu^\omega,\nu^\omega \in P_{p}(\mathcal{X})$ a.s.
    Lemma below ensures that there exists a measurable way to select optimal couplings between these random measures. Thus, we have the existence of a random measure that is optimal a.s. with respect to $W_p$.

    \begin{lemma}[{Measurable selection of optimal couplings \cite[Corollary 5.22]{Villani2009}}]\label{lemma:measurable_selection_opt_cpl_two}
        Let $\mu,\nu \in P_{p,\Omega}(\mathcal{X})$ with $p\geq 1$. Then there exists a random measure $\Upsilon \in P_{\Omega}(\mathcal{X}^2)$ such that for $\mathbb{P}$-a.e. $\omega$, we have 
        $\Upsilon^\omega \in \mathrm{OptCpl} (\mu^\omega, \nu^\omega).$
    \end{lemma}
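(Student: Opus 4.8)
The plan is to recast the claim as a measurable selection statement and invoke a classical selection theorem. Since $\mathcal{X}$ is Polish, so is $\mathcal{X}^2$, and hence $P(\mathcal{X}^2)$ equipped with the narrow topology is Polish; its Borel $\sigma$-algebra $\mathcal{B}(P(\mathcal{X}^2))$ is generated by the evaluations $\Upsilon\mapsto\int f\,\d\Upsilon$, $f\in C_b(\mathcal{X}^2)$, and also coincides with the one generated by $\Upsilon\mapsto\Upsilon(A)$, $A\in\mathcal{B}(\mathcal{X}^2)$. In particular the maps $\omega\mapsto\mu^\omega$ and $\omega\mapsto\nu^\omega$, measurable by \cref{def:random_measure}, are Borel measurable into $(P(\mathcal{X}),\text{narrow})$, and it suffices to produce a map $\omega\mapsto\Upsilon^\omega$ that is measurable with respect to $\mathcal{B}(P(\mathcal{X}^2))$. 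I would then consider the multifunction
\[
\Phi\colon\Omega\to 2^{P(\mathcal{X}^2)},\qquad \Phi(\omega)\coloneqq\mathrm{OptCpl}(\mu^\omega,\nu^\omega),
\]
defined on the full-measure set where $\mu^\omega,\nu^\omega\in P_p(\mathcal{X})$ and extended arbitrarily (say to $\{\mu^\omega\otimes\nu^\omega\}$) on the null complement, and seek a measurable selection of $\Phi$.

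First I would record that $\Phi$ has nonempty compact values: $\mathrm{Cpl}(\mu^\omega,\nu^\omega)$ is narrowly compact (its marginals are fixed, hence the family is tight and closed), the functional $\Upsilon\mapsto\int_{\mathcal{X}^2}d(x,y)^p\,\d\Upsilon$ is narrowly lower semicontinuous (it is the integral of a nonnegative lsc function), and it is finite at some $\Upsilon$ because $\mu^\omega,\nu^\omega$ have finite $p$-moments; by the direct method the minimum is attained and $\mathrm{OptCpl}(\mu^\omega,\nu^\omega)$ is a nonempty closed (indeed compact) subset of $P(\mathcal{X}^2)$.

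The substantive step is to verify that the graph
\[
\mathrm{Gr}(\Phi)=\Big\{(\omega,\Upsilon):(\mathrm{Pr}^1)_\#\Upsilon=\mu^\omega,\ (\mathrm{Pr}^2)_\#\Upsilon=\nu^\omega,\ \textstyle\int d(x,y)^p\,\d\Upsilon=W_p^p(\mu^\omega,\nu^\omega)\Big\}
\]
lies in $\mathcal{F}\otimes\mathcal{B}(P(\mathcal{X}^2))$. The two marginal constraints cut out a measurable set because the pushforward maps $\Upsilon\mapsto(\mathrm{Pr}^i)_\#\Upsilon$ are narrowly continuous, $\omega\mapsto\mu^\omega,\nu^\omega$ are measurable, and the diagonal of the Polish space $P(\mathcal{X})$ is closed. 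For the optimality constraint, $\Upsilon\mapsto\int d^p\,\d\Upsilon$ is Borel on $P(\mathcal{X}^2)$ (being lsc), while $\omega\mapsto W_p^p(\mu^\omega,\nu^\omega)$ is $\mathcal{F}$-measurable because $(\mu,\nu)\mapsto W_p^p(\mu,\nu)$ is narrowly lower semicontinuous — this follows by extracting a narrowly convergent subsequence of optimal couplings and using lsc of the cost — hence Borel, and is here composed with the measurable map $\omega\mapsto(\mu^\omega,\nu^\omega)$. With a measurable graph and nonempty closed values in a Polish space, the Aumann--von Neumann selection theorem (equivalently: verify measurability of $\Phi$ via $\{\omega:\Phi(\omega)\cap U\neq\emptyset\}=\mathrm{proj}_\Omega\big(\mathrm{Gr}(\Phi)\cap(\Omega\times U)\big)$ together with the measurable projection theorem on the complete space $(\Omega,\overline{\mathcal{F}},\mathbb{P})$, then apply Kuratowski--Ryll-Nardzewski) furnishes a measurable map $\omega\mapsto\Upsilon^\omega\in\mathrm{OptCpl}(\mu^\omega,\nu^\omega)$; modifying it on a $\mathbb{P}$-null set yields the desired random measure $\Upsilon\in P_\Omega(\mathcal{X}^2)$.

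I expect the main obstacle to be exactly this measurability of $\Phi$ and its graph, and within it the measurability of $\omega\mapsto W_p^p(\mu^\omega,\nu^\omega)$: since the cost $d^p$ is unbounded one must work with narrow lower semicontinuity rather than continuity, and one must note that the projection theorem needs completeness of the probability space — harmless here because only a $\mathbb{P}$-a.s. statement is claimed and one may pass to the completion of $(\Omega,\mathcal{F},\mathbb{P})$ throughout. Everything else (nonemptiness and compactness of $\mathrm{OptCpl}$, continuity of pushforward maps, Polishness of $P(\mathcal{X}^2)$) is routine, and the whole argument is the content of \cite[Corollary 5.22]{Villani2009}, which may alternatively simply be quoted.
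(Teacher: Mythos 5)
Your proposal is correct and is essentially the paper's approach: the paper simply quotes \cite[Corollary 5.22]{Villani2009}, whose proof is exactly the measurable-selection argument you reconstruct (nonempty compact values of $\omega\mapsto\mathrm{OptCpl}(\mu^\omega,\nu^\omega)$, measurable graph, then a selection theorem). Your handling of the exceptional null set and of completeness of $(\Omega,\mathcal{F},\mathbb{P})$ matches what the paper spells out in \cref{rmk:zero_set}, where the arbitrary extension off the full-measure set and the reduction from $\mathcal{F}^*$- to $\mathcal{F}$-measurability via \cite[Lemma 1.2]{Crauel2002} are discussed.
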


    Accordingly, given $\mu,\nu \in P_{p,\Omega}(\mathcal{X})$, we write 
    \begin{equation}
        \mathrm{OptCpl}_\Omega(\mu,\nu) \coloneqq \Big\{ \Upsilon \in \mathrm{Cpl}_\Omega(\mu,\nu) : \quad  \Upsilon^\omega \in \mathrm{OptCpl} (\mu^\omega, \nu^\omega) \,\, \mathbb{P}\text{-a.s.} \Big\}.
    \end{equation}
    As a result of the lemma above, the infimum in \eqref{eq:def:mathbbW_p_inf} is actually attained as a minimum, and
    \begin{equation}\label{eq:mathbbW_p_Exp}
        \mathbb{W}_p (\mu,\nu) = \Big( \mathbb{E} \big[ W^p_p(\mu,\nu) \big] \Big)^{1/p}.  
    \end{equation}
    Note that $\omega \mapsto W_p (\mu^\omega, \nu^\omega)$ is measurable (see e.g. \cite[Remark 3.20 (ii)]{Crauel2002}) and thus the expectation is well-defined. 
    Using $\Upsilon \in \mathrm{OptCpl}_{\Omega}(\mu,\nu)$, we can write, similar to \eqref{eq:E_p_moment_bold}:
    \begin{align}
         \mathbb{W}^p_p (\mu,\nu) & =  \int_{\Omega} \int_{\mathcal{X}^2} d(x,y)^p \d \Upsilon^\omega (x,y) \d \mathbb{P} (\omega) \\
         & = \int_{\mathcal{X}^2 \times \Omega }  d(x,y)^p \d \bm{\Upsilon}(x,y,\omega) = \int_{\mathcal{X}^2} d(x,y)^p \d \mathbb{E} \Upsilon (x,y). \label{eq:EWpp_bold_Upsilon}
    \end{align}
    Let us comment further on the definitions above. First, in restriction to deterministic measures, the space $(P_{p,\Omega} (\mathcal{X}),\mathbb{W}_p)$ obviously reduces to $(P_p(\mathcal{X}),W_p)$, the space of probability measures with finite $p$-moment endowed with $p$-Wasserstein metric.
    On the other hand, if we restrict our attention to the set of random measures of the form $\omega \mapsto \mu^\omega = \delta_{X{(\omega)}} $, where $X$ is a random variable taking values in $\mathcal{X}$, then the space $P_{p,\Omega} (\mathcal{X})$ reduces to the space of $p$-integrable random variables
    and $\mathbb{W}_p$ reduces to the $L^p$ distance between them.
    When $\mathcal{X}$ is compact (so that all measures have finite moments), we can view random measures as random variables taking values in the metric space $(P_p(\mathcal{X}),W_p)$ and hence $\mathbb{W}_p$ is nothing but the $L^p$-distance between them. 
    In general, $\mathbb{W}_p$ defines a metric on $P_{p,\Omega} (\mathcal{X})$. This can be confirmed via \eqref{eq:mathbbW_p_Exp} or \eqref{eq:EWpp_bold_Upsilon}. We do the latter. To this end, let us first note that one can easily generalize  Lemma \ref{lemma:measurable_selection_opt_cpl_two}  to more than two measures by repeatedly applying the so-called gluing lemma.

    \begin{lemma}[Measurable selection of optimal multi-couplings]\label{lemma:measurable_selection_opt_cpl_2}
         Let $\mu_i \in P_{p,\Omega}(\mathcal{X})$, $1 \leq i \leq N$, with $N \in \mathbb{N}$ fixed and $p\geq 1$. Then there exists a random measure $ \Upsilon \in P_{\Omega} (\mathcal{X}^N)$ such that for $\mathbb{P}$-a.e. $\omega$, 
        \begin{gather}
            (\mathrm{Pr}^{i,i+1})_{\#} \Upsilon^{\omega} \in \mathrm{OptCpl}(\mu^\omega_{i},\mu^\omega_{i+1}), \quad \forall i\in \left\{1, \cdots, N -1 \right\}. 
        \end{gather}
    \end{lemma}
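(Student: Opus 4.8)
The plan is to argue by induction on $N$, using the two-measure statement \cref{lemma:measurable_selection_opt_cpl_two} together with a gluing procedure performed in a way that is measurable in $\omega$. The base case $N=2$ is exactly \cref{lemma:measurable_selection_opt_cpl_two}. For the inductive step, I would assume the statement for $N-1$, so that there is a random measure $\Upsilon_{N-1}\in P_\Omega(\mathcal{X}^{N-1})$ with $(\mathrm{Pr}^{i,i+1})_{\#}\Upsilon_{N-1}^\omega\in\mathrm{OptCpl}(\mu_i^\omega,\mu_{i+1}^\omega)$ for $1\le i\le N-2$, $\mathbb{P}$-a.s.; in particular $(\mathrm{Pr}^{N-1})_{\#}\Upsilon_{N-1}^\omega=\mu_{N-1}^\omega$ a.s. Applying \cref{lemma:measurable_selection_opt_cpl_two} to the pair $(\mu_{N-1},\mu_N)$ yields $\Theta\in P_\Omega(\mathcal{X}^2)$ with $\Theta^\omega\in\mathrm{OptCpl}(\mu_{N-1}^\omega,\mu_N^\omega)$ a.s., and in particular $(\mathrm{Pr}^1)_{\#}\Theta^\omega=\mu_{N-1}^\omega$ a.s. It then remains to glue $\Upsilon_{N-1}$ and $\Theta$ along their common marginal $\mu_{N-1}$.

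The gluing I would carry out by disintegration on the product space. Using the notation of \eqref{eq:bold_mu_def}, form $\bm{\Theta}\in P(\mathcal{X}^2\times\Omega)$ and disintegrate it with respect to the projection $(x,y,\omega)\mapsto(x,\omega)$; since $\mathcal{X}$ is Polish, the measurable disintegration theorem provides a Borel probability kernel $(x,\omega)\mapsto\theta_x^\omega$ on $\mathcal{X}$, jointly measurable in the sense that $(x,\omega)\mapsto\theta_x^\omega(A)$ is measurable for every $A\in\mathcal{B}(\mathcal{X})$, such that $\Theta^\omega(\d x\,\d y)=\mu_{N-1}^\omega(\d x)\,\theta_x^\omega(\d y)$ for $\mathbb{P}$-a.e. $\omega$. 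I would then define
\begin{equation}
  \Upsilon^\omega(\d x_1\cdots\d x_N)\coloneqq\Upsilon_{N-1}^\omega(\d x_1\cdots\d x_{N-1})\,\theta_{x_{N-1}}^\omega(\d x_N),
\end{equation}
which is a random measure $\Upsilon\in P_\Omega(\mathcal{X}^N)$ thanks to the joint measurability of the kernel. Marginalizing out $x_N$ recovers $\Upsilon_{N-1}^\omega$, so $(\mathrm{Pr}^{i,i+1})_{\#}\Upsilon^\omega$ for $i\le N-2$ is unchanged and optimal a.s.; and $(\mathrm{Pr}^{N-1,N})_{\#}\Upsilon^\omega=\Theta^\omega\in\mathrm{OptCpl}(\mu_{N-1}^\omega,\mu_N^\omega)$ a.s. by construction. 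This closes the induction. (Equivalently, one may iterate the whole composition directly: measurably select $\pi_i\in\mathrm{OptCpl}_\Omega(\mu_i,\mu_{i+1})$ via \cref{lemma:measurable_selection_opt_cpl_two} for $1\le i\le N-1$, disintegrate each $\pi_i^\omega$ in its first coordinate, and form the Markov composition against $\mu_1^\omega$.)

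\textbf{The main obstacle} is the measurability in $\omega$ of the gluing step: the classical gluing lemma is purely $\omega$-wise, so one must ensure the disintegration kernels can be chosen to depend measurably on $\omega$. This is exactly what the measurable disintegration theorem on $\mathcal{X}^N\times\Omega$ delivers — and it is the same type of measurable-selection argument underlying the proof of \cref{lemma:measurable_selection_opt_cpl_two} in \cite[Corollary 5.22]{Villani2009}. Everything else is routine: checking that the composed kernel has the prescribed consecutive marginals, and that optimality of the two-dimensional marginals is preserved under the composition (the latter being immediate since the relevant marginals are literally the optimal couplings $\pi_i^\omega$, resp. $\Theta^\omega$).
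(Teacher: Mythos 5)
Your proposal is correct and follows essentially the same route as the paper, which justifies this lemma only by the remark that one "repeatedly applies the gluing lemma" to \cref{lemma:measurable_selection_opt_cpl_two}; your induction with the measurable disintegration of $\bm{\Theta}$ on $\mathcal{X}^2\times\Omega$ is a valid and careful implementation of exactly that idea, supplying the $\omega$-measurability of the glued kernel that the paper leaves implicit.
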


    \begin{remark}[$\mathbb{P}$-zero sets and completion of $\mathcal{F}$]\label{rmk:zero_set}
        We would like to emphasize that throughout this paper, how the random measure is defined on the zero sets plays no role. 
        This is already the case in how they are defined in  Definition \ref{def:random_measure} (i). 
        To clarify this, let us consider Lemma \ref{lemma:measurable_selection_opt_cpl_two}. 
        By assumption, there is a $\mathbb{P}$-full measure set $\Omega_\circ \subset \Omega$ on which $\mu^\omega,\nu^\omega \in P_p(\mathcal{X})$. 
        For all $\omega \in \Omega_\circ$, there is a measurable way of selecting optimal couplings between $\mu^\omega$ and $\nu^\omega$ by \cite[Corollary 5.22]{Villani2009} (note that $\omega \mapsto (\mu^\omega, \nu^\omega)$ is indeed measurable because both $\omega \mapsto \nu^\omega$ and $\omega \mapsto \mu^\omega$ are measurable).
         For any $\omega \in \Omega \setminus \Omega_\circ$, we let  $\Upsilon^\omega$ be an arbitrary coupling (take e.g. the product measure, which ensures that the set of couplings is always nonempty). 
         Then for all $\omega \in \Omega$, $ \Upsilon^\omega$ is a probability measure on $\mathcal{X}^2$. 
         To prove that $\Upsilon$ is a random measure, it remains to show that $\omega \mapsto  \Upsilon^\omega$ is measurable. 
         For any $A \in \mathcal{B}(\mathcal{X}^2)$ and $B \in \mathcal{B}([0,1])$, we have
        \begin{align}
            \{ \omega \in \Omega : \Upsilon^\omega (A) \in B \} = 
            \underbrace{\{ \omega \in \Omega_\circ : \Upsilon^\omega (A) \in B \}}_{\qquad \in \mathcal{F} \subset \mathcal{F}^*} \cup
            \underbrace{ \{ \omega \in \Omega \setminus \Omega_\circ : \Upsilon^\omega (A) \in B \} }_{\,\, \in \mathcal{F}^*},
        \end{align} 
        while this set is in the completion $\mathcal{F}^*$ of  $\mathcal{F}$, we do not need to do the completion of the $\sigma$-algebra (and we will not). As mentioned in \cite[Lemma 1.2]{Crauel2002}, for random variables taking values in a separable metric space that are measurable with respect to the completion, one can always find another random variable measurable with respect to the original $\sigma$-algebra such that the two match on a full measure set. Here, our random variables take values in $p$-Wasserstein space, which is indeed separable. 
    \end{remark}

    Proposition below confirms that $(P_{p,\Omega}(\mathcal{X}),\mathbb{W}_p)$ is a metric space. We consider two random measures $\mu$ and $\nu$ to be equivalent if $\mu^\omega = \nu^\omega$ almost surely. 
    \begin{proposition}\label{prop:mathbbWp_metric_result}
        Let $\mu_1, \mu_2, \mu_3 \in P_{p,\Omega}(\mathcal{X})$. Then
        \begin{enumerate}[label=(\roman*), font=\normalfont]
            \item $\mathbb{W}_p (\mu_1,\mu_2) = 0 \, \Leftrightarrow \, \mu_1=\mu_2$ $($i.e., $\mu_1^\omega  = \mu_2^\omega $ $\mathbb{P}$-a.s.$)$. 
            \item $\mathbb{W}_p (\mu_1,\mu_2) = \mathbb{W}_p (\mu_2,\mu_1)$.
            \item $\mathbb{W}_p(\mu_1, \mu_3) \leq \mathbb{W}_p(\mu_1, \mu_2) + \mathbb{W}_p(\mu_2, \mu_3)$.
        \end{enumerate}
    \end{proposition}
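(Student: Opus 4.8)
The plan is to transfer each metric axiom to the corresponding property of the deterministic Wasserstein distance $W_p$, either pathwise via the identity \eqref{eq:mathbbW_p_Exp} or through a measurably glued coupling via \eqref{eq:EWpp_bold_Upsilon}, as foreshadowed in the text. Note first that all quantities are finite: since each $\mu_i \in P_{p,\Omega}(\mathcal{X})$, one has $\mathbb{E}\mu_i \in P_p(\mathcal{X})$, and from $W_p(\mu_i^\omega,\mu_j^\omega)^p \le 2^{p-1}\big(\int d(x,\bar x)^p\d\mu_i^\omega + \int d(x,\bar x)^p\d\mu_j^\omega\big)$ together with \eqref{eq:mathbbW_p_Exp} we get $\mathbb{W}_p(\mu_i,\mu_j)<\infty$; nonemptiness of $\mathrm{Cpl}_\Omega$ (and in fact $\mathrm{OptCpl}_\Omega$) was already secured by \cref{lemma:measurable_selection_opt_cpl_two}.

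For (i): if $\mu_1 = \mu_2$ in the declared sense, the random measure $\omega\mapsto(\mathrm{id},\mathrm{id})_{\#}\mu_1^\omega$ belongs to $\mathrm{Cpl}_\Omega(\mu_1,\mu_2)$ and has zero cost, so $\mathbb{W}_p(\mu_1,\mu_2)=0$; conversely \eqref{eq:mathbbW_p_Exp} gives $\mathbb{E}[W_p^p(\mu_1^\omega,\mu_2^\omega)]=0$, hence $W_p(\mu_1^\omega,\mu_2^\omega)=0$ $\mathbb{P}$-a.s., and since $W_p$ is a metric on $P_p(\mathcal{X})$ this forces $\mu_1^\omega=\mu_2^\omega$ $\mathbb{P}$-a.s. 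Axiom (ii) follows either from \eqref{eq:mathbbW_p_Exp} and symmetry of $W_p$, or directly from $d(x,y)=d(y,x)$ and the bijection $\Upsilon\mapsto S_{\#}\Upsilon$, $S(x,y)=(y,x)$, between $\mathrm{Cpl}_\Omega(\mu_1,\mu_2)$ and $\mathrm{Cpl}_\Omega(\mu_2,\mu_1)$.

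The one step with content is the triangle inequality (iii), which is exactly where \cref{lemma:measurable_selection_opt_cpl_2} is used. Apply it with $N=3$ to obtain $\Upsilon\in P_\Omega(\mathcal{X}^3)$ with $(\mathrm{Pr}^{1,2})_{\#}\Upsilon^\omega\in\mathrm{OptCpl}(\mu_1^\omega,\mu_2^\omega)$ and $(\mathrm{Pr}^{2,3})_{\#}\Upsilon^\omega\in\mathrm{OptCpl}(\mu_2^\omega,\mu_3^\omega)$ for $\mathbb{P}$-a.e.\ $\omega$ (on the null exceptional set pick any coupling, e.g.\ a product, as in \cref{rmk:zero_set}). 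Then $(\mathrm{Pr}^{1,3})_{\#}\Upsilon\in\mathrm{Cpl}_\Omega(\mu_1,\mu_3)$, so by \eqref{eq:def:mathbbW_p_inf}, the pointwise triangle inequality $d(x_1,x_3)\le d(x_1,x_2)+d(x_2,x_3)$, and Minkowski's inequality in $L^p(\mathcal{X}^3\times\Omega,\bm{\Upsilon})$,
\begin{align}
\mathbb{W}_p(\mu_1,\mu_3)
&\le \Big(\mathbb{E}\Big[\int_{\mathcal{X}^3} d(x_1,x_3)^p \,\d\Upsilon^\omega\Big]\Big)^{1/p} \\
&\le \Big(\mathbb{E}\Big[\int_{\mathcal{X}^3} d(x_1,x_2)^p \,\d\Upsilon^\omega\Big]\Big)^{1/p}
 + \Big(\mathbb{E}\Big[\int_{\mathcal{X}^3} d(x_2,x_3)^p \,\d\Upsilon^\omega\Big]\Big)^{1/p}.
\end{align}
Pushing forward under $\mathrm{Pr}^{1,2}$ resp.\ $\mathrm{Pr}^{2,3}$ and using that these marginals are $W_p$-optimal a.s., the two terms on the right equal $\mathbb{W}_p(\mu_1,\mu_2)$ and $\mathbb{W}_p(\mu_2,\mu_3)$ by \eqref{eq:mathbbW_p_Exp}, which gives (iii). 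The only genuine obstacle is the measurable gluing underlying \cref{lemma:measurable_selection_opt_cpl_2}; once that is granted, the remainder is a routine averaged version of the classical argument.
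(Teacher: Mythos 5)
Your proposal is correct and follows essentially the same route as the paper: parts (i) and (ii) are handled by the identity $\mathbb{W}_p(\mu,\nu)=(\mathbb{E}[W_p^p(\mu,\nu)])^{1/p}$, and the triangle inequality is obtained by invoking the measurable gluing of \cref{lemma:measurable_selection_opt_cpl_2} to produce a random multi-coupling $\Upsilon\in P_\Omega(\mathcal{X}^3)$ with a.s.\ optimal $(1,2)$- and $(2,3)$-marginals, followed by the pointwise triangle inequality for $d$ and Minkowski's inequality in $L^p(\bm{\Upsilon})$. The added remarks on finiteness and on the exceptional null set are consistent with \cref{rmk:zero_set} and do not change the argument.
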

    \begin{proof}
        (i) is straightforward, and (ii) is obvious from \eqref{eq:mathbbW_p_Exp} since $W_p$ is a distance. To show (iii), one approach is to use \eqref{eq:mathbbW_p_Exp} and apply the triangle inequality for $W_p$ and Minkowski's inequality for $L^p(\mathbb{P})$ functions. An alternative approach, which we follow here, is to apply Lemma \ref{lemma:measurable_selection_opt_cpl_2} to get an explicit random multi-coupling $\Upsilon \in P_{\Omega } (\mathcal{X}^3)$ such that for $\mathbb{P}$-a.e $\omega \in \Omega$,
        $$
        (\mathrm{Pr}^{1,2})_{\#} \Upsilon^{\omega} \in \mathrm{OptCpl} (\mu_1^{\omega},\mu_2^{\omega}), \quad (\mathrm{Pr}^{2,3})_{\#} \Upsilon^{\omega} \in \mathrm{OptCpl} (\mu_2^{\omega},\mu_3^{\omega}).
        $$
        Using this measure, we can write similar to \eqref{eq:EWpp_bold_Upsilon}:
        \begin{align}
            \mathbb{W}_p (\mu_1,\mu_3)
            & = \left( \int_{\Omega} W_p^p(\mu_1^\omega,\mu_3^\omega) \d \mathbb{P} (\omega) \right)^{1/p} \\
            & \leq \left( \int_{\Omega} \int_{\mathcal{X}^3} d(x_1,x_3)^p \d \Upsilon^\omega (x_1,x_2,x_3) \d \mathbb{P} (\omega) \right)^{1/p} \\
            & = \left( \int_{\mathcal{X}^3 \times \Omega} d(x_1,x_3)^p \d \bm{\Upsilon} (x_1,x_2,x_3,\omega) \right)^{1/p} \\
            & \leq \left( \int_{\mathcal{X}^3 \times \Omega} d(x_1,x_2)^p \d \bm{\Upsilon} \right)^{1/p} \hspace{-6pt} +  \left( \int_{\mathcal{X}^3 \times \Omega} d(x_2,x_3)^p \d \bm{\Upsilon} \right)^{1/p} \hspace{-6pt} = \mathbb{W}_p (\mu_1,\mu_2) + \mathbb{W}_p (\mu_2,\mu_3),
        \end{align}
        where we used the triangle inequality for $d$ and Minkowski's inequality for $L^p(\bm{\Upsilon})$ functions. 
    \end{proof}

    Let's proceed with some preliminary results. The first simple observation is that the expectation of any random coupling between two random measures produces a coupling between their average measures:

    \begin{lemma}\label{lemma:Ecoupling}
        Let $\mu, \nu \in P_{\Omega} (\mathcal{X})$ and $\Upsilon \in \mathrm{Cpl}_{\Omega}(\mu, \nu)$. Then $\mathbb{E}\Upsilon \in \mathrm{Cpl} (\mathbb{E} \mu, \mathbb{E} \nu)$.
    \end{lemma}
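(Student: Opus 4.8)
The plan is to check directly that the two one-dimensional marginals of the product-space probability measure $\mathbb{E}\Upsilon$, defined through \eqref{eq:exp_mu_def} applied on $\mathcal{X}^2$, are exactly $\mathbb{E}\mu$ and $\mathbb{E}\nu$; membership in $\mathrm{Cpl}(\mathbb{E}\mu,\mathbb{E}\nu)$ then follows by definition. First I would record that $\mathbb{E}\Upsilon$ is a well-defined element of $P(\mathcal{X}^2)$: since $\Upsilon \in P_\Omega(\mathcal{X}^2)$, for $\mathbb{P}$-a.e.\ $\omega$ the set function $A \mapsto \Upsilon^\omega(A)$ is a Borel probability measure on $\mathcal{X}^2$, and on the remaining $\mathbb{P}$-null set its value is irrelevant for the integral (cf.\ \cref{rmk:zero_set}), so \eqref{eq:exp_mu_def} applies verbatim.

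Next, fix $A \in \mathcal{B}(\mathcal{X})$. Testing \eqref{eq:exp_mu_def} against $\mathds{1}_{A \times \mathcal{X}}$ gives
\begin{equation}
(\mathrm{Pr}^1)_\#\mathbb{E}\Upsilon\,(A) = \mathbb{E}\Upsilon\,(A \times \mathcal{X}) = \int_\Omega \Upsilon^\omega(A \times \mathcal{X})\,\d\mathbb{P}(\omega).
\end{equation}
Because $\Upsilon \in \mathrm{Cpl}_\Omega(\mu,\nu)$, we have $\Upsilon^\omega \in \mathrm{Cpl}(\mu^\omega,\nu^\omega)$, hence $(\mathrm{Pr}^1)_\#\Upsilon^\omega = \mu^\omega$, for $\mathbb{P}$-a.e.\ $\omega$; so $\Upsilon^\omega(A \times \mathcal{X}) = \mu^\omega(A)$ a.s., and the integral above equals $\int_\Omega \mu^\omega(A)\,\d\mathbb{P}(\omega) = \mathbb{E}\mu(A)$. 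Since $A$ was arbitrary, $(\mathrm{Pr}^1)_\#\mathbb{E}\Upsilon = \mathbb{E}\mu$. Running the same computation with $\mathrm{Pr}^2$ and $\mathds{1}_{\mathcal{X} \times B}$, using $(\mathrm{Pr}^2)_\#\Upsilon^\omega = \nu^\omega$ a.s., yields $(\mathrm{Pr}^2)_\#\mathbb{E}\Upsilon = \mathbb{E}\nu$. (Alternatively, one can test against functions of the form $\varphi \otimes 1$ and $1 \otimes \psi$ and invoke \eqref{eq:Exp_int_phi_mu}.) Therefore $\mathbb{E}\Upsilon \in \mathrm{Cpl}(\mathbb{E}\mu,\mathbb{E}\nu)$.

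I do not expect any genuine obstacle: the only points needing a word are the $\mathcal{F}$-measurability of $\omega \mapsto \Upsilon^\omega(A \times \mathcal{X})$, which is immediate from \cref{def:random_measure}(i), and the treatment of the $\mathbb{P}$-null set on which $\Upsilon^\omega$ may fail to be a coupling, which is harmless under the integral. The argument is thus essentially a one-line marginal computation, and the lemma is used later mainly as a bookkeeping tool for transferring coupling information from random measures to their averages.
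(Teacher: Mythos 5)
Your proposal is correct and follows essentially the same route as the paper: the paper's proof is the same one-line marginal computation, testing against $\varphi \in C_b(\mathcal{X})$ and invoking \eqref{eq:Exp_int_phi_mu}, which is precisely the alternative you mention parenthetically; testing against indicators of Borel sets as you do is an equivalent way to identify the marginals. No gaps.
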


    \begin{proof}
        Let $\varphi \in C_b (\mathcal{X})$. Then by \eqref{eq:Exp_int_phi_mu}, we have
        \begin{align}
            \int_{\mathcal{X}^2} \varphi (x) \d \mathbb{E} \Upsilon (x,y)
            & = \mathbb{E} \left[ \int_{\mathcal{X}^2} \varphi (x) \d \Upsilon (x,y) \right] \\
            & = \mathbb{E} \left[ \int_{\mathcal{X}} \varphi (x) \d \mu (x) \right] = \int_{\mathcal{X}} \varphi (x) \d \mathbb{E}  \mu (x)
        \end{align}
        and the same holds for the second marginal.
    \end{proof}
    
     We note that if $\mu,\nu \in P_{p, \Omega} (\mathcal{X})$ and $\Upsilon \in \mathrm{OptCpl}_{\Omega}(\mu, \nu)$, then this does \emph{not} necessarily imply that $\mathbb{E}\Upsilon \in \mathrm{OptCpl} (\mathbb{E} \mu, \mathbb{E} \nu)$ (see Example \ref{exp:average_measure_example} below). In general, we have the following relation:

    \begin{corollary}
        Let $\mu,\nu \in P_{p,\Omega} (\mathcal{X})$. Then 
        \begin{align}\label{eq:WpEmuEnu_Wpmunu}
            W_p (\mathbb{E} \mu, \mathbb{E} \nu) \leq \mathbb{W}_p (\mu, \nu ) .
        \end{align}
    \end{corollary}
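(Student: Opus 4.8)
The plan is to exhibit one concrete coupling between the average measures $\mathbb{E}\mu$ and $\mathbb{E}\nu$ whose cost equals $\mathbb{W}_p^p(\mu,\nu)$, and then use that $W_p$ is defined as an infimum over \emph{all} couplings. First I would invoke \cref{lemma:measurable_selection_opt_cpl_two} to pick a random measure $\Upsilon \in \mathrm{OptCpl}_{\Omega}(\mu,\nu)$, i.e.\ $\Upsilon^\omega \in \mathrm{OptCpl}(\mu^\omega,\nu^\omega)$ for $\mathbb{P}$-a.e.\ $\omega$, which exists since $\mu,\nu \in P_{p,\Omega}(\mathcal{X})$ forces $\mu^\omega,\nu^\omega \in P_p(\mathcal{X})$ a.s. By \cref{lemma:Ecoupling}, the average measure $\mathbb{E}\Upsilon$ (defined as in \eqref{eq:exp_mu_def} applied to $\Upsilon \in P_\Omega(\mathcal{X}^2)$) satisfies $\mathbb{E}\Upsilon \in \mathrm{Cpl}(\mathbb{E}\mu,\mathbb{E}\nu)$; note $\mathbb{E}\mu,\mathbb{E}\nu \in P_p(\mathcal{X})$ by the characterization following \cref{def:P_p_Omega(X)}, so $W_p(\mathbb{E}\mu,\mathbb{E}\nu)$ is finite and well-defined.

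Next I would chain the following: since $\mathbb{E}\Upsilon$ is merely \emph{one} admissible coupling for $W_p$, the minimum definition \eqref{eq:def:Wp} gives
\[
    W_p^p(\mathbb{E}\mu,\mathbb{E}\nu) \;\leq\; \int_{\mathcal{X}^2} d(x,y)^p \d \mathbb{E}\Upsilon(x,y).
\]
Then, applying \eqref{eq:Exp_int_phi_mu} (equivalently, the computation in \eqref{eq:EWpp_bold_Upsilon}) to the lower semicontinuous—hence measurable—cost $(x,y)\mapsto d(x,y)^p$, the right-hand side equals $\mathbb{E}\!\left[\int_{\mathcal{X}^2} d(x,y)^p \d \Upsilon(x,y)\right]$, which in turn equals $\mathbb{E}[W_p^p(\mu,\nu)] = \mathbb{W}_p^p(\mu,\nu)$ precisely because $\Upsilon$ was chosen optimal $\mathbb{P}$-a.s.\ (this is \eqref{eq:mathbbW_p_Exp}). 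Taking $p$-th roots yields the claim.

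There is essentially no serious obstacle here: the statement is a one-line consequence of "expectation of a coupling is a coupling" (\cref{lemma:Ecoupling}) combined with the fact that a minimum over a larger set is smaller, plus Fubini/Tonelli in the guise of \eqref{eq:Exp_int_phi_mu}. The only point requiring a word of care is that the inequality is in general \emph{strict}: $\mathbb{E}\Upsilon$ need not be optimal for $W_p(\mathbb{E}\mu,\mathbb{E}\nu)$ even when $\Upsilon^\omega$ is optimal for each $\omega$, which is exactly the remark preceding the corollary—so no equality should be claimed. One should also briefly note measurability of $\omega \mapsto d(x,y)^p \d\Upsilon^\omega$-integrals, but this is covered by \cite[Proposition 3.3]{Crauel2002} / \cite[Remark 3.20]{Crauel2002} already cited in the text.
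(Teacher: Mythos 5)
Your proposal is correct and follows essentially the same route as the paper: select an a.s.\ optimal random coupling $\Upsilon$ via \cref{lemma:measurable_selection_opt_cpl_two}, use \cref{lemma:Ecoupling} to see $\mathbb{E}\Upsilon \in \mathrm{Cpl}(\mathbb{E}\mu,\mathbb{E}\nu)$, and chain the suboptimality of $\mathbb{E}\Upsilon$ with \eqref{eq:Exp_int_phi_mu} and \eqref{eq:mathbbW_p_Exp}. The additional remarks on measurability and on the inequality being generally strict are consistent with the paper's surrounding discussion.
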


    \begin{proof}
        Let $\Upsilon \in \mathrm{OptCpl}_{\Omega}(\mu, \nu)$. By Lemma \ref{lemma:Ecoupling}, $\mathbb{E}\Upsilon$ produces a coupling between $\mathbb{E}\mu$ and $\mathbb{E}\nu$, both lie in $p$-Wasserstein space by the assumption. As a result, we can estimate 
        \begin{align}
            W^p_p (\mathbb{E} \mu, \mathbb{E} \nu)
            \leq \int d(x,y)^p \d \mathbb{E} \Upsilon (x,y) 
            = \mathbb{E} \left[ \int d(x,y)^p \d  \Upsilon (x,y) \right]  = \mathbb{E} \left[ W_p^p (\mu,\nu) \right] = \mathbb{W}_p^p(\mu,\nu).
        \end{align}
    \end{proof}

    \begin{example}\label{exp:average_measure_example}
        Let $X \sim \mathrm{Unif} [-1,1]$ be a uniformly distributed random variable defined on a probability space $(\Omega,\mathcal{F},\mathbb{P})$. On this probability space, we define two random measures on $\mathbb{R}$ by
    	\begin{equation}
    		\mu^\omega \coloneqq \delta_{X(\omega)}, \qquad  \nu^\omega \coloneqq \delta_{-X(\omega)}. 
    	\end{equation} 
    	Then $\mathbb{E} \mu = \mathbb{E} \nu = \mathrm{Unif} [-1,1]$, and therefore, $W_p(\mathbb{E} \mu , \mathbb{E} \nu) = 0$. The random optimal coupling $\Upsilon \in \mathrm{OptCpl}_{\Omega}(\mu, \nu)$ is given by 
    	$		\Upsilon^\omega \coloneqq \delta_{(X(\omega),-X(\omega))}$.
    	However, we have $\mathbb{E}\Upsilon \notin \mathrm{OptCpl} (\mathbb{E} \mu, \mathbb{E} \nu)$. Indeed, 
    	\begin{align}
    		\int_{\mathbb{R}^2} |x-y|^p \d \mathbb{E}\Upsilon (x,y) = \mathbb{E} \left[ \int_{\mathbb{R}^2} |x-y|^p \d \Upsilon (x,y) \right] =  \mathbb{E} \left[ |2 X|^p \right] >0.
    	\end{align}
    \end{example}

    We now discuss the relationship between the topology of narrow convergence on $P_{\Omega} (\mathcal{X})$ and the topology induced by $\mathbb{W}_p$. As the next proposition shows, it's easy to prove that the topology induced by $\mathbb{W}_p$ is stronger than the narrow topology. To study the converse, it is crucial to note that, unlike the deterministic case, narrow convergence on $P_{\Omega} (\mathcal{X})$ is not metrizable in general. According to \cite[Corollary 4.31]{Crauel2002}, the narrow topology on $P_{\Omega}(\mathcal{X})$ for a Polish space $\mathcal{X}$ with at least two points is metrizable if and only if the $\sigma$-algebra $\mathcal{F}$ is \emph{countably generated} $(\mathrm{mod}\,\mathbb{P})$. However, even under this condition, the topology induced by $\mathbb{W}_p$ can still be strictly stronger than narrow convergence, as discussed in the next remark.
    
    \begin{proposition}\label{prop:narrow convergence_vs_Ewpp}
         Let $(\mathcal{X}, d)$ be a complete separable metric space and $(\Omega, \mathcal{F}, \mathbb{P})$ be a probability space. Take a sequence of random probability measures $(\mu_n) \subset P_{p,\Omega}(\mathcal{X})$ and $\mu \in P_{p,\Omega}(\mathcal{X})$  with $1 \leq p <\infty$. Consider two conditions:
        \begin{enumerate}[label=(\roman*), font=\normalfont]
           \item $\mathbb{W}_p(\mu_n,\mu) \underset{n \to \infty }{\longrightarrow} 0$.
           \item $\mu_n \underset{n \to \infty }{\longrightarrow} \mu$ in $P_\Omega (\mathcal{X})$ and $\mathbb{E}[\int d(x,\bar{x})^p \d \mu_n] \underset{n \to \infty }{\longrightarrow} \mathbb{E}[\int d(x,\bar{x})^p \d \mu]$ for some $\bar{x} \in \mathcal{X}$.
        \end{enumerate}
        Then (i) implies (ii).
        \end{proposition}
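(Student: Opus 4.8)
The plan is to derive both assertions in (ii) from the identity $\mathbb{W}_p(\mu_n,\mu)^p = \mathbb{E}[W_p^p(\mu_n,\mu)]$ recorded in \eqref{eq:mathbbW_p_Exp}, together with the existence of a measurably selected optimal random coupling guaranteed by \cref{lemma:measurable_selection_opt_cpl_two}. For the convergence of the $p$-th moments, I would first note that the deterministic Dirac mass $\delta_{\bar x}$, viewed as a constant random measure, lies in $P_{p,\Omega}(\mathcal{X})$, and that, since the only coupling of $\nu^\omega$ with $\delta_{\bar x}$ is the product one, $\mathbb{W}_p(\nu,\delta_{\bar x})^p = \mathbb{E}\big[\int_{\mathcal{X}} d(x,\bar x)^p \d\nu\big]$ for every $\nu \in P_{p,\Omega}(\mathcal{X})$. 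The (reverse) triangle inequality for $\mathbb{W}_p$ (\cref{prop:mathbbWp_metric_result}) then gives $\big|\mathbb{W}_p(\mu_n,\delta_{\bar x}) - \mathbb{W}_p(\mu,\delta_{\bar x})\big| \le \mathbb{W}_p(\mu_n,\mu) \to 0$, so $\mathbb{W}_p(\mu_n,\delta_{\bar x})^p \to \mathbb{W}_p(\mu,\delta_{\bar x})^p$, which is exactly $\mathbb{E}[\int d(x,\bar x)^p \d\mu_n] \to \mathbb{E}[\int d(x,\bar x)^p \d\mu]$.

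For the narrow convergence $\mu_n \to \mu$ in $P_\Omega(\mathcal{X})$, I would use that it suffices to test against the random Lipschitz functions $\mathrm{Lip}_{b,\Omega}(\mathcal{X})$ from \eqref{eq:random_Lip_functions} (see \cite[Proposition 4.9 and Corollary 4.10]{Crauel2002}). Fix such a $g$ with $\mathrm{Lip}(g(\cdot,\omega)) \le L$ for $\mathbb{P}$-a.e.\ $\omega$, and for each $n$ pick $\Upsilon_n \in \mathrm{OptCpl}_\Omega(\mu_n,\mu)$. Since $\Upsilon_n^\omega \in \mathrm{Cpl}(\mu_n^\omega,\mu^\omega)$ for $\mathbb{P}$-a.e.\ $\omega$, boundedness of $g$ makes both iterated integrals finite and
\begin{align*}
&\Big| \int_\Omega\!\int_{\mathcal{X}} g(x,\omega)\,\d\mu_n^\omega(x)\,\d\mathbb{P}(\omega) - \int_\Omega\!\int_{\mathcal{X}} g(x,\omega)\,\d\mu^\omega(x)\,\d\mathbb{P}(\omega) \Big| \\
&\qquad \le L \, \mathbb{E}\Big[ \int_{\mathcal{X}^2} d(x,y)\,\d\Upsilon_n \Big] \le L\,\big( \mathbb{E}[W_p^p(\mu_n,\mu)] \big)^{1/p} = L\,\mathbb{W}_p(\mu_n,\mu),
\end{align*}
where the first inequality uses the Lipschitz bound on $g$ together with the coupling property of $\Upsilon_n^\omega$, and the second uses Jensen's inequality fibrewise (recall $p \ge 1$), the optimality of $\Upsilon_n^\omega$, and Jensen's inequality over $\Omega$. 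Letting $n \to \infty$ and using $\mathbb{W}_p(\mu_n,\mu) \to 0$ yields the convergence in $P_\Omega(\mathcal{X})$, so (i) implies (ii).

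I do not expect a genuine obstacle here: this is a transcription of the classical deterministic fact that $W_p$-convergence equals narrow convergence plus moment convergence. The only points requiring a little care are the joint measurability of $(x,\omega) \mapsto g(x,\omega)$ that legitimizes the Fubini-type manipulations above — which is built into \cref{def:random_cb_functions} — and the reduction of narrow convergence of random measures to Lipschitz test functions, both of which are supplied by \cite{Crauel2002}. (The converse implication, which this proposition does not assert, is genuinely more delicate, since the narrow topology on $P_\Omega(\mathcal{X})$ need not be metrizable.)
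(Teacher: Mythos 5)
Your proof is correct and follows essentially the same route as the paper: the reverse triangle inequality against $\delta_{\bar x}$ for the moment convergence, and testing against $\mathrm{Lip}_{b,\Omega}(\mathcal{X})$ with a measurably selected optimal coupling plus Jensen's inequality (fibrewise and over $\Omega$) for the narrow convergence. The only differences are cosmetic, such as spelling out why $\mathbb{W}_p(\nu,\delta_{\bar x})^p$ equals the expected $p$-th moment.
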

        \begin{remark}
        \label{rk:narrow convergence_vs_Ewpp}
             In general, (ii) $\Rightarrow$ (i) does \textit{not} hold, even if the $\sigma$-algebra $\mathcal{F}$ is countably generated $(\mathrm{mod}\,\mathbb{P})$, which is necessary for the narrow topology on $P_{\Omega}(\mathcal{X})$ to be metrizable.
             By an application of \cite[Lemma 5.2]{Crauel2002}, it turns out that when $d$ is a bounded metric on $\mathcal{X}$, the topology induced by any $\mathbb{W}_p$, $1 \leq p < \infty$, coincides with that induced by the Ky Fan metric on $P_{\Omega}(\mathcal{X})$. As a matter of fact, the Ky Fan topology doesn't depend on the choice of
             the metric \cite[Lemma 5.3]{Crauel2002} and is also stronger than the narrow topology \cite[Proposition 5.4]{Crauel2002}. In \cite[Example 5.7]{Crauel2002} a case is presented where $d$ is bounded, $\mathcal{F}$ is countably generated, and a sequence of random measures converges narrowly but doesn't converge in the Ky Fan metric, and thus not in $\mathbb{W}_p$ either.
        \end{remark}

    \begin{proof} ``(i) $\Rightarrow$ (ii)'' Using the the reverse triangle inequality, we have
    $$
    \Big | \mathbb{W}_p (\mu_n,\delta_{\bar{x}}) - \mathbb{W}_p (\mu,\delta_{\bar{x}}) \Big | \leq \mathbb{W}_p (\mu_n,\mu).
    $$
    Taking the limit yields the convergence of the expected values of moments.
    \\
    Now, let us verify that $(\mu_n)$ narrowly converges to $\mu$. We recall that narrow convergence in $P_{\Omega}(\mathcal{X})$ can also be characterized by random Lipschitz bounded functions.
    Take an arbitrary $g \in \mathrm{Lip}_{b,\Omega} (\mathcal{X})$ with $L$ being the bound on its Lipschitz constant as in \eqref{eq:random_Lip_functions}. For each $n$, let $\Upsilon_n \in \mathrm{OptCpl}_{\Omega} (\mu, \mu_n)$.
    We have  
         \begin{align}
        \bigg| \int_{\Omega} \int_{\mathcal{X}} g(x,\omega) \d \mu^\omega (x) \d \mathbb{P} (\omega) & - \int_{\Omega} \int_{\mathcal{X}} g(x,\omega) \d \mu_n^\omega (x) \d \mathbb{P} (\omega) \bigg| \\
        & = \left| \int_\Omega \int_{\mathcal{X}^2} \left( g (x,\omega) - g (y, \omega) \right) \d \Upsilon_n^\omega (x,y) \d \mathbb{P} (\omega) \right| \\
        & \leq  \int_\Omega \mathrm{Lip} (g (\cdot, \omega) ) \int_{\mathcal{X}^2} d (x,y) \d \Upsilon_n^\omega (x,y)  \d \mathbb{P} (\omega) \\
        & \leq  L \int_\Omega  \left( \int_{\mathcal{X}^2} d (x,y)^p \d \Upsilon_n^\omega (x,y) \right)^{1/p} \d \mathbb{P} (\omega) \\
        & = L \, \mathbb{E} \big[ W_p (\mu_n, \mu )\big] \leq L \, \mathbb{W}_p(\mu_n, \mu ).
    \end{align}
    Taking the limit, we obtain the result.
    \end{proof}

    As the final topic in this section, we generalize the notion of compatibility for deterministic measures in $P_p(\mathcal{X})$ to the setting of random measures. This notion was studied in our recent work \cite{Abedi2025paths}, following \cite{Boissard2015,PanaretosZemel2020}.

    \begin{definition}[Compatibility of random measures in $P_{p,\Omega}(\mathcal{X})$]\label{def:compatibility_random}
    We say a collection of random measures $\mathcal{M} \subset P_{p,\Omega}(\mathcal{X})$ is compatible in $P_{p,\Omega}(\mathcal{X})$, if, for every finite subcollection of $\mathcal{M}$, there exists a random multi-coupling such that all of its two-dimensional marginals are optimal $\mathbb{P}$-a.s.
    \end{definition}
    
     More precisely, this property tells that for any $N \in \mathbb{N}$ and any subcollection $\{ \mu_i \}_{1 \leq i \leq N} \subset \mathcal{M} \subset P_{p,\Omega}(\mathcal{X})$, there exists a random measure $ \Upsilon \in P_{\Omega} (\mathcal{X}^N)$ such that for $\mathbb{P}$-a.e. $\omega \in \Omega$, we have
     \begin{align}
         (\mathrm{Pr}^{i})_{\#} \Upsilon^{\omega} & = \mu^\omega_{i} \qquad \qquad \qquad \qquad \,\,  \forall i \in \left\{1, \cdots, N  \right\}, \\  
        (\mathrm{Pr}^{i,j})_{\#} \Upsilon^{\omega} & \in \mathrm{OptCpl}(\mu^\omega_{i},\mu^\omega_{j}) \qquad \forall i,j\in \left\{1, \cdots, N  \right\},
    \end{align}
    with a $\mathbb{P}$-zero set depending on this subcollection. 
    In other words, the random multi-coupling $\Upsilon$ realizes the distance $\mathbb{W}_p$ between the random marginals:
    $$
    \mathbb{W}_p^p (\mu_i,\mu_j) = \mathbb{E} \left[ \int_{\mathcal{X}^N} d(x_i,x_j)^p \d \Upsilon  \right].
    $$

    \subsection{Probability measure-valued processes}
    Continuing the discussion from \cref{subsec:random_probability_measures,subsec:random_probability_measures_finite_moment}, once the notion of random probability measure is established, we can discuss \emph{probability measure-valued processes}. These are collections $(\mu_t)_{t \in I} \subset P_{\Omega} (\mathcal{X})$ of random measures defined on some probability space $(\Omega,\mathcal{F}, \mathbb{P})$ and 
    indexed by $t$ in some time interval $I \coloneqq [0,T] \subset \mathbb{R}$.
    
    In this paper, we are specifically interested in processes $(\mu_t)_{t \in I} \subset P_{p,\Omega} (\mathcal{X})$ that lie in the corresponding subspace and have certain path regularity with respect to the metric $\mathbb{W}_p$ for some $p\in [1,\infty)$. The former is a condition on the random measure at individual time points, and the latter is a condition on the measure at pairs of time points.
    
    Accordingly, we view such a measure-valued process as a ``single curve'' in the metric space $(P_{p,\Omega}(\mathcal{X}),\mathbb{W}_p)$, with its regularity always understood with respect to the metric $\mathbb{W}_p$.
    For clarity, all norms computed with respect to $\mathbb{W}_p$ will be denoted by $\Vert \cdot \Vert$, distinguishing them from norms $|\cdot|$ computed with respect to ${W}_p$.
    The observation below is useful.

    \begin{lemma}\label{lemma:Exp_balphap_mu} 
        Let $(\mu_t)_{t \in [0,1]} \subset P_{p,\Omega}(\mathcal{X})$ be a measure-valued stochastic process defined on some probability space $(\Omega, \mathcal{F}, \mathbb{P})$. Let $ p \in [1,\infty)$ and $\alpha \in (0,1)$. Then 
        \begin{equation}
            \Vert \mu \Vert^p_{b^{\alpha,p}} = \mathbb{E} \left[ |\mu|^p_{b^{\alpha,p}} \right],
        \end{equation}
        where both may be finite or infinite. 
    \end{lemma}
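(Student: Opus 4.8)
The plan is to reduce the claim to the identity $\mathbb{W}_p^p(\cdot,\cdot)=\mathbb{E}\big[W_p^p(\cdot,\cdot)\big]$ from \eqref{eq:mathbbW_p_Exp}, applied term by term to the dyadic series defining the Besov seminorm, and then to exchange the expectation with the (countable, nonnegative) double sum by Tonelli's theorem. The point is that $|\cdot|_{b^{\alpha,p}}$ is a \emph{discrete} object: with $t_k^{(m)}\coloneqq k/2^m$ as in \cref{def:balphap_regularity}, the seminorm of the path $t\mapsto\mu_t^\omega$ with respect to $W_p$ reads
\[
  |\mu^\omega|_{b^{\alpha,p}}^p=\sum_{m=0}^\infty 2^{m(\alpha p-1)}\sum_{k=0}^{2^m-1} W_p\big(\mu_{t_k^{(m)}}^\omega,\mu_{t_{k+1}^{(m)}}^\omega\big)^p ,
\]
whereas the seminorm of the curve $t\mapsto\mu_t$ with respect to $\mathbb{W}_p$ is, by definition,
\[
  \Vert\mu\Vert_{b^{\alpha,p}}^p=\sum_{m=0}^\infty 2^{m(\alpha p-1)}\sum_{k=0}^{2^m-1} \mathbb{W}_p\big(\mu_{t_k^{(m)}},\mu_{t_{k+1}^{(m)}}\big)^p .
\]

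First I would handle the measurability and well-definedness bookkeeping. Since there are only countably many dyadic points and each $\mu_{t_k^{(m)}}\in P_{p,\Omega}(\mathcal{X})$, relation \eqref{eq:E_p_moment_bold} yields a single $\mathbb{P}$-full-measure set on which $\mu_{t_k^{(m)}}^\omega\in P_p(\mathcal{X})$ for \emph{every} dyadic point simultaneously; on this set the first display is exactly \cref{def:balphap_regularity} applied to $t\mapsto\mu_t^\omega$ in $(P_p(\mathcal{X}),W_p)$ and all appearing $W_p$-distances are finite. Moreover each map $\omega\mapsto W_p\big(\mu_{t_k^{(m)}}^\omega,\mu_{t_{k+1}^{(m)}}^\omega\big)$ is $\mathcal{F}$-measurable (cf. \cite[Remark 3.20 (ii)]{Crauel2002}), hence so is $\omega\mapsto|\mu^\omega|_{b^{\alpha,p}}^p$ as a pointwise limit of finite sums of nonnegative measurable functions, so $\mathbb{E}\big[|\mu|_{b^{\alpha,p}}^p\big]\in[0,+\infty]$ is well-defined.

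Then comes the computation: I would apply \eqref{eq:mathbbW_p_Exp} to each summand, $\mathbb{W}_p\big(\mu_{t_k^{(m)}},\mu_{t_{k+1}^{(m)}}\big)^p=\mathbb{E}\big[W_p\big(\mu_{t_k^{(m)}},\mu_{t_{k+1}^{(m)}}\big)^p\big]$, substitute this into the expansion of $\Vert\mu\Vert_{b^{\alpha,p}}^p$, and, since every term is nonnegative, invoke Tonelli's theorem to pull the expectation out through the double sum over $(m,k)$, landing on $\Vert\mu\Vert_{b^{\alpha,p}}^p=\mathbb{E}\big[\sum_{m,k}2^{m(\alpha p-1)}W_p(\mu_{t_k^{(m)}},\mu_{t_{k+1}^{(m)}})^p\big]=\mathbb{E}\big[|\mu|_{b^{\alpha,p}}^p\big]$. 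The chain holds in $[0,+\infty]$, so both sides are finite or infinite together, which is exactly the asserted conclusion.

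I do not anticipate a genuine obstacle. The only care required is the measurability of $\omega\mapsto|\mu^\omega|_{b^{\alpha,p}}$ (handled above) and the observation that the interchange of sum and expectation needs \emph{only} nonnegativity, no integrability hypothesis — which is precisely why the statement is phrased so as to permit both sides to equal $+\infty$. It is also worth noting that this is the structural reason the paper favors $|\cdot|_{b^{\alpha,p}}$ over $|\cdot|_{W^{\alpha,p}}$: for the latter the analogous identity would require Tonelli over the continuum $[0,1]^2\times\Omega$ together with measurability of $\omega\mapsto|\mu^\omega|_{W^{\alpha,p}}$, while \cref{thm:Walphap_balphap_r_paper} allows transferring any needed consequence to the $W^{\alpha,p}$ seminorm up to the equivalence constants.
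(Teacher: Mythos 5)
Your proposal is correct and follows essentially the same route as the paper: a single null set handling all countably many dyadic times, the termwise identity $\mathbb{W}_p^p=\mathbb{E}[W_p^p]$ from \eqref{eq:mathbbW_p_Exp}, and an interchange of the nonnegative double sum with the expectation (the paper invokes Beppo Levi's lemma where you invoke Tonelli, which is the same justification here). The measurability remark for $\omega\mapsto|\mu^\omega|_{b^{\alpha,p}}^p$ also matches the paper's.
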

    \begin{remark}\label{rmk:Exp_Walphap_mu}
        Suppose $\Vert \mu \Vert^p_{W^{\alpha,p}} < + \infty$. 
        Without additional assumptions, we \emph{cannot} similarly state that 
        $$
        \Vert \mu \Vert^p_{W^{\alpha,p}} = \mathbb{E} \left[ |\mu|^p_{W^{\alpha,p}} \right]
        $$
        because the function $\omega \mapsto |\mu^\omega|^p_{W^{\alpha,p}}$ may \emph{not} be measurable. See further discussion on these measurability issues in \cite[Section 2.8]{Abedi2025paths}. These arise for the same reason that  $C(I;\mathcal{X}) \notin \mathcal{B}(\mathcal{X})^I$. See e.g. \cite{RogersWilliams2000_V1,StroockVaradhan2006,FrizVictoir2010book}.
    \end{remark}
    \begin{proof}[Proof of Lemma \ref{lemma:Exp_balphap_mu}]
        Let $D\coloneqq \bigcup_{m\in \mathbb{N}_0} D_m$ be the set of dyadic points of $[0,1]$, where 
        $$
        D_m \coloneqq \big\{ \tfrac{k}{2^m} : \,  k \in \{0,1,\cdots 2^m \} \big\}.
        $$
        For each $t \in [0,1]$, there is a $\mathbb{P}$-zero set outside of which $\mu_t^\omega \in P_p(\mathcal{X})$. Since $D$ is countable, we can say that there is a $\mathbb{P}$-zero set $\mathrm{Z}$ outside of which $\mu_t^\omega \in P_p(\mathcal{X})$ for all $t \in D$. We have
        \begin{align}
             \Vert\mu\Vert_{b^{\alpha,p}}^p
             & \coloneqq \lim_{M \to \infty} \sum_{m=0}^{M} 2^{m(\alpha p -1)} \sum_{k=0}^{2^m-1} \mathbb{W}_p^p (\mu_{t_{k}^{(m)}}, \mu_{t_{k+1}^{(m)}}) \\
             & = \lim_{M \to \infty} \int_{\Omega \setminus \mathrm{Z}} \sum_{m=0}^{M} 2^{m(\alpha p -1)} \sum_{k=0}^{2^m-1}  {W}_p^p (\mu^\omega_{t_{k}^{(m)}}, \mu^\omega_{t_{k+1}^{(m)}}) \, \d \mathbb{P} (\omega)\\
             & = \int_{\Omega \setminus \mathrm{Z}} \sum_{m=0}^{\infty} 2^{m(\alpha p -1)} \sum_{k=0}^{2^m-1} {W}_p^p (\mu^\omega_{t_{k}^{(m)}}, \mu^\omega_{t_{k+1}^{(m)}}) \, \d \mathbb{P} (\omega) = \mathbb{E} \left[ |\mu|^p_{b^{\alpha,p}} \right],
         \end{align}
         where we used Beppo Levi's lemma. In particular, we stress that the function $\omega \mapsto |\mu^\omega|^p_{b^{\alpha,p}}$ is measurable. 
    \end{proof}

 \subsection{Set of random lifts on continuous paths}\label{subsec:set_of_random_lifts}
    Given a Borel probability measure-valued process $(\mu_t)_{t \in I} \subset P_{\Omega} (\mathcal{X})$, indexed by $t \in I \coloneqq [0,T] \subset \mathbb{R}$ and defined on some probability space $(\Omega,\mathcal{F}, \mathbb{P})$, we define the following (possibly empty) set
    \begin{equation}
        \mathrm{Lift}_\Omega (\mu_t) \coloneqq \Big\{\pi \in P_\Omega(C(I;\mathcal{X})) : \quad (e_t)_{\#} \pi^\omega  = \mu_t^\omega \,\, \mathbb{P}\text{-a.s. } \text{for all } t \in I \Big\}.
    \end{equation}
    We study some properties of this set.

    \begin{lemma}\label{lemma:lift_property_convex_random}
        The set $\mathrm{Lift}_\Omega(\mu_t)$, provided it is non-empty, is convex. 
    \end{lemma}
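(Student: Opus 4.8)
The plan is to show that any convex combination of two random lifts is again a random lift. Fix $\pi_0,\pi_1 \in \mathrm{Lift}_\Omega(\mu_t)$ and $\lambda \in [0,1]$, and define $\pi_\lambda$ by $\pi_\lambda^\omega \coloneqq (1-\lambda)\pi_0^\omega + \lambda\,\pi_1^\omega$ for every $\omega \in \Omega$. Two things have to be checked: that $\pi_\lambda$ is a random probability measure on $C(I;\mathcal{X})$, and that its realizations have the prescribed one-dimensional time marginals $\mathbb{P}$-almost surely.

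For the first point I would verify the two defining properties in \cref{def:random_measure}. Measurability: for each $A \in \mathcal{B}(C(I;\mathcal{X}))$, the map $\omega \mapsto \pi_\lambda^\omega(A) = (1-\lambda)\pi_0^\omega(A) + \lambda\,\pi_1^\omega(A)$ is a linear combination of two $\mathcal{F}$-measurable functions, hence $\mathcal{F}$-measurable. Probability-measure property: outside the $\mathbb{P}$-null set on which $\pi_0^\omega$ or $\pi_1^\omega$ fails to be a Borel probability measure, $\pi_\lambda^\omega$ is a convex combination of two Borel probability measures on $C(I;\mathcal{X})$, hence again one. Thus $\pi_\lambda \in P_\Omega(C(I;\mathcal{X}))$.

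For the marginal condition I would use linearity of the pushforward: for every $t \in I$ and every $\omega$ one has $(e_t)_\# \pi_\lambda^\omega = (1-\lambda)(e_t)_\#\pi_0^\omega + \lambda\,(e_t)_\#\pi_1^\omega$, which if preferred can be checked by testing against $f \in C_b(\mathcal{X})$, since $\int f \d (e_t)_\#\pi_\lambda^\omega = \int (f\circ e_t)\d \pi_\lambda^\omega = (1-\lambda)\int(f\circ e_t)\d\pi_0^\omega + \lambda\int(f\circ e_t)\d\pi_1^\omega$. Now fix $t \in I$: by hypothesis there is a $\mathbb{P}$-full-measure set $\Omega_t$ on which both $(e_t)_\#\pi_0^\omega = \mu_t^\omega$ and $(e_t)_\#\pi_1^\omega = \mu_t^\omega$, and on $\Omega_t$ the identity above gives $(e_t)_\#\pi_\lambda^\omega = (1-\lambda)\mu_t^\omega + \lambda\,\mu_t^\omega = \mu_t^\omega$. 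As $t \in I$ was arbitrary, $(e_t)_\#\pi_\lambda^\omega = \mu_t^\omega$ $\mathbb{P}$-a.s.\ for all $t \in I$, i.e.\ $\pi_\lambda \in \mathrm{Lift}_\Omega(\mu_t)$, proving convexity.

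The argument is purely elementary and I do not anticipate a genuine obstacle; the only point I would state carefully is the order of quantifiers — the exceptional $\mathbb{P}$-null set may depend on $t$, exactly as in the definition of $\mathrm{Lift}_\Omega(\mu_t)$, so for each fixed $t$ one simply takes the union of the two null sets coming from $\pi_0$ and $\pi_1$. As noted in \cref{rmk:zero_set}, the behaviour of the realizations on $\mathbb{P}$-null sets plays no role here.
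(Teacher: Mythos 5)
Your proposal is correct and follows essentially the same route as the paper: form the convex combination $\pi_\lambda$ and verify, for each fixed $t$, that its time-$t$ marginal agrees with $\mu_t^\omega$ outside the union of the two null sets coming from $\pi_0$ and $\pi_1$. The only cosmetic difference is that the paper verifies the marginal identity by integrating test functions of the form $\mathds{1}_F(\omega)\phi(\gamma_t)$ and invoking \cite[Lemma 3.14]{Crauel2002}, whereas you argue pointwise in $\omega$ via linearity of the pushforward, which is equally valid (and you additionally spell out that $\pi_\lambda$ is a random probability measure, which the paper leaves implicit).
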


    \begin{proof}
        Let $\pi_0,\pi_1 \in \mathrm{Lift}_\Omega (\mu_t)$, take $\beta \in [0,1]$, and define the random path measure $\pi_{\beta} \coloneqq (1-\beta) \pi_0 + \beta \pi_1$. For a fixed $t \in I$, there is a $\mathbb{P}$-zero set $\mathrm{Z}_0$ outside of which $(e_t)_{\#} \pi_0^\omega  = \mu_t^\omega$ and $(e_t)_{\#} \pi_1^\omega  = \mu_t^\omega$. 
        Let $\phi \in \mathrm{Lip}_b (\mathcal{X})$ and $F \in  \mathcal{F}$  be arbitarty. We have
        \begin{multline}
            \int_{\Omega}\int_{\Gamma_T} \mathds{1}_{F} (\omega) \phi (\gamma_t) \d \pi_\beta^\omega (\gamma) \d \mathbb{P}(\omega) \\
             = (1-\beta) \int_{\Omega \setminus \mathrm{Z}_0}\int_{\Gamma_T} \mathds{1}_{F} (\omega)  \phi (\gamma_t) \d \pi_0^\omega (\gamma)  \d \mathbb{P}(\omega) + \beta \int_{\Omega \setminus \mathrm{Z}_0}\int_{\Gamma_T} \mathds{1}_{F} (\omega)  \phi (\gamma_t) \d \pi_1^\omega (\gamma)  \d \mathbb{P}(\omega) \\
             = \int_{\Omega} \int_{\mathcal{X}} \mathds{1}_{F} (\omega) \phi(x) \d \mu^\omega_t(x)  \d \mathbb{P}(\omega).
        \end{multline}
        This means (by \cite[Lemma 3.14]{Crauel2002}) that $(e_t)_{\#} \pi_\beta^\omega  = \mu_t^\omega$ a.s. and thus $\pi_\beta \in \mathrm{Lift}_\Omega (\mu_t)$.
    \end{proof}

    \begin{lemma}\label{lemma:lift_property_closed_random}
        The set $\mathrm{Lift}_\Omega(\mu_t)$, provided it is non-empty, is closed under narrow convergence of sequences.
    \end{lemma}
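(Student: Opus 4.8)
The plan is to mirror the argument used for convexity in \cref{lemma:lift_property_convex_random}, replacing the convex-combination bookkeeping by a passage to the limit. Let $(\pi_n) \subset \mathrm{Lift}_\Omega(\mu_t)$ with $\pi_n \to \pi$ narrowly in $P_\Omega(\Gamma_T)$; by definition of narrow convergence of random measures, $\pi$ is again an element of $P_\Omega(\Gamma_T)$, so it only remains to verify the marginal constraint $(e_t)_\#\pi^\omega = \mu_t^\omega$ $\mathbb{P}$-a.s. for each fixed $t \in I$.

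Fix $t \in I$, $\phi \in \mathrm{Lip}_b(\mathcal{X})$, and $F \in \mathcal{F}$, and consider the map $g\colon \Gamma_T \times \Omega \to \mathbb{R}$ given by $g(\gamma,\omega) \coloneqq \mathds{1}_F(\omega)\,\phi(\gamma_t)$. I would first check that $g \in C_{b,\Omega}(\Gamma_T)$: for fixed $\gamma$ the map $\omega \mapsto \mathds{1}_F(\omega)\phi(\gamma_t)$ is $\mathcal{F}$-measurable; for fixed $\omega$ the map $\gamma \mapsto \mathds{1}_F(\omega)\phi(\gamma_t)$ is continuous (as the composition of the continuous evaluation $e_t$ with $\phi$) and bounded by $\|\phi\|_\infty$; and $\lVert g\rVert_{\infty\times 1} \le \|\phi\|_\infty < \infty$. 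In fact $g(\cdot,\omega)$ is Lipschitz for the supremum distance with constant $\le \mathrm{Lip}(\phi)$, so $g \in \mathrm{Lip}_{b,\Omega}(\Gamma_T)$. Since $\pi_n \in \mathrm{Lift}_\Omega(\mu_t)$ gives $(e_t)_\#\pi_n^\omega = \mu_t^\omega$ $\mathbb{P}$-a.s., for every $n$ we have
\[
\int_\Omega \int_{\Gamma_T} g(\gamma,\omega)\,\d\pi_n^\omega(\gamma)\,\d\mathbb{P}(\omega)
= \int_\Omega \mathds{1}_F(\omega)\int_{\mathcal{X}} \phi\,\d\mu_t^\omega\,\d\mathbb{P}(\omega),
\]
a quantity independent of $n$. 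Letting $n \to \infty$ and using narrow convergence $\pi_n \to \pi$ tested against $g$ yields
\[
\int_\Omega \mathds{1}_F(\omega)\int_{\mathcal{X}} \phi\,\d\big((e_t)_\#\pi^\omega\big)\,\d\mathbb{P}(\omega)
= \int_\Omega \mathds{1}_F(\omega)\int_{\mathcal{X}} \phi\,\d\mu_t^\omega\,\d\mathbb{P}(\omega).
\]
As $F \in \mathcal{F}$ is arbitrary, $\int \phi\,\d\big((e_t)_\#\pi^\omega\big) = \int\phi\,\d\mu_t^\omega$ for $\mathbb{P}$-a.e.\ $\omega$; applying this to a countable measure-determining family $\{\phi_k\}\subset\mathrm{Lip}_b(\mathcal{X})$ (available since $\mathcal{X}$ is separable) and intersecting the corresponding full sets gives $(e_t)_\#\pi^\omega = \mu_t^\omega$ $\mathbb{P}$-a.s.\ --- equivalently, one invokes \cite[Lemma 3.14]{Crauel2002} exactly as in the proof of \cref{lemma:lift_property_convex_random}. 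Carrying this out for each $t \in I$ shows $\pi \in \mathrm{Lift}_\Omega(\mu_t)$.

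The argument is essentially routine; the only points that require care are (i) verifying that the test function $g$ lies in $C_{b,\Omega}(\Gamma_T)$, so that narrow convergence of random measures is directly applicable, and (ii) observing that the marginal constraint is imposed $t$-by-$t$, so one never needs a single $\mathbb{P}$-full set valid simultaneously for all $t$, which would be an issue for uncountable $I$. I do not expect a genuine obstacle here.
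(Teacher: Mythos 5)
Your proposal is correct and follows essentially the same route as the paper's proof: testing against $(\gamma,\omega)\mapsto \mathds{1}_F(\omega)\,\phi(\gamma_t)\in C_{b,\Omega}(\Gamma_T)$, using that the integrals against $\pi_n$ are constant in $n$ because each $\pi_n$ is a lift, passing to the limit by narrow convergence, and concluding via \cite[Lemma 3.14]{Crauel2002}. The only (harmless) cosmetic difference is that you spell out the membership $g\in\mathrm{Lip}_{b,\Omega}(\Gamma_T)$ and offer the countable measure-determining family as an alternative to citing that lemma.
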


    \begin{proof}
        Take a sequence $(\pi_n)_{n \in \mathbb{N}} \subset \mathrm{Lift}_\Omega(\mu_t)$ with $\pi_n \to \pi $ narrowly on $P_\Omega (\Gamma_T)$. Fix $t \in I$ and note that there is a $\mathbb{P}$-zero set $\mathrm{Z}_0$ outside of which $(e_t)_{\#} \pi_n^\omega  = \mu_t^\omega$ for all $n \in \mathbb{N}$. Let $\phi \in \mathrm{Lip}_b (\mathcal{X})$ and $F \in  \mathcal{F}$  be arbitarty and note that $(\gamma,\omega) \mapsto \mathds{1}_{F} (\omega) \phi \circ e_t (\gamma) \in C_{b,\Omega}(\Gamma_T)$. Thus, we have 
        \begin{align}
             \int_{\Omega} \int_{\Gamma_T} \mathds{1}_{F} (\omega) \phi \circ e_t (\gamma) \d \pi^\omega (\gamma) \d \mathbb{P} (\omega)
             & = \lim_{n \to \infty} \int_{\Omega} \int_{\Gamma_T} \mathds{1}_{F} (\omega) \phi \circ e_t (\gamma) \d \pi_n^\omega (\gamma) \d \mathbb{P} (\omega) \\
             & = \lim_{n \to \infty} \int_{\Omega \setminus \mathrm{Z}_0} \mathds{1}_{F} (\omega ) \left(\int_{\Gamma_T}  \phi (\gamma_t) \d \pi_n^\omega (\gamma) \right) \d \mathbb{P} (\omega) \\
             & = \int_{\Omega } \int_{\mathcal{X}} \mathds{1}_{F} (\omega ) \phi (x) \d \mu_t^\omega (x)  \d \mathbb{P} (\omega),
        \end{align}
        which implies (by \cite[Lemma 3.14]{Crauel2002}) that $(e_t)_{\#} \pi^\omega  = \mu_t^\omega$ a.s. and thus $\pi \in \mathrm{Lift}_\Omega (\mu_t)$. 
    \end{proof}

\subsection{The \texorpdfstring{$\nu$}{nu}-based Wasserstein distance}\label{subsec:optimal_transport_nu_based}
    Let $\mathcal{X}$ be a separable Hilbert space and $P(\mathcal{X})$ be the set of Borel probability measures on $\mathcal{X}$.
    As before, let $P_p(\mathcal{X})$, $p\geq 1$, be the subset of measure with finite $p$-moment. In addition, let $P^r(\mathcal{X})$ be the subset of regular measures (\cite[Definition 6.2.2]{AGS2008GFs}), and set $P_p^r(\mathcal{X}) \coloneqq P_p(\mathcal{X}) \cap P^r(\mathcal{X})$.
    We recall that in the case $\mathcal{X} = \mathbb{R}^{\mathrm{d}}$, the subset $P^r(\mathcal{X})$ coincides with $P^{ac}(\mathcal{X})$, the set of measures that are absolutely continuous with respect to the Lebesgue measure. For $p > 1$, the solution to the optimal transport problem from $\nu \in P^r_p(\mathcal{X})$ to $\mu \in P_p(\mathcal{X})$ for the cost function $|x-y|^p$ is given by a unique transport map, which we will denote by $T_\nu^\mu$ (\cite[Theorem 6.2.10]{AGS2008GFs}). 
    
    \begin{definition}[$\nu$-based Wasserstein distance]
        Let $\mathcal{X}$ be a separable Hilbert space and $\nu \in P_p^r (\mathcal{X})$ be a reference measure with $p >1$. For any $\mu_0,\mu_1 \in P_p(\mathcal{X})$, define
        \begin{equation}\label{eq:def_Wpnu}
            W_{p,\nu} (\mu_0,\mu_1) \coloneqq \left( \int_{\mathcal{X}} \big| T_{\nu}^{\mu_1} (y) - T_{\nu}^{\mu_0} (y) \big|^p \d \nu(y) \right)^{1/p}. 
        \end{equation}
    \end{definition}
    This induces a geodesic, called ``generalized geodesic'' between $\mu_0$ and $\mu_1$ with base $\nu$: \begin{equation}\label{eq:def_generalized_geodesic}
        \mu_t \coloneqq \left( (1-t) T_{\nu}^{\mu_0} + t T_{\nu}^{\mu_1} \right)_{\#} \nu, \qquad t \in [0,1],
    \end{equation}
    A recent study of this metric is conducted by \cite{NennaPass2023}. 

\subsection{Wasserstein distance between measures on \texorpdfstring{$\mathbb{R}$}{R}}\label{subsec:optimal_transport_R1}
    The solution to the optimal transport problem in $\mathbb{R}$ for a wide class of convex cost functions can be expressed in terms of cumulative distribution functions (CDF).
    The CDF of a given $\mu \in P(\mathbb{R})$ is defined by
    \begin{equation}\label{eq:CDF}
        F_{\mu} (x) \coloneqq \mu\big((-\infty,x]\big), \quad x \in \mathbb{R},
    \end{equation}
    which is a right-continuous function.
    We also recall the \emph{generalized} inverse of $F_{\mu}$ defined by 
    \begin{equation}\label{eq:CDF_inverse}
        F_{\mu}^{-1} (\alpha) \coloneqq \inf \{ x \in \mathbb{R} | F_{\mu} (x) \geq \alpha \},
    \end{equation}
    which is a left-continuous function. This is also called the left-continuous inverse of $F_{\mu}$ to distinguish it from the other definition, right-continuous inverse, in which $\geq$ above is replaced with $>$. We shall adopt the convention above. Note that 
    $$
    (F_\mu)_{\#} \mu = \mathrm{Leb}|_{[0,1]} \quad \text{and} \quad (F_\mu^{-1})_{\#} \mathrm{Leb}|_{[0,1]} = \mu 
    $$
    The following result is traced back to Hoeffding and Fr\'echet and a proof (for general convex cost functions) can be found in \cite[Theorem 2.18]{Villani2003} and \cite[Theorem 6.0.2]{AGS2008GFs}. 
    
    \begin{theorem}
        Let $\mu, \nu \in P_p(\mathbb{R})$ with $1 \leq p < \infty$.
        Then 
        \begin{equation}
            W_p (\mu,\nu)  = \left(\int_{[0,1]} \big|F_{\mu}^{-1}(\alpha) - F_{\nu}^{-1}(\alpha) \big|^p \d \alpha  \right)^{1/p},
        \end{equation}
        and an optimal coupling is
        \begin{equation}
        \Upsilon \coloneqq (F_{\mu}^{-1},F_{\nu}^{-1})_{\#}\mathrm{Leb}|_{[0,1]},
        \end{equation}
        which is unique if $1<p$. Furthermore, if $\mu$ has no atom, i.e., $F_{\mu}$ is continuous, then 
        \begin{equation}
            T \coloneqq F_{\nu}^{-1} \circ F_{\mu}
        \end{equation}
        is an optimal transport map from $\mu$ onto $\nu$ and it is again unique if $1<p$. 
    \end{theorem}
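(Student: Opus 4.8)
The plan is to prove that the quantile (monotone) coupling $\Upsilon$ is optimal by the classical Hoeffding--Fr\'echet rearrangement argument, and then to read off the cost formula and the Monge map from the elementary properties of generalized inverses recalled just above.

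\textbf{Step 1 (admissibility and upper bound).} First I would note that $\Upsilon$ is admissible: since $(F_\mu^{-1})_\#\mathrm{Leb}|_{[0,1]}=\mu$ and $(F_\nu^{-1})_\#\mathrm{Leb}|_{[0,1]}=\nu$, composing with the two coordinate projections gives $(\mathrm{Pr}^1)_\#\Upsilon=\mu$, $(\mathrm{Pr}^2)_\#\Upsilon=\nu$, so $\Upsilon\in\mathrm{Cpl}(\mu,\nu)$; moreover $\int_0^1|F_\mu^{-1}|^p\,\d\alpha=\int_{\mathbb{R}}|x|^p\,\d\mu<\infty$ and likewise for $\nu$, so $F_\mu^{-1},F_\nu^{-1}\in L^p([0,1])$. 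By the change-of-variables formula for pushforwards, $\int_{\mathbb{R}^2}|x-y|^p\,\d\Upsilon=\int_{[0,1]}|F_\mu^{-1}(\alpha)-F_\nu^{-1}(\alpha)|^p\,\d\alpha$, which already yields $W_p^p(\mu,\nu)\le\int_{[0,1]}|F_\mu^{-1}-F_\nu^{-1}|^p\,\d\alpha$.

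\textbf{Step 2 (core: optimality).} For the reverse inequality I would exploit that the cost $c(x,y)=|x-y|^p=h(x-y)$ with $h=|\cdot|^p$ convex is \emph{submodular}: whenever $x\le x'$ and $y\le y'$ one has $c(x,y)+c(x',y')\le c(x,y')+c(x',y)$, with strict inequality when $x<x'$, $y<y'$ and $p>1$. An optimal coupling $\gamma^\star$ exists (the infimum defining $W_p$ is attained, by lower semicontinuity of $\gamma\mapsto\int c\,\d\gamma$ under narrow convergence together with tightness of $\mathrm{Cpl}(\mu,\nu)$, by the same standard argument underlying the existence of $\mathrm{OptCpl}(\mu,\nu)$), and every optimal coupling is concentrated on a $c$-cyclically monotone, in particular $2$-monotone, set. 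Combining $2$-monotonicity of the support of $\gamma^\star$ with submodularity forces (for $p>1$ via strict submodularity; for $p=1$ after a zero-cost modification) the support to lie in the graph of a nondecreasing relation. A coupling with nondecreasing support and marginals $\mu,\nu$ is unique and coincides with $\Upsilon$ — this is the lemma that pins down the monotone rearrangement. Hence $\Upsilon$ is optimal, and when $p>1$ strict submodularity makes it the \emph{unique} optimal coupling. For $p=1$ the lower bound can alternatively be obtained transparently from the Fr\'echet--Hoeffding bound $\gamma((-\infty,t]^2)\le\min(F_\mu(t),F_\nu(t))$ together with $|x-y|=\int_{\mathbb{R}}|\mathds{1}_{\{x\le t\}}-\mathds{1}_{\{y\le t\}}|\,\d t$ and Fubini, after the area identity $\int_{\mathbb{R}}|F_\mu-F_\nu|\,\d t=\int_{[0,1]}|F_\mu^{-1}-F_\nu^{-1}|\,\d\alpha$.

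\textbf{Step 3 (Monge map).} Finally I would convert $\Upsilon$ into a transport map when $\mu$ is atomless. Then $F_\mu$ is continuous, $(F_\mu)_\#\mu=\mathrm{Leb}|_{[0,1]}$, $F_\mu^{-1}(F_\mu(x))=x$ for $\mu$-a.e. $x$, and $F_\mu(F_\mu^{-1}(\alpha))=\alpha$ for Lebesgue-a.e. $\alpha$. Using these, $(F_\mu^{-1},F_\nu^{-1})_\#\mathrm{Leb}|_{[0,1]}=(\mathrm{id},F_\nu^{-1}\circ F_\mu)_\#(F_\mu^{-1})_\#\mathrm{Leb}|_{[0,1]}=(\mathrm{id},T)_\#\mu$ with $T=F_\nu^{-1}\circ F_\mu$; hence $T_\#\mu=\nu$ and the graph of $T$ carries $\Upsilon$, so $T$ is an optimal map, its uniqueness for $p>1$ following from that of $\Upsilon$ (any optimal map induces an optimal coupling). \textbf{Expected main obstacle:} the substantive point is Step 2 — securing existence of an optimal plan on the non-compact space $\mathbb{R}$, invoking $c$-cyclical monotonicity of its support, and turning submodularity plus $2$-monotonicity into the rigidity statement that the optimal plan is precisely the monotone coupling, including the small lemma that a coupling supported in a nondecreasing graph is uniquely determined by its marginals; the rest is bookkeeping with generalized inverses. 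Since this is the classical Hoeffding--Fr\'echet theorem, one may instead simply cite \cite[Theorem 2.18]{Villani2003} or \cite[Theorem 6.0.2]{AGS2008GFs}.
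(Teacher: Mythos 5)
Your proposal is correct, but it is worth noting that the paper does not prove this statement at all: it is recalled as a classical result, with the proof delegated to the cited references (\cite[Theorem 2.18]{Villani2003} and \cite[Theorem 6.0.2]{AGS2008GFs}), exactly as you anticipate in your closing sentence. What you supply beyond that is a self-contained sketch of the standard Hoeffding--Fr\'echet argument, and it holds together: Step 1 (admissibility of the quantile coupling and the change-of-variables identity for the cost) is routine; Step 2 correctly isolates the substantive content, namely existence of an optimal plan on the non-compact line, $c$-cyclical monotonicity of its support, the submodularity inequality $h(x-y)+h(x'-y')\le h(x-y')+h(x'-y)$ for convex $h$ (strict for $p>1$ when $x<x'$, $y<y'$), and the rigidity lemma that a coupling with nondecreasing support is determined by its marginals; and your separate treatment of $p=1$ via the Fr\'echet--Hoeffding bound and the area identity is the right way to handle the loss of strict convexity there, consistent with the theorem only asserting uniqueness for $p>1$. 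Step 3 (passing to the Monge map $T=F_\nu^{-1}\circ F_\mu$ when $\mu$ is atomless, using $F_\mu^{-1}(F_\mu(x))=x$ for $\mu$-a.e.\ $x$) is also standard and correct. In short, your route is a genuine proof where the paper offers only a citation; the trade-off is that the paper's choice keeps this preliminary section short, while yours would make the statement self-contained at the cost of importing the existence and cyclical-monotonicity machinery for optimal plans.
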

    Finally, let us observe that the Wasserstein distance between measures on $\mathbb{R}$ can be regarded as $\mathrm{Leb}|_{[0,1]}$-based Wasserstein distance, as defined in \eqref{eq:def_Wpnu}.

    \subsection{Compatibility of random probability measure on \texorpdfstring{$\mathbb{R}$}{R} with finite \texorpdfstring{$p$}{p}-moments}

    \begin{remark}[Any collection $\mathcal{M} \subset P_{p,\Omega}(\mathbb{R})$ is compatible in the sense of Definition \ref{def:compatibility_random}]
        	Let $\mathcal{M} \subset P_{p,\Omega}(\mathbb{R})$ be an arbitrary collection, and let $\{\mu_i\}_{1\leq i\leq N} \subset \mathcal{M}$ be a finite subcollection. For each $i$, denote by $F_{\mu_i^\omega}$ the CDF of $\mu_i^\omega$, and by $F_{\mu_i^\omega}^{-1}$ its generalized inverse.  We define the following  multi-coupling
	 $$
	 \Upsilon^\omega
	 \coloneqq
	 \bigl(F_{\mu_1^\omega}^{-1},F_{\mu_2^\omega}^{-1},\ldots,F_{\mu_N^\omega}^{-1}\bigr)_{\#}\mathrm{Leb}|_{[0,1]},
	 $$
	 outside a $\mathbb{P}$-zero set.
	 Since $\mu_i$ is a random probability measure on $\mathbb{R}$, one can show that the map $(\omega,\alpha)\mapsto F_{\mu_i^\omega}^{-1}(\alpha)$ is measurable on $\Omega \times (0,1)$ (see the claim below). Hence $\Upsilon$ is a random probability measure on $\mathbb{R}^N$. For every $i,j\in\{1,\ldots,N\}$, the $(i,j)$-marginal of $\Upsilon^\omega$ is given by
	 $$
	 (\mathrm{Pr}^{i,j})_{\#}\Upsilon^\omega
	 =
	 \bigl(F_{\mu_i^\omega}^{-1},F_{\mu_j^\omega}^{-1}\bigr)_{\#}\mathrm{Leb}|_{[0,1]},
	$$
	which, as discussed in the previous section, is an optimal coupling for $(\mu_i^\omega,\mu_j^\omega)$  for $\mathbb{P}$-a.e. $\omega\in\Omega$. Therefore all two-dimensional marginals of $\Upsilon^\omega$ are optimal for the corresponding pairs of marginals, $\mathbb{P}$-a.s. This shows that $\mathcal{M}$ is compatible in $P_{p,\Omega}(\mathbb{R})$ in the sense of Definition \ref{def:compatibility_random}. It remains to verify the claim.
    
	\begin{itemize}[leftmargin=1.5em] 
	\item[] \textit{Claim.}
	Let $\mu \in P_{\Omega}(\mathbb{R})$. The map $(\omega,\alpha) \mapsto F_{\mu^\omega}^{-1}(\alpha)$ is measurable on $\Omega \times (0,1)$.
	
	\item[] Indeed, for every $q \in \mathbb{Q}$, the map
	$ \omega \mapsto F_{\mu^\omega}(q)=\mu^\omega((-\infty,q]) $
	is measurable by the definition of a random probability measure. Hence the map
	$(\omega,\alpha) \mapsto F_{\mu^\omega}(q)-\alpha$
	is measurable on $\Omega \times (0,1)$. Now, for every $r \in \mathbb{R}$, we have
	$$
	\left\{(\omega,\alpha): F_{\mu^\omega}^{-1}(\alpha)<r\right\}
	=
	\bigcup_{\substack{q\in\mathbb{Q}\\ q<r}}
	\left\{(\omega,\alpha): F_{\mu^\omega}(q)\geq \alpha\right\}.
	$$
	The right-hand side is measurable, since it is a countable union of measurable sets. Therefore,
	$(\omega,\alpha) \mapsto F_{\mu^\omega}^{-1}(\alpha)$
	is measurable. We may define the values at $\alpha=0$ and $\alpha=1$ arbitrarily, since these two points do not affect the push-forward with respect to $\mathrm{Leb}|_{[0,1]}$.
	\end{itemize}
    \end{remark}

    \subsection{Further remarks} In this paper, we frequently require a geodesic selection and interpolation map: given finitely many points, such a map selects constant-speed geodesics between consecutive points and interpolates between them. Here we discuss the measurability of such a map. It is enough to discuss the two-point case, where one seeks a selection map
	$\ell : \mathcal{X}^2 \to Geo([0,1];\mathcal{X})$. 
	In general, such a selection map need \emph{not} be Borel measurable.
	In the author's previous work in the deterministic setting, for a complete separable geodesic metric space $(\mathcal{X},d)$ and a fixed measure $\Upsilon \in P(\mathcal{X}^2)$, we discuss in \cite[Remark 2.47]{Abedi2025paths} that one can find a map $\ell$ measurable with respect to the $\Upsilon$-completion of $\mathcal{B}(\mathcal{X}^2)$. This was sufficient to define the push-forward measure $\pi \coloneqq (\ell)_{\#}\Upsilon$. In the present random setting, however, we deal with random measures $\omega \mapsto \Upsilon^\omega$, and need a slightly stronger statement, namely the existence of a \emph{Borel} measurable selection map. The next remark provides this under the additional \emph{local compactness} assumption on $(\mathcal{X},d)$.

    \begin{remark}[Measurable selection of geodesics under local compactness]\label{rmk:measurable_geodesic_selection}
    Let $(\mathcal{X},d)$ be a complete, separable, and locally compact length metric space. We show that there exists a Borel measurable geodesic selection map $\ell : \mathcal{X}^2 \to Geo([0,1];\mathcal{X})$.
	\\
	By the Hopf–Rinow Theorem, $\mathcal{X}$ is a geodesic space.
	We thus define the following (non-empty) set 
	\begin{equation}
		\mathcal{G} \coloneqq \Big\{((x,y),\gamma) \in \mathcal{X}^2\times C([0,1];\mathcal{X}): \quad  \gamma \in Geo ([0,1];\mathcal{X}) , \gamma_0= x, \gamma_1 = y \Big\}.
	\end{equation}
	This set is closed (see e.g. \cite[Remark 2.47]{Abedi2025paths}), in particular,  it is Borel-measurable, i.e., it belongs to $\mathcal{B}(\mathcal{X}^2\times C([0,1];\mathcal{X}))$. 
	Given arbitrary points $(x,y) \in \mathcal{X}^2$, we define
	\begin{equation}
			\mathcal{G}_{(x,y)} \coloneqq \Big\{ \gamma \in C([0,1];\mathcal{X}): \quad ((x,y),\gamma) \in \mathcal{G} \Big\}.
	\end{equation}
	This set is nonempty. Moreover, we claim that it is compact. Indeed, let $\bar{x} \in \mathcal{X}$ be a fixed reference point, and note that firstly 
	\begin{equation}
		\sup_{\gamma \in \mathcal{G}_{(x,y)}} d (\gamma_0,\bar{x}) = d(x,\bar{x}) < + \infty. 
	\end{equation}
	and, secondly, every $\gamma \in \mathcal{G}_{(x,y)}$ satisfies
	\begin{equation}
		d (\gamma_t,\gamma_s) = d (x,y) |t-s| \qquad \forall t,s \in [0,1],
	\end{equation}
	and hence the family $\mathcal{G}_{(x,y)}$ is equicontinuous.
	By Arzel\`a-Ascoli theorem (note that by the Hopf–Rinow theorem, the closed bounded sets in $\mathcal{X}$ are compact), the two conditions above imply that $\mathcal{G}_{(x,y)}$ is relatively compact in $C([0,1];\mathcal{X})$. To see that it is in fact compact, take a sequence $(\gamma^n) \subset \mathcal{G}_{(x,y)}$. By relative
	compactness, we know that there is a convergent subsequence $\gamma^{n_k} \to \gamma$ in $C([0,1];\mathcal{X})$. Then clearly $\gamma_0=x$ and $\gamma_1=y$, and for any $t,s \in [0,1]$
	$$
	d (\gamma_t,\gamma_s) = \lim_{k \to \infty} d (\gamma^{n_k}_t,\gamma^{n_k}_s) = \lim_{k \to \infty} d (x,y) |t-s| = d (x,y) |t-s|
	$$
	which means that $\gamma \in \mathcal{G}_{(x,y)}$. Therefore, $\mathcal{G}_{(x,y)}$ is compact, in particular, $\sigma$-compact. By \cite[Theorem 6.9.6]{Bogachev2007MeasureTheory}, it follows that $\mathcal{G}$ contains the graph of a Borel map
	$
	\ell : \mathcal{X}^2 \to C([0,1];\mathcal{X}).
	$
	By construction, $\ell(x,y) \in Geo([0,1];\mathcal{X})$, $\ell(x,y)_0=x$, and $\ell(x,y)_1=y$ for every $(x,y)\in\mathcal{X}^2$. Thus $\ell$ is a Borel measurable geodesic selection map.
    \end{remark}

\section{Main results}\label{sec:results_s}
\subsection{Existence of a random minimizer for the variational problem}
    We consider Problem \ref{prob:variational_problem_random}, as formulated in the introduction.
    \begin{proposition}[Existence of a random minimizer]\label{prop:existence_of_minimizer_random}
         Let $(\mathcal{X}, d)$ be a complete separable metric space, $(\Omega, \mathcal{F},\mathbb{P})$ be a probability space, and $I \coloneqq [0,T] \subset \mathbb{R}$.
         Let $\Psi: C(I;\mathcal{X}) \to [0,+\infty]$ be a map whose sublevels are compact in $C(I;\mathcal{X})$.
         Assume that the infimum  \eqref{eq:variational_problem_random} is finite. Then there exists a random minimizer $\pi \in P_\Omega(C(I;\mathcal{X}) ) $ to Problem \ref{prob:variational_problem_random}. 
    \end{proposition}    

    \begin{proof}
        Let $(\pi_n)_{n \in \mathbb{N}} \subset \mathrm{Lift}_\Omega (\mu_t)$ be a minimizing sequence, i.e.,
        \begin{equation}
            \lim_{n \to \infty} \mathbb{E} \left[\int_{\Gamma_T} \Psi \d \pi_n \right]= \inf_{\pi \in \mathrm{Lift}_\Omega (\mu_t)}  \mathbb{E} \left[\int_{\Gamma_T} \Psi \d \pi \right] \eqqcolon M < + \infty.
        \end{equation}
        In particular, we have 
        \begin{equation}\label{eq:proof_existence_minimizer_random}
            \sup_{n} \, \mathbb{E} \left[ \int_{\Gamma_T}  \Psi \d \pi_n \right] < + \infty.
        \end{equation}
        Since the sublevels of $\Psi$ are compact, the condition \eqref{eq:proof_existence_minimizer_random} implies that the family of random measures $\{\pi_n \}_{n} \subset P_\Omega (\Gamma_T)$ is tight by Lemma \ref{lemma:tightness_criterion_random}.
        By Prokhorov's theorem in the random setting (Theorem \ref{thm:Prokhorov_random}), there exists a subsequence $\{\pi_{n_k} \}_{k \in \mathbb{N}}$ such that $\pi_{n_k} \to \pi$ narrowly on $P_\Omega(\Gamma_T)$ as $k \to \infty$. By Lemma \ref{lemma:lift_property_closed_random}, we conclude that $ \pi \in \mathrm{Lift}_\Omega (\mu_t)$ and thus $
        \mathbb{E}[\int_{\Gamma_T} \Psi \d \pi ] \geq M$. On the other hand, since $\Psi$ is in particular lower semi-continuous, we apply \eqref{eq:narrow_conv_lsc_func_random} and get the reverse inequality 
        \begin{equation}
            \mathbb{E} \left[\int_{\Gamma_T} \Psi \d \pi \right] \leq \liminf_{k \to \infty } \mathbb{E} \left[ \int_{\Gamma_T} \Psi \d \pi_{n_k} \right] = M. 
        \end{equation}
        Therefore, $\mathbb{E} [\int_{\Gamma_T} \Psi \d \pi ] = M$ holds and thus $\pi$ is a minimizer.
    \end{proof}

\subsection{From random path measures to random Wasserstein curves}\label{subsec:from_pi_to_mu_random}
    In this section, we start with a random path measure $\pi \in P_\Omega(C([0,T];\mathcal{X}))$ whose energy, with respect to certain regularity, is finite in expectation. 
    We then look at its one-dimensional time marginals $\mu_t^\omega \coloneqq (e_t)_{\#} \pi^\omega \in P(\mathcal{X})$ defined for all $t \in [0,T]$ and for almost all $\omega \in \Omega$. This induces a new random measure $\mu_t \in P_{\Omega} (\mathcal{X})$ for all $t \in [0,T]$, as the evaluation map $e_t : \Gamma_T \to \mathcal{X}$ is measurable.
    It turns out that in all three cases below, the sample paths $t \mapsto \mu_t^\omega$ lie in the  Wasserstein space $P_p(\mathcal{X})$ a.s. and have certain regularity with respect to the distance $W_p$. In addition, $t \mapsto \mu_t$ lies in  $P_{p,\Omega}(\mathcal{X})$ and has certain regularity with respect to the distance $\mathbb{W}_p$.
    The results of this section essentially rely on \cite[Section 3.2]{Abedi2025paths} and generalize it to the random setting. 
    
    \begin{theorem}\label{thm:lift_to_mu_Hol_random}
        Let $(\mathcal{X},d)$ be a complete separable metric space, and $(\Omega, \mathcal{F}, \mathbb{P})$ be a probability space.
        Let $\pi \in P_\Omega(C([0,T];\mathcal{X}))$ satisfy
         \begin{equation}
            \mathbb{E} \left[ \int_{\Gamma_T} \Big( d(\gamma_0,\bar{x})^p + | \gamma |_{\theta\textrm{-}\mathrm{H\ddot{o}l};[0,T]}^p \Big) \d \pi(\gamma) \right] < + \infty
        \end{equation}
        for some $p \in [1,\infty)$ and $\theta\in(0,1]$ and $\bar{x}\in \mathcal{X}$.
        Then, for $\mathbb{P}$-a.e. $\omega \in \Omega$, the curve $t \mapsto \mu_t^\omega \coloneqq {(e_t)}_{\#} \pi^\omega $ is in $ C^{\theta\textrm{-} \mathrm{H\ddot{o}l}} ([0,T];P_p(\mathcal{X}))$ with
        \begin{equation}\label{eq:lift_to_mu_Hol_random_ineq_pointwise}
            |\mu^\omega|_{\theta\textrm{-}\mathrm{H\ddot{o}l};[s,t]}^p \leq \int_{\Gamma_T} |\gamma|_{\theta\textrm{-}\mathrm{H\ddot{o}l};[s,t]}^p \d \pi^\omega (\gamma)
        \end{equation}
        for any $0\leq s<t\leq T$. In addition, $ (\mu_t) \in  C^{\theta\textrm{-} \mathrm{H\ddot{o}l}} ([0,T];P_{p,\Omega}(\mathcal{X}))$ with
        \begin{equation}\label{eq:lift_to_mu_Hol_random_ineq_exp}
            \Vert\mu\Vert_{\theta\textrm{-}\mathrm{H\ddot{o}l};[s,t]}^p \leq \mathbb{E}\left[ \int_{\Gamma_T} |\gamma|_{\theta\textrm{-}\mathrm{H\ddot{o}l};[s,t]}^p \d \pi (\gamma)\right]
        \end{equation}
        for any $0\leq s<t\leq T$.
    \end{theorem}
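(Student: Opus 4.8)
The plan is to prove the pointwise statement for $\mathbb{P}$-a.e.\ fixed $\omega$ first, essentially by the deterministic argument of \cite[Section 3.2]{Abedi2025deterministic}, and then integrate in $\omega$ to obtain the statement for the random curve. The only analytic ingredient is the trivial bound $d(\gamma_u,\gamma_v)\le |v-u|^{\theta}\,|\gamma|_{\theta\textrm{-}\mathrm{H\ddot{o}l};[u,v]}$ together with the observation that $(e_u,e_v)_{\#}\pi^\omega$ is a (generally non-optimal) coupling of $\mu_u^\omega$ and $\mu_v^\omega$. Throughout I would be careful to fix \emph{one} $\mathbb{P}$-full set on which all the (uncountably many) pointwise estimates hold simultaneously.

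\emph{Step 1 (reduction to a good $\omega$).} The map $\gamma\mapsto |\gamma|_{\theta\textrm{-}\mathrm{H\ddot{o}l};[s,t]}$ is a supremum of the continuous functions $\gamma\mapsto d(\gamma_u,\gamma_v)/|v-u|^{\theta}$ over $s\le u<v\le t$, hence lower semicontinuous and in particular Borel on $\Gamma_T$; consequently $\omega\mapsto\int_{\Gamma_T}|\gamma|_{\theta\textrm{-}\mathrm{H\ddot{o}l};[s,t]}^p\,\d\pi^\omega(\gamma)$ is measurable (cf.\ \cite[Proposition 3.3]{Crauel2002}) and Tonelli applies. Since the expectation in the hypothesis is finite, I fix a single $\mathbb{P}$-full set $\Omega_\circ$ on which $\pi^\omega\in P(\Gamma_T)$ and $\int_{\Gamma_T}\big(d(\gamma_0,\bar x)^p+|\gamma|_{\theta\textrm{-}\mathrm{H\ddot{o}l};[0,T]}^p\big)\,\d\pi^\omega(\gamma)<\infty$.

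\emph{Step 2 (pointwise H\"older estimate).} Fix $\omega\in\Omega_\circ$ and $[u,v]\subset[s,t]\subset[0,T]$. Since $(e_u,e_v)_{\#}\pi^\omega\in\mathrm{Cpl}(\mu_u^\omega,\mu_v^\omega)$,
\begin{equation*}
W_p^p(\mu_u^\omega,\mu_v^\omega)\le\int_{\Gamma_T} d(\gamma_u,\gamma_v)^p\,\d\pi^\omega(\gamma)\le |v-u|^{\theta p}\int_{\Gamma_T}|\gamma|_{\theta\textrm{-}\mathrm{H\ddot{o}l};[u,v]}^p\,\d\pi^\omega(\gamma)\le |v-u|^{\theta p}\int_{\Gamma_T}|\gamma|_{\theta\textrm{-}\mathrm{H\ddot{o}l};[s,t]}^p\,\d\pi^\omega(\gamma),
\end{equation*}
using monotonicity of $|\gamma|_{\theta\textrm{-}\mathrm{H\ddot{o}l};\,\cdot\,}$ in the interval. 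Dividing by $|v-u|^{\theta p}$ and taking the supremum over $s\le u<v\le t$ yields \eqref{eq:lift_to_mu_Hol_random_ineq_pointwise}. The same coupling with a triangle inequality gives $\int_{\Gamma_T} d(\gamma_t,\bar x)^p\,\d\pi^\omega\le C(p,T,\theta)\int_{\Gamma_T}\big(d(\gamma_0,\bar x)^p+|\gamma|_{\theta\textrm{-}\mathrm{H\ddot{o}l};[0,T]}^p\big)\,\d\pi^\omega<\infty$, so $\mu_t^\omega\in P_p(\mathcal X)$ for every $t$, and the case $[s,t]=[0,T]$ of the estimate shows $t\mapsto\mu_t^\omega$ is $\theta$-H\"older, hence continuous, into $(P_p(\mathcal X),W_p)$; thus $\mu^\omega\in C^{\theta\textrm{-}\mathrm{H\ddot{o}l}}([0,T];P_p(\mathcal X))$ for $\omega\in\Omega_\circ$. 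That each $\mu_t$ is a genuine random measure is immediate, since $\omega\mapsto\mu_t^\omega(A)=\pi^\omega(e_t^{-1}A)$ is measurable for $A\in\mathcal{B}(\mathcal X)$ and $\mu_t^\omega\in P(\mathcal X)$ a.s.

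\emph{Step 3 (the random curve) and the main difficulty.} Taking expectations in $\mathbb{E}[\int d(x,\bar x)^p\,\d\mu_t]=\mathbb{E}[\int d(\gamma_t,\bar x)^p\,\d\pi]$ and using the bound from Step 2 gives $\mu_t\in P_{p,\Omega}(\mathcal X)$. Next, $\omega\mapsto W_p(\mu_u^\omega,\mu_v^\omega)$ is measurable (\cite[Remark 3.20(ii)]{Crauel2002}), so using \eqref{eq:mathbbW_p_Exp} and taking expectation in the pointwise inequality of Step 2,
\begin{equation*}
\mathbb{W}_p^p(\mu_u,\mu_v)=\mathbb{E}\big[W_p^p(\mu_u^\omega,\mu_v^\omega)\big]\le |v-u|^{\theta p}\,\mathbb{E}\left[\int_{\Gamma_T}|\gamma|_{\theta\textrm{-}\mathrm{H\ddot{o}l};[s,t]}^p\,\d\pi(\gamma)\right]
\end{equation*}
for all $s\le u<v\le t$; dividing by $|v-u|^{\theta p}$ and taking the supremum over such $u,v$ gives \eqref{eq:lift_to_mu_Hol_random_ineq_exp}, and in particular $(\mu_t)\in C^{\theta\textrm{-}\mathrm{H\ddot{o}l}}([0,T];P_{p,\Omega}(\mathcal X))$. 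The argument is otherwise routine; the point requiring genuine care, as the author warns in \cref{rmk:treatment}, is the bookkeeping of measurability and $\mathbb{P}$-null sets --- justifying that $\omega\mapsto\int|\gamma|_{\theta\textrm{-}\mathrm{H\ddot{o}l};[s,t]}^p\,\d\pi^\omega$ and $\omega\mapsto W_p(\mu_u^\omega,\mu_v^\omega)$ are measurable so that Tonelli and \eqref{eq:mathbbW_p_Exp} may be invoked, and isolating a single full-measure set on which the pointwise conclusions of Step 2 hold for all pairs of times at once. This is precisely why the deterministic results of \cite[Section 3.2]{Abedi2025deterministic}, which the above mirrors, cannot be quoted $\omega$-by-$\omega$ without this extra work.
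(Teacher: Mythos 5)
Your proof is correct and follows essentially the same route as the paper's: establish the pointwise estimate on a fixed $\mathbb{P}$-full set via the coupling $(e_u,e_v)_{\#}\pi^\omega$ (which the paper delegates to the deterministic result \cite[Theorem 3.3]{Abedi2025deterministic}), then take expectations and use $\mathbb{W}_p^p=\mathbb{E}[W_p^p]$ for the statement in $P_{p,\Omega}(\mathcal{X})$. Your added care about measurability of $\omega\mapsto\int|\gamma|_{\theta\textrm{-}\mathrm{H\ddot{o}l};[s,t]}^p\,\d\pi^\omega$ and of $\omega\mapsto W_p(\mu_u^\omega,\mu_v^\omega)$ is consistent with the references the paper itself invokes and does not change the argument.
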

    
    \begin{proof} The integrability assumption implies that there exists a $\mathbb{P}$-full measure set $\Omega_\circ \subseteq \Omega$ on which the quantity inside the expectation is finite. For all $\omega \in \Omega_\circ$ and $t \in [0,T]$, we define 
    $\mu_t^\omega \coloneqq {(e_t)}_{\#} \pi^\omega$.
    From the proof of \cite[Theorem 3.3]{Abedi2025paths}, we have the estimate
        \begin{align}\label{eq:lift_to_mu_Hol_random_proof_1}
            \int d(\bar{x},x)^p \d \mu_t^\omega \leq 2^{p-1} \int d(\gamma_0,\bar{x})^p \d \pi^\omega + 2^{p-1}  |T|^{p\theta} \int |\gamma|_{\theta\textrm{-}\mathrm{H\ddot{o}l};[0,T] }^p \d\pi^\omega < + \infty,
        \end{align}
        along with the pointwise bound \eqref{eq:lift_to_mu_Hol_random_ineq_pointwise}. Hence, $(\mu_t^\omega) \in C^{\theta\textrm{-} \mathrm{H\ddot{o}l}} ([0,T];P_p(\mathcal{X}))$ a.s.
        \\
        As for the second statement, taking the expectation of \eqref{eq:lift_to_mu_Hol_random_proof_1} and disregarding the $\mathbb{P}$-zero measure set, we conclude that $\mu_t \in P_{p,\Omega} (\mathcal{X})$ for all $t\in[0,T]$.
        Then, we observe that
        \begin{align}
            \mathbb{W}_p^p(\mu_u,\mu_v) & \leq \mathbb{E} \left[ \int_{\Gamma_T} d(\gamma_u,\gamma_v)^p \d\pi(\gamma)\right] \leq |v-u|^{p \theta}  \mathbb{E} \left[ \int_{\Gamma_T} |\gamma|_{\theta\textrm{-}\mathrm{H\ddot{o}l};[s,t] }^p \d\pi(\gamma) \right],
        \end{align}
        for any $s \leq u<v \leq t$,
        which then implies that
        \begin{equation}
            \Vert\mu\Vert_{\theta\textrm{-}\mathrm{H\ddot{o}l};[s,t]} \coloneqq \sup_{s\leq u< v \leq t} \frac{\mathbb{W}_p(\mu_u, \mu_v)}{|v-u|^\theta} \leq \left( \mathbb{E} \left[ \int_{\Gamma_T} |\gamma|_{\theta\textrm{-}\mathrm{H\ddot{o}l};[s,t] }^p \d\pi(\gamma) \right] \right)^{1/p}.
        \end{equation}
        Raising this expression to the power $p$, we obtain the result. 
    \end{proof}

    \begin{theorem}\label{thm:lift_to_mu_var_random}
        Let $(\mathcal{X},d)$ be a complete separable metric space, and $(\Omega, \mathcal{F}, \mathbb{P})$ be a probability space.
        Let $\pi \in P_{\Omega}(C([0,T];\mathcal{X})) $ satisfy
         \begin{equation}\label{eq:lift_to_mu_var_integrability_random}
            \mathbb{E} \left[ \int_{\Gamma_T} \Big( d(\gamma_0,\bar{x})^p + | \gamma |_{q\textrm{-}\mathrm{var};[0,T]}^p \Big) \d \pi(\gamma)  \right] < + \infty
        \end{equation}
        for some $1 \leq q \leq p < \infty$ and  $\bar{x}\in \mathcal{X}$.
        Then, for $\mathbb{P}$-a.e. $\omega \in \Omega$, the curve  $t \mapsto \mu_t^\omega \coloneqq {(e_t)}_{\#} \pi^\omega $ is in $ C^{p\textrm{-} \mathrm{var}} ([0,T];P_p(\mathcal{X}))$ with
        \begin{equation}\label{eq:lift_to_mu_var_random_ineq_pointwise}
            |\mu^\omega|_{p\textrm{-}\mathrm{var};[s,t]}^p \leq \int_{\Gamma_T} |\gamma|_{q\textrm{-}\mathrm{var};[s,t]}^p \d \pi^\omega(\gamma)
        \end{equation}
        for any $0\leq s<t\leq T$.  In addition, $ (\mu_t) \in  C^{p\textrm{-} \mathrm{var}} ([0,T];P_{p,\Omega}(\mathcal{X}))$ with
        \begin{equation}
            \Vert\mu\Vert_{p\textrm{-}\mathrm{var};[s,t]}^p \leq \mathbb{E} \left[ \int_{\Gamma_T} |\gamma|_{q\textrm{-}\mathrm{var};[s,t]}^p \d \pi(\gamma) \right]
        \end{equation}
        for any $0\leq s<t\leq T$.
    \end{theorem}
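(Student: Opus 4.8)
The plan is to mirror the proof of \cref{thm:lift_to_mu_Hol_random}, substituting the $p$-variation seminorm for the H\"older seminorm and invoking, pathwise in $\omega$, the deterministic $p$-variation counterpart developed in \cite[Section 3.2]{Abedi2025deterministic}.

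First I would use the integrability assumption \eqref{eq:lift_to_mu_var_integrability_random} to produce a $\mathbb{P}$-full measure set $\Omega_\circ \subseteq \Omega$ on which $\int_{\Gamma_T}\big(d(\gamma_0,\bar x)^p + |\gamma|_{q\textrm{-}\mathrm{var};[0,T]}^p\big)\,\d\pi^\omega < +\infty$, and for $\omega \in \Omega_\circ$ and $t \in [0,T]$ set $\mu_t^\omega \coloneqq (e_t)_\#\pi^\omega$; since $e_t$ is measurable this defines a random measure $\mu_t \in P_\Omega(\mathcal{X})$ for each $t$ (the behaviour on the $\mathbb{P}$-zero set being irrelevant, cf.\ \cref{rmk:zero_set}). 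Applying the deterministic result to the fixed path measure $\pi^\omega$ for each such $\omega$ yields simultaneously the moment bound
\begin{equation*}
  \int_{\mathcal{X}} d(\bar x,x)^p \,\d\mu_t^\omega \;\le\; 2^{p-1}\int_{\Gamma_T} d(\gamma_0,\bar x)^p\,\d\pi^\omega \;+\; 2^{p-1}\int_{\Gamma_T} |\gamma|_{q\textrm{-}\mathrm{var};[0,T]}^p\,\d\pi^\omega \;<\;+\infty ,
\end{equation*}
the pointwise estimate \eqref{eq:lift_to_mu_var_random_ineq_pointwise}, and the $W_p$-continuity of $t\mapsto\mu_t^\omega$ (which follows from $W_p^p(\mu_u^\omega,\mu_v^\omega)\le \int_{\Gamma_T} d(\gamma_u,\gamma_v)^p\,\d\pi^\omega \to 0$ as $v\to u$ by dominated convergence, with $\pi^\omega$-integrable dominating function $|\gamma|_{q\textrm{-}\mathrm{var};[0,T]}^p$). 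Hence $(\mu_t^\omega)\in C^{p\textrm{-}\mathrm{var}}([0,T];P_p(\mathcal{X}))$ for $\mathbb{P}$-a.e.\ $\omega$.

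For the statement in expectation, taking expectations in the moment bound and discarding the exceptional $\mathbb{P}$-zero set gives $\mu_t\in P_{p,\Omega}(\mathcal{X})$ for all $t\in[0,T]$. For the $p$-variation estimate I would fix $0\le s<t\le T$ and a dissection $D=\{s=t_0<\cdots<t_N=t\}\in\mathcal{D}([s,t])$, use $\mathbb{W}_p^p(\mu_{t_i},\mu_{t_{i+1}})\le \mathbb{E}\big[\int_{\Gamma_T} d(\gamma_{t_i},\gamma_{t_{i+1}})^p\,\d\pi\big]$, sum over $i$ (a finite sum, which commutes with the expectation and the integral), and then apply the elementary bound $\sum_i a_i^p \le \big(\sum_i a_i^q\big)^{p/q}$, valid for $a_i\ge 0$ and $q\le p$, together with $\sum_i d(\gamma_{t_i},\gamma_{t_{i+1}})^q \le |\gamma|_{q\textrm{-}\mathrm{var};[s,t]}^q$, to obtain
\begin{equation*}
  \sum_{i} \mathbb{W}_p(\mu_{t_i},\mu_{t_{i+1}})^p \;\le\; \mathbb{E}\Big[\int_{\Gamma_T}\Big(\textstyle\sum_i d(\gamma_{t_i},\gamma_{t_{i+1}})^p\Big)\d\pi\Big] \;\le\; \mathbb{E}\Big[\int_{\Gamma_T} |\gamma|_{q\textrm{-}\mathrm{var};[s,t]}^p\,\d\pi\Big].
\end{equation*}
Taking the supremum over $D\in\mathcal{D}([s,t])$ yields $\Vert\mu\Vert_{p\textrm{-}\mathrm{var};[s,t]}^p \le \mathbb{E}\big[\int_{\Gamma_T}|\gamma|_{q\textrm{-}\mathrm{var};[s,t]}^p\,\d\pi\big]$, and the right-hand side is finite by \eqref{eq:lift_to_mu_var_integrability_random}. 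Together with $\mathbb{W}_p$-continuity of $t\mapsto\mu_t$ (again by dominated convergence, dominated by the finite quantity $\mathbb{E}[\int_{\Gamma_T}|\gamma|_{q\textrm{-}\mathrm{var};[0,T]}^p\,\d\pi]$) this gives $(\mu_t)\in C^{p\textrm{-}\mathrm{var}}([0,T];P_{p,\Omega}(\mathcal{X}))$.

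The thing to be careful about — rather than a genuine obstacle — is running every step on a common $\mathbb{P}$-full set and confirming that $\mu$ is an honest random measure without completing $\mathcal{F}$, which is handled exactly as in \cref{rmk:zero_set}. In contrast to the Besov/fractional Sobolev setting, no measurability pathology for $\omega\mapsto|\mu^\omega|_{p\textrm{-}\mathrm{var}}$ intervenes here, since the in-expectation bound is derived directly by supremizing over dissections rather than by integrating that map.
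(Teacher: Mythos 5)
Your proposal is correct and follows essentially the same route as the paper's proof: pathwise application of the deterministic $q$-variation result on a common $\mathbb{P}$-full set for the a.s.\ statement, then taking expectations of the moment bound, establishing $\mathbb{W}_p$-continuity by dominated convergence, and bounding $\sum_i \mathbb{W}_p^p(\mu_{t_i},\mu_{t_{i+1}})$ over an arbitrary dissection before taking the supremum. Your explicit use of $\sum_i a_i^p \le \big(\sum_i a_i^q\big)^{p/q}$ is just the standard proof of the monotonicity $|\gamma|_{p\textrm{-}\mathrm{var}} \le |\gamma|_{q\textrm{-}\mathrm{var}}$ that the paper invokes directly, so there is no substantive difference.
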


    \begin{proof} There exists a $\mathbb{P}$-full measure set $\Omega_\circ \subseteq \Omega$ on which the quantity inside the expectation in \eqref{eq:lift_to_mu_var_integrability_random} is finite. For all $\omega \in \Omega_\circ$ and $t \in [0,T]$, we set 
    $\mu_t^\omega \coloneqq {(e_t)}_{\#} \pi^\omega$.
    From the proof of \cite[Theorem 3.4]{Abedi2025paths}, we conclude that first
    \begin{equation}\label{eq:lift_to_mu_var_random_proof_1}
        \int_\mathcal{X} d(\bar{x},x)^p \d \mu_t^\omega(x) \leq 2^{p-1} \int_{\Gamma_T} \Big(  d(\bar{x},\gamma_0)^p + | \gamma |_{q\textrm{-}\mathrm{var};[0,T]}^p \Big) \d\pi^\omega(\gamma) < + \infty,
    \end{equation}
    secondly, $t \mapsto \mu_t^\omega$ is continuous in $P_p(\mathcal{X})$, and thirdly, the bound  \eqref{eq:lift_to_mu_var_random_ineq_pointwise} holds. In short, we have $(\mu_t^\omega) \in C^{p\textrm{-} \mathrm{var}} ([0,T];P_p(\mathcal{X}))$ a.s.
    \\
    Next, by taking the expectation of \eqref{eq:lift_to_mu_var_random_proof_1}, we see that $\mu_t \in P_{p,\Omega} (\mathcal{X})$ for all $t\in[0,T]$. 
    We would like to show that $t \mapsto \mu_t$ is continuous in $P_{p,\Omega}(\mathcal{X})$. Fix $t$ and take an arbitrary sequence $t_n \to t$ as $n \to \infty$.
    By Lebesgue's dominated convergence theorem on the measure space $(\Gamma_T\times \Omega, \mathcal{B}(\Gamma_T) \otimes \mathcal{F}, \bm{\pi})$, we obtain
    \begin{align}
        \lim_{n \to \infty} \mathbb{W}_p^p (\mu_t, \mu_{t_n})
        & \leq \lim_{n \to \infty} \int_{\Omega} \int_{\Gamma_T} d(\gamma_t,\gamma_{t_n})^p \d \pi^\omega (\gamma) \d \mathbb{P} (\omega) \\
        & = \lim_{n \to \infty} \int_{\Gamma_T \times \Omega } d(\gamma_t,\gamma_{t_n})^p \d \bm{\pi} (\gamma,\omega) \\
        & = \int_{\Gamma_T \times \Omega } \left( \lim_{n \to \infty} d(\gamma_t,\gamma_{t_n})^p \right) \d\bm{\pi} (\gamma,\omega) = 0,
    \end{align}
    where $d(\gamma_s,\gamma_t)^p \leq | \gamma |_{q\textrm{-}\mathrm{var}}^p$ gives us the domination, which is integrable by assumption. It remains to show that $t \mapsto \mu_t$ has finite $p$-variation in $P_{p,\Omega}$. Take $0 \leq s <t \leq T$ and let $D=(t_i)$ be an arbitrary partition of the interval $[s,t]$. We have
    \begin{align}
        \sum_{i} \mathbb{W}_p^p (\mu_{t_{i}}, \mu _{t_{i+1}}) & \leq \mathbb{E} \left[  \int_{\Gamma_T} \sum_{i} d(\gamma_{t_{i}}, \gamma _{t_{i+1}})^p \d\pi(\gamma) \right] \leq \mathbb{E} \left[ \int_{\Gamma_T}  |\gamma|_{q\textrm{-}\mathrm{var};[s,t]}^p \d\pi(\gamma) \right],
    \end{align}
    where we used the fact that $|\gamma|_{p\textrm{-}\mathrm{var}}\leq |\gamma|_{q\textrm{-}\mathrm{var}}$ on the space of continuous paths if $1 \leq q \leq p$.
    Taking the supremum over all dissections completes the proof.
    \end{proof}
    
    \begin{theorem}\label{thm:lift_to_mu_Walphap_stoch}
    Let $(\mathcal{X},d)$ be a complete separable metric space, and $(\Omega, \mathcal{F}, \mathbb{P})$ be a probability space. Let $\pi \in P_{\Omega} (C([0,T];\mathcal{X})) $ satisfy 
    \begin{equation}\label{eq:lift_to_mu_Walphap_integrability_stoch}
            \mathbb{E} \left[  \int_{\Gamma_T} \Big( d(\gamma_0,\bar{x})^p + | \gamma |_{W^{\alpha,p};[0,T]}^p  \Big) \d \pi(\gamma) \right] < + \infty
        \end{equation}
        for some $1<p<\infty$ and $ \frac{1}{p} < \alpha < 1$ and $\bar{x}\in \mathcal{X}$.
        Then, for $\mathbb{P}$-a.e. $\omega \in \Omega$, the curve $t \mapsto  \mu_t^\omega \coloneqq {(e_t)}_{\#} \pi^\omega$ is in $W^{\alpha, p} ([0,T];P_p(\mathcal{X}))$ with
        \begin{equation}\label{eq:lift_to_mu_Walphap_random_ineq_pointwise} 
            |\mu^\omega|_{W^{\alpha,p};[s,t]}^p \leq \int_{\Gamma_T} |\gamma|_{W^{\alpha,p};[s,t]}^p \d \pi^\omega(\gamma)
        \end{equation}
        for any $0\leq s<t\leq T$. In addition, $(\mu_t) \in W^{\alpha, p} ([0,T];P_{p,\Omega}(\mathcal{X}))$ with 
        \begin{equation}\label{eq:lift_to_mu_Walphap_random_ineq_exp} 
            \Vert\mu\Vert_{W^{\alpha,p};[s,t]}^p \leq \mathbb{E} \left[\int_{\Gamma_T} |\gamma|_{W^{\alpha,p};[s,t]}^p \d \pi (\gamma) \right]
        \end{equation}
        for any $0\leq s<t\leq T$. The same statement holds for $|\cdot|_{b^{\alpha,p}}$.
    \end{theorem}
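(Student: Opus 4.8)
Our plan is to follow the two-step pattern already used for \cref{thm:lift_to_mu_Hol_random,thm:lift_to_mu_var_random}: first a pathwise analysis that leans on the deterministic fractional Sobolev counterpart from \cite[Section 3.2]{Abedi2025deterministic}, then an ``in expectation'' analysis carried out directly at the level of the metric $\mathbb{W}_p$. Concretely, we first fix a $\mathbb{P}$-full set $\Omega_\circ$ on which the integrand in \eqref{eq:lift_to_mu_Walphap_integrability_stoch} is finite, and set $\mu_t^\omega \coloneqq (e_t)_\#\pi^\omega$ for $\omega \in \Omega_\circ$, $t \in [0,T]$. For each such $\omega$, the deterministic path measure $\pi^\omega$ is concentrated on $W^{\alpha,p}([0,T];\mathcal{X})$ with $\int_{\Gamma_T}|\gamma|_{W^{\alpha,p};[0,T]}^p\d\pi^\omega<\infty$, so the deterministic result applied pathwise yields that $t\mapsto\mu_t^\omega$ lies in $W^{\alpha,p}([0,T];P_p(\mathcal{X}))$ together with the pointwise bound \eqref{eq:lift_to_mu_Walphap_random_ineq_pointwise}; the same input (or, directly, the Garsia--Rodemich--Rumsey embedding $W^{\alpha,p}\subset C^{\alpha-\frac1p\textrm{-}\mathrm{H\ddot{o}l}}$ of \cref{thm:FractionalSobolev-Holder_r_paper}) gives the moment estimate
\begin{equation*}
\int_{\mathcal{X}} d(\bar{x},x)^p\d\mu_t^\omega \;\leq\; 2^{p-1}\int_{\Gamma_T}d(\gamma_0,\bar{x})^p\d\pi^\omega \;+\; 2^{p-1}\bar{c}^{\,p}\,T^{\alpha p-1}\int_{\Gamma_T}|\gamma|_{W^{\alpha,p};[0,T]}^p\d\pi^\omega \;<\;+\infty .
\end{equation*}
Taking expectations of this inequality shows $\mu_t\in P_{p,\Omega}(\mathcal{X})$ for every $t\in[0,T]$.

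For the bound \eqref{eq:lift_to_mu_Walphap_random_ineq_exp} in expectation, we will argue directly with $\mathbb{W}_p$, deliberately avoiding any integration of the pointwise seminorm $|\mu^\omega|_{W^{\alpha,p}}$, whose measurability in $\omega$ is not guaranteed (cf.\ \cref{rmk:Exp_Walphap_mu}). Since $(e_u,e_v)_\#\pi^\omega\in\mathrm{Cpl}(\mu_u^\omega,\mu_v^\omega)$ for $\omega\in\Omega_\circ$, we obtain $\mathbb{W}_p^p(\mu_u,\mu_v)\leq\mathbb{E}\big[\int_{\Gamma_T}d(\gamma_u,\gamma_v)^p\d\pi\big]$ for all $u,v\in[0,T]$; the right-hand side is continuous in $(u,v)$ by dominated convergence (with $\bm\pi$-integrable dominating function $\bar{c}^{\,p}T^{\alpha p-1}|\gamma|_{W^{\alpha,p};[0,T]}^p$), which both yields continuity of $t\mapsto\mu_t$ in $\mathbb{W}_p$ and makes $(u,v)\mapsto\mathbb{W}_p^p(\mu_u,\mu_v)$ measurable. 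Then Tonelli's theorem on the product of $[s,t]^2$ (with weight $|v-u|^{-1-\alpha p}\d u\d v$) and $(\Gamma_T\times\Omega,\bm\pi)$ — legitimate because the integrand is nonnegative and $(u,v,\gamma,\omega)\mapsto d(\gamma_u,\gamma_v)^p/|v-u|^{1+\alpha p}$ is jointly measurable — gives
\begin{align*}
\Vert\mu\Vert_{W^{\alpha,p};[s,t]}^p
&= \iint_{[s,t]^2}\frac{\mathbb{W}_p^p(\mu_u,\mu_v)}{|v-u|^{1+\alpha p}}\,\d u\,\d v
\;\leq\; \iint_{[s,t]^2}\frac{1}{|v-u|^{1+\alpha p}}\,\mathbb{E}\!\left[\int_{\Gamma_T}d(\gamma_u,\gamma_v)^p\d\pi\right]\d u\,\d v\\
&= \mathbb{E}\!\left[\int_{\Gamma_T}\iint_{[s,t]^2}\frac{d(\gamma_u,\gamma_v)^p}{|v-u|^{1+\alpha p}}\,\d u\,\d v\;\d\pi(\gamma)\right]
\;=\; \mathbb{E}\!\left[\int_{\Gamma_T}|\gamma|_{W^{\alpha,p};[s,t]}^p\d\pi(\gamma)\right].
\end{align*}
Together with $\mu_t\in P_{p,\Omega}(\mathcal{X})$ for all $t$ and continuity of $t\mapsto\mu_t$ in $\mathbb{W}_p$, this establishes $(\mu_t)\in W^{\alpha,p}([0,T];P_{p,\Omega}(\mathcal{X}))$ and \eqref{eq:lift_to_mu_Walphap_random_ineq_exp}.

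For the $|\cdot|_{b^{\alpha,p}}$ version the scheme is the same but technically cleaner. The pointwise bound $|\mu^\omega|_{b^{\alpha,p};[s,t]}^p\leq\int_{\Gamma_T}|\gamma|_{b^{\alpha,p};[s,t]}^p\d\pi^\omega$ follows from $W_p^p(\mu_{t_k^{(m)}}^\omega,\mu_{t_{k+1}^{(m)}}^\omega)\leq\int_{\Gamma_T}d(\gamma_{t_k^{(m)}},\gamma_{t_{k+1}^{(m)}})^p\d\pi^\omega$ by multiplying with $2^{m(\alpha p-1)}$, summing over $k$, and letting $m\to\infty$ via monotone convergence; the expectation bound then follows immediately from \cref{lemma:Exp_balphap_mu}, which already records both the measurability of $\omega\mapsto|\mu^\omega|_{b^{\alpha,p}}^p$ and the identity $\Vert\mu\Vert_{b^{\alpha,p}}^p=\mathbb{E}[|\mu|_{b^{\alpha,p}}^p]$, and it can be upgraded from $[0,1]$ to a general subinterval $[s,t]$ by the usual rescaling. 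The only genuinely delicate point in the whole argument is the one flagged above, namely the possible non-measurability of $\omega\mapsto|\mu^\omega|_{W^{\alpha,p}}^p$; this is precisely why the $W^{\alpha,p}$ estimate in expectation has to be obtained through the Tonelli computation on $\mathbb{W}_p$ rather than by integrating the pointwise inequality \eqref{eq:lift_to_mu_Walphap_random_ineq_pointwise}, and it is also the reason the companion Besov statement is kept alongside throughout the paper.
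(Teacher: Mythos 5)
Your proposal is correct and follows essentially the same route as the paper's proof: a pathwise application of the deterministic result from \cite[Section 3.2]{Abedi2025deterministic} on a $\mathbb{P}$-full set to get the moment bound and \eqref{eq:lift_to_mu_Walphap_random_ineq_pointwise}, followed by the Tonelli exchange at the level of $\mathbb{W}_p$ for \eqref{eq:lift_to_mu_Walphap_random_ineq_exp} and an appeal to \cref{lemma:Exp_balphap_mu} for the Besov case, precisely so as to sidestep the possible non-measurability of $\omega\mapsto|\mu^\omega|_{W^{\alpha,p}}^p$. Your added remarks on continuity of $t\mapsto\mu_t$ in $\mathbb{W}_p$ and on joint measurability only make explicit what the paper leaves implicit.
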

    \begin{proof}
        As in the previous proofs, the integrability assumption implies that there exists a $\mathbb{P}$-full measure set $\Omega_\circ \subseteq \Omega$ on which the quantity inside the expectation is finite. For all $\omega \in \Omega_\circ$ and $t \in [0,T]$, we set $\mu_t^\omega \coloneqq {(e_t)}_{\#} \pi^\omega$.
        From the proof of \cite[Theorem 3.5]{Abedi2025paths}, we have that
        \begin{equation}\label{eq:lift_to_mu_Walphap_stoch_proof_1}
            \int d(\bar{x},x)^p \d \mu_t^\omega \leq 2^{p-1} \int d(\gamma_0,\bar{x})^p \d \pi^\omega + 2^{p-1} \bar{c}^p |T|^{\alpha p - 1 }\int |\gamma|_{W^{\alpha,p};[0,T] }^p \d\pi^\omega < + \infty
        \end{equation}
        and  $(\mu_t^\omega) \in W^{\alpha, p} ([0,T];P_p(\mathcal{X}))$ a.s. with the following point-wise bounds
        \begin{equation}\label{eq:lift_to_mu_Walphap_stoch_proof_2}
            |\mu^\omega|_{W^{\alpha,p};[s,t]}^p \leq \int_{\Gamma_T} |\gamma|_{W^{\alpha,p};[s,t]}^p \d \pi^\omega(\gamma)
        \end{equation}
        for any $0\leq s<t\leq T$ and 
        \begin{equation}\label{eq:lift_to_mu_Walphap_stoch_proof_3}
            |\mu^\omega|_{b^{\alpha,p}}^p \leq \int_{\Gamma} |\gamma|_{b^{\alpha,p}}^p \d \pi^\omega(\gamma)
        \end{equation}
        when $T=1$. 
        \\
        To prove the second statement, we note that taking the expectation of \eqref{eq:lift_to_mu_Walphap_stoch_proof_1} and ignoring the $\mathbb{P}$-zero set yields that $\mu_t \in P_{p,\Omega} (\mathcal{X})$ for all $t\in[0,T]$.
        Then, we observe that
        \begin{align}
             \Vert\mu\Vert_{W^{\alpha,p};[s,t]}^p
             & \coloneqq \iint_{[s,t]^2} \frac{\mathbb{E} [W_p^p (\mu_u,\mu_v)]}{|v-u|^{1+\alpha p}} \d u \d v \\
             & \leq \iint_{[s,t]^2} \frac{\mathbb{E}  \left[\int_{\Gamma_T} d(\gamma_u,\gamma_v)^p \d \pi (\gamma) \right] }{|v-u|^{1+\alpha p}} \d u \d v \\
             & = \mathbb{E} \left[\int_{\Gamma_T} |\gamma|_{W^{\alpha,p};[s,t]}^p \d \pi (\gamma) \right] < + \infty.
        \end{align}
        Similarly, for $b^{\alpha,p}$, using  Lemma \ref{lemma:Exp_balphap_mu} and the inequality \ref{eq:lift_to_mu_Walphap_stoch_proof_3}, we obtain
         \begin{align}
             \Vert\mu\Vert_{b^{\alpha,p}}^p
             & = \mathbb{E} \left[ |\mu|^p_{b^{\alpha,p}} \right] \\
             & \leq \mathbb{E} \left[ \int_\Gamma |\gamma|_{b^{\alpha,p}}^p \d \pi (\gamma) \right] < + \infty.
         \end{align}
        We thus proved that $(\mu_t) \in W^{\alpha, p} ([0,T];P_{p,\Omega}(\mathcal{X}))$ with respect to the metric $\mathbb{W}_p$.
    \end{proof}

    \begin{proposition}\label{prop:equality_implies_compatibility_random}
    Let $(\mathcal{X},d)$ be a complete separable metric space, and $(\Omega, \mathcal{F}, \mathbb{P})$ be a probability space.
    Let $\pi \in P_{\Omega}(C([0,T];\mathcal{X})) $ satisfy \eqref{eq:lift_to_mu_Walphap_integrability_stoch} for some $1<p<\infty$ and $ \frac{1}{p} < \alpha < 1$ and $\bar{x}\in \mathcal{X}$.
    Assume that $\pi$ and $t \mapsto \mu_t \coloneqq {(e_t)}_{\#} \pi $ satisfy the equality
        \begin{align}
            \Vert\mu\Vert_{W^{\alpha,p};[0,T]}^p = \mathbb{E} \left[\int_{\Gamma_T} |\gamma|_{W^{\alpha,p};[0,T]}^p \d \pi (\gamma)\right],
        \end{align}
        then $(\mu_t)_{t\in [0,T]}$ is compatible in $P_{p,\Omega}(\mathcal{X})$.
    \end{proposition}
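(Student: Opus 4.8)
The plan is to adapt the deterministic argument of \cite[Proposition 1.4]{Abedi2025deterministic}, inserting an expectation in every step. First I would fix a $\mathbb{P}$-full set $\Omega_\circ$ on which the integrand in \eqref{eq:lift_to_mu_Walphap_integrability_stoch} is finite and on which, by \cref{thm:lift_to_mu_Walphap_stoch}, the curve $t\mapsto\mu_t^\omega=(e_t)_\#\pi^\omega$ lies in $W^{\alpha,p}([0,T];P_p(\mathcal{X}))$ and hence, by the Garsia--Rodemich--Rumsey embedding \eqref{eq:FS_H_v_embeddings_r_paper}, is Hölder continuous for $W_p$. The only input is then the trivial pointwise-in-time bound: for every $0\le u<v\le T$ and $\omega\in\Omega_\circ$, the marginal $(e_u,e_v)_\#\pi^\omega$ couples $\mu_u^\omega$ and $\mu_v^\omega$, so
\[
W_p^p(\mu_u^\omega,\mu_v^\omega)\ \le\ \int_{\Gamma_T}d(\gamma_u,\gamma_v)^p\,\d\pi^\omega(\gamma)\ \le\ \bar c^{\,p}\,T^{\alpha p-1}\!\int_{\Gamma_T}|\gamma|_{W^{\alpha,p};[0,T]}^p\,\d\pi^\omega(\gamma)<+\infty,
\]
using \eqref{eq:distance_estimate_GRR_r_paper} and $\alpha p-1>0$ for the second inequality.

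Next I would rewrite both sides of the assumed equality via Tonelli. On the left, $\mathbb{W}_p^p=\mathbb{E}[W_p^p]$ (see \eqref{eq:mathbbW_p_Exp}) turns $\Vert\mu\Vert_{W^{\alpha,p};[0,T]}^p$ into $\iint_{[0,T]^2}|v-u|^{-1-\alpha p}\,\mathbb{E}[W_p^p(\mu_u^\omega,\mu_v^\omega)]\,\d u\,\d v$; on the right, the double-integral form of $|\cdot|_{W^{\alpha,p}}$ turns $\mathbb{E}[\int_{\Gamma_T}|\gamma|_{W^{\alpha,p};[0,T]}^p\,\d\pi]$ into $\iint_{[0,T]^2}|v-u|^{-1-\alpha p}\,\mathbb{E}[\int_{\Gamma_T}d(\gamma_u,\gamma_v)^p\,\d\pi]\,\d u\,\d v$ (all integrands nonnegative and jointly measurable, the right-hand one finite by \eqref{eq:lift_to_mu_Walphap_integrability_stoch}). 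Since the kernel is strictly positive off the diagonal and, by the previous paragraph, the two $(u,v)$-integrands differ by a nonnegative quantity, equality of the double integrals forces $\mathbb{E}[\int_{\Gamma_T}d(\gamma_u,\gamma_v)^p\,\d\pi^\omega-W_p^p(\mu_u^\omega,\mu_v^\omega)]=0$ for Lebesgue-a.e.\ $(u,v)$; as the random variable in the bracket is $\ge0$ $\mathbb{P}$-a.s., I conclude that for a.e.\ pair $(u,v)$ one has $(e_u,e_v)_\#\pi^\omega\in\mathrm{OptCpl}(\mu_u^\omega,\mu_v^\omega)$ $\mathbb{P}$-a.s., i.e.\ $(e_u,e_v)_\#\pi\in\mathrm{OptCpl}_\Omega(\mu_u,\mu_v)$.

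The third step is to upgrade ``a.e.\ pair'' to ``every pair''. Given arbitrary $s,t\in[0,T]$, I would pick good pairs $(s_n,t_n)\to(s,t)$ (possible since the good set has full $2$-dimensional measure) and a \emph{single} $\mathbb{P}$-null set (the countable union of the per-$n$ exceptional sets) outside of which $(e_{s_n},e_{t_n})_\#\pi^\omega$ is optimal for all $n$. For such $\omega$, $W_p^p(\mu_{s_n}^\omega,\mu_{t_n}^\omega)\to W_p^p(\mu_s^\omega,\mu_t^\omega)$ by $W_p$-continuity of $u\mapsto\mu_u^\omega$, while $\int_{\Gamma_T}d(\gamma_{s_n},\gamma_{t_n})^p\,\d\pi^\omega\to\int_{\Gamma_T}d(\gamma_s,\gamma_t)^p\,\d\pi^\omega$ by dominated convergence against the \emph{$n$-independent} dominating function $\bar c^{\,p}T^{\alpha p-1}|\gamma|_{W^{\alpha,p};[0,T]}^p\in L^1(\pi^\omega)$ (this is exactly where $\alpha>\tfrac1p$ is used, via \eqref{eq:distance_estimate_GRR_r_paper}). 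Passing to the limit along $(s_n,t_n)$ gives $(e_s,e_t)_\#\pi\in\mathrm{OptCpl}_\Omega(\mu_s,\mu_t)$ for \emph{all} $s,t\in[0,T]$. Finally, for any finite set $t_1<\dots<t_N$ in $[0,T]$ the random measure $\Upsilon\coloneqq(e_{t_1},\dots,e_{t_N})_\#\pi\in P_\Omega(\mathcal{X}^N)$ satisfies $(\mathrm{Pr}^i)_\#\Upsilon=\mu_{t_i}$ a.s.\ and $(\mathrm{Pr}^{i,j})_\#\Upsilon=(e_{t_i},e_{t_j})_\#\pi\in\mathrm{OptCpl}_\Omega(\mu_{t_i},\mu_{t_j})$; taking the union of the finitely many relevant null sets shows all two-dimensional marginals of $\Upsilon$ are optimal $\mathbb{P}$-a.s., which is exactly compatibility of $(\mu_t)_{t\in[0,T]}$ in $P_{p,\Omega}(\mathcal{X})$ in the sense of \cref{def:compatibility_random}.

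I expect the main obstacle to be the bookkeeping in the third step — producing one $\mathbb{P}$-null exceptional set valid along the whole approximating sequence, and checking that the dominating function is genuinely independent of $n$ — together with the (routine) measurability needed for the two Tonelli expansions, which follows from \eqref{eq:distance_estimate_GRR_r_paper} and from the Carathéodory structure of $(u,v,\omega)\mapsto W_p^p(\mu_u^\omega,\mu_v^\omega)$ already used in the proof of \cref{thm:lift_to_mu_Walphap_stoch}.
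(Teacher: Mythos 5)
Your proposal is correct and follows essentially the same route as the paper: both proofs expand the two sides of the energy identity via Tonelli, use nonnegativity of the pointwise gap to get $\mathbb{E}\big[\int_{\Gamma_T} d(\gamma_u,\gamma_v)^p \d\pi\big]=\mathbb{W}_p^p(\mu_u,\mu_v)$ for a.e.\ $(u,v)$, and then upgrade to all pairs using the Garsia--Rodemich--Rumsey bound $d(\gamma_u,\gamma_v)^p\le \bar c^{\,p}T^{\alpha p-1}|\gamma|_{W^{\alpha,p};[0,T]}^p$ as the dominating function, before pushing forward by finitely many evaluation maps to exhibit the compatible multi-couplings. The only (immaterial) difference is that the paper phrases the upgrade as continuity of the deficit function $f(s,t)$ on $[0,T]^2$ via dominated convergence on $\Omega\times\Gamma_T$, whereas you run the same limit $\omega$-wise along a sequence of good pairs with a single exceptional null set.
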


    \begin{proof}
        The equality implies 
        \begin{align}
             0 =  \iint_{[0,T]^2} \underbrace{ \left(  \frac{\mathbb{E}\left[\int_{\Gamma_T} d(\gamma_s,\gamma_t)^p \d \pi (\gamma)\right]}{|t-s|^{1+\alpha p}} - \frac{\mathbb{W}_p^p (\mu_s,\mu_t)}{|t-s|^{1+\alpha p}}  \right)}_{\eqqcolon f(s,t)}  \d s \d t .
         \end{align}
         According to the definition of $\mathbb{W}_p$, the function $f$ is non-negative, because $\pi$ is just a random lift of $(\mu_t)$. Thus, the equality above implies that $f=0$ almost everywhere. We now show that $f$ is continuous and then conclude that $f=0$ everywhere. We first note that
         the application
         $$
         (s,t) \mapsto \mathbb{W}_p^p (\mu_s,\mu_t)
         $$
         is continuous becasue $(\mu_t) \in W^{\alpha, p} ([0,T];P_{p,\Omega}(\mathcal{X})) \subset C ([0,T];P_{p,\Omega}(\mathcal{X}))$ on the given parameter range. Also, the application
         $$
         (s,t) \mapsto \mathbb{E} \left[ \int_{\Gamma_T} d(\gamma_s,\gamma_t)^p \d \pi (\gamma)  \right] 
         $$
         is also continuous. Take a sequence $(s_m,t_m) \to (s,t)$ as $m \to \infty$ and note that
         \begin{equation}
             \lim_{m \to \infty } \mathbb{E} \left[ \int_{\Gamma_T} d(\gamma_{s_m},\gamma_{t_m})^p \d \pi \right] = \mathbb{E} \left[ \int_{\Gamma_T} \left(  \lim_{m \to \infty } d(\gamma_{s_m},\gamma_{t_m})^p \right) \d \pi  \right] = \mathbb{E} \left[ \int_{\Gamma_T} d(\gamma_{s},\gamma_{t})^p \d \pi  \right]
         \end{equation}
         by Lebesgue's dominated convergence theorem, where the dominated function
         $$
             d (\gamma_s,\gamma_t)^p \leq \bar{c}^p T^{\alpha p  - 1} | \gamma |^p_{W^{\alpha,p};[0,T]},
         $$
         coming from \eqref{eq:distance_estimate_GRR_r_paper}, is integrable on $\Omega \times \Gamma_T$ by assumption.
         \\
         Accordingly, for any finite collection of time points $\{t_i \in [0,T]$: $i \in \left\{1, \cdots, N \right\}\}$, the projection   $\omega \mapsto (e_{t_1},e_{t_2}, \cdots, e_{t_N})_\# \pi^\omega$ provides a random multi-coupling for $\{\mu_{t_i}: i \in \left\{1, \cdots, N \right\} \}$ whose two-dimensional marginals are all optimal. So $(\mu_t)$ is compatible in the sense of Definition \ref{def:compatibility_random}.
    \end{proof}

    \subsection{From measure-valued processes to random path measures: a stochastic superposition principle} We consider Construction \hyperref[itm:ConstructionB_random]{{\footnotesize$\bigstar$}} as explained in the introduction.
    \begin{theorem}\label{thm:optimal_lift_mu_Walphap_compatible_random}
        Let $(\mathcal{X}, d)$ be a complete, separable, and locally compact length metric space, and $I \coloneqq [0,T] \subset \mathbb{R}$.
        Let $(\mu_t)_{t \in I}$ be a probability measure-valued stochastic process defined on a probability space $(\Omega, \mathcal{F}, \mathbb{P})$ such that $(\mu_t) \in W^{\alpha,p} (I; P_{p,\Omega}(\mathcal{X}))$ for some $1<p<\infty$ and $\frac{1}{p}< \alpha <  1$. 
        Assume that $(\mu_t)_{t \in I}$ is compatible in $P_{p,\Omega}(\mathcal{X})$.
        Then, construction \hyperref[itm:ConstructionB_random]{{\footnotesize$\bigstar$}} converges narrowly (up to a subsequence) to a random probability measure $\pi \in P_{\Omega} (C(I;\mathcal{X}))$ satisfying
        \begin{enumerate}[label=(\roman*), font=\normalfont]
            \item $\pi^\omega$ is concentrated on $W^{\alpha,p}(I;\mathcal{X})$ and $t \mapsto {(e_t)}_{\#} \pi^\omega$ is in $ W^{\alpha, p} (I;P_p(\mathcal{X}))$ a.s.;
            \item $(e_t)_\#\pi^\omega =\mu_t^{\omega} $ a.s. for all $t\in I$;
            \item $(e_s,e_t)_\# \pi^\omega \in \mathrm{OptCpl}(\mu_s^{\omega}, \mu_t^{\omega})$ a.s. for all $s,t \in I$; and in particular,
            \begin{equation}\label{eq:optimality_pi_1_random}
                \Vert\mu\Vert_{W^{\alpha,p}}^p = \mathbb{E} \left[\int_{\Gamma_T} |\gamma|_{W^{\alpha,p}}^p \d \pi (\gamma) \right].
            \end{equation}
        \end{enumerate}
    The same statement holds for $|\cdot|_{b^{\alpha,p}}$.
    \end{theorem}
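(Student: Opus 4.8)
The strategy follows the deterministic construction of \cite{Abedi2025deterministic} (a random version of Lisini's argument), but every limit is taken at the level of random measures in the narrow topology on $P_\Omega$, since the sample paths $t\mapsto\mu_t^\omega$ need not be $W_p$-continuous. Work with $|\cdot|_{b^{\alpha,p}}$ and $T=1$ (the general interval follows by rescaling). In Step~1 I realize the approximating sequence: compatibility together with \cref{def:compatibility_random} and the measurable-selection reasoning behind \cref{lemma:measurable_selection_opt_cpl_2} gives, for each $n$, a random multi-coupling $\Upsilon_n\in P_\Omega(\mathcal{X}^{2^n+1})$ of $(\mu_{t_i^{(n)}})_{0\le i\le 2^n}$ \emph{all} of whose two-dimensional marginals are optimal $\mathbb{P}$-a.s.\ (in particular \eqref{eq:compatibility_dyadic_random} holds), and I set $\pi_n\coloneqq\ell_\#\Upsilon_n$ with $\ell$ the measurable constant-speed geodesic interpolation map. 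Since a path in the image of $\ell$ is a geodesic on each dyadic interval of generation $n$, summing the geometric series over generations $m>n$ gives the elementary bound $|\gamma|_{b^{\alpha,p}}^p\le D_{\alpha,p}\sum_{m=0}^{n}2^{m(\alpha p-1)}\sum_k d(\gamma_{t_k^{(m)}},\gamma_{t_{k+1}^{(m)}})^p$ with $D_{\alpha,p}\coloneqq(1-2^{p(\alpha-1)})^{-1}<\infty$; integrating against $\pi_n^\omega$, using optimality of the relevant two-marginals, and taking expectations yields $\mathbb{E}[\int_{\Gamma_1}|\gamma|_{b^{\alpha,p}}^p\,\d\pi_n]\le D_{\alpha,p}\Vert\mu\Vert_{b^{\alpha,p}}^p$ uniformly in $n$, while $(e_0)_\#\pi_n^\omega=\mu_0^\omega$ a.s.\ with $\mu_0\in P_{p,\Omega}(\mathcal{X})$. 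By \cref{prop:tightness_balphap_random} the family $\{\pi_n\}$ is tight, so \cref{thm:Prokhorov_random} yields a subsequence $\pi_{n_k}\to\pi$ narrowly in $P_\Omega(\Gamma_1)$.

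For properties (i) and (ii): lower semicontinuity of $\gamma\mapsto|\gamma|_{b^{\alpha,p}}^p$ and \eqref{eq:narrow_conv_lsc_func_random} give $\mathbb{E}[\int|\gamma|_{b^{\alpha,p}}^p\,\d\pi]\le D_{\alpha,p}\Vert\mu\Vert_{b^{\alpha,p}}^p<\infty$, so $\pi^\omega$ is concentrated on $b^{\alpha,p}=W^{\alpha,p}$ (\cref{thm:Walphap_balphap_r_paper}) a.s.; once (ii) at $t=0$ is known, the hypothesis of \cref{thm:lift_to_mu_Walphap_stoch} is met and (i) follows. For (ii), composing with the continuous maps $e_t$ turns $\pi_{n_k}\to\pi$ into narrow convergence of $(e_t)_\#\pi_{n_k}$ in $P_\Omega(\mathcal{X})$; for dyadic $t$ and $n_k$ large, $(e_t)_\#\pi_{n_k}^\omega$ is the coordinate marginal of $\Upsilon_{n_k}^\omega$ at the index with $t_i^{(n_k)}=t$, hence equals $\mu_t^\omega$ a.s., and since narrow limits in $P_\Omega$ are unique (test against $\mathds{1}_F(\omega)g(x)$) we get $(e_t)_\#\pi=\mu_t$. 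For non-dyadic $t$, take dyadic $s_j\to t$: $(e_{s_j})_\#\pi^\omega\to(e_t)_\#\pi^\omega$ narrowly for a.e.\ $\omega$ by continuity of $\pi^\omega$-a.e.\ path, hence $(e_{s_j})_\#\pi\to(e_t)_\#\pi$ narrowly in $P_\Omega$ by dominated convergence, while $\mu_{s_j}\to\mu_t$ narrowly because $(\mu_t)\in C(I;P_{p,\Omega})$ and \cref{prop:narrow convergence_vs_Ewpp} applies; uniqueness of limits gives (ii).

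For property (iii) and the energy identity: for dyadic $s,t$ and $n_k$ large, $(e_s,e_t)_\#\pi_{n_k}^\omega$ is a two-dimensional marginal of $\Upsilon_{n_k}^\omega$, hence lies in $\mathrm{OptCpl}(\mu_s^\omega,\mu_t^\omega)$ a.s.\ by the all-pairs choice in Step~1; applying \eqref{eq:narrow_conv_lsc_func_random} to $(x,y)\mapsto d(x,y)^p$ along $(e_s,e_t)_\#\pi_{n_k}\to(e_s,e_t)_\#\pi$ gives $\mathbb{E}[\int d(x,y)^p\,\d(e_s,e_t)_\#\pi]\le\liminf_k\mathbb{W}_p^p(\mu_s,\mu_t)=\mathbb{W}_p^p(\mu_s,\mu_t)$, whereas by (ii) the reverse holds $\omega$-pointwise since $(e_s,e_t)_\#\pi^\omega$ is a coupling; equality of expectations of pointwise-ordered quantities forces $(e_s,e_t)_\#\pi^\omega\in\mathrm{OptCpl}(\mu_s^\omega,\mu_t^\omega)$ a.s. Letting $s_j\to s$, $t_j\to t$ through dyadics and again upgrading pathwise narrow convergence to $P_\Omega$-convergence, together with $\mathbb{W}_p$-continuity of $(\mu_t)$, extends this to all $s,t\in I$, which is (iii). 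Finally, Tonelli on $(\Gamma_1\times\Omega)\times I^2$ and (iii) give $\mathbb{E}[\int|\gamma|_{W^{\alpha,p}}^p\,\d\pi]=\iint_{I^2}|v-u|^{-1-\alpha p}\,\mathbb{W}_p^p(\mu_u,\mu_v)\,\d u\,\d v=\Vert\mu\Vert_{W^{\alpha,p}}^p$, which is \eqref{eq:optimality_pi_1_random}; the $b^{\alpha,p}$ version is identical with the double integral replaced by the dyadic sum.

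\textbf{Main obstacle.} The combinatorial bookkeeping in Step~1 is routine; the real difficulty—and the reason a separate treatment is needed rather than a pathwise application of \cite{Abedi2025deterministic} (cf.\ \cref{rmk:treatment})—is that (ii) and (iii) cannot be argued pathwise: every limit must be transferred to the level of random measures, which forces systematic use of the characterizations of narrow convergence on $P_\Omega$ (uniqueness of limits, lower semicontinuity against $\omega$-independent continuous integrands, the dominated-convergence upgrade from pathwise to $P_\Omega$ convergence) and of the $\mathbb{W}_p$-continuity of $(\mu_t)$ to handle non-dyadic times where the sample curves $t\mapsto\mu_t^\omega$ offer no regularity.
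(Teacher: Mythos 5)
Your proposal follows essentially the same route as the paper's proof: the same dyadic construction via compatibility and geodesic interpolation, the same uniform $b^{\alpha,p}$-energy bound with constant $(1-2^{-(p-\alpha p)})^{-1}$, tightness and Prokhorov's theorem in $P_\Omega(\Gamma)$, and the same limit arguments for (i)--(iii) and the energy identity via Tonelli/Beppo Levi. The only (harmless) deviation is in extending (ii) to non-dyadic $t$: the paper bounds $d(\gamma_t,\gamma_{[2^{n_k}t]/2^{n_k}})$ under $\pi_{n_k}$ via the Garsia--Rodemich--Rumsey estimate \emph{before} passing to the limit, whereas you pass to the limit at dyadic times first and then use a.s.\ path continuity of the limit $\pi$, dominated convergence, and narrow continuity of $t\mapsto\mu_t$ in $P_\Omega(\mathcal{X})$ --- both arguments are valid.
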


    \begin{proof}
        The proof follows the same steps as those in the proof of \cite[Theorem 3.7]{Abedi2025paths} in the deterministic setting, with a generalization to the random setting. The strategy is indeed based on \cite[Theorem 5]{Lisini2007}. 
        Without loss of generality, we take $T=1$, i.e., $I=[0,1]$.
        \\
        To begin with, let us note that, by the Fractional Sobolev-H\"{o}lder embedding Theorem \ref{thm:FractionalSobolev-Holder_r_paper} applied on the metric space $(P_{p,\Omega}(\mathcal{X}), \mathbb{W}_p)$, we get
        \begin{equation}\label{eq:E_Wpp_nu}
                \mathbb{W}_p (\mu_s,\mu_t)  \leq \bar{c} |t-s|^{\alpha - \tfrac{1}{p}} \Vert \mu  \Vert_{W^{\alpha,p}}.
        \end{equation}
        Thus, $t \mapsto \mu_t$ is continuous in $P_{p,\Omega}(\mathcal{X})$. In particular, $t \mapsto \mu_t$ is narrowly continuous in $P_{\Omega}(\mathcal{X})$ by Proposition \ref{prop:narrow convergence_vs_Ewpp}. But, the sample paths $t \mapsto \mu_t^\omega$ need not be narrowly continuous in $P(\mathcal{X})$.  
        \\
        \textbf{Step 0 (Construction of $\{\pi_n\}_{n \in \mathbb{N}} \subset P_{\Omega}(\Gamma)$)}.
        We carry out construction \hyperref[itm:ConstructionB_random]{{\footnotesize$\bigstar$}} as follows. For each $n \in \mathbb{N}$, we take the dyadic partition of the time interval $[0,1]$ with points denoted by $t_i^{(n)} \coloneqq i/2^n$, where $i \in \{0,\cdots, 2^n \eqqcolon N_n\}$.
        The compatibility assumption in the sense of Definition \ref{def:compatibility_random} implies that there is a random multi-coupling on the product space
        $\bm{\mathcal{X}}_n \coloneqq \mathcal{X}_0 \times \mathcal{X}_1 \times \cdots \times \mathcal{X}_{N_n}$
        with $\{ \mathcal{X}_i \}$ representing copies of $\mathcal{X}$,
        $$\Upsilon_n \in P_{\Omega}(\bm{\mathcal{X}}_n)$$ 
        such that, outside a $\mathbb{P}$-zero set denoted by $\mathrm{Z}_n$, all of its two-dimensional marginals are optimal, namely
    \begin{equation}\label{eq:2D_marginal_Upsilon_omega_all}
        (\mathrm{Pr}^{i,j})_{\#} \Upsilon_n^\omega
        \in
        \mathrm{OptCpl}
        \bigl(
            \mu^\omega_{t_i^{(n)}},
            \mu^\omega_{t_j^{(n)}}
        \bigr)
        \qquad
        \forall i,j \in \{0,\ldots,N_n\}.
    \end{equation}
    In particular, we have
         \begin{gather}\label{eq:2D_marginal_Upsilon_omega}
            (\text{Pr}^{i,i+\frac{2^n}{2^m}})_{\#} \Upsilon_{n}^\omega \in \text{OptCpl}\big(\mu^\omega_{t^{(n)}_{i}},\mu^\omega_{t^{(n)}_{i+\frac{2^n}{2^m}}}\big) , 
        \end{gather}
        for all $i \in \big\{k \frac{2^n}{2^m} \big| k \in \{ 0,1,\cdots, 2^m-1\} \big\} $ and $ m \in \{0,1,\cdots n\}$,
        outside $\mathrm{Z}_n$. (For all $\omega$ inside $\mathrm{Z}_n$, we can let $\Upsilon^\omega_{n}$ be an arbitrary multi-coupling, take the product measure, for instance.
        As we will see and as explained in Remark \ref{rmk:zero_set}, 
        the definition of $\Upsilon^\omega_{n}$ on the $\mathbb{P}$-zero measure set plays no role here and we even do not need to do the completion of the $\sigma$-algebra $\mathcal{F}$). 
        We have so far a sequence of random measures $(\Upsilon_n)_{n \in \mathbb{N}}$ whose definition involves different $\mathbb{P}$-zero sets. 
        Let $\mathrm{Z}$ denote the countable union of all $\mathrm{Z}_n$s, which is again a $\mathbb{P}$-zero measure set. From now on, we shall only care about $\omega \in \Omega \setminus \mathrm{Z}$. We build a sequence of path measures using a measurable geodesic selection and interpolation map 
        \begin{equation}\label{eq:def_geod_interp_0_r_paper}
            \ell : \, \bm{x}=(x_0,\cdots ,x_{N_n}) \, \in \, \bm{\mathcal{X}}_n \mapsto \ell_{\bm{x}} \in C(I;\mathcal{X})
        \end{equation}
        defined by
        \begin{align}\label{eq:def_geod_interp_r_paper}
            \ell_{\bm{x}}(t) & \coloneqq x_i, \quad t = t^{(n)}_i, \quad  i \in \{ 0,1, \cdots, N_n \};
        \end{align}
        and connecting with constant-speed geodesics in between.
        Such a Borel measurable selection map exists under the assumptions of this theorem; see Remark \ref{rmk:measurable_geodesic_selection}.
        We set
        \begin{equation}\label{eq:pi_n_Upsilon_random}
             \pi_n^\omega \coloneqq (\ell)_{\#} \Upsilon_n^\omega \in P (\Gamma), \quad \forall n \in \mathbb{N}, \quad \forall \omega \in \Omega \setminus \mathrm{Z}. 
        \end{equation}     
         As a result, it is easy to check that
        \begin{align}
        \pi_n: \mathcal{B}(\Gamma) \times \Omega & \to \, [0,1] \\
        (A, \omega) \,  & \mapsto \,  \pi_n^{\omega}  (A)
       \end{align}
       defines a random probability measure. Indeed, for any $\omega$, $\pi_n^\omega$ is a probability measure in $P (\Gamma)$. It remains to verify that for fixed $A \in \mathcal{B}(\Gamma)$, the function $\omega \mapsto \pi_n^{\omega}  (A) $ is $\mathcal{F}$-$\mathcal{B}([0,1])$-measurable. Take an arbitrary set $B \in \mathcal{B}([0,1])$. By definition of the push-forward measures, we have
       \begin{align}
           \Big\{ \omega \in \Omega:  \pi_n^{\omega}  (A) \in B \Big\}
           & = \Big\{ \omega \in \Omega :  \Upsilon_n^\omega \big(\ell^{-1} (A) \big) \in B \Big\} \\
           & = \Big\{ \omega \in \Omega :  \Upsilon_n^\omega \big( \{ \bm{x} \in  \bm{\mathcal{X}}_n : \ell_{\bm{x}} \in A \}  \big)  \in B \Big\} \in \mathcal{F},
       \end{align}
       which is indeed in $\mathcal{F}$ because $\Upsilon_n$ is a random probability measure.
       In summary, we have constructed a sequence of random  path measures
       $$ \pi_n \in  P_{\Omega}(\Gamma), \quad \forall n \in \mathbb{N}.$$
        \textbf{Step 1 (Tightness of $\{\pi_n\}_{n \in \mathbb{N}} \subset P_{\Omega}(\Gamma)$).} We would like to show that the family of random measures $\{ \pi_n\}$ is tight in $P_\Omega(C([0,1];\mathcal{X}))$. To this end,  we use the tightness condition formulated in Corollary \ref{prop:tightness_balphap_random}. Our goal is to show 
        \begin{equation}\label{eq:tightness_computation_random_claim}
             \sup_{n \in \mathbb{N}} \mathbb {E} \left[ \int_{\Gamma} \Big(  d(\gamma_0, \bar{x})^p +  | \gamma |^p_{b^{\alpha,p}} \Big) \d \pi_n (\gamma) \right] < +\infty. 
        \end{equation}
        In what follows, we compute the expectation by ignoring the $\mathbb{P}$-zero set $\mathrm{Z}$.
        For the first term, we have
    \begin{equation}
        \mathbb{E} \left[  \int_{\Gamma} d(\gamma_0, \bar{x})^p \d \pi_n (\gamma) \right]
        = \mathbb{E} \left[ \int_{\bm{\mathcal{X}}_n} d(x_0, \bar{x})^p \d \Upsilon_n (\bm{x}) \right]
        = \mathbb{E} \left[ \int_{\mathcal{X}} d (x,\bar{x})^p \d \mu_0 (x) \right]  < + \infty \label{eq:tightness_computation_random_step_00}, 
    \end{equation}
    which is indeed finite by assumption.
    \\
    For the second term, we estimate 
    \begin{align}
        \sup_{n \in \mathbb{N}} & \, \mathbb {E}  \left[ \int_{\Gamma}   | \gamma |^p_{b^{\alpha,p}} \d \pi_n (\gamma) \right]
         = \sup_{n \in \mathbb{N}} \mathbb {E} \left[ \int_{\bm{\mathcal{X}}_n}  | \ell_{\bm{x}} |^p_{b^{\alpha,p}} \d \Upsilon_n (\bm{x}) \right]
        \leq  \mathbb {E} \left[ \sup_{n \in \mathbb{N}} \int_{\bm{\mathcal{X}}_n}  | \ell_{\bm{x}} |^p_{b^{\alpha,p}} \d \Upsilon_n (\bm{x}) \right] \\
        & \overset{(a)}{=} \mathbb {E} \left[ \sup_{n \in \mathbb{N}} \left(\sum_{m=0}^{n} 2^{m(\alpha p -1)} \,\, \sum_{k=0}^{2^m-1} W_p^p( \mu_{t^{(m)}_{k}},\mu_{t^{(m)}_{k+1}})  + \frac{2^{n (\alpha p - 1 )}}{2^{(p - \alpha p)}-1}  \sum_{i=0}^{2^n-1} W_p^p (\mu_{t^{(n)}_{i}},\mu_{t^{(n)}_{i+1}}) \right) \right]\\
        & \overset{(b)}{\leq} \mathbb {E} \left[ \frac{1}{ 1- 2^{-(p - \alpha p)}} \, | \mu |^p_{b^{\alpha,p}} \right] \\
        & \overset{(c)}{=} \frac{1}{ 1- 2^{-(p - \alpha p)}} \Vert \mu \Vert^p_{b^{\alpha,p}}  \\
        & \overset{(d)}{\leq} \frac{c_2(\alpha,p)^p}{ 1- 2^{-(p - \alpha p)}} \,  \Vert \mu \Vert^p_{W^{\alpha,p}} < + \infty \label{eq:tightness_computation_random_step_11}. 
    \end{align}
    In step ($a$), we used \cite[Lemma 2.31]{Abedi2025paths}, which computes the $b^{\alpha,p}$-norm of piecewise geodesic curves, and the optimality of the two-dimensional marginals of $\Upsilon_n$ in \eqref{eq:2D_marginal_Upsilon_omega}.
    The estimate ($b$) follows from the structure of the $b^{\alpha,p}$-norm. 
    Steps ($c$) and ($d$) follow from Lemma \ref{lemma:Exp_balphap_mu} and Theorem \ref{thm:Walphap_balphap_r_paper}, respectively. The last quantity is finite by assumption. 
    \\
    In contrast to the deterministic case, here we do not use the estimate involving $W^{\alpha, p}$-norm immediately because $\omega \mapsto |\mu^\omega|_{W^{\alpha,p}}$ may not be measurable and its expectation may not be defined. However, $\omega \mapsto |\mu^\omega|_{b^{\alpha,p}}$, as a function that only involves a countable sum, is always measurable.
    Here $W^{\alpha, p}$-norm appears in the last step, where we used the equivalence between these norms on the space of continuous paths on the metric space $(P_{p,\Omega}(\mathcal{X}), \mathbb{W}_p)$. 
    \\
    As the bound \eqref{eq:tightness_computation_random_claim} holds, we conclude that the family of random measures $\{\pi_n \}_n \subset P_{\Omega} (\Gamma) $ is tight in the sense of Definition \ref{def:tightness_random}. Then Prokhorov Theorem \ref{thm:Prokhorov_random} for random measures implies that the set  $\{\pi_n\}_{n\in \mathbb{N}}$ is relatively (sequentially) compact with respect to the narrow topology of $P_\Omega (\mathcal{X})$, i.e., there exists a subsequence $\{\pi_{n_k}\}_{k\in \mathbb{N}} $ such that $\pi_{n_k} \to \pi$ narrowly on $P_\Omega(\Gamma)$ as $k \to \infty$ to a limit point $\pi \in P_\Omega (\Gamma)$. In particular, we emphasize that $\omega \mapsto \pi^\omega$ is measurable. 
    \\
    \textbf{Step 2 ($\pi^\omega$ is concentrated on $W^{\alpha,p}(I;\mathcal{X})$ $\mathbb{P}$-a.s.)} By lower semi-continuity of the map $\gamma \mapsto |\gamma|^p_{W^{\alpha,p}}$ and \eqref{eq:narrow_conv_lsc_func_random}, we have
        \begin{align}
             \mathbb{E} \left[ \int_{\Gamma}|\gamma|_{W^{\alpha, p}}^p \d \pi (\gamma) \right]
             & \leq \liminf_{k \to \infty }  \mathbb{E} \left[  \int_{\Gamma}  | \gamma |^p_{W^{\alpha,p}} \d \pi_{n_k} (\gamma) \right] \\
             & \leq \frac{1}{c_1(\alpha,p)^p} \, \liminf_{k \to \infty }  \mathbb{E} \left[  \int_{\Gamma}  | \gamma |^p_{b^{\alpha,p}} \d \pi_{n_k} (\gamma) \right] < + \infty, \label{eq:support_computation_random}
        \end{align}
       where the integral on the right-hand side has been shown, in the previous step, to be bounded independent of $n_k$.
       Therefore, we conclude that for $\mathbb{P}$-a.e. $\omega \in \Omega$,
       $$
       \int_{\Gamma} |\gamma|_{W^{\alpha, p}}^p \d \pi^{\omega} (\gamma) < + \infty,
       $$
       and thus
       $$
            |\gamma|_{W^{\alpha, p}} < + \infty \quad \textrm{for } \pi^\omega \textrm{-a.e. } \gamma \in \Gamma.
       $$ 
    \\
    \textbf{Step 3 ($(e_t)_\#\pi^\omega=\mu_t^\omega$ $\mathbb{P}$-a.s. for all $t\in I$).}
    The marginal of the random path measure $\pi \in P_{\Omega} (\Gamma)$ under the evaluation map $e_t : \Gamma \to \mathcal{X}$ defines a new random measure, that we denote by $\nu_t \in P_\Omega (\mathcal{X})$, i.e., for a given $t$ and $\omega$, we put 
    \begin{equation}
        \nu_t^\omega \coloneqq (e_t)_\#\pi^\omega.  
    \end{equation}
    Our goal is to prove that    \begin{equation}\label{eq:mu_nu_modification}
    \mathbb{P} \Big( \left\{ \omega \in \Omega: \,  \nu_t^\omega =\mu_t^\omega  \right\} \Big)  = 1 \quad \textrm{for each } \, t \in I,
    \end{equation}
    or, in the terminology of stochastic analysis, they are modifications of each other. To prove that two random measures are equal a.s., it is enough by \cite[Lemma 3.14]{Crauel2002} to show that
    \begin{equation}
        \int_{\Omega} \int_{\mathcal{X}} \mathds{1}_{F} (\omega) \phi(x) \d \nu_t^\omega (x) \d \mathbb{P} (\omega) = \int_{\Omega} \int_{\mathcal{X}}  \mathds{1}_{F} (\omega) \phi (x) \d \mu_t^\omega (x) \d \mathbb{P} (\omega)
    \end{equation}
    holds for all $\phi \in \mathrm{Lip}_b (\mathcal{X})$ and $F \in  \mathcal{F}$. We have 
    \begin{align}
        \int_{\Omega} \int_{\mathcal{X}} \mathds{1}_{F} (\omega) \phi(x) \d ({e_t}_{\#}\pi^\omega) (x) \d \mathbb{P} (\omega)
        & = \int_{\Omega} \int_{\Gamma} \mathds{1}_{F} (\omega) \phi (\gamma_t) \d \pi^\omega (\gamma) \d \mathbb{P} (\omega) \, \\
        & \overset{(a)}{=}  \lim_{k \to \infty}  \int_{\Omega} \int_{\Gamma} \mathds{1}_{F} (\omega) \phi (\gamma_{[2^{n_{k}} t]/2^{n_{k}} })  \d \pi^\omega_{n_k} (\gamma)  \d \mathbb{P} (\omega) \\
        & \overset{(b)}{=}  \lim_{k \to \infty} \int_{\Omega} \int_{\bm{\mathcal{X}}_{n_k}} \mathds{1}_{F} (\omega) \phi(x_{[2^{n_k} t]}) \d \Upsilon^\omega_{n_k} (\bm{x}) \d \mathbb{P} (\omega) \\
        & \overset{(c)}{=} \lim_{k \to \infty}  \int_{\Omega} \int_{\mathcal{X}} \mathds{1}_{F} (\omega) \phi (x) \d \mu^\omega_{[2^{n_k} t]/2^{n_k} } (x) \d \mathbb{P} (\omega) \\
        & \overset{(d)}{=} \int_{\Omega} \int_{\mathcal{X}} \mathds{1}_{F} (\omega) \phi (x) \d \mu_t^\omega (x) \d \mathbb{P} (\omega). \label{eq:proof_random_1dim_time_marginal}
    \end{align}
    As for the step $(a)$, observe that
    \begin{multline}
        \bigg| \iint \mathds{1}_{F} (\omega) \phi (\gamma_t)  \d \pi^\omega \d \mathbb{P}   -  \iint \mathds{1}_{F} (\omega)  \phi (\gamma_{[2^{n_k} t]/2^{n_k} } )  \d \pi^\omega_{n_k} \d \mathbb{P}  \bigg|  \\
         \leq  \bigg|   \iint \mathds{1}_{F} (\omega)  \phi (\gamma_t)  \d \pi^\omega \d \mathbb{P}   -  \iint \mathds{1}_{F} (\omega)  \phi (\gamma_t)  \d \pi^\omega_{n_k}  \d \mathbb{P}  \bigg| \\
         + \bigg|  \iint \mathds{1}_{F} (\omega)  \phi (\gamma_t)  \d \pi^\omega_{n_k}  \d \mathbb{P}   -  \iint \mathds{1}_{F} (\omega)  \phi (\gamma_{[2^{n_k} t]/2^{n_k} } )  \d \pi^\omega_{n_k} \d \mathbb{P} \bigg|. \label{eq:convergence_1_dim_marginal_random}
    \end{multline}
    By taking limit $k \to \infty$, the first term on the right-hand side goes to zero by the narrow convergence of $(\pi_{n_k})_k$ to $\pi$ in $P_\Omega(\mathcal{X})$ (note that the map $(\gamma, \omega) \mapsto \phi (\gamma_t) \mathds{1}_{F} (\omega)$ from $\Gamma \times \Omega \to \mathbb{R} $ is indeed a random continuous bounded function).
    To show that the second term also vanishes in the limit, we further estimate 
    \begin{align}    
        \bigg|  \iint \mathds{1}_{F} (\omega) \phi (\gamma_t)  \d \pi^\omega_{n_k}  \d \mathbb{P} & - \iint  \mathds{1}_{F} (\omega) \phi (\gamma_{[2^{n_k} t]/2^{n_k} } )  \d \pi^\omega_{n_k}  \d \mathbb{P}  \bigg| \\
        & \leq \mathrm{Lip} (\phi) \iint \mathds{1}_{F} (\omega) d (\gamma_t, \gamma_{[2^{n_{k}} t]/2^{n_{k}} } )  \d \pi^\omega_{n_k}  \d \mathbb{P}  \\
        & \leq \mathrm{Lip} (\phi)  \bar{c}(\alpha,p) \left(t- \frac{[2^{n_{k}} t]}{2^{n_{k}} }\right)^{\alpha - \frac{1}{p}} \left(\mathbb{E} \left[ \int | \gamma |^p_{W^{\alpha,p}} \d \pi_{n_k} \right] \right)^{\frac1p}, \label{eq:convergence_1_dim_marginal_random_partII}
     \end{align}
     where we first bounded the expression using the Lipschitz constant of the test function, and then used the fact that for $\mathbb{P}$-a.e. $\omega$, the measure $\pi_{n_k}^\omega$ is concentrated on continuous curves with finite $W^{\alpha,p}$-semi-norm, and thus $d (\gamma_t, \gamma_{[2^{n_{k}} t]/2^{n_{k}} } )$ can be estimated using \eqref{eq:distance_estimate_GRR_r_paper}. The integral on the right-hand side of \eqref{eq:convergence_1_dim_marginal_random_partII} is uniformly bounded by Step 1. Thus, in the limit $k \to \infty$, the last expression approaches zero. 
     \\
     Steps $(b)$-$(c)$ simply follow from the construction. For fixed $\omega$ and $k$, the 1-D marginals of $\pi^\omega_{n_k}$ at time $t$ coincide with $\mu_t^\omega$ whenever $t$ is of the form $t = \frac{i}{2^{n_k}}$ for some integer $i$. 
    \\
    Finally, step $(d)$ follows from the fact that $t \mapsto \mu_t$ is a narrowly continuous curve in $P_{\Omega}(\mathcal{X})$, as we pointed out at the beginning of the proof. This means that the application
    $$t \mapsto \int_{\Omega}   \int_{\mathcal{X}} \mathds{1}_{F} (\omega) \phi (x) \d \mu_t^\omega \d \mathbb{P} $$
    is continuous. Note that the application
    $$
    t \mapsto   \int_{\mathcal{X}} \phi (x)  \d \mu_t^\omega 
    $$
    need not be continuous a.s. On the contrary, in the next step, we prove that 
    $$
    t \mapsto   \int_{\mathcal{X}} \phi (x)  \d \nu_t^\omega 
    $$
    is continuous a.s.
    \\
    \textbf{Step 4 ($t \mapsto  {(e_t)}_{\#} \pi^\omega \in W^{\alpha, p} (I;P_p(\mathcal{X}))$ $\mathbb{P}$-a.s.).} Eq. \eqref{eq:support_computation_random} together with \eqref{eq:mu_nu_modification} at $t=0$ gives 
    \begin{equation}
        \mathbb{E} \left[  \int_{\Gamma_T} \Big( d(\gamma_0,\bar{x})^p + | \gamma |_{W^{\alpha,p}}^p  \Big) \d \pi(\gamma) \right] < + \infty,
    \end{equation}
    which is exactly the integrability condition \eqref{eq:lift_to_mu_Walphap_integrability_stoch}. Hence,  Theorem \ref{thm:lift_to_mu_Walphap_stoch} applies and we obtain: 
    \begin{itemize}[leftmargin=*]
        \item[$\diamond$] $(\nu_t) \in W^{\alpha, p} (I;P_{p,\Omega}(\mathcal{X}))$ and thus $t \mapsto \nu_t $ is narrowly continuous in $P_{\Omega}(\mathcal{X})$ by Proposition \ref{prop:narrow convergence_vs_Ewpp}.
        \item[$\diamond$] $(\nu_t^\omega) \in W^{\alpha, p} (I;P_{p}(\mathcal{X}))$ a.s. and thus $t \mapsto \nu_t^\omega $ is narrowly continuous in $P(\mathcal{X})$ a.s.
    \end{itemize} 
    \textbf{Step 5 ($(e_s,e_t)_\# \pi^\omega \in \mathrm{OptCpl}(\mu_s^\omega, \mu_t^\omega)$ $\mathbb{P}$-a.s. for all $s,t \in I$).}
    Here, our goal is to prove that 
    \begin{equation}
    \mathbb{P} \Big( \big\{ \omega \in \Omega: \,  W_p^p(\mu_t^\omega,\mu_s^\omega) = \int_{\Gamma} d(\gamma_t,\gamma_s)^p \d \pi^\omega  \big\} \Big)  = 1 \quad \textrm{for  each } \, s,t \in I.
    \end{equation}
    Fix $s,t \in I$.
    By previous step, we have a $\mathbb{P}$-full measure set on which  $(e_s,e_t)_\# \pi^\omega \in \mathrm{Cpl}(\mu_s^\omega, \mu_t^\omega)$. By assumption, we have a $\mathbb{P}$-full measure set on which $\mu_s^\omega, \mu_t^\omega \in P_p(\mathcal{X})$. Thus, we have 
    \begin{equation}
        W_p^p ( \mu_t^\omega , \mu_s^\omega ) \leq  \int_\Gamma d (\gamma_t,\gamma_s)^p \d \pi^\omega (\gamma) \,\,\,\, \text{a.s.}
    \end{equation}
    Integrating this together with a test function of the form $\omega \mapsto \mathds{1}_{F}(\omega)$ with $F \in \mathcal{F}$, we obtain 
    \begin{equation}\label{eq:proof_opti_inequ1_random}
            \int_{\Omega} \mathds{1}_{F}(\omega) W_p^p ( \mu_t^\omega , \mu_s^\omega ) \d \mathbb{P} (\omega) \leq  \int_{\Omega} \int_\Gamma \mathds{1}_{F}(\omega) d (\gamma_t,\gamma_s)^p \d \pi^\omega (\gamma) \d \mathbb{P} (\omega).
        \end{equation}
    If we show that the reverse inequality holds as well, the proof of this step is complete. (Recall the following:  If $X$ and $Y$ are two real-valued integrable random variables defined on $(\Omega, \mathcal{F}, \mathbb{P})$ such that $\int_{F} X \d \mathbb{P} = \int_{F} Y \d \mathbb{P}$ for all $F \in \mathcal{F}$, then $X = Y$ a.s.) As any random measure $\pi^{\omega}$ can be viewed as the disintegration of a non-random measure $\bm{\pi} \in P (\Gamma \times \Omega)$, as defined in \ref{eq:bold_mu_def}, we can write 
        \begin{align}
            \int_{\Omega} \int_\Gamma \mathds{1}_{F} (\omega) d (\gamma_t,\gamma_s)^p \d \pi^\omega & (\gamma)  \d \mathbb{P} (\omega)
            = \int_{\Gamma \times \Omega} \mathds{1}_{F} (\omega) d (\gamma_t,\gamma_s)^p \d \bm{\pi} (\gamma, \omega)\\
            & \overset{(a)}{\leq }  \liminf_{m \to \infty}  \int_{\Gamma \times \Omega} \mathds{1}_{F} (\omega) d (\gamma_{[2^m t]/2^m}, \gamma_{[2^m s]/2^m} )^p \d \bm{\pi} (\gamma, \omega) \\
             & \overset{(b)}{\leq }  \liminf_{m \to \infty} \left( \liminf_{k \to \infty} \int_\Omega \int_{\Gamma}  \mathds{1}_{F} (\omega)  d (\gamma_{[2^m t]/2^m}, \gamma_{[2^m s]/2^m} )^p  \d \pi^\omega_{n_k}  \d \mathbb{P}(\omega) \right) \\
             & \overset{(c)}{=} \liminf_{m \to \infty} \int_\Omega \mathds{1}_{F} (\omega) W_p^p \big( \mu^\omega_{[2^m t]/2^m} , \mu^\omega_{[2^m s]/2^m} \big) \d \mathbb{P} (\omega) \\
            & \overset{(d)}{=} \int_\Omega \mathds{1}_{F}(\omega) W_p^p ( \mu_t^\omega , \mu_s^\omega ) \d \mathbb{P} (\omega).
            \label{eq:proof_opti_inequ2_random}
        \end{align}
    In step $(a)$, we applied Fatou's lemma on the measure space $(\Gamma \times \mathbb{P}, \mathcal{B}(\Gamma) \otimes \mathcal{F}, \bm{\pi})$.
    Clearly, for any $(\gamma,\omega) \in \Gamma \times \Omega$,
    \begin{equation}
        \mathds{1}_{F} (\omega) d (\gamma_t,\gamma_s)^p = \lim_{m \to \infty} \mathds{1}_{F} (\omega) d (\gamma_{[2^m t]/2^m}, \gamma_{[2^m s]/2^m} )^p .
    \end{equation}
    To achieve step $(b)$, we first revert the integral back to its disintegrated form.
    Next, we note that for fixed $m$ in the integrand, the function
    $$
    (\gamma, \omega) \mapsto \mathds{1}_{F} (\omega) d (\gamma_{[2^m t]/2^m}, \gamma_{[2^m s]/2^m} )^p
    $$
    is continuous in $\gamma$ (for any $\omega$), in particular, it is lower-semi continuous in $\gamma$ (for any $\omega$).
    Thus, this step is due to the narrow convergence of the random measures $(\pi_{n_k})_k$ to $\pi$ and \eqref{eq:narrow_conv_lsc_func_random}.
    \\
    Step $(c)$ is a direct consequence of the construction of $\pi_{n_k}$ and the compatibility assumption (here we use the all-pairs optimality condition \eqref{eq:2D_marginal_Upsilon_omega_all}). Ignoring the zero sets in the integration again, we notice that once $n_k$ gets larger than $m$, the measure $\pi_{n_k}^\omega$ gives us the optimal coupling between any two measures at times of the form $i/2^m$ with $i$ being an integer, and hence the value of the integral does not change anymore. 
    \\
    Finally, step $(d)$ is due to the fact that the curve $t \mapsto \mu$ is a continuous curve $P_{p, \Omega}(\mathcal{X})$, making 
    $$(t,s) \mapsto \int_{\Omega} \mathds{1}_{F}(\omega) W_p^p ( \mu_t^\omega , \mu_s^\omega ) \d \mathbb{P} (\omega) $$
    continuous.
    Indeed, for any sequence $(t_m,s_m) \to (t,s)$, we can estimate
    \begin{multline}
        \left| \int_{\Omega} \mathds{1}_{F}(\omega) W_p^p ( \mu_{t_m}^\omega , \mu_{s_m}^\omega ) \d \mathbb{P} (\omega) - \int_{\Omega} \mathds{1}_{F}(\omega) W_p^p ( \mu_{t}^\omega , \mu_{s}^\omega ) \d \mathbb{P} (\omega) \right| \\
        \leq \lVert W_p^p ( \mu_{t_m} , \mu_{s_m} ) - W_p^p ( \mu_{t} , \mu_{s} ) \rVert_{L^1(\mathbb{P})}
    \end{multline}
    where the right-hand side approaches zero because $W_p ( \mu_{t_m} , \mu_{s_m} ) \overset{L^p(\mathbb{P})}{\rightarrow}   W_p ( \mu_{t} , \mu_{s} )$.
    \\
    \textbf{Step 6 ($\Vert\mu\Vert_{W^{\alpha,p}}^p = \mathbb{E} [ \int |\gamma|_{W^{\alpha,p}}^p \d \pi ] $).} Choosing $F = \Omega$ in the previous step, we obtain 
    $$
    \mathbb{W}^p_p (\mu_t,\mu_s) = \mathbb{E} \left[ \int_{\Gamma} d (\gamma_t,\gamma_s)^p \d \pi \right] 
    $$
    Therefore, we can compute:
        \begin{align}
             \Vert\mu\Vert_{W^{\alpha,p}}^p
             & \coloneqq \iint_{[0,1]^2} \frac{\mathbb{W}_p^p (\mu_s,\mu_t)}{|t-s|^{1+\alpha p}} \d s \d t \\
             & = \iint_{[0,1]^2} \frac{\mathbb{E}\left[\int_\Gamma d(\gamma_s,\gamma_t)^p \d \pi (\gamma)\right]}{|t-s|^{1+\alpha p}} \d s \d t \\
             & = \mathbb{E} \left[ \int_\Gamma |\gamma|_{W^{\alpha,p}}^p \d \pi(\gamma)\right] < + \infty, 
         \end{align}
        where we used Tonelli's theorem. Similarly, we have
         \begin{align}
             \Vert\mu\Vert_{b^{\alpha,p}}^p
             & \coloneqq \sum_{m=0}^{\infty} 2^{m(\alpha p -1)} \sum_{k=0}^{2^m-1} \mathbb{W}_p^p (\mu_{t_{k}^{(m)}}, \mu_{t_{k+1}^{(m)}}) \\
             & = \lim_{M \to \infty} \int_{\Gamma \times \Omega} \sum_{m=0}^{M} 2^{m(\alpha p -1)} \sum_{k=0}^{2^m-1} d (\gamma_{t_{k}^{(m)}}, \gamma_{t_{k+1}^{(m)}})^p \d \bm{\pi} (\gamma,\omega) \\
             & = \int_{\Gamma \times \Omega} \sum_{m=0}^{\infty} 2^{m(\alpha p -1)} \sum_{k=0}^{2^m-1} d (\gamma_{t_{k}^{(m)}}, \gamma_{t_{k+1}^{(m)}})^p \d \bm{\pi} (\gamma,\omega) \\
             & = \mathbb{E} \left[ \int_\Gamma |\gamma|_{b^{\alpha,p}}^p \d \pi (\gamma) \right] < + \infty,
         \end{align}
         where we used Beppo Levi's lemma. This completes the proof. 
    \end{proof}

    \begin{remark}\label{rmk:random_curves}
        If in addition to the assumptions of  Theorem \ref{thm:optimal_lift_mu_Walphap_compatible_random}, we have that $\mu$ is given by a measurable map $\mu: (\Omega,\mathcal{F},\mathbb{P}) \to C(I;P_p(\mathcal{X}))$, then the second and third statements of the theorem can be strengthen as follows:
        \textit{
         \begin{enumerate}[label=(\roman*), font=\normalfont]
            \item[(ii)] $(e_t)_\#\pi^\omega =\mu_t^{\omega} $ for all $t\in I$ a.s.;
            \item[(iii)] $(e_s,e_t)_\# \pi^\omega \in \mathrm{OptCpl}(\mu_s^{\omega}, \mu_t^{\omega})$ for all $s,t \in I$ a.s.; and,
            \begin{equation}\label{eq:rmk_random_curves}
            |\mu^\omega|_{W^{\alpha,p}}^p = \int_{\Gamma_T} |\gamma|_{W^{\alpha,p}}^p \d \pi^\omega(\gamma) \quad \textrm{a.s}.
            \end{equation}
        \end{enumerate}
        }
        \noindent
        Indeed, if two continuous processes are modifications of each other, they are indistinguishable. To see \eqref{eq:rmk_random_curves}, we note that since $\mu: (\Omega,\mathcal{F},\mathbb{P}) \to C(I;P_p(\mathcal{X}))$ is a measurable map, we can exchange the integrals and write: $\Vert\mu\Vert_{W^{\alpha,p}}^p = \mathbb{E}[|\mu|_{W^{\alpha,p}}^p]$ (which is not true in general as discussed in Remark \ref{rmk:Exp_Walphap_mu}). This together with \eqref{eq:optimality_pi_1_random} yields
        \begin{align}
             \mathbb{E} \left[\int_{\Gamma_T} |\gamma|_{W^{\alpha,p}}^p \d \pi (\gamma) \right] - \Vert\mu\Vert_{W^{\alpha,p}}^p = \mathbb{E} \Big[ \underbrace{\int_{\Gamma_T} |\gamma|_{W^{\alpha,p}}^p \d \pi - |\mu|_{W^{\alpha,p}}^p}_{\geq 0} \Big] = 0. 
        \end{align}
        As $\pi^\omega$ is a lift of $(\mu_t^\omega)$ a.s., we know by \cite[Theorem 1.2]{Abedi2025paths} that the quantity inside the expectation is non-negative. Thus, it must be zero a.s.  
    \end{remark}

    \begin{corollary}\label{thm:optimal_lift_mu_Holder_compatible_random}
        Let $(\mathcal{X}, d)$ be a complete, separable, and locally compact length metric space, and $I \coloneqq [0,T] \subset \mathbb{R}$.
        Let $(\mu_t)_{t \in I}$ be a probability measure-valued stochastic process defined on a probability space $(\Omega, \mathcal{F}, \mathbb{P})$ such that $(\mu_t) \in C^{\upgamma\textrm{-} \mathrm{H\ddot{o}l}}(I;P_{p,\Omega}(\mathcal{X}))$ for some $1<p<\infty$ and $\frac{1}{p}<\upgamma \leq 1$.
        Assume that $(\mu_t)_{t \in I}$ is compatible in $P_{p,\Omega}(\mathcal{X})$.
        Then, construction \hyperref[itm:ConstructionB_random]{{\footnotesize$\bigstar$}} converges narrowly (up to a subsequence) to a random probability measure $\pi \in P_{\Omega} (C(I;\mathcal{X}))$ satisfying
    \begin{enumerate}[label=(\roman*), font=\normalfont]
                \item $\pi^\omega$ is concentrated on $W^{\alpha,p}(I;\mathcal{X}) \subset  C^{(\alpha - \frac{1}{p})\textrm{-} \mathrm{H\ddot{o}l}}(I;\mathcal{X}) $ a.s. and
                \\
                $t\mapsto (e_t)_\#\pi^\omega$ is in $W^{\alpha,p} (I;P_p(\mathcal{X}))\subset C^{(\alpha-\frac{1}{p})\textrm{-} \mathrm{H\ddot{o}l}}$ \, $(I;P_p(\mathcal{X}))$ a.s. for any $\alpha \in (\frac{1}{p},\upgamma)$;
                \item $(e_t)_\#\pi^\omega=\mu_t^\omega$ a.s. for all $t\in I$;
                \item $(e_s,e_t)_\# \pi^\omega \in \mathrm{OptCpl}(\mu_s^\omega, \mu_t^\omega)$ a.s. for all $s,t \in I$; and for any $\alpha \in (\frac{1}{p},\upgamma)$, we have  \eqref{eq:optimality_pi_1_random} and 
                \begin{align}
                   \Vert \mu \Vert_{\upgamma\textrm{-}\mathrm{H\ddot{o}l}}^p \geq c \, \mathbb{E} \left[ \int_{\Gamma_T} | \gamma |^p_{\alpha - \frac{1}{p}\textrm{-}\mathrm{H\ddot{o}l}} \d \pi (\gamma) \right] \geq c \, \Vert \mu \Vert_{\alpha-\frac{1}{p}\textrm{-}\mathrm{H\ddot{o}l}}^p,
                \end{align}
                where $c = c(\upgamma,\alpha,p,T)$ is an explicit positive constant. In addition, for any $\alpha \in (\frac{1}{p},\upgamma)$, we have
        \begin{equation}
             \Vert \mu \Vert_{\upgamma\textrm{-}\mathrm{H\ddot{o}l}}^p \geq c \, \mathbb{E} \left[ \int_{\Gamma_T} | \gamma |^p_{\frac{1}{\alpha} \textrm{-}\mathrm{var}} \d \pi (\gamma) \right] \geq c \, \Vert \mu \Vert_{p\textrm{-}\mathrm{var}}^p,
        \end{equation}
        where $c = c(\upgamma,\alpha,p,T)$ is another explicit positive constant.
        \end{enumerate}
    \end{corollary}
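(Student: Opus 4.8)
The plan is to obtain \cref{thm:optimal_lift_mu_Holder_compatible_random} from \cref{thm:optimal_lift_mu_Walphap_compatible_random} by feeding the H\"{o}lder hypothesis through the two embeddings between the H\"{o}lder and fractional Sobolev scales: \cref{rmk:Holder-FractionalSobolev_r_paper} (H\"{o}lder inside $W^{\alpha,p}$) and \cref{thm:FractionalSobolev-Holder_r_paper} ($W^{\alpha,p}$ inside H\"{o}lder). First I would fix an arbitrary $\alpha\in(\tfrac1p,\upgamma)$---the interval being non-empty exactly because $\tfrac1p<\upgamma$---and apply \cref{rmk:Holder-FractionalSobolev_r_paper} on the metric space $(P_{p,\Omega}(\mathcal{X}),\mathbb{W}_p)$: from $(\mu_t)\in C^{\upgamma\textrm{-}\mathrm{H\ddot{o}l}}(I;P_{p,\Omega}(\mathcal{X}))$ one obtains $(\mu_t)\in W^{\alpha,p}(I;P_{p,\Omega}(\mathcal{X}))$ together with $\Vert\mu\Vert_{W^{\alpha,p}}^p\le C_1\Vert\mu\Vert_{\upgamma\textrm{-}\mathrm{H\ddot{o}l}}^p$, where $C_1=C_1(\upgamma,\alpha,p,T)$ is the explicit constant of that remark. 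The hypotheses of \cref{thm:optimal_lift_mu_Walphap_compatible_random} (completeness, separability, local compactness, length metric, compatibility, $W^{\alpha,p}$-regularity) are thereby met, so construction \hyperref[itm:ConstructionB_random]{{\footnotesize$\bigstar$}}---whose definition does not involve $\alpha$---converges, along a subsequence, to a random measure $\pi\in P_{\Omega}(C(I;\mathcal{X}))$ satisfying items (i)--(iii) of that theorem. Since the construction is $\alpha$-free and the uniform $b^{\alpha,p}$-bound established in Step~1 of its proof is valid for \emph{every} admissible $\alpha$, lower semicontinuity through \eqref{eq:narrow_conv_lsc_func_random} gives $\mathbb{E}[\int_{\Gamma_T}|\gamma|_{W^{\alpha,p}}^p\,\d\pi]<\infty$---hence $\pi^\omega$ concentrated on $W^{\alpha,p}(I;\mathcal{X})$ a.s.---and likewise items (ii)--(iii) and the equality \eqref{eq:optimality_pi_1_random} hold, for \emph{each} $\alpha\in(\tfrac1p,\upgamma)$; indeed in the proof of \cref{thm:optimal_lift_mu_Walphap_compatible_random} these are deduced from the $\alpha$-independent identity $\mathbb{W}_p^p(\mu_s,\mu_t)=\mathbb{E}[\int_{\Gamma_T}d(\gamma_s,\gamma_t)^p\,\d\pi]$. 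So a single $\pi$ serves the whole range, and items (ii) and the optimality part of (iii) of the corollary follow directly.

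To obtain item (i) I would apply the Garsia--Rodemich--Rumsey embedding \cref{thm:FractionalSobolev-Holder_r_paper} pathwise on $\mathcal{X}$ and on $(P_p(\mathcal{X}),W_p)$, which gives $W^{\alpha,p}(I;\mathcal{X})\subset C^{(\alpha-\frac1p)\textrm{-}\mathrm{H\ddot{o}l}}(I;\mathcal{X})$ and $W^{\alpha,p}(I;P_p(\mathcal{X}))\subset C^{(\alpha-\frac1p)\textrm{-}\mathrm{H\ddot{o}l}}(I;P_p(\mathcal{X}))$; combining these inclusions with \cref{thm:optimal_lift_mu_Walphap_compatible_random}(i) yields item (i) as stated.

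Finally I would establish the inequality chain. For the first inequality I concatenate the estimates already at hand,
\[
\Vert\mu\Vert_{\upgamma\textrm{-}\mathrm{H\ddot{o}l}}^p \;\ge\; \frac{1}{C_1}\Vert\mu\Vert_{W^{\alpha,p}}^p \;\overset{\eqref{eq:optimality_pi_1_random}}{=}\; \frac{1}{C_1}\,\mathbb{E}\!\left[\int_{\Gamma_T}|\gamma|_{W^{\alpha,p}}^p\,\d\pi(\gamma)\right] \;\ge\; \frac{1}{C_1\,\bar c^{\,p}}\,\mathbb{E}\!\left[\int_{\Gamma_T}|\gamma|_{\alpha-\frac1p\textrm{-}\mathrm{H\ddot{o}l}}^p\,\d\pi(\gamma)\right],
\]
the last step using $|\gamma|_{\alpha-\frac1p\textrm{-}\mathrm{H\ddot{o}l}}\le\bar c\,|\gamma|_{W^{\alpha,p}}$ from \cref{thm:FractionalSobolev-Holder_r_paper}; this identifies $c\coloneqq(C_1\bar c^{\,p})^{-1}=c(\upgamma,\alpha,p,T)$. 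For the second inequality I first check the hypothesis of \cref{thm:lift_to_mu_Hol_random}, namely $\mathbb{E}[\int_{\Gamma_T}(d(\gamma_0,\bar x)^p+|\gamma|_{\alpha-\frac1p\textrm{-}\mathrm{H\ddot{o}l}}^p)\,\d\pi]<\infty$: by item (ii) the first term equals $\mathbb{E}[\int_{\mathcal{X}}d(x,\bar x)^p\,\d\mu_0]<\infty$ (as $\mu_0\in P_{p,\Omega}(\mathcal{X})$, $(\mu_t)$ taking values there), and the second is $\le\bar c^{\,p}\,\mathbb{E}[\int_{\Gamma_T}|\gamma|_{W^{\alpha,p}}^p\,\d\pi]<\infty$. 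Then \cref{thm:lift_to_mu_Hol_random} with $\theta=\alpha-\tfrac1p\in(0,1)$, applied over $[0,T]$, gives via \eqref{eq:lift_to_mu_Hol_random_ineq_exp} that $\mathbb{E}[\int_{\Gamma_T}|\gamma|_{\alpha-\frac1p\textrm{-}\mathrm{H\ddot{o}l}}^p\,\d\pi]\ge\Vert\mu\Vert_{\alpha-\frac1p\textrm{-}\mathrm{H\ddot{o}l}}^p$, which after multiplying by $c$ is the second inequality. I do not anticipate a genuine obstacle; the only bookkeeping point is that the tightness/convergence step of \cref{thm:optimal_lift_mu_Walphap_compatible_random} runs unchanged for each admissible $\alpha$ while the construction itself never sees $\alpha$, so one and the same $\pi$ works for the whole range $(\tfrac1p,\upgamma)$.
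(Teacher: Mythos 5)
Your proposal is correct and follows essentially the same route as the paper: the paper's proof also reduces everything to \cref{thm:optimal_lift_mu_Walphap_compatible_random} via the embedding of \cref{rmk:Holder-FractionalSobolev_r_paper}, and then establishes the displayed chain of inequalities by concatenating that remark, the equality \eqref{eq:optimality_pi_1_random}, the Garsia--Rodemich--Rumsey embedding \cref{thm:FractionalSobolev-Holder_r_paper}, and \cref{thm:lift_to_mu_Hol_random}, with the same constant $c=\tilde c/\bar c^{\,p}$. Your explicit bookkeeping (that the construction is $\alpha$-free so one $\pi$ serves the whole range, and the verification of the integrability hypothesis of \cref{thm:lift_to_mu_Hol_random}) is left implicit in the paper but is consistent with it.
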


    \begin{remark}
    We note that the random paths $t \mapsto \mu_t^\omega$ and $t \mapsto (e_t)_{\#} \pi^\omega$ do not necessarily coincide a.s.
    In the deterministic case, the curve $((e_t)_\#\pi)$ coincides with $(\mu_t)$, making $\pi$ a \emph{lift} by its very definition. Here, item (ii) states that the measure-valued process $((e_t)_\#\pi)$ is a modification $(\mu_t)$, making $\pi$ a \emph{random lift} in the sense of \eqref{eq:def:random_lift_intro}. 
    Item (i) tells us that the sample paths of this modification lie a.s. in the Wasserstein space and are a.s. continuous. Thus, on the level of Wasserstein space, this is similar to Kolmogorov--\v Centsov continuity theorem. The additional statement is that the modification here arises as the one-dimensional time marginals of a random path measure $\pi$ on the underlying space with optimality (iii), both of which are constructed \emph{at the same time}. 
    This avoids the issue of losing the regularity twice, as raised in Remark \ref{rmk:treatment}. Furthermore, the construction ensures the measurability of $\omega \mapsto \pi^\omega$.
    \end{remark}
    
    \begin{proof}[Proof of Corollary \ref{thm:optimal_lift_mu_Holder_compatible_random}]
    We only need to show the inequalities. 
    Take an arbitrary $\alpha \in (\frac{1}{p},\upgamma)$. We apply the previous results as follows:
    \begin{multline}\label{eq:optimal_lift_mu_Holder_compatible_random_proof}
            \Vert \mu \Vert_{\upgamma\textrm{-}\mathrm{H\ddot{o}l}}^p  \overset{\text{Rem.\ref{rmk:Holder-FractionalSobolev_r_paper}}}{\geq} \tilde{c} \Vert\mu\Vert_{W^{\alpha,p}}^p \overset{\text{Thm.\ref{thm:optimal_lift_mu_Walphap_compatible_random}}}{=} \tilde{c} \mathbb{E} \left[  \int_{\Gamma_T} |\gamma|_{W^{\alpha,p}}^p \d \pi (\gamma) \right] \\ \overset{\text{Thm.\ref{thm:FractionalSobolev-Holder_r_paper}}}{\geq} \frac{\tilde{c}}{{\bar{c}}^p} \mathbb{E} \left[ \int_{\Gamma_T} | \gamma |^p_{\alpha - \frac{1}{p}\textrm{-}\mathrm{H\ddot{o}l}} \d \pi (\gamma) \right]\overset{\text{Thm.\ref{thm:lift_to_mu_Hol_random}}}{\geq}  \frac{\tilde{c}}{{\bar{c}}^p} \Vert \mu \Vert_{\alpha-\frac{1}{p}\textrm{-}\mathrm{H\ddot{o}l}}^p.
    \end{multline}
            where 
            $$
            \tilde{c} \coloneqq \frac{(\upgamma p - \alpha p)(\upgamma p - \alpha p+1)}{2 T^{(\upgamma p - \alpha p+1)}}, \quad \text{and} \quad {\bar{c}\,}^p \coloneqq 32 \frac{\alpha p +1}{\alpha p -1}. 
            $$
            Similarly, we have 
            \begin{align}
            \tilde{c} \, \mathbb{E} \left[  \int_{\Gamma_T} |\gamma|_{W^{\alpha,p}}^p \d \pi (\gamma) \right] \overset{\text{Thm.\ref{thm:FractionalSobolev-Holder_r_paper}}}{\geq} \frac{\tilde{c}}{{\bar{c}}^p} \frac{1}{T^{\alpha p -1}} \mathbb{E} \left[ \int_{\Gamma_T} | \gamma |^p_{\frac{1}{\alpha}\textrm{-}\mathrm{var}} \d \pi (\gamma) \right]\overset{\text{Thm.\ref{thm:lift_to_mu_var_random}}}{\geq}   \frac{\tilde{c}}{{\bar{c}}^p} \frac{1}{T^{\alpha p -1}} \Vert \mu \Vert_{p\textrm{-}\mathrm{var}}^p.
            \end{align}
    \end{proof}

    \subsection{Expectation of random lifts}
    
    As a corollary of the previous results, we note that taking expectation of the constructed random lift $\pi \in P_{\Omega}(C(I;\mathcal{X}))$ for $(\mu_t) \subset P_{p,\Omega}(\mathcal{X})$ immediately produces a (not necessarily optimal) lift $\mathbb{E} \pi \in P(C(I;\mathcal{X}))$ for the (not necessarily compatible) deterministic curve $(\mathbb{E} \mu_t) \subset P_p(\mathcal{X})$. 

    \begin{corollary}\label{crl:lift_Emu_Walphap}
        Under the assumptions of Theorem \ref{thm:optimal_lift_mu_Walphap_compatible_random}, we have $(\mathbb{E} \mu_t) \in W^{\alpha, p} (I;P_p (\mathcal{X}))$, and the expectation of $\pi$ obtained therein, $\mathbb{E} \pi \in P(C(I;\mathcal{X}))$, satisfies:
        \begin{enumerate}[label=(\roman*), font=\normalfont]
            \item $\mathbb{E}\pi$ is concentrated on $W^{\alpha,p}(I;\mathcal{X})$;
            \item $(e_t)_\# \mathbb{E} \pi =\mathbb{E} \mu_t $ for all $t\in I$;
            \item we have for all $s,t \in I$ that
            \begin{equation}
            W_p^p (\mathbb{E}\mu_s,\mathbb{E}\mu_t) \leq  \int_{\Gamma_T} d(\gamma_t,\gamma_s)^p \d \mathbb{E} \pi (\gamma) = \mathbb{W}_p^p (\mu_t,\mu_s),
            \end{equation}
             and, moreover,
             \begin{align}
            |\mathbb{E} \mu |_{W^{\alpha,p}}^p \leq  \int_{\Gamma_T} |\gamma|_{W^{\alpha,p}}^p \d \mathbb{E} \pi (\gamma) = \Vert \mu \Vert_{W^{\alpha,p}}^p .
            \end{align}
        \end{enumerate}
    \end{corollary}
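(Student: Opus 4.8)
The plan is to obtain the statement by ``averaging out the randomness'' in the lift $\pi$ furnished by \cref{thm:optimal_lift_mu_Walphap_compatible_random}. The only additional tool is the identity \eqref{eq:Exp_int_phi_mu} applied with $\mathcal{X}$ replaced by the Polish space $\Gamma_T=C(I;\mathcal{X})$: for every non-negative Borel function $\Phi:\Gamma_T\to[0,+\infty]$ (or every $\mathbb{E}\pi$-integrable one) one has $\mathbb{E}\big[\int_{\Gamma_T}\Phi\d\pi\big]=\int_{\Gamma_T}\Phi\d\mathbb{E}\pi$, where $\mathbb{E}\pi\in P(\Gamma_T)$ is the average measure defined as in \eqref{eq:exp_mu_def}. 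To stay on safe measurability ground I would carry out all the seminorm computations with $|\cdot|_{b^{\alpha,p}}$, which, being a countable sum, is a genuine Borel functional on $\Gamma_T$, and only pass to $|\cdot|_{W^{\alpha,p}}$ at the end via the equivalence $c_1|\cdot|_{W^{\alpha,p}}\le|\cdot|_{b^{\alpha,p}}\le c_2|\cdot|_{W^{\alpha,p}}$ on continuous paths (\cref{thm:Walphap_balphap_r_paper}), which applies $\mathbb{E}\pi$-a.e.\ since $\mathbb{E}\pi$ lives on $C(I;\mathcal{X})$.

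First I would establish (ii). For $\varphi\in C_b(\mathcal{X})$,
\[
\int_{\mathcal{X}}\varphi\d(e_t)_\#\mathbb{E}\pi=\int_{\Gamma_T}\varphi\circ e_t\d\mathbb{E}\pi=\mathbb{E}\Big[\int_{\Gamma_T}\varphi\circ e_t\d\pi\Big]=\mathbb{E}\Big[\int_{\mathcal{X}}\varphi\d\mu_t\Big]=\int_{\mathcal{X}}\varphi\d\mathbb{E}\mu_t,
\]
where the third equality uses $(e_t)_\#\pi^\omega=\mu_t^\omega$ $\mathbb{P}$-a.s.\ (item (ii) of \cref{thm:optimal_lift_mu_Walphap_compatible_random}) and the last one uses \eqref{eq:Exp_int_phi_mu}; hence $(e_t)_\#\mathbb{E}\pi=\mathbb{E}\mu_t$. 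Taking $t=0$ and approximating $d(\cdot,\bar x)^p$ by truncations, I also get $\int_{\Gamma_T}d(\gamma_0,\bar x)^p\d\mathbb{E}\pi=\int_{\mathcal{X}}d(x,\bar x)^p\d\mathbb{E}\mu_0<+\infty$, since $\mu_0\in P_{p,\Omega}(\mathcal{X})$ means $\mathbb{E}\mu_0\in P_p(\mathcal{X})$.

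Next comes the energy bookkeeping. By \eqref{eq:optimality_pi_1_random} together with \cref{lemma:Exp_balphap_mu}, $\int_{\Gamma_T}|\gamma|_{b^{\alpha,p}}^p\d\mathbb{E}\pi=\mathbb{E}\big[\int_{\Gamma_T}|\gamma|_{b^{\alpha,p}}^p\d\pi\big]=\Vert\mu\Vert_{b^{\alpha,p}}^p<+\infty$, hence by the norm equivalence $\int_{\Gamma_T}|\gamma|_{W^{\alpha,p}}^p\d\mathbb{E}\pi<+\infty$ as well, and in particular $\mathbb{E}\pi$ is concentrated on $W^{\alpha,p}(I;\mathcal{X})$, which is (i). Combined with the finiteness of $\int d(\gamma_0,\bar x)^p\d\mathbb{E}\pi$, this shows that $\mathbb{E}\pi$, viewed as a (trivially random) path measure over a one-point probability space, fulfils the hypothesis \eqref{eq:lift_to_mu_Walphap_integrability_stoch}; \cref{thm:lift_to_mu_Walphap_stoch} (equivalently its deterministic version \cite[Theorem 3.5]{Abedi2025deterministic}) then yields $(\mathbb{E}\mu_t)=\big(t\mapsto(e_t)_\#\mathbb{E}\pi\big)\in W^{\alpha,p}(I;P_p(\mathcal{X}))$ and the pointwise bound $|\mathbb{E}\mu|_{W^{\alpha,p}}^p\le\int_{\Gamma_T}|\gamma|_{W^{\alpha,p}}^p\d\mathbb{E}\pi$. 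The Wasserstein estimate needs nothing more: $(e_s,e_t)_\#\mathbb{E}\pi\in\mathrm{Cpl}(\mathbb{E}\mu_s,\mathbb{E}\mu_t)$ by (ii), so $W_p^p(\mathbb{E}\mu_s,\mathbb{E}\mu_t)\le\int_{\Gamma_T}d(\gamma_t,\gamma_s)^p\d\mathbb{E}\pi$ directly from the definition of $W_p$.

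Finally I would identify the two right-hand sides. Using \eqref{eq:Exp_int_phi_mu} once more, item (iii) of \cref{thm:optimal_lift_mu_Walphap_compatible_random} (so that $\int_{\Gamma_T}d(\gamma_t,\gamma_s)^p\d\pi^\omega=W_p^p(\mu_s^\omega,\mu_t^\omega)$ a.s.) and \eqref{eq:mathbbW_p_Exp},
\[
\int_{\Gamma_T}d(\gamma_t,\gamma_s)^p\d\mathbb{E}\pi=\mathbb{E}\Big[\int_{\Gamma_T}d(\gamma_t,\gamma_s)^p\d\pi\Big]=\mathbb{E}\big[W_p^p(\mu_s,\mu_t)\big]=\mathbb{W}_p^p(\mu_s,\mu_t),
\]
and similarly $\int_{\Gamma_T}|\gamma|_{W^{\alpha,p}}^p\d\mathbb{E}\pi=\mathbb{E}\big[\int_{\Gamma_T}|\gamma|_{W^{\alpha,p}}^p\d\pi\big]=\Vert\mu\Vert_{W^{\alpha,p}}^p$ by \eqref{eq:optimality_pi_1_random}. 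Combining everything yields (i)--(iii). The one place to tread carefully --- the main obstacle in an otherwise routine argument --- is the repeated exchange $\mathbb{E}\int(\cdot)\d\pi=\int(\cdot)\d\mathbb{E}\pi$ applied to the path seminorms: it is legitimate precisely because $|\cdot|_{b^{\alpha,p}}$ is a bona fide Borel functional on $\Gamma_T$ (so the left-hand integrand is well defined and measurable in $\omega$) and because $\mathbb{E}\pi$ charges only continuous paths, which is what licenses the final replacement of $|\cdot|_{b^{\alpha,p}}$ by $|\cdot|_{W^{\alpha,p}}$.
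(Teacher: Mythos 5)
Your proposal is correct and follows essentially the same strategy as the paper: average the random lift via the identity \eqref{eq:Exp_int_phi_mu} on the path space, identify $(e_t)_\#\mathbb{E}\pi=\mathbb{E}\mu_t$ by testing against $C_b(\mathcal{X})$, and read off the energy identities from items (ii)--(iii) of \cref{thm:optimal_lift_mu_Walphap_compatible_random}. The only (minor) divergence is in how the inequality $|\mathbb{E}\mu|_{W^{\alpha,p}}^p\leq \Vert\mu\Vert_{W^{\alpha,p}}^p$ is obtained: the paper gets it directly from the elementary bound $W_p(\mathbb{E}\mu_s,\mathbb{E}\mu_t)\leq\mathbb{W}_p(\mu_s,\mu_t)$ of \eqref{eq:WpEmuEnu_Wpmunu}, without ever invoking the lift, whereas you reach the same conclusion by feeding the deterministic path measure $\mathbb{E}\pi$ into \cref{thm:lift_to_mu_Walphap_stoch}; both are valid, and your detour through $|\cdot|_{b^{\alpha,p}}$ is a safe but not strictly necessary precaution here, since $\gamma\mapsto|\gamma|_{W^{\alpha,p}}^p$ is already a Borel (indeed lower semicontinuous) functional on $C(I;\mathcal{X})$.
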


    \begin{proof}
       We recall that $\mu_t \in P_{p,\Omega} (\mathcal{X})$ implies $\mathbb{E} \mu_t \in P_{p} (\mathcal{X})$. Also, by inequality \eqref{eq:WpEmuEnu_Wpmunu}, we have
        \begin{align}\label{eq:lift_Emu_Walphap_proof}
             |\mathbb{E} \mu|_{W^{\alpha,p}}^p
              \coloneqq \iint \frac{W_p^p (\mathbb{E} \mu_s,\mathbb{E} \mu_t)}{|t-s|^{1+\alpha p}} \d s \d t
              \leq \iint \frac{\mathbb{W}_p^p (\mu_s,\mu_t)}{|t-s|^{1+\alpha p}} \d s \d t \eqqcolon \Vert \mu \Vert_{W^{\alpha,p}}^p < + \infty.
        \end{align}
        The rest of the statements are essentially a consequence of the simple identity \eqref{eq:Exp_int_phi_mu}.
        \\
        ``(i)'' It follows from the finiteness of the integral \eqref{eq:optimality_pi_1_random} that 
        \begin{equation}
             \int_{\Gamma_T}|\gamma|_{W^{\alpha, p}}^p \d  \mathbb{E} \pi (\gamma)    < + \infty,
        \end{equation}
        which implies that $\mathbb{E} \pi$-a.e. $\gamma \in \Gamma_T$ has finite $W^{\alpha,p}$-norm. 
        \\
        ``(ii)'' By Theorem \ref{thm:optimal_lift_mu_Walphap_compatible_random} (ii), we know that $(e_t)_{\#} \pi^\omega = \mu_t^\omega$ a.s at each time. This simply implies that $(e_t)_{\#} \mathbb{E}\pi = \mathbb{E}\mu_t$ at each time. Take $\varphi \in C_b(\mathcal{X})$ and observe that
        \begin{align}
            \int_{\Gamma_T} \varphi (\gamma_t) \d \mathbb{E} \pi (\gamma) = \mathbb{E} \left[ \int_{\Gamma_T} \varphi (\gamma_t) \d \pi (\gamma) \right] = \mathbb{E} \left[ \int_{\mathcal{X}} \varphi (x) \d \mu_t (x) \right] =  \int_{\mathcal{X}} \varphi (x) \d \mathbb{E} \mu_t (x).
        \end{align}
        ``(iii)'' By the previous part, we have $(e_s,e_t)_{\#} \mathbb{E} \pi \in \mathrm{Cpl}(\mathbb{E} \mu_s, \mathbb{E} \mu_t)$, which, together with Theorem \ref{thm:optimal_lift_mu_Walphap_compatible_random} (iii), provide us with the first estimate. The second estimate finally follows from \eqref{eq:lift_Emu_Walphap_proof} and Theorem \ref{thm:optimal_lift_mu_Walphap_compatible_random} (iii):
        \begin{equation}
            |\mathbb{E} \mu|_{W^{\alpha,p}}^p \leq \Vert \mu \Vert_{W^{\alpha,p}}^p  = \mathbb{E} \left[\int_{\Gamma_T} |\gamma|^p_{W^{\alpha,p}} \d \pi (\gamma)\right] = \int_{\Gamma_T} |\gamma|^p_{W^{\alpha,p}} \d \mathbb{E} \pi (\gamma).
        \end{equation}
    \end{proof}

    \begin{corollary}\label{crl:lift_Emu_Holder}
        Under the assumptions of Corollary \ref{thm:optimal_lift_mu_Holder_compatible_random}, we have $(\mathbb{E} \mu_t) \in C^{\upgamma\textrm{-} \mathrm{H\ddot{o}l}} (I;P_p (\mathcal{X}))$, and the expectation of $\pi$ obtained therein, $\mathbb{E} \pi \in P(C(I;\mathcal{X}))$, satisfies:
        \begin{enumerate}[label=(\roman*), font=\normalfont]
            \item $\mathbb{E}\pi$ is concentrated on $W^{\alpha,p}(I;\mathcal{X})\subset C^{(\alpha-\frac{1}{p})\textrm{-} \mathrm{H\ddot{o}l}}(I;\mathcal{X})$ for any $\alpha \in (\frac{1}{p},\upgamma)$;
            \item $(e_t)_\# \mathbb{E} \pi =\mathbb{E} \mu_t $ for all $t\in I$;
            \item we have for all $s,t \in I$ that
            \begin{equation}
            W_p^p (\mathbb{E}\mu_s,\mathbb{E}\mu_t) \leq  \int_{\Gamma_T} d(\gamma_t,\gamma_s)^p \d \mathbb{E} \pi (\gamma) = \mathbb{W}_p^p (\mu_t,\mu_s),
            \end{equation}
             and, moreover, for any $\alpha \in (\frac{1}{p},\upgamma)$,
             \begin{align}
                   \Vert \mu \Vert_{\upgamma\textrm{-}\mathrm{H\ddot{o}l}}^p \geq c \,  \int_{\Gamma_T} | \gamma |^p_{\alpha - \frac{1}{p}\textrm{-}\mathrm{H\ddot{o}l}} \d \mathbb{E} \pi (\gamma)  \geq c \, \Vert \mu \Vert_{\alpha-\frac{1}{p}\textrm{-}\mathrm{H\ddot{o}l}}^p,
            \end{align}
            where $c = c(\upgamma,\alpha, p, T)$ is an explicit positive constant. 
        \end{enumerate}
    \end{corollary}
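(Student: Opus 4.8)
The proof is a direct adaptation of that of \cref{crl:lift_Emu_Walphap}, the only additional input being the trivial H\"older--fractional-Sobolev embedding of \cref{rmk:Holder-FractionalSobolev_r_paper}. The plan is as follows. Fix an arbitrary $\alpha \in (\frac{1}{p},\upgamma)$; then, on the metric space $(P_{p,\Omega}(\mathcal{X}),\mathbb{W}_p)$, one has $C^{\upgamma\textrm{-}\mathrm{H\ddot{o}l}} \subset W^{\alpha,p}$, so $(\mu_t)$ satisfies the hypotheses of \cref{thm:optimal_lift_mu_Walphap_compatible_random}, and the random lift $\pi$ produced by \cref{thm:optimal_lift_mu_Holder_compatible_random} is precisely one to which \cref{thm:optimal_lift_mu_Walphap_compatible_random} and \cref{crl:lift_Emu_Walphap} apply. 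Throughout, the workhorse is the identity \eqref{eq:Exp_int_phi_mu}, $\mathbb{E}\big[\int \varphi \d \pi\big] = \int \varphi \d \mathbb{E}\pi$, which I will use for the nonnegative Borel functionals $\gamma \mapsto |\gamma|^p_{W^{\alpha,p}}$ and $\gamma \mapsto |\gamma|^p_{\alpha - \frac{1}{p}\textrm{-}\mathrm{H\ddot{o}l}}$ on $C(I;\mathcal{X})$: being nonnegative and measurable, they make the interchange of $\mathbb{E}$ and the path integral valid in $[0,+\infty]$ by monotone convergence, and finiteness is then inherited from \eqref{eq:optimality_pi_1_random}.

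First I would record that $(\mathbb{E}\mu_t) \in C^{\upgamma\textrm{-}\mathrm{H\ddot{o}l}}(I;P_p(\mathcal{X}))$: indeed $\mathbb{E}\mu_t \in P_p(\mathcal{X})$ since $\mu_t \in P_{p,\Omega}(\mathcal{X})$, and \eqref{eq:WpEmuEnu_Wpmunu} gives $W_p(\mathbb{E}\mu_s,\mathbb{E}\mu_t) \le \mathbb{W}_p(\mu_s,\mu_t) \le \Vert\mu\Vert_{\upgamma\textrm{-}\mathrm{H\ddot{o}l}}\,|t-s|^\upgamma$ for all $s,t \in I$. Claim (i) is then \cref{crl:lift_Emu_Walphap}(i) together with the continuous embedding $W^{\alpha,p} \subset C^{(\alpha - \frac{1}{p})\textrm{-}\mathrm{H\ddot{o}l}}$ from \cref{thm:FractionalSobolev-Holder_r_paper}; and claim (ii) is verbatim \cref{crl:lift_Emu_Walphap}(ii): for $\varphi \in C_b(\mathcal{X})$, $\int_{\Gamma_T}\varphi(\gamma_t)\d\mathbb{E}\pi = \mathbb{E}\big[\int_{\Gamma_T}\varphi(\gamma_t)\d\pi\big] = \mathbb{E}\big[\int_\mathcal{X}\varphi\d\mu_t\big] = \int_\mathcal{X}\varphi\d\mathbb{E}\mu_t$ by \eqref{eq:Exp_int_phi_mu} and \cref{thm:optimal_lift_mu_Holder_compatible_random}(ii), so $(e_t)_\#\mathbb{E}\pi = \mathbb{E}\mu_t$ for every $t \in I$.

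For (iii), part (ii) makes $(e_s,e_t)_\#\mathbb{E}\pi$ a coupling of $\mathbb{E}\mu_s$ and $\mathbb{E}\mu_t$, hence $W_p^p(\mathbb{E}\mu_s,\mathbb{E}\mu_t) \le \int_{\Gamma_T}d(\gamma_t,\gamma_s)^p\d\mathbb{E}\pi$; and since \cref{thm:optimal_lift_mu_Holder_compatible_random}(iii) gives $\int_\Gamma d(\gamma_t,\gamma_s)^p\d\pi^\omega = W_p^p(\mu_s^\omega,\mu_t^\omega)$ a.s., taking expectations and using \eqref{eq:Exp_int_phi_mu} on the left and \eqref{eq:mathbbW_p_Exp} on the right shows $\int_{\Gamma_T}d(\gamma_t,\gamma_s)^p\d\mathbb{E}\pi = \mathbb{W}_p^p(\mu_t,\mu_s)$. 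The H\"older-norm chain is obtained by pushing \eqref{eq:Exp_int_phi_mu} through the chain in \cref{thm:optimal_lift_mu_Holder_compatible_random}(iii):
\[
\Vert\mu\Vert_{\upgamma\textrm{-}\mathrm{H\ddot{o}l}}^p \;\ge\; c\,\mathbb{E}\Big[\int_{\Gamma_T}|\gamma|^p_{\alpha - \frac{1}{p}\textrm{-}\mathrm{H\ddot{o}l}}\d\pi\Big] \;=\; c\int_{\Gamma_T}|\gamma|^p_{\alpha - \frac{1}{p}\textrm{-}\mathrm{H\ddot{o}l}}\d\mathbb{E}\pi \;\ge\; c\,\Vert\mu\Vert_{\alpha - \frac{1}{p}\textrm{-}\mathrm{H\ddot{o}l}}^p,
\]
with $c = c(\upgamma,\alpha,p,T)$ the explicit constant of that theorem. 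I do not expect any genuine obstacle here; the only point requiring care is the licence to apply \eqref{eq:Exp_int_phi_mu} to the a priori merely lower-semicontinuous energy functionals, which is legitimate because they are nonnegative Borel functions of the path, so the Fubini-type interchange holds with values in $[0,+\infty]$, and all the quantities in play are in fact finite by \eqref{eq:optimality_pi_1_random} and \cref{rmk:Holder-FractionalSobolev_r_paper}.
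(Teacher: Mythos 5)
Your proposal is correct and follows essentially the same route as the paper: the paper first establishes $(\mathbb{E}\mu_t)\in C^{\upgamma\textrm{-}\mathrm{H\ddot{o}l}}(I;P_p(\mathcal{X}))$ via the inequality \eqref{eq:WpEmuEnu_Wpmunu}, exactly as you do, and then states that the remaining claims follow as in \cref{crl:lift_Emu_Walphap} and \cref{thm:optimal_lift_mu_Holder_compatible_random}. Your write-up merely spells out those deferred steps (the interchange \eqref{eq:Exp_int_phi_mu} applied to the nonnegative Borel energy functionals and the H\"{o}lder chain from \cref{thm:optimal_lift_mu_Holder_compatible_random}(iii)), which is precisely what the paper intends.
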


    \begin{proof}
        $(\mu_t) \subset P_{p,\Omega} (\mathcal{X})$ implies $(\mathbb{E} \mu_t) \subset P_{p} (\mathcal{X})$. By inequality \eqref{eq:WpEmuEnu_Wpmunu}, we deduce
        \begin{equation}
            | \mathbb{E} \mu |^p_{\upgamma\textrm{-}\mathrm{H\ddot{o}l}}
            \coloneqq \sup_{0 \leq s < t \leq T} \frac{W_p^p(\mathbb{E} \mu_s,\mathbb{E} \mu_t)}{|t-s|^\upgamma}
            \leq 
            \sup_{0 \leq s < t \leq T} \frac{\mathbb{W}_p^p(\mu_s, \mu_t)}{|t-s|^\upgamma} \eqqcolon \Vert \mu \Vert^p_{\upgamma\textrm{-}\mathrm{H\ddot{o}l}} < + \infty.
        \end{equation}
        The rest follows as in Corollary \ref{crl:lift_Emu_Walphap} and Corollary \ref{thm:optimal_lift_mu_Holder_compatible_random}. 
    \end{proof}

    \subsection{Dynamic formulation of Wasserstein distance between random measures via Besov energy}
    As the final result of this section, we provide a dynamic formulation of $\mathbb{W}_p$ in terms of the Besov energy.

    \begin{proposition}\label{crl:dynamic_formulation_EWp}
		Let $(\mathcal{X}, d)$ be a complete, separable, and locally compact length metric space, and $(\Omega, \mathcal{F}, \mathbb{P})$ be a probability space.  Let  $1<p<\infty$ and $ 0< \alpha <  1$. Then for every $\mu,\nu \in P_{p,\Omega}(\mathcal{X})$, we have 
		\begin{multline}
			\mathbb{W}_p(\mu,\nu) =  \big(1-2^{-(p-\alpha p)}\big)\min \bigg\{ \mathbb{E} \left[\int_{\Gamma} |\gamma|_{b^{\alpha,p}}^p \d \pi (\gamma)\right]  : \\
			 \pi \in P_{\Omega}(C([0,1];\mathcal{X})), \,\, (e_0)_\# \pi^\omega  = \mu^\omega, \, (e_1)_\# \pi^\omega  = \nu^\omega  \,\, \mathbb{P}\text{-a.s.}\bigg\}. 
		\end{multline}
		In addition, $\pi$ is a minimizer if and only if  $(e_0,e_1)_{\#} \pi \in \mathrm{OptCpl}_{\Omega}(\mu,\nu)$ and $\pi^\omega$ is concentrated on $ Geo([0,1];\mathcal{X})$ a.s.
	\end{proposition}
	
	\begin{proof}
		Let $\pi$ be an admissible random path measure in the minimization above with finite Besov energy in expectation. Using \cite[Lemma 3.11]{Abedi2025paths}, we have 
		\begin{equation}\label{eq:proof_dynamic_EWp_balphap}
			\mathbb{W}_p^p (\mu,\nu) \leq \mathbb{E} \left[ \int_{\Gamma} d(\gamma_0,\gamma_1)^p \d \pi (\gamma) \right] \leq  \big(1-2^{-(p-\alpha p)}\big) \mathbb{E} \left[ \int_{\Gamma} |\gamma|^p_{b^{\alpha, p}} \d \pi (\gamma)\right]. 
		\end{equation}
		Now we show by direct construction that a minimizer exists.
		Let $\Upsilon \in \mathrm{OptCpl}_{\Omega} (\mu,\nu)$. Note that $\mathcal{X}$ is a geodesic space by the Hopf–Rinow Theorem. By Remark \ref{rmk:measurable_geodesic_selection} on the existence of a Borel measurable selection of geodesics $\ell : \mathcal{X}\times\mathcal{X} \to Geo([0,1];\mathcal{X})$ under local compactness assumption on the state space, we set $\pi_*^\omega \coloneqq (\ell)_{\#} \Upsilon^\omega$ a.s., which defines a random path measure $\pi_* \in P_{\Omega}(C([0,1];\mathcal{X})) $.
		Note that, by its very construction, $(e_0,e_1)_{\#} \pi_* \in \mathrm{OptCpl}_{\Omega}(\mu,\nu)$, and $\pi_*^\omega$ is concentrated on $Geo([0,1];\mathcal{X})$ a.s. By \cite[Lemma 3.12]{Abedi2025paths} ($1 \Rightarrow 2$), we have 
		\begin{equation}
			\mathbb{W}_p^p(\mu,\nu)  = \mathbb{E} \left[\int_{Geo} d(\gamma_0,\gamma_1)^p \d \pi_* (\gamma) \right] = \big(1-2^{-(p-\alpha p)}\big) \mathbb{E} \left[ \int_{Geo} |\gamma|^p_{b^{\alpha, p}} \d \pi_* (\gamma)\right]. 
		\end{equation}
		Finally, note that an admissible random path measure $\pi_*$ is a minimizer if and only if the inequalities in  \eqref{eq:proof_dynamic_EWp_balphap} are equalities, which happens if and only if $(e_0,e_1)_{\#} \pi \in \mathrm{OptCpl}_{\Omega}(\mu,\nu)$ and $\pi^\omega$ is concentrated on $ Geo([0,1];\mathcal{X})$ a.s. by \cite[Lemmas 3.11 and 3.12]{Abedi2025paths}. 
	\end{proof}

\section{Corollaries}\label{sec:corollaries_s}
Theorem \ref{thm:optimal_lift_mu_Walphap_compatible_random} (construction of a realizing random lift) can be immediately applied to the case of $\mathcal{X} = \mathbb{R}$, on which all probability measures with finite $p$-moments form a compatible family. 
As compatibility can easily fail in higher dimensions, here we give an alternative assumption in the case of $\mathcal{X} = \mathbb{R}^\mathrm{d}$. This assumption is obtained by replacing the Wasserstein metric in the regularity assumption with the $\nu$-based Wasserstein metric ${W}_{p,\nu}$ \eqref{eq:def_Wpnu}. Under this new assumption, the lift is constructed differently: rather than connecting measures at given time points with Wasserstein geodesics and transporting the particles optimally, we connect them with the $\nu$-based generalized geodesics. This is illustrated in Fig. \ref{fig:construction_path_measure}.
While this result may be interesting for applications, it is not from a theoretical perspective.
Firstly, because it is no longer intrinsic and depends on an external measure $\nu$.
Secondly, the construction leads to a trivial lift, as both the Kolmogorov extension theorem and the Kolmogorov–\v Centsov continuity theorem are applicable here.
In fact, Corollaries \ref{crl:lift_mu_Holder_Rd} and \ref{crl:lift_mu_Holder_R1} in the deterministic settings below can be immediately proved using \cite[Theorem 2.1.6]{StroockVaradhan2006}.  However, the stochastic versions of them do not seem immediate. Therefore, we first prove the stochastic results and then let the deterministic results follow afterward.
In this section, we let $\mathrm{d} \in \mathbb{N}$ and consider $\mathcal{X} = \mathbb{R}^\mathrm{d}$ equipped with the Euclidean distance.
All the proofs of the corollaries below are collected in \cref{subsec:proof_corollaries}.     
    
    \begin{figure}
    \centering
    \begin{tikzpicture}[scale=5, baseline]
    \draw [gray][-]
    (0,0.25)--
    (0.015380859375,0.3612632708068568)--
    (0.031005859375,0.3891164605040001)--
    (0.046630859375,0.3164720425429702)--
    (0.062255859375,0.3292680944756214)--
    (0.077880859375,0.35906552801519)--
    (0.093505859375,0.3297119891524707)--
    (0.109130859375,0.348720923202514)--
    (0.124755859375,0.312649164179907)--
    (0.140380859375,0.3137370229548458)--
    (0.156005859375,0.3298335290141453)--
    (0.171630859375,0.3139689973529306)--
    (0.187255859375,0.272450498411577)--
    (0.202880859375,0.225778142632884)--
    (0.218505859375,0.208878232423458)--
    (0.234130859375,0.231376029457259)--
    (0.249755859375,0.208989827924455)--
    (0.265380859375,0.243175347880716)--
    (0.281005859375,0.230905230707569)--
    (0.296630859375,0.243831667899003)--
    (0.312255859375,0.321969678542121)--
    (0.327880859375,0.330539575785065)--
    (0.343505859375,0.34170923398277)--
    (0.359130859375,0.394110789019707)--
    (0.374755859375,0.433145862774257)--
    (0.390380859375,0.397813061630579)--
    (0.406005859375,0.451465730268383)--
    (0.421630859375,0.582415030660174)--
    (0.437255859375,0.494237431018321)--
    (0.452880859375,0.463052943810623)--
    (0.468505859375,0.507413070506955)--
    (0.484130859375,0.48277670767766)--
    (0.499755859375,0.437049979688923)--
    (0.515380859375,0.440003412666466)--
    (0.531005859375,0.416266620789955)--
    (0.546630859375,0.348207616141129)--
    (0.562255859375,0.418227275322591)--
    (0.577880859375,0.445692142202932)--
    (0.593505859375,0.38482033325536)--
    (0.609130859375,0.441833126521721)--
    (0.624755859375,0.370287747091853)--
    (0.640380859375,0.475213938666167)--
    (0.656005859375,0.482944598610605)--
    (0.671630859375,0.501711686484074)--
    (0.687255859375,0.49756066772299)--
    (0.702880859375,0.454712042809014)--
    (0.718505859375,0.468874352925224)--
    (0.734130859375,0.519369346574088)--
    (0.749755859375,0.559367060958715)--
    (0.765380859375,0.539319336746755)--
    (0.781005859375,0.573589392153039)--
    (0.796630859375,0.587086858022874)--
    (0.812255859375,0.524838104215206)--
    (0.827880859375,0.519056954334893)--
    (0.843505859375,0.479212453969141)--
    (0.859130859375,0.450896985308748)--
    (0.874755859375,0.483767596963912)--
    (0.890380859375,0.535935970933879)--
    (0.906005859375,0.507416319512363)--
    (0.921630859375,0.522262735487998)--
    (0.937255859375,0.550390028285906)--
    (0.952880859375,0.649058024988311)--
    (0.968505859375,0.618966730942909)--
    (0.984130859375,0.591265045544213)--
    (0.999755859375,0.552537486937108);
    \filldraw [black] (0,0.25) circle (0.16pt);
    \filldraw [black] (0.249755859375,0.208989827924455) circle (0.16pt);
    \filldraw [black] (0.499755859375,0.437049979688923) circle (0.16pt);
    \filldraw [black] (0.749755859375,0.559367060958715) circle (0.16pt);
    \filldraw [black] (0.999755859375,0.552537486937108) circle (0.16pt);
    \draw [-]
    (0,0.25) --
    (0.249755859375,0.208989827924455) --
    (0.499755859375,0.437049979688923) --
    (0.749755859375,0.559367060958715) --
    (0.999755859375,0.552537486937108);
    \draw (0,0.25)
    node[anchor=east] {\scriptsize $\mu_{t_0}$};
    \draw (0.249755859375,0.208989827924455 +0.015)
    node[anchor=south] {\scriptsize $\mu_{t_1}$};
    \draw (0.499755859375 - 0.028 ,0.437049979688923 +0.015)
    node[anchor=south west] {\scriptsize $\mu_{t_2}$};
    \draw (0.749755859375,0.559367060958715 )
    node[anchor=south] {\scriptsize $\mu_{t_3}$};
    \draw (0.999755859375,0.552537486937108)
    node[anchor=west] {\scriptsize $\mu_{t_4}$};
    \end{tikzpicture}
    \begin{tikzpicture}[scale=5, baseline]
    \draw [gray][-]
    (0,0.25)--
    (0.015380859375,0.3612632708068568)--
    (0.031005859375,0.3891164605040001)--
    (0.046630859375,0.3164720425429702)--
    (0.062255859375,0.3292680944756214)--
    (0.077880859375,0.35906552801519)--
    (0.093505859375,0.3297119891524707)--
    (0.109130859375,0.348720923202514)--
    (0.124755859375,0.312649164179907)--
    (0.140380859375,0.3137370229548458)--
    (0.156005859375,0.3298335290141453)--
    (0.171630859375,0.3139689973529306)--
    (0.187255859375,0.272450498411577)--
    (0.202880859375,0.225778142632884)--
    (0.218505859375,0.208878232423458)--
    (0.234130859375,0.231376029457259)--
    (0.249755859375,0.208989827924455)--
    (0.265380859375,0.243175347880716)--
    (0.281005859375,0.230905230707569)--
    (0.296630859375,0.243831667899003)--
    (0.312255859375,0.321969678542121)--
    (0.327880859375,0.330539575785065)--
    (0.343505859375,0.34170923398277)--
    (0.359130859375,0.394110789019707)--
    (0.374755859375,0.433145862774257)--
    (0.390380859375,0.397813061630579)--
    (0.406005859375,0.451465730268383)--
    (0.421630859375,0.582415030660174)--
    (0.437255859375,0.494237431018321)--
    (0.452880859375,0.463052943810623)--
    (0.468505859375,0.507413070506955)--
    (0.484130859375,0.48277670767766)--
    (0.499755859375,0.437049979688923)--
    (0.515380859375,0.440003412666466)--
    (0.531005859375,0.416266620789955)--
    (0.546630859375,0.348207616141129)--
    (0.562255859375,0.418227275322591)--
    (0.577880859375,0.445692142202932)--
    (0.593505859375,0.38482033325536)--
    (0.609130859375,0.441833126521721)--
    (0.624755859375,0.370287747091853)--
    (0.640380859375,0.475213938666167)--
    (0.656005859375,0.482944598610605)--
    (0.671630859375,0.501711686484074)--
    (0.687255859375,0.49756066772299)--
    (0.702880859375,0.454712042809014)--
    (0.718505859375,0.468874352925224)--
    (0.734130859375,0.519369346574088)--
    (0.749755859375,0.559367060958715)--
    (0.765380859375,0.539319336746755)--
    (0.781005859375,0.573589392153039)--
    (0.796630859375,0.587086858022874)--
    (0.812255859375,0.524838104215206)--
    (0.827880859375,0.519056954334893)--
    (0.843505859375,0.479212453969141)--
    (0.859130859375,0.450896985308748)--
    (0.874755859375,0.483767596963912)--
    (0.890380859375,0.535935970933879)--
    (0.906005859375,0.507416319512363)--
    (0.921630859375,0.522262735487998)--
    (0.937255859375,0.550390028285906)--
    (0.952880859375,0.649058024988311)--
    (0.968505859375,0.618966730942909)--
    (0.984130859375,0.591265045544213)--
    (0.999755859375,0.552537486937108);
    \filldraw [black] (0,0.25) circle (0.16pt);
    \filldraw [black] (0.249755859375,0.208989827924455) circle (0.16pt);
    \filldraw [black] (0.499755859375,0.437049979688923) circle (0.16pt);
    \filldraw [black] (0.749755859375,0.559367060958715) circle (0.16pt);
    \filldraw [black] (0.999755859375,0.552537486937108) circle (0.16pt);
    \draw [dashed]
    (0,0.25) --
    (0.249755859375,0.208989827924455) --
    (0.499755859375,0.437049979688923) --
    (0.749755859375,0.559367060958715) --
    (0.999755859375,0.552537486937108);
    \draw (0,0.25)
    node[anchor=east] {\scriptsize $\mu_{t_0}$};
    \draw (0.249755859375,0.208989827924455 +0.015)
    node[anchor=south] {\scriptsize $\mu_{t_1}$};
    \draw (0.499755859375 - 0.028 ,0.437049979688923 +0.015)
    node[anchor=south west] {\scriptsize $\mu_{t_2}$};
    \draw (0.749755859375,0.559367060958715 )
    node[anchor=south] {\scriptsize $\mu_{t_3}$};
    \draw (0.999755859375,0.552537486937108)
    node[anchor=west] {\scriptsize $\mu_{t_4}$};
    \draw (0.35,-0.04) node[anchor=north west] {\scriptsize $\nu \in P_p^{ac}(\mathcal{X})$};
    \draw [black][-] (0.4,-0.05) -- (0,0.25);
    \draw [black][-] (0.4,-0.05) -- (0.249755859375,0.208989827924455);
    \draw [black][-] (0.4,-0.05) -- (0.499755859375,0.437049979688923);
    \draw [black][-] (0.4,-0.05) -- (0.749755859375,0.559367060958715);
    \draw [black][-] (0.4,-0.05) -- (0.999755859375,0.552537486937108);
    \draw [fill=white] (0.4,-0.05) circle (0.16pt);
    \end{tikzpicture}
    \captionsetup{font=footnotesize}
    \caption{Two ways of constructing $\pi_n$. Solid lines represent optimal couplings while dashed lines are not necessarily optimal. Measure are connected by Wasserstein geodesics \textbf{(left)} and by $\nu$-based generalized geodesics \textbf{(right)}.}  
    \label{fig:construction_path_measure}
    \end{figure}

    \begin{table}[h]
    \begin{center}
    \label{table:corollaries}
    \captionsetup{font=footnotesize}
    \caption{Arrangement of corollaries in Euclidean setting.}
    {\footnotesize
	\begin{tabular}{c|ll|ll}
		\hline
		&
        \multicolumn{2}{c|}{$\mathbb{R}^{\mathrm{d}}, \, \mathrm{d} \in \mathbb{N}$}
		&
        \multicolumn{2}{c}{$\mathbb{R}^{1}$ }
        \\
		\hline
		deterministic &
        Corollary \ref{crl:lift_mu_Walphap_Rd}. $W^{\alpha,p}$-regularity
        &
        w.r.t. $W_{p,\nu}$
        &
        Corollary \ref{crl:lift_mu_Walphap_R1}.
        $W^{\alpha,p}$-regularity
        &
        w.r.t. $W_{p}$
        \\
        setting
        &
        Corollary \ref{crl:lift_mu_Holder_Rd}.
        $C^{\upgamma\textrm{-}\mathrm{H\ddot{o}l}}$-regularity
        &
        w.r.t. $W_{p,\nu}$
        &
        Corollary \ref{crl:lift_mu_Holder_R1}. $C^{\upgamma\textrm{-}\mathrm{H\ddot{o}l}}$-regularity
        &
        w.r.t. $W_{p}$
        \\
        \hline
        stochastic
        &
        Corollary \ref{crl:lift_mu_Walphap_Rd_random}.
        $W^{\alpha,p}$-regularity
        &
        w.r.t. $\mathbb{W}_{p,\nu}$
        &
        Corollary \ref{crl:lift_mu_Walphap_R1_random}.
        $W^{\alpha,p}$-regularity
        &
        w.r.t. $\mathbb{W}_{p}$
        \\
        setting
        &
        Corollary \ref{crl:lift_mu_Holder_Rd_random}.
        $C^{\upgamma\textrm{-}\mathrm{H\ddot{o}l}}$-regularity
        &
        w.r.t. $\mathbb{W}_{p,\nu}$
        &
        Corollary \ref{crl:lift_mu_Holder_R1_random}.
        $C^{\upgamma\textrm{-}\mathrm{H\ddot{o}l}}$-regularity
        &
        w.r.t. $\mathbb{W}_{p}$
        \\
		\hline
	\end{tabular}
    }
    \end{center}
\end{table}

    \subsection{Corollaries in \texorpdfstring{$\mathbb{R}^\mathrm{d}$}{Rd}: deterministic setting}

    \begin{corollary}\label{crl:lift_mu_Walphap_Rd}
        Let $\mathcal{X} = \mathbb{R}^{\mathrm{d}}$  and $I \coloneqq [0,T] \subset \mathbb{R}$. 
        Let $(\mu_t) \in W^{\alpha,p} (I; (P_p(\mathcal{X}),W_{p,\nu}) )$ with $1<p<\infty$ and $\frac{1}{p}<\alpha < 1$ for some measure $\nu \in P_p^{ac} (\mathcal{X})$. Denote by $T_t$ the unique optimal transport map from $\nu$ to $\mu_t$.       
        Then there exists a unique probability measure $\pi \in P(C(I;\mathcal{X}))$ such that
        \begin{equation}\label{eq:lift_mu_Walphap_Rd_pi}
            \pi = ((T_t)_{t \in I} )_\# \nu \quad\, \text{on} \quad\,  \big(\mathcal{X}^I, \mathcal{B}(\mathcal{X})^I\big).
        \end{equation}
        In particular, $\pi$ satisfies
        \begin{enumerate}[label=(\roman*), font=\normalfont]
            \item $\pi$ is concentrated on $W^{\alpha,p}(I;\mathcal{X})$; 
            \item $(e_t)_\#\pi=\mu_t$ for all $t\in I$;
            \item we have for all $s,t \in I$ that
            \begin{equation}
                W^p_p (\mu_s,\mu_t) \leq   \int_{\Gamma_T} |\gamma_t - \gamma_s|^p \d \pi(\gamma) = W_{p,\nu}^p (\mu_s,\mu_t);
            \end{equation}
           and in particular\footnote{Here, $| \mu |_{W^{\alpha,p},\nu}$ denotes the $W^{\alpha,p}$-regularity of the curve $(\mu_t)$ with respect to $W_{p,\nu}$.},
           \begin{align}\label{eq:lift_mu_Walphap_Rd_energy}
                |\mu|_{W^{\alpha,p}}^p \leq  \int_{\Gamma_T} |\gamma|_{W^{\alpha,p}}^p \d \pi (\gamma) = |\mu|_{W^{\alpha,p},\nu}^p.
            \end{align}
        \end{enumerate}
    \end{corollary}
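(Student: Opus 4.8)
The engine of the proof is the explicit formula \eqref{eq:def_Wpnu}, which reads $W_{p,\nu}^p(\mu_s,\mu_t)=\int_{\mathcal X}|T_t(y)-T_s(y)|^p\,\d\nu(y)$. The plan is to recognise $y\mapsto(T_t(y))_{t\in I}$ as a path-valued random element on $(\mathcal X,\mathcal B(\mathcal X),\nu)$ whose law, after passing to a continuous modification, will be the desired $\pi$; then to establish that this law is concentrated on $W^{\alpha,p}(I;\mathcal X)\subset C(I;\mathcal X)$; and finally to read off (i)--(iii) and uniqueness from the formula above together with Tonelli's theorem. For Step 1: each optimal map $T_t\colon\mathcal X\to\mathcal X$ is Borel (by \cite[Theorem 6.2.10]{AGS2008GFs}), so $\Phi\colon y\mapsto(T_t(y))_{t\in I}$ is measurable from $(\mathcal X,\mathcal B(\mathcal X))$ to $(\mathcal X^I,\mathcal B(\mathcal X)^I)$, and $\hat\pi\coloneqq\Phi_\#\nu$ is a probability measure on the product space whose finite-dimensional marginal at any $t_1,\dots,t_j\in I$ is $(T_{t_1},\dots,T_{t_j})_\#\nu$; in particular $(e_u,e_v)_\#\hat\pi=(T_u,T_v)_\#\nu$ and $(e_t)_\#\hat\pi=\mu_t$ for all $u,v,t$.

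\textbf{Step 2 (a.e. path is fractional-Sobolev regular; upgrading to $C(I;\mathcal X)$).} Unwinding the definition of $W^{\alpha,p}$-regularity of $(\mu_t)$ with respect to $W_{p,\nu}$ and applying Tonelli,
\[
|\mu|_{W^{\alpha,p},\nu}^p=\iint_{I^2}\frac{W_{p,\nu}^p(\mu_s,\mu_t)}{|t-s|^{1+\alpha p}}\,\d s\,\d t=\int_{\mathcal X}\Big(\iint_{I^2}\frac{|T_t(y)-T_s(y)|^p}{|t-s|^{1+\alpha p}}\,\d s\,\d t\Big)\d\nu(y),
\]
which is finite by hypothesis. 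Hence for $\nu$-a.e.\ $y$ the measurable map $t\mapsto T_t(y)$ has finite $W^{\alpha,p}$-seminorm on $I$ in the sense of \cref{def:Walphap_regularity}, and therefore, since $\alpha>1/p$, admits an $(\alpha-\tfrac1p)$-Hölder continuous representative by the Garsia--Rodemich--Rumsey based embedding \cref{thm:FractionalSobolev-Holder_r_paper}. The consistent finite-dimensional distributions of Step 1 together with this modulus-of-continuity control then fall under the Kolmogorov--\v{C}entsov continuity theorem (e.g.\ \cite[Theorem 2.1.6]{StroockVaradhan2006}), producing a probability measure $\pi\in P(C(I;\mathcal X))$ with the same finite-dimensional marginals, i.e.\ $\pi=\hat\pi$ on $(\mathcal X^I,\mathcal B(\mathcal X)^I)$, which is exactly \eqref{eq:lift_mu_Walphap_Rd_pi}. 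Uniqueness is immediate: two Borel probability measures on $C(I;\mathcal X)$ with equal evaluations coincide, since $\mathcal B(C(I;\mathcal X))$ is generated by the maps $e_t$.

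\textbf{Step 3 (verification of (i)--(iii)).} By Tonelli and the marginal identity $(e_u,e_v)_\#\pi=(T_u,T_v)_\#\nu$ from Step 1,
\[
\int_{\Gamma_T}|\gamma|_{W^{\alpha,p}}^p\,\d\pi(\gamma)=\iint_{I^2}\frac{1}{|v-u|^{1+\alpha p}}\int_{\mathcal X}|T_u(y)-T_v(y)|^p\,\d\nu(y)\,\d u\,\d v=|\mu|_{W^{\alpha,p},\nu}^p<\infty,
\]
so $\pi$-a.e.\ $\gamma$ lies in $W^{\alpha,p}(I;\mathcal X)$, which is (i). Item (ii) is $(e_t)_\#\pi=(T_t)_\#\nu=\mu_t$. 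For (iii), fix $s,t\in I$; then $(e_s,e_t)_\#\pi=(T_s,T_t)_\#\nu\in\mathrm{Cpl}(\mu_s,\mu_t)$, so by the definitions of $W_p$ and of $W_{p,\nu}$,
\[
W_p^p(\mu_s,\mu_t)\leq\int_{\Gamma_T}|\gamma_t-\gamma_s|^p\,\d\pi(\gamma)=\int_{\mathcal X}|T_t(y)-T_s(y)|^p\,\d\nu(y)=W_{p,\nu}^p(\mu_s,\mu_t).
\]
Multiplying this chain by $|t-s|^{-1-\alpha p}$, integrating over $I^2$, and identifying the integrated middle term with $\int_{\Gamma_T}|\gamma|_{W^{\alpha,p}}^p\,\d\pi$ (the display above) and the integrated right term with $|\mu|_{W^{\alpha,p},\nu}^p$ (its definition) yields \eqref{eq:lift_mu_Walphap_Rd_energy}; finiteness of $|\mu|_{W^{\alpha,p}}$ with respect to $W_p$ follows since it is bounded by $|\mu|_{W^{\alpha,p},\nu}$.

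\textbf{Main obstacle.} The only genuinely non-routine point is the passage in Step 2 from a measure on the product $\sigma$-algebra to a measure on continuous paths: since $C(I;\mathcal X)\notin\mathcal B(\mathcal X)^I$, the existence of $\pi$ is not a formality and rests precisely on the standing assumption $\alpha>1/p$, which is the exact threshold at which the fractional Sobolev seminorm controls the modulus of continuity through \cref{thm:FractionalSobolev-Holder_r_paper}. One must also check that the continuous modifications of $t\mapsto T_t(y)$ can be chosen measurably in $y$, but this is built into the standard proof of the continuity theorem. Everything else is bookkeeping with Tonelli's theorem and the definitions of $W_p$, $W_{p,\nu}$, and the $W^{\alpha,p}$-seminorm.
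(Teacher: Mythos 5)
Your overall architecture (push $\nu$ forward under $y\mapsto(T_t(y))_{t\in I}$, upgrade to a measure on $C(I;\mathcal X)$, then read off (i)--(iii) by Tonelli and the marginal identities) is a genuinely different route from the paper, which instead deduces this corollary from its stochastic counterpart \cref{crl:lift_mu_Walphap_Rd_random}, itself obtained by running the dyadic Construction~{\footnotesize$\bigstar$} of \cref{thm:optimal_lift_mu_Walphap_compatible_random} on the product space $(\Omega\times\mathcal Y,\mathbb P\otimes\nu)$ applied to the Dirac-valued process $\delta_{T_t^\omega(y)}$ and then averaging over $y$. Your Steps~1 and~3 are fine. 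The gap is in Step~2, and it is exactly the obstruction the paper goes out of its way to avoid (see \cref{rmk:treatment} and \cref{rmk:Exp_Walphap_mu}).

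Concretely, two things fail as written. First, the Tonelli interchange producing $\int_{\mathcal X}\big(\iint_{I^2}|T_t(y)-T_s(y)|^p|t-s|^{-1-\alpha p}\,\d s\,\d t\big)\d\nu(y)$ requires joint measurability of $(t,y)\mapsto T_t(y)$; each $T_t$ is only a Borel map defined and unique up to a $\nu$-null set depending on $t$, so for a fixed $y$ the path $t\mapsto T_t(y)$ need not even be Lebesgue measurable, and the inner double integral is not defined. One can repair this by noting that $t\mapsto T_t$ is continuous into $L^p(\nu;\mathbb R^{\mathrm d})$ (via \cref{thm:FractionalSobolev-Holder_r_paper} applied to $(P_p(\mathcal X),W_{p,\nu})$) and selecting a jointly measurable modification, but then one must separately verify that the one-dimensional marginals at \emph{every} fixed $t$ (not just a.e.\ $t$) survive the modification; none of this is in your argument. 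Second, the appeal to the Kolmogorov--\v Centsov theorem is not available on the stated range of $\alpha$: the only two-point moment bound extractable from the hypothesis is $\int|T_t-T_s|^p\,\d\nu=W_{p,\nu}^p(\mu_s,\mu_t)\leq\bar c^{\,p}|t-s|^{\alpha p-1}|\mu|^p_{W^{\alpha,p},\nu}$, whose exponent exceeds $1$ only when $\alpha>2/p$, i.e.\ the ``regularity lost twice'' phenomenon of \cref{rmk:treatment}; the paper's remark that \cite[Theorem 2.1.6]{StroockVaradhan2006} applies ``immediately'' concerns the H\"older corollaries \cref{crl:lift_mu_Holder_Rd,crl:lift_mu_Holder_R1}, not this $W^{\alpha,p}$ one. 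Your pathwise-Besov idea (a.e.\ path has finite $W^{\alpha,p}$-seminorm, hence a continuous representative by GRR) is the right way to avoid that loss, but it is precisely the step that needs the joint-measurability repair above, plus an argument gluing the pathwise continuous representatives into a single element of $P(C(I;\mathcal X))$ with the correct evaluations at all times. The paper's dyadic construction sidesteps both problems because it only ever touches countably many time points and the discrete seminorm $|\cdot|_{b^{\alpha,p}}$, for which measurability is automatic.
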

    \noindent
    \textbf{Remark.}
    The equality \eqref{eq:lift_mu_Walphap_Rd_pi} means
    \begin{equation}\label{eq:lift_mu_Walphap_Rd_pi_equivalent}
          (e_{t_1}, \cdots, e_{t_j})_\#\pi = (T_{t_1}, \cdots, T_{t_j} )_\# \nu 
          \end{equation}
    for any finite sequence $t_1, \cdots, t_j \in I$.

    \begin{corollary}\label{crl:lift_mu_Holder_Rd}
        Let $\mathcal{X} = \mathbb{R}^{\mathrm{d}}$ and $I \coloneqq [0,T] \subset \mathbb{R}$.
        Let $(\mu_t) \in C^{\upgamma\textrm{-}\mathrm{H\ddot{o}l}} (I;(P_p(\mathcal{X}),W_{p,\nu}))$ with $1<p<\infty$ and $\frac{1}{p}<\upgamma \leq 1$ for some measure $\nu \in P_p^{ac} (\mathcal{X})$.
        Denote by $T_t$ the unique optimal transport map from $\nu$ to $\mu_t$.       
        Then there exists a unique probability measure $\pi \in P(C(I;\mathcal{X}))$ such that
        \begin{equation}\label{eq:lift_mu_Holder_Rd_pi}
            \pi = ((T_t)_{t \in I} )_\# \nu \quad\, \text{on} \quad\,  \big(\mathcal{X}^I, \mathcal{B}(\mathcal{X})^I\big).
        \end{equation}
        In particular, $\pi$ satisfies
        \begin{enumerate}[label=(\roman*), font=\normalfont]
            \item $\pi$ is concentrated on $W^{\alpha,p}(I;\mathcal{X}) \subset C^{(\alpha - \frac{1}{p})\textrm{-} \mathrm{H\ddot{o}l}}(I;\mathcal{X})$ for any $\alpha \in (\frac{1}{p},\upgamma)$;
            \item $(e_t)_\#\pi=\mu_t$ for all $t\in I$;
            \item  we have for all $s,t \in I$ that
            \begin{equation}
                W^p_p (\mu_s,\mu_t) \leq  \int_{\Gamma_T} |\gamma_t - \gamma_s|^p \d \pi(\gamma) = W_{p,\nu}^p (\mu_s,\mu_t);
            \end{equation}
           and, for any $\alpha \in (\frac{1}{p},\upgamma)$,  we have \eqref{eq:lift_mu_Walphap_Rd_energy} and           \begin{align}\label{eq:optimal_lift_mu_Holder_Rd}
               | \mu |_{\upgamma\textrm{-}\mathrm{H\ddot{o}l},\nu}^p \geq c \int_{\Gamma_T} | \gamma |^p_{\alpha - \frac{1}{p}\textrm{-}\mathrm{H\ddot{o}l}} \d \pi (\gamma) \geq c | \mu |_{\alpha-\frac{1}{p}\textrm{-}\mathrm{H\ddot{o}l}}^p,
            \end{align}
            where $c = c(\upgamma,\alpha,p,T)$ is an explicit positive constant.
        \end{enumerate}
    \end{corollary}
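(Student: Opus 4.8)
The plan is to deduce Corollary~\ref{crl:lift_mu_Holder_Rd} from the already established Corollary~\ref{crl:lift_mu_Walphap_Rd} via the (trivial) H\"older--fractional-Sobolev embedding of Remark~\ref{rmk:Holder-FractionalSobolev_r_paper}, exactly as Corollary~\ref{thm:optimal_lift_mu_Holder_compatible_random} was obtained from Theorem~\ref{thm:optimal_lift_mu_Walphap_compatible_random}. Since $\upgamma>\tfrac1p$, the interval $(\tfrac1p,\upgamma)$ is nonempty; I fix an $\alpha$ in it. Applying Remark~\ref{rmk:Holder-FractionalSobolev_r_paper} with the metric space $(P_p(\mathcal{X}),W_{p,\nu})$ playing the role of the abstract target space, the hypothesis $(\mu_t)\in C^{\upgamma\textrm{-}\mathrm{H\ddot{o}l}}(I;(P_p(\mathcal{X}),W_{p,\nu}))$ upgrades to $(\mu_t)\in W^{\alpha,p}(I;(P_p(\mathcal{X}),W_{p,\nu}))$ with
\[
|\mu|_{W^{\alpha,p},\nu}^p \;\le\; \tilde{c}^{-1}\,|\mu|_{\upgamma\textrm{-}\mathrm{H\ddot{o}l},\nu}^p,\qquad
\tilde{c}\coloneqq \frac{(\upgamma p-\alpha p)(\upgamma p-\alpha p+1)}{2\,T^{\upgamma p-\alpha p+1}}.
\]
Thus the assumptions of Corollary~\ref{crl:lift_mu_Walphap_Rd} hold, and it produces a $\pi\in P(C(I;\mathcal{X}))$ with $\pi=((T_t)_{t\in I})_\#\nu$ on $(\mathcal{X}^I,\mathcal{B}(\mathcal{X})^I)$ enjoying properties (i)--(iii) there for this $\alpha$. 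Because the optimal maps $T_t$ from $\nu$ to $\mu_t$ do not depend on $\alpha$, neither do the finite-dimensional marginals $(T_{t_1},\dots,T_{t_j})_\#\nu$; and since a Borel probability measure on $C(I;\mathcal{X})$ is determined by its finite-dimensional marginals (cf.\ \cite[Theorem~2.1.6]{StroockVaradhan2006}), the $\pi$ obtained is the same for every $\alpha\in(\tfrac1p,\upgamma)$. This yields at once the existence-and-uniqueness claim \eqref{eq:lift_mu_Holder_Rd_pi}, together with items (i) and (ii) for the whole admissible range of $\alpha$.

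It remains to verify item (iii). The inequality $W_p^p(\mu_s,\mu_t)\le\int_{\Gamma_T}|\gamma_t-\gamma_s|^p\,\d\pi(\gamma)=W_{p,\nu}^p(\mu_s,\mu_t)$ is verbatim Corollary~\ref{crl:lift_mu_Walphap_Rd}(iii). For the energy chain \eqref{eq:optimal_lift_mu_Holder_Rd} I would concatenate four estimates, in order: $|\mu|_{\upgamma\textrm{-}\mathrm{H\ddot{o}l},\nu}^p\ge\tilde{c}\,|\mu|_{W^{\alpha,p},\nu}^p$ (Remark~\ref{rmk:Holder-FractionalSobolev_r_paper}); $|\mu|_{W^{\alpha,p},\nu}^p=\int_{\Gamma_T}|\gamma|_{W^{\alpha,p}}^p\,\d\pi(\gamma)$ (Corollary~\ref{crl:lift_mu_Walphap_Rd}(iii), eq.~\eqref{eq:lift_mu_Walphap_Rd_energy}); the pointwise Garsia--Rodemich--Rumsey bound $|\gamma|_{\alpha-\frac{1}{p}\textrm{-}\mathrm{H\ddot{o}l}}\le\bar{c}\,|\gamma|_{W^{\alpha,p}}$ from Theorem~\ref{thm:FractionalSobolev-Holder_r_paper}, valid for $\pi$-a.e.\ $\gamma$ since those paths have finite $W^{\alpha,p}$-seminorm by item (i); and finally $|\mu|_{\alpha-\frac{1}{p}\textrm{-}\mathrm{H\ddot{o}l}}^p\le\int_{\Gamma_T}|\gamma|_{\alpha-\frac{1}{p}\textrm{-}\mathrm{H\ddot{o}l}}^p\,\d\pi(\gamma)$, which is Theorem~\ref{thm:lift_to_mu_Hol_random} in its deterministic instance (trivial $\Omega$), applicable because its integrability hypothesis is met: $\int_{\Gamma_T}d(\gamma_0,\bar{x})^p\,\d\pi=\int d(x,\bar{x})^p\,\d\mu_0<\infty$ by item (ii) and $\int_{\Gamma_T}|\gamma|_{\alpha-\frac{1}{p}\textrm{-}\mathrm{H\ddot{o}l}}^p\,\d\pi\le\bar{c}^p\,|\mu|_{W^{\alpha,p},\nu}^p<\infty$. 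Multiplying through gives \eqref{eq:optimal_lift_mu_Holder_Rd} with $c=\tilde{c}/\bar{c}^p$, a constant depending only on $(\upgamma,\alpha,p,T)$. The same scheme, with the $p$-variation embedding of Theorem~\ref{thm:FractionalSobolev-Holder_r_paper} and Theorem~\ref{thm:lift_to_mu_var_random} in place of the H\"older ones, would produce the analogous $p$-variation estimate.

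I do not expect a genuine obstacle in this corollary: all the real work --- producing a continuous modification of $t\mapsto T_t$ in $L^p(\nu;\mathcal{X})$ from its H\"older (hence, via the embedding, fractional Sobolev) regularity and realizing its law as a Borel measure on $C(I;\mathcal{X})$ with the prescribed finite-dimensional marginals --- is already internal to Corollary~\ref{crl:lift_mu_Walphap_Rd}, resting ultimately on the Kolmogorov--\v Centsov theorem and \cite[Theorem~2.1.6]{StroockVaradhan2006}. The one point that needs a careful line is the $\alpha$-independence of $\pi$ noted above, which is precisely what allows items (i)--(iii) to hold simultaneously for every $\alpha\in(\tfrac1p,\upgamma)$.
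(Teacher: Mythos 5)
Your proposal is correct and takes essentially the same route as the paper: the paper also derives this corollary from the $W^{\alpha,p}$ version via the embedding of \cref{rmk:Holder-FractionalSobolev_r_paper} and obtains \eqref{eq:optimal_lift_mu_Holder_Rd} by the same chain as in \eqref{eq:optimal_lift_mu_Holder_compatible_random_proof} using \cref{thm:FractionalSobolev-Holder_r_paper} and \cref{thm:lift_to_mu_Hol_random}, the only cosmetic difference being that the paper routes through the stochastic corollary (\cref{crl:lift_mu_Holder_Rd_random}) and then specializes to trivial $\Omega$, whereas you argue directly from \cref{crl:lift_mu_Walphap_Rd}. Your careful note that $\pi$ is independent of $\alpha$ because its finite-dimensional marginals $(T_{t_1},\dots,T_{t_j})_\#\nu$ are, and these determine the Borel measure on $C(I;\mathcal{X})$, is precisely the uniqueness argument the paper invokes.
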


\subsection{Corollaries in \texorpdfstring{$\mathbb{R}^\mathrm{d}$}{Rd}: stochastic setting}
    \begin{corollary}\label{crl:lift_mu_Walphap_Rd_random}
        Let $\mathcal{X} = \mathbb{R}^{\mathrm{d}}$ and $I \coloneqq [0,T] \subset \mathbb{R}$. 
        Let $(\mu_t)_{t \in I}$ be a probability measure-valued stochastic process defined on a probability space $(\Omega, \mathcal{F}, \mathbb{P})$ such that $(\mu_t) \in W^{\alpha,p} (I; (P_{p,\Omega}(\mathcal{X}),\mathbb{W}_{p,\nu}) )$ with $1<p<\infty$ and $\frac{1}{p}<\alpha < 1$ for some measure $\nu \in P_p^{ac} (\mathcal{X})$. Denote by $T^\omega_t$ the optimal transport map from $\nu$ to $\mu^\omega_t$, which exists and is unique a.s.     
        Then there exists a random probability measure $\pi \in P_{\Omega}(C(I;\mathcal{X}))$ such that
        \begin{equation}\label{eq:lift_mu_Walphap_Rd_pi_random}
            (e_{t_1}, \cdots, e_{t_j})_\#\pi^\omega = (T^\omega_{t_1}, \cdots, T^\omega_{t_j} )_\# \nu \quad \mathbb{P}\textrm{-a.s. }
        \end{equation}
         for any finite sequence $t_1, \cdots, t_j \in I$. In particular, $\pi$ satisfies
        \begin{enumerate}[label=(\roman*), font=\normalfont]
            \item[(i) ] $\pi^\omega$ is concentrated on $W^{\alpha,p}(I;\mathcal{X})$ and $t \mapsto {(e_t)}_{\#} \pi^\omega$ is in $ W^{\alpha, p} (I;P_p(\mathcal{X}))$ a.s.;
            \item[(ii) ] $(e_t)_\#\pi^\omega =\mu_t^{\omega} $ a.s. for any $t\in I$;
            \item[(iii) ] we have for all $s,t \in I$ that
            \begin{equation}
                \mathbb{W}^p_p (\mu_s,\mu_t) \leq   \mathbb{E} \left[\int_{\Gamma_T} |\gamma_t - \gamma_s|^p \d \pi(\gamma)\right]=  \mathbb{W}_{p,\nu}^p (\mu_s,\mu_t);
            \end{equation}
           and in particular\footnote{Here, $\Vert \mu \Vert_{W^{\alpha,p},\nu}$ denotes the $W^{\alpha,p}$-regularity of the curve $(\mu_t)$ with respect to $\mathbb{W}_{p,\nu} (\cdot,\cdot) \coloneqq (\mathbb{E} [W^p_{p,\nu} (\cdot,\cdot)])^{1/p} $.},
           \begin{align}\label{eq:lift_mu_Walphap_Rd_random_energy}
                \Vert\mu\Vert_{W^{\alpha,p}}^p \leq  \mathbb{E} \left[ \int_{\Gamma_T} |\gamma|_{W^{\alpha,p}}^p \d \pi (\gamma)\right] = \Vert\mu\Vert_{W^{\alpha,p},\nu}^p.
            \end{align}
        \end{enumerate}
    \end{corollary}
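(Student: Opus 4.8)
The plan is to prove this by running the stochastic superposition construction of \cref{thm:optimal_lift_mu_Walphap_compatible_random} on $\mathcal{X}=\mathbb{R}^{\mathrm{d}}$ (a complete, separable, locally compact length space), but with the random multi-couplings chosen to be the \emph{$\nu$-based} (generalized-geodesic) ones, for which the $W_{p,\nu}$-analogue of the compatibility condition \eqref{eq:2D_marginal_Upsilon_omega} holds for free. Take $T=1$. First I would fix a $\mathbb{P}$-full set on which $\mu^\omega_t\in P_p(\mathbb{R}^{\mathrm{d}})$ and the optimal map $T^\omega_t:=T^{\mu^\omega_t}_\nu$ is well defined and $\nu$-a.e.\ unique for every dyadic $t$ (countably many; uniqueness by $\nu\in P_p^{ac}$, $p>1$, cf.\ \cref{subsec:optimal_transport_nu_based}). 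For $t^{(n)}_i:=i/2^n$ I set $\Upsilon^\omega_n:=\big(T^\omega_{t^{(n)}_0},\dots,T^\omega_{t^{(n)}_{2^n}}\big)_{\#}\nu$; that $\omega\mapsto\Upsilon^\omega_n$ is a random measure follows by gluing the measurable random couplings $(\mathrm{id},T^\omega_{t^{(n)}_i})_{\#}\nu$ along their common first marginal $\nu$ (as in \cref{lemma:measurable_selection_opt_cpl_two,lemma:measurable_selection_opt_cpl_2}, with the usual freedom on $\mathbb{P}$-null sets of \cref{rmk:zero_set}). Each two-dimensional marginal $(\mathrm{Pr}^{i,j})_{\#}\Upsilon^\omega_n=(T^\omega_{t^{(n)}_i},T^\omega_{t^{(n)}_j})_{\#}\nu$ realizes $W^p_{p,\nu}(\mu^\omega_{t^{(n)}_i},\mu^\omega_{t^{(n)}_j})$ by the very definition of $W_{p,\nu}$ — this replaces \eqref{eq:2D_marginal_Upsilon_omega}. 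With $\ell$ the Borel piecewise-linear interpolation through the given points at the dyadic times (i.e.\ the constant-speed geodesic interpolation in $\mathbb{R}^{\mathrm{d}}$), I put $\pi^\omega_n:=\ell_{\#}\Upsilon^\omega_n$, carried by the generalized geodesics $t\mapsto(1-s)T^\omega_{t^{(n)}_i}(y)+sT^\omega_{t^{(n)}_{i+1}}(y)$; that $\pi_n\in P_\Omega(C(I;\mathbb{R}^{\mathrm{d}}))$ is checked as in Step~0 of the proof of \cref{thm:optimal_lift_mu_Walphap_compatible_random}.

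\textbf{Passing to the limit.} I would then transcribe Steps~1--5 of that proof. \emph{Tightness:} using \cite[Lemma~2.31]{Abedi2025deterministic} to compute $|\ell_{\bm{x}}|^p_{b^{\alpha,p}}$ for piecewise-linear $\bm{x}$, integrating against $\Upsilon^\omega_n$ and using $\int|T^\omega_{t^{(m)}_k}(y)-T^\omega_{t^{(m)}_{k+1}}(y)|^p\d\nu(y)=W^p_{p,\nu}(\mu^\omega_{t^{(m)}_k},\mu^\omega_{t^{(m)}_{k+1}})$, one gets after taking $\mathbb{E}$ the bound $\sup_n\mathbb{E}\!\left[\int_\Gamma(d(\gamma_0,\bar{x})^p+|\gamma|^p_{b^{\alpha,p}})\d\pi_n\right]\lesssim\mathbb{E}\!\left[\int d(\cdot,\bar{x})^p\d\mu_0\right]+\Vert\mu\Vert^p_{b^{\alpha,p},\nu}\lesssim\Vert\mu\Vert^p_{W^{\alpha,p},\nu}<\infty$ (via \cref{lemma:Exp_balphap_mu} and \cref{thm:Walphap_balphap_r_paper} applied in the metric space $(P_{p,\Omega}(\mathbb{R}^{\mathrm{d}}),\mathbb{W}_{p,\nu})$); then \cref{prop:tightness_balphap_random} and Prokhorov \cref{thm:Prokhorov_random} give a narrowly convergent subsequence $\pi_{n_k}\to\pi$ in $P_\Omega(\Gamma)$. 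Lower semicontinuity of $\gamma\mapsto|\gamma|^p_{W^{\alpha,p}}$ together with \eqref{eq:narrow_conv_lsc_func_random} yields that $\pi^\omega$ concentrates on $W^{\alpha,p}(I;\mathbb{R}^{\mathrm{d}})$ a.s. Since $W_p\le W_{p,\nu}$, hence $\mathbb{W}_p\le\mathbb{W}_{p,\nu}$, the curve $(\mu_t)$ is $W^{\alpha,p}$ — thus narrowly continuous in $P_\Omega$, by \cref{prop:narrow convergence_vs_Ewpp} — also with respect to $\mathbb{W}_p$, so the argument \eqref{eq:proof_random_1dim_time_marginal}--\eqref{eq:convergence_1_dim_marginal_random_partII} (which only uses the \emph{uniform} $W^{\alpha,p}$-bound on the sample paths) gives (ii): $(e_t)_{\#}\pi^\omega=\mu^\omega_t$ a.s.\ for all $t$. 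Taking $t=0$ makes \eqref{eq:lift_to_mu_Walphap_integrability_stoch} hold, so \cref{thm:lift_to_mu_Walphap_stoch} supplies the remaining assertion of (i). For \eqref{eq:lift_mu_Walphap_Rd_pi_random} at \emph{dyadic} times, note that for $n_k$ large $(e_{t_1},\dots,e_{t_j})\circ\ell=\mathrm{Pr}^{i_1,\dots,i_j}$ on $(\mathbb{R}^{\mathrm{d}})^{2^{n_k}+1}$ with $i_l=2^{n_k}t_l$, so $(e_{t_1},\dots,e_{t_j})_{\#}\pi^\omega_{n_k}=(T^\omega_{t_1},\dots,T^\omega_{t_j})_{\#}\nu$ independently of $k$; testing against $\mathds{1}_F(\omega)\phi(\gamma_{t_1},\dots,\gamma_{t_j})$, $\phi\in C_b((\mathbb{R}^{\mathrm{d}})^j)$, $F\in\mathcal{F}$, passing to the narrow limit, and invoking \cite[Lemma~3.14]{Crauel2002} gives the identity for dyadic times. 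Part (iii) then follows by taking $j=2$: $(e_s,e_t)_{\#}\pi^\omega=(T^\omega_s,T^\omega_t)_{\#}\nu$, so $\int|\gamma_t-\gamma_s|^p\d\pi^\omega=W^p_{p,\nu}(\mu^\omega_s,\mu^\omega_t)\ge W^p_p(\mu^\omega_s,\mu^\omega_t)$; taking $\mathbb{E}$ and integrating the kernel $|t-s|^{-1-\alpha p}$ (Tonelli) gives $\Vert\mu\Vert^p_{W^{\alpha,p}}\le\mathbb{E}[\int_\Gamma|\gamma|^p_{W^{\alpha,p}}\d\pi]=\Vert\mu\Vert^p_{W^{\alpha,p},\nu}$.

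\textbf{The hard part.} The one genuinely nontrivial step is upgrading \eqref{eq:lift_mu_Walphap_Rd_pi_random} from dyadic to arbitrary times $t_1,\dots,t_j\in I$: a priori $T^\omega_t$ is only the optimal map for each \emph{individual} $t$, so one needs a.s.\ continuity of $t\mapsto T^\omega_t\in L^p(\nu)$ along the dyadics. I would get this from the $W^{\alpha,p}$-regularity of $(\mu_t)$ in $\mathbb{W}_{p,\nu}$ and the Garsia--Rodemich--Rumsey estimate \eqref{eq:distance_estimate_GRR_r_paper} applied in $(P_{p,\Omega},\mathbb{W}_{p,\nu})$: since $\mathbb{E}[\Vert T^\omega_s-T^\omega_t\Vert^p_{L^p(\nu)}]=\mathbb{W}^p_{p,\nu}(\mu_s,\mu_t)\le\bar{c}^p|t-s|^{\alpha p-1}\Vert\mu\Vert^p_{W^{\alpha,p},\nu}$, dyadic approximants $t^{(m)}_l\to t_l$ make $(\omega,y)\mapsto T^\omega_{t^{(m)}_l}(y)$ converge in $L^p(\mathbb{P}\otimes\nu)$, hence — along a subsequence — $\mathbb{P}\otimes\nu$-a.e.\ to $(\omega,y)\mapsto T^\omega_{t_l}(y)$; then for a.e.\ $\omega$ the pushforwards $(T^\omega_{t^{(m)}_1},\dots,T^\omega_{t^{(m)}_j})_{\#}\nu\to(T^\omega_{t_1},\dots,T^\omega_{t_j})_{\#}\nu$ narrowly along that subsequence, while $(e_{t^{(m)}_1},\dots,e_{t^{(m)}_j})_{\#}\pi^\omega\to(e_{t_1},\dots,e_{t_j})_{\#}\pi^\omega$ narrowly along the full sequence because $\pi^\omega$ sits on continuous paths; equating the two limits gives the identity a.s. The remaining points — joint measurability of $\omega\mapsto\Upsilon^\omega_n$ and $\omega\mapsto\pi^\omega$, and independence of all choices of the $\mathbb{P}$-null sets — are routine given the measurable-selection/gluing lemmas of \cref{sec:preliminaries_s} and the a.s.\ uniqueness of the optimal map from the fixed absolutely continuous reference $\nu$; everything else is a verbatim transcription of the proof of \cref{thm:optimal_lift_mu_Walphap_compatible_random}.
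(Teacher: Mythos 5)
Your proposal is correct, and it reaches the same limiting object as the paper, but it is organized differently. The paper does not re-run the superposition construction with $\nu$-based couplings; instead it performs a reduction: it enlarges the probability space to $(\Omega\times\mathcal{Y},\mathcal{F}\otimes\mathcal{B}(\mathcal{Y}),\mathbb{P}\otimes\nu)$ with $\mathcal{Y}=\mathcal{X}$, considers the Dirac-valued process $\mu_t^{\omega,y}\coloneqq\delta_{T_t^\omega(y)}$ — which is automatically compatible, and whose $\mathbb{W}_p$-regularity on the product space coincides exactly with the $\mathbb{W}_{p,\nu}$-regularity of $(\mu_t)$ via $\mathbb{W}^p_{p,\nu}(\mu_s,\mu_t)=\int_{\Omega\times\mathcal{Y}}W_p^p(\mu_s^{\omega,y},\mu_t^{\omega,y})\,\mathrm{d}\mathbb{P}\otimes\nu$ — applies \cref{thm:optimal_lift_mu_Walphap_compatible_random} as a black box to obtain $\eta\in P_{\Omega\times\mathcal{Y}}(C(I;\mathcal{X}))$, and then sets $\pi^\omega\coloneqq\int_{\mathcal{Y}}\eta^{\omega,y}\,\mathrm{d}\nu(y)$. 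You instead modify the construction itself, replacing the optimal multi-couplings of \eqref{eq:2D_marginal_Upsilon_omega} by $(T^\omega_{t_0^{(n)}},\dots,T^\omega_{t_{2^n}^{(n)}})_{\#}\nu$ and re-verifying Steps 0--5 with $W_{p,\nu}$ in place of $W_p$; since Wasserstein geodesics between Diracs averaged over $\nu$ are precisely the generalized geodesics, your $\pi_n$ coincides with the $\nu$-average of the paper's, so the two routes converge to the same $\pi$. Your "hard part" — upgrading the finite-dimensional identity from dyadic to arbitrary times via continuity of $t\mapsto T_t$ in $L^p(\mathbb{P}\otimes\nu)$, which follows from the Garsia--Rodemich--Rumsey bound $\mathbb{W}_{p,\nu}(\mu_s,\mu_t)\leq\bar{c}\,|t-s|^{\alpha-1/p}\Vert\mu\Vert_{W^{\alpha,p},\nu}$ — is exactly the content of the paper's Claim 3 (its base case is $\mathbb{W}_{p,\nu}(\mu_{t_1},\mu_{t_1^k})\to 0$ by this same continuity), so the genuinely delicate step is handled equivalently in both arguments. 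What the paper's route buys is that none of the tightness, measurability, or limiting arguments of the superposition theorem need to be re-checked, only the averaging over $y$ and the marginal identification; what your route buys is an explicit construction living directly on $\Omega$ without the auxiliary randomness $y$, at the cost of re-verifying (as you correctly sketch) the measurable gluing of the map-induced couplings, the $b^{\alpha,p}$-energy bound with $W_{p,\nu}$ in place of $W_p$, and the analogue of \cref{lemma:Exp_balphap_mu} for $\mathbb{W}_{p,\nu}$.
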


    \begin{corollary}\label{crl:lift_mu_Holder_Rd_random}
        Let $\mathcal{X} = \mathbb{R}^{\mathrm{d}}$ and $I \coloneqq [0,T] \subset \mathbb{R}$. 
        Let $(\mu_t)_{t \in I}$ be a probability measure-valued stochastic process defined on a probability space $(\Omega, \mathcal{F}, \mathbb{P})$ such that $(\mu_t) \in C^{\upgamma\textrm{-} \mathrm{H\ddot{o}l}}(I;(P_{p,\Omega}(\mathcal{X}),\mathbb{W}_{p,\nu}))$ with $1<p<\infty$ and $\frac{1}{p}<\upgamma \leq 1$ for some measure $\nu \in P_p^{ac} (\mathcal{X})$. Denote by $T^\omega_t$ the optimal transport map from $\nu$ to $\mu^\omega_t$, which exists and is unique a.s.     
        Then there exists a random probability measure $\pi \in P_{\Omega}(C(I;\mathcal{X}))$ such that
        \begin{equation}
            (e_{t_1}, \cdots, e_{t_j})_\#\pi^\omega = (T^\omega_{t_1}, \cdots, T^\omega_{t_j} )_\# \nu \quad \mathbb{P}\textrm{-a.s. }
        \end{equation}
         for any finite sequence $t_1, \cdots, t_j \in I$. In particular, $\pi$ satisfies
    \begin{enumerate}[label=(\roman*), font=\normalfont]
                \item $\pi^\omega$ is concentrated on $W^{\alpha,p}(I;\mathcal{X}) \subset C^{(\alpha - \frac{1}{p})\textrm{-} \mathrm{H\ddot{o}l}}(I;\mathcal{X})$ a.s. and
                \\
                $t\mapsto (e_t)_\#\pi^\omega$ is in $W^{\alpha,p} (I;P_p(\mathcal{X}))\subset C^{(\alpha-\frac{1}{p})\textrm{-} \mathrm{H\ddot{o}l}}$\, $(I;P_p(\mathcal{X}))$ a.s. for any $\alpha \in (\frac{1}{p},\upgamma)$;
                \item $(e_t)_\#\pi^\omega=\mu_t^\omega$ a.s. for all $t\in I$;
                \item we have for all $s,t \in I$ that
                \begin{equation}
                    \mathbb{W}^p_p (\mu_s,\mu_t) \leq   \mathbb{E} \left[\int_{\Gamma_T} |\gamma_t - \gamma_s|^p \d \pi(\gamma)\right]=  \mathbb{W}_{p,\nu}^p (\mu_s,\mu_t);
                \end{equation} 
                and, for any $\alpha \in (\frac{1}{p},\upgamma)$,  we have \eqref{eq:lift_mu_Walphap_Rd_random_energy} and 
                \begin{align}\label{eq:lift_mu_Holder_Rd_random_energy}
                   \Vert \mu \Vert_{\upgamma\textrm{-}\mathrm{H\ddot{o}l},\nu}^p \geq c \, \mathbb{E} \left[ \int_{\Gamma_T} | \gamma |^p_{\alpha - \frac{1}{p}\textrm{-}\mathrm{H\ddot{o}l}} \d \pi (\gamma) \right] \geq c \, \Vert \mu \Vert_{\alpha-\frac{1}{p}\textrm{-}\mathrm{H\ddot{o}l}}^p,
                \end{align}
                where $c = c(\upgamma,\alpha,p,T)$ is an explicit positive constant.
            \end{enumerate}
    \end{corollary}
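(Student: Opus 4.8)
The plan is to deduce \cref{crl:lift_mu_Holder_Rd_random} from the already-established \cref{crl:lift_mu_Walphap_Rd_random}, in exactly the way \cref{thm:optimal_lift_mu_Holder_compatible_random} was deduced from \cref{thm:optimal_lift_mu_Walphap_compatible_random}, so that essentially only the displayed inequalities require an argument. First I would record that $(P_{p,\Omega}(\mathcal{X}),\mathbb{W}_{p,\nu})$ is a genuine metric space --- by the reasoning of \cref{prop:mathbbWp_metric_result}, using that $W_{p,\nu}$ is a metric on $P_p(\mathcal{X})$ (it is the pullback of the $L^p(\nu)$-metric under the injective map $\mu\mapsto T_\nu^{\mu}$) --- and that, by \cref{rmk:Holder-FractionalSobolev_r_paper} applied on this space, the hypothesis $(\mu_t)\in C^{\upgamma\textrm{-}\mathrm{H\ddot{o}l}}(I;(P_{p,\Omega}(\mathcal{X}),\mathbb{W}_{p,\nu}))$ entails $(\mu_t)\in W^{\alpha,p}(I;(P_{p,\Omega}(\mathcal{X}),\mathbb{W}_{p,\nu}))$ for every $\alpha\in(\tfrac1p,\upgamma)$, together with the quantitative bound $\Vert\mu\Vert_{\upgamma\textrm{-}\mathrm{H\ddot{o}l},\nu}^p\ge\tilde c\,\Vert\mu\Vert_{W^{\alpha,p},\nu}^p$ with the explicit constant $\tilde c=\tilde c(\upgamma,\alpha,p,T)$ of that remark. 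Fixing one such exponent, \cref{crl:lift_mu_Walphap_Rd_random} then produces a random probability measure $\pi\in P_\Omega(C(I;\mathcal{X}))$ satisfying \eqref{eq:lift_mu_Walphap_Rd_pi_random} together with items (i)--(iii) of that corollary. Since a random path measure on $C(I;\mathcal{X})$ is determined $\mathbb{P}$-a.s.\ by its finite-dimensional time marginals --- and the marginals in \eqref{eq:lift_mu_Walphap_Rd_pi_random} depend only on the maps $T_t^{\omega}$, not on $(\alpha,p)$ --- this same $\pi$ is the measure produced by \cref{crl:lift_mu_Walphap_Rd_random} for \emph{every} admissible $\alpha$, so in particular $\pi^{\omega}$ is concentrated on $W^{\alpha,p}(I;\mathcal{X})$ and $t\mapsto(e_t)_{\#}\pi^{\omega}\in W^{\alpha,p}(I;P_p(\mathcal{X}))$ a.s.\ for every $\alpha\in(\tfrac1p,\upgamma)$.

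From here the bulk of \cref{crl:lift_mu_Holder_Rd_random} is immediate. Item (ii) is item (ii) of \cref{crl:lift_mu_Walphap_Rd_random}; the first chain of inequalities in item (iii) and the identity $\Vert\mu\Vert_{W^{\alpha,p}}^p\le\mathbb{E}\bigl[\int_{\Gamma_T}|\gamma|_{W^{\alpha,p}}^p\,\d\pi\bigr]=\Vert\mu\Vert_{W^{\alpha,p},\nu}^p$ of \eqref{eq:lift_mu_Walphap_Rd_random_energy} are carried over verbatim; and the two inclusions in item (i) --- namely $W^{\alpha,p}(I;\mathcal{X})\subset C^{(\alpha-\frac1p)\textrm{-}\mathrm{H\ddot{o}l}}(I;\mathcal{X})$ and $W^{\alpha,p}(I;P_p(\mathcal{X}))\subset C^{(\alpha-\frac1p)\textrm{-}\mathrm{H\ddot{o}l}}(I;P_p(\mathcal{X}))$ --- follow from the Garsia--Rodemich--Rumsey embedding \eqref{eq:FS_H_v_embeddings_r_paper} of \cref{thm:FractionalSobolev-Holder_r_paper}, applied on $(\mathcal{X},|\cdot|)$ and on $(P_p(\mathcal{X}),W_p)$ respectively (legitimate, since \cref{thm:FractionalSobolev-Holder_r_paper} is stated for an arbitrary metric space).

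The one estimate that still requires an argument is \eqref{eq:lift_mu_Holder_Rd_random_energy}, which I would obtain by a chain formally identical to \eqref{eq:optimal_lift_mu_Holder_compatible_random_proof}: from $\Vert\mu\Vert_{\upgamma\textrm{-}\mathrm{H\ddot{o}l},\nu}^p\ge\tilde c\,\Vert\mu\Vert_{W^{\alpha,p},\nu}^p$ (\cref{rmk:Holder-FractionalSobolev_r_paper}), then $\Vert\mu\Vert_{W^{\alpha,p},\nu}^p=\mathbb{E}\bigl[\int_{\Gamma_T}|\gamma|_{W^{\alpha,p}}^p\,\d\pi\bigr]$ (from \eqref{eq:lift_mu_Walphap_Rd_random_energy}), then the lower bound $\ge\bar c^{-p}\,\mathbb{E}\bigl[\int_{\Gamma_T}|\gamma|_{\alpha-\frac1p\textrm{-}\mathrm{H\ddot{o}l}}^p\,\d\pi\bigr]$ by the pointwise Hölder--fractional-Sobolev inequality of \cref{thm:FractionalSobolev-Holder_r_paper} ($\bar c=\bar c(\alpha,p)$ the GRR constant), and finally \cref{thm:lift_to_mu_Hol_random} applied to the random lift $\pi$ --- whose integrability hypothesis is precisely the finiteness of the preceding term, already known --- to dominate $\Vert\mu\Vert_{\alpha-\frac1p\textrm{-}\mathrm{H\ddot{o}l}}^p$. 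The explicit constant is then $c:=\tilde c/\bar c^{\,p}=c(\upgamma,\alpha,p,T)$.

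I do not expect a genuine obstacle: conditional on \cref{crl:lift_mu_Walphap_Rd_random}, this is the $W^{\alpha,p}$-bookkeeping of \cref{thm:optimal_lift_mu_Holder_compatible_random} transcribed into the random, $\nu$-based setting. The only points deserving a sentence of care are (a) that \cref{rmk:Holder-FractionalSobolev_r_paper}, \cref{thm:FractionalSobolev-Holder_r_paper} and \cref{thm:lift_to_mu_Hol_random} get invoked on three different metric spaces (the base space, the Wasserstein space, and $(P_{p,\Omega}(\mathcal{X}),\mathbb{W}_{p,\nu})$), each covered since those results hold on any complete separable metric space; and (b) the independence of the limiting $\pi$ from the exponent $\alpha$, which rests on uniqueness of a random path measure given its finite-dimensional time marginals (itself proved, as in the deterministic case, by a countable-dense-set-of-times argument). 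The analogous deterministic statements \cref{crl:lift_mu_Walphap_Rd} and \cref{crl:lift_mu_Holder_Rd} then follow by specializing to a trivial $\Omega$.
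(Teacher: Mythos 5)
Your proposal is correct and follows essentially the same route as the paper: the paper likewise derives this corollary directly from \cref{crl:lift_mu_Walphap_Rd_random} via the embedding $C^{\upgamma\textrm{-}\mathrm{H\ddot{o}l}}\subset W^{\alpha,p}$ of \cref{rmk:Holder-FractionalSobolev_r_paper} (applied on $(P_{p,\Omega}(\mathcal{X}),\mathbb{W}_{p,\nu})$), and obtains \eqref{eq:lift_mu_Holder_Rd_random_energy} by the same chain as \eqref{eq:optimal_lift_mu_Holder_compatible_random_proof} with the help of \cref{thm:lift_to_mu_Hol_random}. Your additional remarks --- that $\pi$ is independent of $\alpha$ because its finite-dimensional marginals \eqref{eq:lift_mu_Walphap_Rd_pi_random} do not depend on $(\alpha,p)$, and that the three embedding results are invoked on three different metric spaces --- are points the paper leaves implicit, and they are handled correctly.
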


\subsection{Corollaries in \texorpdfstring{$\mathbb{R}$}{R}: deterministic setting}\label{subsec:corollaries_R_deterministic}
    
    \begin{corollary}\label{crl:lift_mu_Walphap_R1}
        Let $\mathcal{X} = \mathbb{R}$ and $I \coloneqq [0,T]\subset \mathbb{R}$. 
        Let $(\mu_t) \in W^{\alpha,p} (I;P_p(\mathcal{X}))$ with $1<p<\infty$ and $\tfrac1p < \alpha <1$. Denote by $F_t$ the CDF of $\mu_t$ and by $F_t^{-1}$ its generalized inverse.
        Then there exists a unique probability measure $\pi \in P(C(I;\mathcal{X}))$ such that
        \begin{equation}\label{eq:lift_mu_Walphap_R1_pi}
            \pi = ((F_{t}^{-1})_{t \in I} )_\# \mathrm{Leb}|_{[0,1]} \quad\, \text{on} \quad\,  \big(\mathcal{X}^I, \mathcal{B}(\mathcal{X})^I\big).
        \end{equation}
                In particular, $\pi$ satisfies
        \begin{enumerate}[label=(\roman*), font=\normalfont]
            \item $\pi$ is concentrated on $W^{\alpha,p}(I;\mathcal{X})$; 
            \item $(e_t)_\#\pi=\mu_t$ for all $t\in I$;
            \item $(e_s,e_t)_\# \pi \in \mathrm{OptCpl}(\mu_s, \mu_t)$ for all $t,s \in I$; and in particular,
            \begin{align}\label{eq:lift_mu_Walphap_R1_energy}
                |\mu|_{W^{\alpha,p}}^p = \int_{\Gamma_T} |\gamma|_{W^{\alpha,p}}^p \d \pi (\gamma).
            \end{align}
        \end{enumerate}
    \end{corollary}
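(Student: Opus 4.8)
The plan is to obtain this corollary as the deterministic instance (take $\Omega$ a single point, so $\mathbb{W}_p=W_p$ and all expectations drop out) of \cref{thm:optimal_lift_mu_Walphap_compatible_random}, applied to $\mathcal{X}=\mathbb{R}$ with the Euclidean distance, which is a complete, separable, locally compact length metric space. The only hypothesis of that theorem still requiring a check is compatibility, and on $\mathbb{R}$ it holds for \emph{every} family in $P_p(\mathbb{R})$: given finitely many times $t_1,\dots,t_N$, the monotone rearrangement multi-coupling $\Upsilon\coloneqq(F_{t_1}^{-1},\dots,F_{t_N}^{-1})_\#\mathrm{Leb}|_{[0,1]}$ has $i$-th marginal $\mu_{t_i}$ and every two-dimensional marginal $(\mathrm{Pr}^{i,j})_\#\Upsilon=(F_{t_i}^{-1},F_{t_j}^{-1})_\#\mathrm{Leb}|_{[0,1]}$ equal, by the Hoeffding--Fr\'echet theorem recalled in \cref{subsec:optimal_transport_R1}, to the optimal coupling of $\mu_{t_i}$ and $\mu_{t_j}$ (which is moreover unique since $p>1$). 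Hence \cref{thm:optimal_lift_mu_Walphap_compatible_random} produces $\pi\in P(C(I;\mathbb{R}))$ satisfying (i)--(iii), with \eqref{eq:optimality_pi_1_random} reducing to \eqref{eq:lift_mu_Walphap_R1_energy}; the H\"older refinement in (i) follows from the embedding $W^{\alpha,p}\subset C^{(\alpha-\frac{1}{p})\textrm{-}\mathrm{H\ddot{o}l}}$ of \cref{thm:FractionalSobolev-Holder_r_paper}, valid since $\tfrac1p<\alpha<1$.

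It then remains to prove \eqref{eq:lift_mu_Walphap_R1_pi}, which (cf.\ \eqref{eq:lift_mu_Walphap_Rd_pi_equivalent}) means $(e_{t_1},\dots,e_{t_j})_\#\pi=(F_{t_1}^{-1},\dots,F_{t_j}^{-1})_\#\mathrm{Leb}|_{[0,1]}$ for every finite sequence $t_1,\dots,t_j\in I$. Inspecting Construction~$\bigstar$, I would make the explicit choice $\Upsilon_n\coloneqq\big((F^{-1}_{t^{(n)}_i})_{0\le i\le 2^n}\big)_\#\mathrm{Leb}|_{[0,1]}$, which satisfies the required dyadic two-optimality \eqref{eq:compatibility_dyadic_random} by Hoeffding--Fr\'echet; in $\mathbb{R}$ the geodesic interpolation $\ell$ is piecewise affine, so $\pi_n=(\ell)_\#\Upsilon_n$ is the law of the polygonal path through $\big(F^{-1}_{t^{(n)}_0}(U),\dots,F^{-1}_{t^{(n)}_{2^n}}(U)\big)$ with $U$ uniform on $[0,1]$. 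For a finite collection of \emph{dyadic} times, once $n$ is large enough that they all lie in the grid $\{t^{(n)}_i\}$ one has $(e_{t_1},\dots,e_{t_j})_\#\pi_n=(F_{t_1}^{-1},\dots,F_{t_j}^{-1})_\#\mathrm{Leb}|_{[0,1]}$, and since $\phi(\gamma_{t_1},\dots,\gamma_{t_j})$ is a bounded continuous function of $\gamma\in C(I;\mathbb{R})$ for every $\phi\in C_b(\mathbb{R}^j)$, the narrow convergence $\pi_{n_k}\to\pi$ transfers this identity to $\pi$.

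For arbitrary $t_1,\dots,t_j\in I$ I would approximate each by dyadic times $s_i^{(n)}\to t_i$: on one side, since $\pi$ is carried by continuous paths, $(e_{s^{(n)}_1},\dots,e_{s^{(n)}_j})_\#\pi\to(e_{t_1},\dots,e_{t_j})_\#\pi$ weakly by dominated convergence; on the other side $\|F^{-1}_{s^{(n)}_i}-F^{-1}_{t_i}\|_{L^p(\mathrm{Leb}|_{[0,1]})}=W_p(\mu_{s^{(n)}_i},\mu_{t_i})\to 0$ because $t\mapsto\mu_t$, lying in $W^{\alpha,p}\subset C([0,T];P_p(\mathbb{R}))$, is $W_p$-continuous, whence $(F^{-1}_{s^{(n)}_1},\dots,F^{-1}_{s^{(n)}_j})_\#\mathrm{Leb}|_{[0,1]}\to(F^{-1}_{t_1},\dots,F^{-1}_{t_j})_\#\mathrm{Leb}|_{[0,1]}$ weakly as well; uniqueness of weak limits yields \eqref{eq:lift_mu_Walphap_R1_pi}. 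Uniqueness of $\pi$ is then immediate: the Borel $\sigma$-algebra of $C(I;\mathbb{R})$ is generated by the evaluation maps, so a Borel probability measure on $C(I;\mathbb{R})$ is determined by its finite-dimensional marginals, and \eqref{eq:lift_mu_Walphap_R1_pi} pins all of them down.

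The main obstacle is the passage from dyadic to general times while retaining the \emph{explicit} product form of the marginals: the construction delivers it only on the dyadic grid, so the upgrade rests on the $L^p$-continuity of the quantile map $t\mapsto F_t^{-1}$ (equivalently the $W_p$-continuity of $t\mapsto\mu_t$, which is precisely where the $W^{\alpha,p}$ hypothesis enters, through $W^{\alpha,p}\subset C([0,T];P_p(\mathbb{R}))$) together with the fact that $\pi$ is genuinely concentrated on continuous paths, so that the evaluation functionals are continuous $\pi$-a.e.\ under the approximation. Everything else is either a direct appeal to \cref{thm:optimal_lift_mu_Walphap_compatible_random} or a classical one-dimensional optimal transport fact from \cref{subsec:optimal_transport_R1}; as a cross-check, item (iii) of \cref{thm:optimal_lift_mu_Walphap_compatible_random} combined with uniqueness of optimal couplings on $\mathbb{R}$ already gives the case $j=2$ of \eqref{eq:lift_mu_Walphap_R1_pi} without the construction.
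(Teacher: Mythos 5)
Your proof is correct, and its route is close to, but not identical with, the paper's. The paper obtains \cref{crl:lift_mu_Walphap_R1} by specializing \cref{crl:lift_mu_Walphap_Rd_random}: replace $\nu$ by $\mathrm{Leb}|_{[0,1]}$ and $T_t$ by $F_t^{-1}$, using that on $\mathbb{R}$ the Wasserstein distance coincides with the $\mathrm{Leb}|_{[0,1]}$-based distance (\cref{subsec:optimal_transport_R1}), and uniqueness follows because finite-dimensional marginals determine the path measure. The substantial work therefore sits in the proof of \cref{crl:lift_mu_Walphap_Rd_random}, which applies \cref{thm:optimal_lift_mu_Walphap_compatible_random} not to $(\mu_t)$ itself but to the Dirac-valued process $\mu_t^{\omega,y}=\delta_{T_t^\omega(y)}$ on the augmented space $(\Omega\times\mathcal{Y},\mathbb{P}\otimes\nu)$, averages over $y$, and identifies the finite-dimensional marginals through Claims 1--3 (dyadic approximation of the times under $\pi_{n_k}$, narrow convergence, $L^p(\nu)$-continuity of $t\mapsto T_t$). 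You instead apply \cref{thm:optimal_lift_mu_Walphap_compatible_random} directly to $(\mu_t)$ on $\mathbb{R}$ --- legitimate, since the comonotone multi-coupling makes every family in $P_p(\mathbb{R})$ compatible --- and then establish \eqref{eq:lift_mu_Walphap_R1_pi} by hand: exact identification of the marginals of $\pi_n$ on the dyadic grid for the explicit choice $\Upsilon_n=\big((F^{-1}_{t^{(n)}_i})_i\big)_\#\mathrm{Leb}|_{[0,1]}$, transfer to $\pi$ by narrow convergence, and extension to arbitrary times via the $L^p$-continuity of $t\mapsto F_t^{-1}$, which is exactly the $W_p$-continuity of the curve granted by $W^{\alpha,p}\subset C(I;P_p(\mathbb{R}))$. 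The analytic core (two-sided dyadic-to-general-time approximation) is the same as in the paper's Claims 1--3; what you gain is a self-contained one-dimensional argument that avoids the detour through the augmented probability space and the $\nu$-based metric, and what you lose is reusability in $\mathbb{R}^\mathrm{d}$, where compatibility can fail and the Dirac-lift trick is genuinely needed. Your closing observation that item (iii) of the main theorem plus uniqueness of optimal couplings for $p>1$ already yields the case $j=2$ of \eqref{eq:lift_mu_Walphap_R1_pi} is a valid cross-check not made explicit in the paper.
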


    \begin{corollary}\label{crl:lift_mu_Holder_R1}
        Let $\mathcal{X} = \mathbb{R}$ and $I \coloneqq [0,T]\subset \mathbb{R}$. 
        Let $(\mu_t) \in C^{\upgamma\textrm{-}\mathrm{H\ddot{o}l}}(I;P_p(\mathcal{X}))$ with $1<p<\infty$ and $\frac{1}{p}<\upgamma \leq 1$. Denote by $F_t$ the CDF of $\mu_t$ and by $F_t^{-1}$ its generalized inverse.
        Then there exists a unique probability measure $\pi \in P(C(I;\mathcal{X}))$ such that
        \begin{equation}\label{eq:lift_mu_Holder_R1_pi}
            \pi = ((F_{t}^{-1})_{t \in I} )_\# \mathrm{Leb}|_{[0,1]} \quad\, \text{on} \quad\,  \big(\mathcal{X}^I, \mathcal{B}(\mathcal{X})^I\big).
        \end{equation}
                In particular, $\pi$ satisfies
        \begin{enumerate}[label=(\roman*), font=\normalfont]
            \item $\pi$ is concentrated on $W^{\alpha,p}(I;\mathcal{X})\subset C^{(\alpha - \frac{1}{p})\textrm{-} \mathrm{H\ddot{o}l}}(I;\mathcal{X})$ for any $\alpha \in (\frac{1}{p},\upgamma)$; 
            \item $(e_t)_\#\pi=\mu_t$ for all $t\in I$;
            \item $(e_s,e_t)_\# \pi \in \mathrm{OptCpl}(\mu_s, \mu_t)$ for all $t,s \in I$; and, for any $\alpha \in (\frac{1}{p},\upgamma)$,  we have \eqref{eq:lift_mu_Walphap_R1_energy} and
            \begin{align}\label{eq:lift_mu_Holder_R1}
               | \mu |_{\upgamma\textrm{-}\mathrm{H\ddot{o}l}}^p \geq c \int_{\Gamma_T} | \gamma |^p_{\alpha - \frac{1}{p}\textrm{-}\mathrm{H\ddot{o}l}} \d \pi (\gamma) \geq c | \mu |_{\alpha-\frac{1}{p}\textrm{-}\mathrm{H\ddot{o}l}}^p,
            \end{align}
            where $c = c(\upgamma,\alpha,p,T)$ is an explicit positive constant.
        \end{enumerate}
    \end{corollary}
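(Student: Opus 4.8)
The plan is to obtain this deterministic statement as a special case of the stochastic corollary \cref{crl:lift_mu_Holder_R1_random}, combined with the classical fact that a Borel probability measure on $C(I;\mathbb{R})$ is uniquely determined by its finite-dimensional marginals. The real content — existence of a realizing lift whose two-dimensional marginals are optimal and which satisfies the energy identity — is already contained there; on the deterministic side it then remains only to promote a random lift to a genuine (non-random) one and to argue uniqueness.

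First I would view the deterministic curve $(\mu_t)$ as a constant probability-measure-valued process on an auxiliary (say one-point) probability space $(\Omega,\mathcal{F},\mathbb{P})$, setting $\mu_t^\omega \coloneqq \mu_t$. Then $\mathbb{W}_p$ restricts to $W_p$ and $P_{p,\Omega}(\mathbb{R})$ to $P_p(\mathbb{R})$, so the hypothesis $(\mu_t)\in C^{\upgamma\textrm{-}\mathrm{H\ddot{o}l}}(I;P_p(\mathbb{R}))$ reads $(\mu_t)\in C^{\upgamma\textrm{-}\mathrm{H\ddot{o}l}}(I;P_{p,\Omega}(\mathbb{R}))$, and \cref{crl:lift_mu_Holder_R1_random} produces a random lift $\pi'\in P_\Omega(C(I;\mathbb{R}))$ with $(e_{t_1},\dots,e_{t_j})_\#(\pi')^\omega = (F_{t_1}^{-1},\dots,F_{t_j}^{-1})_\#\mathrm{Leb}|_{[0,1]}$ for $\mathbb{P}$-a.e.\ $\omega$ and every finite $t_1,\dots,t_j\in I$ (the CDFs being non-random here), together with properties (i)--(iii). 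Since the sup-topology Borel $\sigma$-algebra of $C(I;\mathbb{R})$ coincides with the one generated by the evaluation maps, and the cylinder sets form a $\pi$-system generating it, two Borel probability measures on $C(I;\mathbb{R})$ with identical finite-dimensional marginals agree. Hence $(\pi')^\omega$ is $\mathbb{P}$-a.s.\ equal to a single $\pi\in P(C(I;\mathbb{R}))$; this $\pi$ satisfies \eqref{eq:lift_mu_Holder_R1_pi}, inherits (i)--(iii), and the same $\pi$-system argument shows it is the unique element of $P(C(I;\mathbb{R}))$ satisfying \eqref{eq:lift_mu_Holder_R1_pi}.

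The substance therefore lies in \cref{crl:lift_mu_Holder_R1_random} (to be established in \cref{subsec:proof_corollaries}), which in turn is an application of \cref{thm:optimal_lift_mu_Holder_compatible_random}: on $\mathbb{R}$ every finite subfamily of $P_p(\mathbb{R})$ is compatible, because the monotone multi-coupling $(F_{t_1}^{-1},\dots,F_{t_N}^{-1})_\#\mathrm{Leb}|_{[0,1]}$ has all its two-dimensional marginals $W_p$-optimal (see \cref{subsec:optimal_transport_R1}), and $C^{\upgamma\textrm{-}\mathrm{H\ddot{o}l}}\subset W^{\alpha,p}$ for $\alpha\in(\frac{1}{p},\upgamma)$ by \cref{rmk:Holder-FractionalSobolev_r_paper}. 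The step I expect to be the main obstacle is pinning down the finite-dimensional marginals of the limiting lift: one runs Construction \hyperref[itm:ConstructionB_random]{{\footnotesize$\bigstar$}} with $\Upsilon_n$ chosen to be exactly the monotone rearrangement of $\mu_{t_0^{(n)}},\dots,\mu_{t_{2^n}^{(n)}}$, so each $\pi_n$ already carries the asserted marginals at dyadic times; passing to the narrow limit and using the $W_p$-H\"older (hence narrow) continuity of $t\mapsto\mu_t$ together with the $\pi$-a.s.\ sample-path continuity granted by \cref{thm:optimal_lift_mu_Holder_compatible_random}(i) extends this to arbitrary $t_1,\dots,t_j\in I$ and simultaneously forces the whole sequence $\{\pi_n\}$ (not merely a subsequence) to converge.

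As an alternative deterministic-only route avoiding Construction \hyperref[itm:ConstructionB_random]{{\footnotesize$\bigstar$}}, one could build $\pi$ by hand: the one-dimensional identity $\|F_t^{-1}-F_s^{-1}\|_{L^p([0,1])}^p = W_p^p(\mu_s,\mu_t)\le c^p|t-s|^{\upgamma p}$ is a Kolmogorov--\v Centsov estimate for the process $q\mapsto(F_t^{-1}(q))_{t\in I}$ on $([0,1],\mathrm{Leb}|_{[0,1]})$, so it admits a $(\alpha-\frac{1}{p})$-H\"older continuous modification for a.e.\ $q$ and every $\alpha\in(\frac{1}{p},\upgamma)$, and $\pi$ is the law of that modification; uniqueness and (i)--(iii) then follow as above. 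I would use the first route in the main text, consistently with the rest of the paper's treatment of the stochastic case first.
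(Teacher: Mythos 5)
Your proposal is correct and follows essentially the same route as the paper: the paper also obtains this corollary by specializing the stochastic construction (formally it factors through \cref{crl:lift_mu_Walphap_Rd_random} on the product space $\Omega\times[0,1]$ with the Dirac-valued process $\delta_{F_t^{-1}(q)}$ and then sets $\nu=\mathrm{Leb}|_{[0,1]}$, $T_t=F_t^{-1}$, which on $\mathbb{R}$ is the same monotone multi-coupling you invoke via compatibility), identifies the finite-dimensional marginals by exactly the dyadic-approximation argument you flag as the main obstacle, and concludes uniqueness from the fact that finite-dimensional marginals determine a Borel measure on $C(I;\mathcal{X})$. Your alternative Kolmogorov--\v Centsov route is likewise acknowledged in the paper (via \cite[Theorem 2.1.6]{StroockVaradhan2006}) as a valid deterministic shortcut.
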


\subsection{Corollaries in \texorpdfstring{$\mathbb{R}$}{R}: stochastic setting}\label{subsec:corollaries_R_stochastic}

    \begin{corollary}\label{crl:lift_mu_Walphap_R1_random}
        Let $\mathcal{X} = \mathbb{R}$ and $I \coloneqq [0,T]\subset \mathbb{R}$. 
        Let $(\mu_t)_{t \in I}$ be a probability measure-valued stochastic process defined on a probability space $(\Omega, \mathcal{F}, \mathbb{P})$ such that $(\mu_t) \in W^{\alpha,p} (I; P_{p,\Omega}(\mathcal{X}))$ for some $1<p<\infty$ and $\frac{1}{p}< \alpha <  1$.  Denote by $F_t$ the CDF of $\mu_t$ and by $F_t^{-1}$ its generalized inverse.
        Then there exists a random probability measure $\pi \in P_{\Omega}(C(I;\mathcal{X}))$ such that
        \begin{equation}
            (e_{t_1}, \cdots, e_{t_j})_\#\pi^\omega = ((F_{t_1}^\omega)^{-1}, \cdots,(F_{t_j}^\omega)^{-1} )_\# \mathrm{Leb}|_{[0,1]} \quad \mathbb{P}\textrm{-a.s. }
        \end{equation}
         for any finite sequence $t_1, \cdots, t_j \in I$. In particular, $\pi$ satisfies
        \begin{enumerate}[label=(\roman*), font=\normalfont]
            \item[(i) ] $\pi^\omega$ is concentrated on $W^{\alpha,p}(I;\mathcal{X})$ and $t \mapsto {(e_t)}_{\#} \pi^\omega$ is in $ W^{\alpha, p} (I;P_p(\mathcal{X}))$ a.s.;
            \item[(ii) ] $(e_t)_\#\pi^\omega =\mu_t^{\omega} $ a.s. for any $t\in I$;
            \item[(iii) ] $(e_s,e_t)_\# \pi^\omega \in \mathrm{OptCpl}(\mu_s^{\omega}, \mu_t^{\omega})$ a.s. for all $s,t \in I$; and in particular
            \begin{equation}\label{eq:lift_mu_Walphap_R1_random_energy}
                \Vert\mu\Vert_{W^{\alpha,p}}^p = \mathbb{E} \left[\int_{\Gamma_T} |\gamma|_{W^{\alpha,p}}^p \d \pi (\gamma) \right].
            \end{equation}
        \end{enumerate}
    \end{corollary}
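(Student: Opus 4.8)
The plan is to obtain $\pi$ as the realizing random lift furnished by \cref{thm:optimal_lift_mu_Walphap_compatible_random}, applied to the complete, separable, locally compact length space $\mathcal{X}=\mathbb{R}$, and then to pin down its finite-dimensional marginals via uniqueness of optimal couplings on the line.

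\emph{Step 1: compatibility of $P_{p,\Omega}(\mathbb{R})$.} I would first verify that every finite subcollection $\mu_{s_1},\dots,\mu_{s_j}\in P_{p,\Omega}(\mathbb{R})$ admits a random multi-coupling with all bivariate marginals optimal, namely $\Upsilon^\omega \coloneqq \big((F_{s_1}^\omega)^{-1},\dots,(F_{s_j}^\omega)^{-1}\big)_{\#}\mathrm{Leb}|_{[0,1]}$ (chosen arbitrarily off the $\mathbb{P}$-null set where some $\mu_{s_i}^\omega\notin P_p(\mathbb{R})$, as in \cref{rmk:zero_set}). Since $\mu_{s_i}$ is a random measure, $\omega\mapsto F_{s_i}^\omega(x)=\mu_{s_i}^\omega((-\infty,x])$ is $\mathcal{F}$-measurable for each $x$, and from the Galois identity $(F^\omega)^{-1}(q)\le y\Leftrightarrow F^\omega(y)\ge q$ one gets that $\omega\mapsto(F_{s_i}^\omega)^{-1}(q)$ is $\mathcal{F}$-measurable for each $q$; together with left-continuity of $q\mapsto(F_{s_i}^\omega)^{-1}(q)$ this makes $(q,\omega)\mapsto\big((F_{s_1}^\omega)^{-1}(q),\dots,(F_{s_j}^\omega)^{-1}(q)\big)$ a Carath\'eodory (hence jointly measurable) map, so $\Upsilon$ is a bona fide random measure on $\mathbb{R}^j$ by Tonelli. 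Its one-dimensional marginals are the $\mu_{s_i}^\omega$ (recall $(F_\mu^{-1})_\#\mathrm{Leb}|_{[0,1]}=\mu$), and each bivariate marginal is $W_p$-optimal $\mathbb{P}$-a.s.\ by the Hoeffding--Fr\'echet theorem recalled in \cref{subsec:optimal_transport_R1}. Hence $(\mu_t)_{t\in I}$ is compatible in $P_{p,\Omega}(\mathbb{R})$ in the sense of \cref{def:compatibility_random}.

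\emph{Step 2: the realizing lift.} Since $\mathbb{R}$ is a complete, separable, locally compact length metric space and $(\mu_t)\in W^{\alpha,p}(I;P_{p,\Omega}(\mathbb{R}))$ with $1<p<\infty$ and $\tfrac1p<\alpha<1$, \cref{thm:optimal_lift_mu_Walphap_compatible_random} applies: Construction \hyperref[itm:ConstructionB_random]{{\footnotesize$\bigstar$}}, run with the comonotone multi-couplings of Step~1 as the $\Upsilon_n$, converges narrowly along a subsequence to a random measure $\pi\in P_\Omega(C(I;\mathbb{R}))$ satisfying items (i)--(iii) of that theorem. Thus $\pi^\omega$ is concentrated on $W^{\alpha,p}(I;\mathbb{R})$ — which embeds into $C^{\alpha - \frac{1}{p}\textrm{-}\mathrm{H\ddot{o}l}}(I;\mathbb{R})$ by \cref{thm:FractionalSobolev-Holder_r_paper} — with $t\mapsto(e_t)_{\#}\pi^\omega$ in $W^{\alpha,p}(I;P_p(\mathbb{R}))$ a.s.; $(e_t)_{\#}\pi^\omega=\mu_t^\omega$ a.s.\ for all $t$; $(e_s,e_t)_{\#}\pi^\omega\in\mathrm{OptCpl}(\mu_s^\omega,\mu_t^\omega)$ a.s.\ for all $s,t$; and the energy identity \eqref{eq:optimality_pi_1_random} holds, which is exactly \eqref{eq:lift_mu_Walphap_R1_random_energy}. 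The statement for $|\cdot|_{b^{\alpha,p}}$ is identical. This already yields items (i)--(iii) of the corollary.

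\emph{Step 3: finite-dimensional marginals, and the main difficulty.} It remains to establish the displayed joint-marginal identity. Fix $t_1,\dots,t_j\in I$. By items (ii)--(iii) of Step~2 there is a $\mathbb{P}$-full set $\Omega_0$ (depending on these times) on which $\mu_{t_i}^\omega\in P_p(\mathbb{R})$ for all $i$ and $(e_{t_i},e_{t_l})_{\#}\pi^\omega\in\mathrm{OptCpl}(\mu_{t_i}^\omega,\mu_{t_l}^\omega)$ for all $i,l$. Since $p>1$, the optimal coupling on $\mathbb{R}$ is unique (\cref{subsec:optimal_transport_R1}), so for $\omega\in\Omega_0$ every bivariate marginal of $\nu^\omega\coloneqq(e_{t_1},\dots,e_{t_j})_{\#}\pi^\omega$ is the comonotone coupling $\big((F_{t_i}^\omega)^{-1},(F_{t_l}^\omega)^{-1}\big)_{\#}\mathrm{Leb}|_{[0,1]}$; by the classical fact that a probability measure on $\mathbb{R}^j$ all of whose bivariate marginals are comonotone must itself be the full comonotone (multi-quantile) coupling, $\nu^\omega=\big((F_{t_1}^\omega)^{-1},\dots,(F_{t_j}^\omega)^{-1}\big)_{\#}\mathrm{Leb}|_{[0,1]}$ on $\Omega_0$, i.e.\ $\mathbb{P}$-a.s. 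The genuinely delicate points — where a separate treatment from the deterministic case is needed — are the measurability bookkeeping of Step~1 (that the $\omega$-wise quantile couplings assemble into honest random measures, so that compatibility in $P_{p,\Omega}(\mathbb{R})$ is legitimate) and, in Step~2, that \cref{thm:optimal_lift_mu_Walphap_compatible_random} delivers a \emph{measurable} $\omega\mapsto\pi^\omega$; the remaining content is the one-dimensional optimal transport recalled in \cref{subsec:optimal_transport_R1}.
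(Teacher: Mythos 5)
Your proposal is correct, and it reaches the finite-dimensional marginal identity by a genuinely different route than the paper. The paper's written proof (\cref{subsec:proof_corollaries}) does not invoke compatibility of $(\mu_t)$ on $\Omega$ at all: it first proves the $\nu$-based result \cref{crl:lift_mu_Walphap_Rd_random} by passing to the enlarged probability space $(\Omega\times\mathcal{Y},\mathcal{F}\otimes\mathcal{B}(\mathcal{Y}),\mathbb{P}\otimes\nu)$, applying \cref{thm:optimal_lift_mu_Walphap_compatible_random} to the (trivially compatible) Dirac-valued process $\mu_t^{\omega,y}=\delta_{T_t^\omega(y)}$, averaging over $y$ to get $\pi^\omega$, and then identifying the joint marginals by an explicit limiting argument (Claims 1--3) that tracks the dyadic approximations $\pi_{n_k}$ and the maps $T_{t^k}^\omega$; the $\mathbb{R}$ corollary is then the specialization $\nu=\mathrm{Leb}|_{[0,1]}$, $T_t=F_t^{-1}$. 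You instead apply \cref{thm:optimal_lift_mu_Walphap_compatible_random} directly on $(\Omega,\mathcal{F},\mathbb{P})$, using the nontrivial fact (your Step~1, with a clean measurability argument) that any family in $P_{p,\Omega}(\mathbb{R})$ is compatible in the sense of \cref{def:compatibility_random}, and you recover the joint quantile marginals a posteriori from item~(iii) of the theorem via uniqueness of the monotone coupling for $p>1$ plus the rigidity fact that pairwise comonotonicity of all bivariate marginals forces the full multi-quantile coupling. This is closer to the narrative of the introduction, avoids redoing the dyadic limit argument, and yields the energy equality directly as \eqref{eq:optimality_pi_1_random}; its cost is that it is specific to $\mathbb{R}$ (the rigidity step has no analogue in the $\nu$-based $\mathbb{R}^\mathrm{d}$ setting, where the bivariate marginals of $\pi^\omega$ are not optimal), whereas the paper's argument covers both cases at once. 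The only point I would tighten is the ``classical fact'' in Step~3: it deserves a one-line justification (two points of the support of $(e_{t_1},\dots,e_{t_j})_\#\pi^\omega$ project into monotone sets for every pair of coordinates, hence the support is totally ordered for the componentwise order, so the joint CDF attains the upper Fr\'echet bound $\min_i F_{t_i}^\omega$, which characterizes the multi-quantile coupling) or a citation to the comonotonicity literature; with that added, the proof is complete.
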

    
\begin{corollary}\label{crl:lift_mu_Holder_R1_random}
        Let $\mathcal{X} = \mathbb{R}$ and $I \coloneqq [0,T]\subset \mathbb{R}$.
        Let $(\mu_t)_{t \in I}$ be a probability measure-valued stochastic process defined on a probability space $(\Omega, \mathcal{F}, \mathbb{P})$ such that $(\mu_t) \in C^{\upgamma\textrm{-} \mathrm{H\ddot{o}l}}(I;P_{p,\Omega}(\mathcal{X}))$ for some $1<p<\infty$ and $\frac{1}{p}<\upgamma \leq 1$.
        Denote by $F_t$ the CDF of $\mu_t$ and by $F_t^{-1}$ its generalized inverse.
        Then there exists a random probability measure $\pi \in P_{\Omega}(C(I;\mathcal{X}))$ such that
        \begin{equation}
            (e_{t_1}, \cdots, e_{t_j})_\#\pi^\omega = ((F_{t_1}^\omega)^{-1}, \cdots, (F_{t_j}^\omega)^{-1} )_\# \mathrm{Leb}|_{[0,1]} \quad \mathbb{P}\textrm{-a.s. }
        \end{equation}
         for any finite sequence $t_1, \cdots, t_j \in I$. In particular, $\pi$ satisfies
    \begin{enumerate}[label=(\roman*), font=\normalfont]
                \item $\pi^\omega$ is concentrated on $W^{\alpha,p}(I;\mathcal{X}) \subset  C^{(\alpha - \frac{1}{p})\textrm{-} \mathrm{H\ddot{o}l}}(I;\mathcal{X}) $ a.s. and
                \\
                $t\mapsto (e_t)_\#\pi^\omega$ is in $W^{\alpha,p} (I;P_p(\mathcal{X}))\subset C^{(\alpha-\frac{1}{p})\textrm{-} \mathrm{H\ddot{o}l}}$ \, $(I;P_p(\mathcal{X}))$ a.s. for any $\alpha \in (\frac{1}{p},\upgamma)$;
                \item $(e_t)_\#\pi^\omega=\mu_t^\omega$ a.s. for all $t\in I$;
                \item $(e_s,e_t)_\# \pi^\omega \in \mathrm{OptCpl}(\mu_s^\omega, \mu_t^\omega)$ a.s. for all $s,t \in I$; and for any $\alpha \in (\frac{1}{p},\upgamma)$, we have  \eqref{eq:lift_mu_Walphap_R1_random_energy} and 
                \begin{align}
                   \Vert \mu \Vert_{\upgamma\textrm{-}\mathrm{H\ddot{o}l}}^p \geq c \, \mathbb{E} \left[ \int_{\Gamma_T} | \gamma |^p_{\alpha - \frac{1}{p}\textrm{-}\mathrm{H\ddot{o}l}} \d \pi (\gamma) \right] \geq c \, \Vert \mu \Vert_{\alpha-\frac{1}{p}\textrm{-}\mathrm{H\ddot{o}l}}^p,
                \end{align}
                where $c = c(\upgamma,\alpha,p,T)$ is an explicit positive constant.
            \end{enumerate}
\end{corollary}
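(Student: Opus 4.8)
\medskip
\noindent\textbf{Proof proposal.}
The plan is to deduce the corollary from \cref{thm:optimal_lift_mu_Holder_compatible_random} and then pin down the finite-dimensional marginals of the lift it produces. First I would upgrade the hypothesis: by the trivial H\"older--fractional-Sobolev embedding of \cref{rmk:Holder-FractionalSobolev_r_paper}, applied on the metric space $(P_{p,\Omega}(\mathbb{R}),\mathbb{W}_p)$, we get $(\mu_t)\in W^{\alpha,p}(I;P_{p,\Omega}(\mathbb{R}))$ for every $\alpha\in(\tfrac1p,\upgamma)$. Next I would verify that $(\mu_t)$ is compatible in $P_{p,\Omega}(\mathbb{R})$ in the sense of \cref{def:compatibility_random}: given times $t_1,\dots,t_N$, set
\[
\Upsilon^\omega \coloneqq \big((F_{t_1}^\omega)^{-1},\dots,(F_{t_N}^\omega)^{-1}\big)_{\#}\mathrm{Leb}|_{[0,1]}.
\]
Measurability of $\omega\mapsto\Upsilon^\omega$ follows from that of $\omega\mapsto\mu_{t_i}^\omega$, the measurability of the quantile operator, and the separability of the $p$-Wasserstein line (cf.\ the discussion in \cref{rmk:zero_set}), so $\Upsilon$ is a genuine random measure; and by the Hoeffding--Fr\'echet description of optimal transport on $\mathbb{R}$ recalled in \cref{subsec:optimal_transport_R1}, every two-dimensional marginal $(\mathrm{Pr}^{i,j})_{\#}\Upsilon^\omega$ is $W_p$-optimal $\mathbb{P}$-a.s. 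Hence every subfamily of $P_p(\mathbb{R})$ is compatible.

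Since $\mathbb{R}$ is a complete, separable, locally compact length metric space, \cref{thm:optimal_lift_mu_Holder_compatible_random} now produces a random measure $\pi\in P_\Omega(C(I;\mathbb{R}))$ satisfying verbatim the assertions (i), (ii), (iii) and the H\"older energy estimates of the corollary. The only statement not yet covered is the explicit finite-dimensional marginal formula. For pairs it is immediate: combining (iii) with the uniqueness of $W_p$-optimal couplings on $\mathbb{R}$ for $p>1$ gives $(e_s,e_t)_{\#}\pi^\omega=\big((F_s^\omega)^{-1},(F_t^\omega)^{-1}\big)_{\#}\mathrm{Leb}|_{[0,1]}$ $\mathbb{P}$-a.s.\ for each fixed $s,t$. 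For longer sequences I would carry out Construction~\hyperref[itm:ConstructionB_random]{{\footnotesize$\bigstar$}} with the $\Upsilon_n$ chosen to be exactly the monotone multi-couplings along the dyadic times; since on $\mathbb{R}$ geodesic interpolation is linear interpolation, the map $\ell$ reproduces $\Upsilon_n^\omega$ exactly at the dyadic nodes, so $(e_{s_1},\dots,e_{s_j})_{\#}\pi_n^\omega$ equals the monotone multi-coupling of $(\mu_{s_1}^\omega,\dots,\mu_{s_j}^\omega)$ once the $s_i$ are nodes of the $n$-th partition. Passing to the narrow limit along the subsequence --- tested against functions $(\gamma,\omega)\mapsto\mathds{1}_F(\omega)\,\phi(\gamma_{s_1},\dots,\gamma_{s_j})$ with $\phi\in\mathrm{Lip}_b(\mathbb{R}^j)$ and $F\in\mathcal{F}$, and concluded via \cite[Lemma 3.14]{Crauel2002}, exactly as in Step~3 of the proof of \cref{thm:optimal_lift_mu_Walphap_compatible_random} --- yields the identity at all dyadic times $\mathbb{P}$-a.s. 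Finally, both sides depend narrowly continuously on $(t_1,\dots,t_j)$: the left side because $\pi^\omega$ is concentrated on continuous paths by (i), the right side because $t\mapsto\mu_t^\omega$ is a.s.\ $W_p$-continuous and hence $t\mapsto(F_t^\omega)^{-1}$ is a.s.\ continuous in $L^p(\mathrm{Leb}|_{[0,1]})$; this extends the formula from dyadic to arbitrary time points. (An alternative to the construction argument is to note that (iii) makes $(e_t)_{t\in\mathcal{Q}}$, for a countable dense $\mathcal{Q}$, a pairwise-comonotone family under $\pi^\omega$ a.s., and that pairwise comonotonicity forces joint comonotonicity.) The lift $\pi$ so obtained manifestly does not depend on $(\upgamma,\alpha,p)$.

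The main obstacle is the one anticipated in \cref{rmk:treatment}: the whole argument has to stay on the level of random measures rather than being carried out pathwise, so the two delicate points are (a) the measurability of the random monotone multi-couplings $\omega\mapsto\Upsilon_n^\omega$, and (b) upgrading the narrow convergence $\pi_n\to\pi$ in $P_\Omega(C(I;\mathbb{R}))$ --- which is convergence only in expectation against $C_{b,\Omega}$ test functions --- to the $\mathbb{P}$-a.s.\ finite-dimensional marginal identities. Both are handled by the machinery of \cref{subsec:random_probability_measures} together with the Crauel-type integration-against-$\mathds{1}_F$ arguments used throughout \cref{sec:results_s}; the remaining ingredients ($L^p$-continuity of quantile functions, uniqueness of optimal plans on $\mathbb{R}$, comonotonicity rigidity) are classical and routine.
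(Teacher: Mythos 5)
Your route is sound in outline but genuinely different from the paper's. The paper does \emph{not} invoke compatibility on $\mathbb{R}$ in its actual proof: it first proves \cref{crl:lift_mu_Walphap_Rd_random} by running \cref{thm:optimal_lift_mu_Walphap_compatible_random} on the enlarged probability space $(\Omega\times\mathcal{Y},\mathcal{F}\otimes\mathcal{B}(\mathcal{Y}),\mathbb{P}\otimes\nu)$ with the Dirac-valued process $\mu_t^{\omega,y}=\delta_{T_t^\omega(y)}$ (which is trivially compatible), averages the resulting $\eta^{\omega,y}$ over $y$ to obtain $\pi^\omega$, and establishes the finite-dimensional marginal identity via its Claims 1--3; \cref{crl:lift_mu_Holder_R1_random} is then read off by substituting $\nu=\mathrm{Leb}|_{[0,1]}$ and $T_t=F_t^{-1}$, using that on $\mathbb{R}$ the $\mathrm{Leb}|_{[0,1]}$-based distance coincides with $W_p$. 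You instead verify compatibility of $(\mu_t)$ in $P_{p,\Omega}(\mathbb{R})$ directly via the monotone multi-couplings, apply \cref{thm:optimal_lift_mu_Holder_compatible_random} on $\Omega$ itself, and identify the marginals afterwards (uniqueness of optimal plans on $\mathbb{R}$ for pairs; rerunning Construction~{\footnotesize$\bigstar$} with monotone couplings, or comonotonicity rigidity, for $j>2$). Both work: the paper's detour through $\Omega\times\mathcal{Y}$ delivers the $\mathbb{R}^\mathrm{d}$ corollaries simultaneously and makes the marginal identification one uniform computation, while your argument is intrinsic to $\mathbb{R}$ and matches the heuristic announced in the introduction that all measures on $\mathbb{R}$ with finite $p$-moments form a compatible family.

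One step needs repair. In extending the marginal identity from dyadic to arbitrary times you assert that $t\mapsto\mu_t^\omega$ is a.s.\ $W_p$-continuous, hence $t\mapsto(F_t^\omega)^{-1}$ a.s.\ continuous in $L^p(\mathrm{Leb}|_{[0,1]})$. The hypothesis $(\mu_t)\in C^{\upgamma\textrm{-}\mathrm{H\ddot{o}l}}(I;P_{p,\Omega}(\mathbb{R}))$ gives continuity only with respect to $\mathbb{W}_p$, i.e.\ in $L^p(\mathbb{P})$-mean; it does \emph{not} give pathwise continuity of the sample curves --- this is precisely the pitfall emphasized in \cref{subsec:Measure_valued_processes_intro} and \cref{rmk:treatment}. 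The fix is to run the continuity argument in expectation, as in the paper's Claim 3: one has $\lVert (F_t)^{-1}-(F_s)^{-1}\rVert_{L^p(\mathbb{P}\otimes\mathrm{Leb}|_{[0,1]})}=\mathbb{W}_p(\mu_s,\mu_t)$ because $W_p$ on $\mathbb{R}$ is the $L^p(\mathrm{Leb}|_{[0,1]})$-distance of quantile functions, and this suffices to pass to the limit after integrating the test functions against $\mathds{1}_F(\omega)$; on the left-hand side the passage from $\gamma_{t^k}$ to $\gamma_t$ is controlled by the GRR estimate \eqref{eq:distance_estimate_GRR_r_paper} together with the uniform $W^{\alpha,p}$-energy bound from the tightness step, not by pathwise continuity alone. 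With this correction your argument closes.
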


\subsection{Proof of corollaries}\label{subsec:proof_corollaries}
\begin{proof}[Proof of Corollary \ref{crl:lift_mu_Walphap_Rd_random}]
    First of all, we note that if we can find a random measure $\pi \in P_{\Omega}(C(I;\mathcal{X}))$ satisfying the finite-dimensional time-marginal property \eqref{eq:lift_mu_Walphap_Rd_pi_random}, the rest of the statements follow with simple computation and with the help of Theorem \ref{thm:lift_to_mu_Walphap_stoch}.
    To obtain such a measure, we first apply Theorem \ref{thm:optimal_lift_mu_Walphap_compatible_random} with the choice of the probability space $(\Omega \times \mathcal{Y},\mathcal{F} \otimes \mathcal{B}(\mathcal{Y}),\mathbb{P} \otimes \nu)$, where $\mathcal{Y} \coloneqq \mathcal{X}$, and then show that this measure indeed satisfies the finite-dimensional time-marginal property.
    For each $t\in I$, let us define the following random measure, now depending on two parameters $(\omega,y)$,
        \begin{equation}\label{eq:proof_clr_def_mutomegay}
        \mu_t^{\omega,y} \coloneqq \delta_{T^\omega_t(y)} \quad  \textrm{for } \mathbb{P} \otimes \nu \textrm{-a.e. } (\omega, y) \in \Omega \times \mathcal{Y}.
        \end{equation}
        Taking the average of this measure over the 2nd component (interpreted as in \eqref{eq:exp_mu_def}) recovers the random measure that is given to us here:
        \begin{equation}\label{eq:proof_clr_mutomegay}
        \mu_t^\omega = \int_{\mathcal{Y}} \mu_t^{\omega,y} \d \nu (y) \quad \textrm{for } \mathbb{P} \textrm{-a.e. } \omega \in \Omega.
        \end{equation}
        By what we have defined, we also have:
        \begin{equation}\label{eq:proof_clr_Wppomegay}
            \mathbb{W}^p_{p,\nu} (\mu_s,\mu_t) = \int_{\Omega \times \mathcal{Y}} W_p^p (\mu^{\omega,y}_s,\mu^{\omega,y}_t) \, \d \mathbb{P} \otimes \nu (\omega,y).
        \end{equation}
        Therefore, we can apply Theorem \ref{thm:optimal_lift_mu_Walphap_compatible_random} for the measure-valued process defined in \eqref{eq:proof_clr_def_mutomegay}, and obtain a random probability measure here denoted by $\eta \in P_{\Omega \times \mathcal{Y}} (C(I;\mathcal{X}))$ depending on two randomness parameters. 
        In particular, we obtain that  $(e_t)_{\#}\eta^{\omega,y} = \mu_t^{\omega,y}$ holds $\mathbb{P}\otimes\nu$-a.s.
        From the random measure $\eta$, we define another random measure $\pi \in P_{\Omega} (C(I;\mathcal{X}))$ by taking the average over the 2nd component:
        \begin{equation}\label{eq:proof_clr_etaomegay}
            \pi^\omega \coloneqq \int_{\mathcal{Y}} \eta^{\omega,y} \d \nu (y) \quad \textrm{for } \mathbb{P} \textrm{-a.e. } \omega \in \Omega.
        \end{equation}
        We claim that this is the desired measure. Let us first show that $\pi$ is a random lift of $(\mu_t)$. Take an arbitrary $\phi \in \mathrm{Lip}_b (\mathcal{X})$ and $F \in  \mathcal{F}$. We have 
        \begin{align}
            \int_{\Omega} \int_{\mathcal{X}} \mathds{1}_{F} (\omega) \phi(x) \d ({e_t}_{\#}\pi^\omega) (x) \d \mathbb{P} (\omega)
            &  = \int_{\Omega} \int_{\mathcal{Y}} \int_{\Gamma_T} \mathds{1}_{F} (\omega)  \phi (\gamma_t) \d \eta^{\omega,y} (\gamma) \d \nu (y) \d \mathbb{P} (\omega) \\
            & = \int_{\Omega} \int_{\mathcal{Y}} \int_{\mathcal{X}} \mathds{1}_{F} (\omega) \phi (x) \d \mu_t^{\omega,y} (x) \d \nu (y) \d \mathbb{P} (\omega) \\
            & = \int_{\Omega}  \int_{\mathcal{X}} \mathds{1}_{F} (\omega) \phi (x) \d \mu_t^{\omega} (x)  \d \mathbb{P} (\omega).
        \end{align}
        Therefore, $(e_t)_{\#}\pi^{\omega} = \mu_t^{\omega}$ $\mathbb{P}$-a.s., which already proves \eqref{eq:lift_mu_Walphap_Rd_pi_random} for $j=1$. 
        Let us also observe that  by \eqref{eq:optimality_pi_1_random},
            \begin{align} \Vert\mu\Vert_{W^{\alpha,p},\nu}^p =  \int_{\Omega \times \mathcal{Y}} \int_{ \Gamma_T} |\gamma|_{W^{\alpha,p}}^p \d \eta^{\omega,y} (\gamma) \d \mathbb{P}\otimes \nu (\omega,y)= \int_{\Omega} \int_{\Gamma_T} |\gamma|_{W^{\alpha,p}}^p \d \pi^{\omega} (\gamma) \d \mathbb{P}(\omega).
            \end{align}
        It thus remains to show \eqref{eq:lift_mu_Walphap_Rd_pi_random} for $j>1$. The idea is to demonstrate the same chain of equalities as in \eqref{eq:proof_random_1dim_time_marginal} but for higher-dimensional marginals. As for the test functions, it suffices to consider only the product of Lipschitz functions. We split the computations into four steps.
        \\
        \textbf{Setup.} Let $F \in  \mathcal{F}$, the test functions $\phi_1, \phi_2, \cdots \in \mathrm{Lip}_b(\mathcal{X})$, and $t_1, t_2, \cdots \in I$ be arbitrary. As in the  proof of Theorem \ref{thm:optimal_lift_mu_Walphap_compatible_random}, we denote by $\{\pi_{n_k}\}_{k \in \mathbb{N}}$ the subsequence converging narrowly to $\pi$. For any $t \in I$ and $k \in \mathbb{N}$, we set:
        $$
        t^k \coloneqq \frac{[2^{n_k} t ]}{2^{n_k}}. 
        $$ 
        \\
        \textbf{Claim 1.}  We claim
        \begin{equation}
        \lim_{k \to \infty}  \int_{\Omega} \int_{\Gamma_T} \left| \phi_1(\gamma_{t_1}) \cdots \phi_j(\gamma_{t_j}) - \phi_1(\gamma_{t_1^k}) \cdots \phi_j(\gamma_{t_j^k})  \right| \d \pi_{n_k}^{\omega} \d \mathbb{P} = 0, \qquad \forall j \in \mathbb{N}.
        \end{equation}
        We prove by induction:
        \\
        ``Base case $j=1$.'' By Lipschitz continuity of the function and Jensen's inequality, we obtain
        \begin{equation}\label{eq:proof_finite_dim_marginal_claim1_base}
        \int_{\Omega} \int_{\Gamma_T} \left| \phi_1(\gamma_{t_1}) - \phi_1(\gamma_{t_1^k})  \right| \d \pi_{n_k}^{\omega} \d \mathbb{P} \leq \text{Lip}(\phi_1) \left(\int_{\Omega} \int_{\Gamma_T}   |\gamma_{t_1}- \gamma_{t_1^k}|^p \d \pi_{n_k}^\omega  \d \mathbb{P} \right)^{1/p}.
        \end{equation}
        The right-hand side converges to zero by the same line of reasoning as in \eqref{eq:convergence_1_dim_marginal_random_partII}.
        \\
        ``Induction step $j \to j+1$.'' By adding and subtracting $\phi_1(\gamma_{t_1^k}) \cdots \phi_j(\gamma_{t_j^k})\phi_{j+1}(\gamma_{t_{j+1}})$, we get
        \begin{align}
            \int_{\Omega} \int_{\Gamma_T}  & \left| \phi_1(\gamma_{t_1}) \cdots \phi_j(\gamma_{t_j}) \phi_{j+1}(\gamma_{t_{j+1}}) - \phi_1(\gamma_{t_1^k}) \cdots \phi_j(\gamma_{t_j^k}) \phi_{j+1}(\gamma_{t_{j+1}^k})   \right| \d \pi_{n_k}^{\omega} \d \mathbb{P}  \\
            & \quad \leq \int_{\Omega} \int_{\Gamma_T} \left| \left[ \phi_1(\gamma_{t_1}) \cdots \phi_j(\gamma_{t_j}) - \phi_1(\gamma_{t_1^k}) \cdots \phi_j(\gamma_{t_j^k}) \right] \phi_{j+1}(\gamma_{t_{j+1}}) \right| \d \pi_{n_k}^{\omega} \d \mathbb{P} \\
            & \quad + \int_{\Omega} \int_{\Gamma_T} \left|  \phi_1(\gamma_{t_1^k}) \cdots \phi_j(\gamma_{t_j^k}) \left[ \phi_{j+1}(\gamma_{t_{j+1}}) - \phi_{j+1}(\gamma_{t_{j+1}^k}) \right] \right| \d \pi_{n_k}^{\omega} \d \mathbb{P} \\
            & \quad \leq \lVert \phi_{j+1} \rVert_{\infty} \int_{\Omega} \int_{\Gamma_T} \left|  \phi_1(\gamma_{t_1}) \cdots \phi_j(\gamma_{t_j}) - \phi_1(\gamma_{t_1^k}) \cdots \phi_j(\gamma_{t_j^k}) \right| \d \pi_{n_k}^{\omega} \d \mathbb{P} \\
            & \quad + \lVert \phi_{1} \rVert_{\infty} \cdots \lVert \phi_{j} \rVert_{\infty}   \int_{\Omega} \int_{\Gamma_T} \left|   \phi_{j+1}(\gamma_{t_{j+1}}) - \phi_{j+1}(\gamma_{t_{j+1}^k}) \right| \d \pi_{n_k}^{\omega} \d \mathbb{P}. \label{eq:proof_finite_dim_marginal_claim1_induction}
        \end{align}
        As $k\to \infty$, the first term vanishes by the induction hypothesis, and the second term as well by the base case.
        \\
        \textbf{Claim 2.} We claim
        \begin{align}
             \lim_{k \to \infty}  \int_{\Omega} \int_{\Gamma_T} \mathds{1}_{F} (\omega) \phi_1(\gamma_{t_1^k}) \cdots \phi_j(\gamma_{t_j^k})   \d \pi_{n_k}^{\omega} \d \mathbb{P} = \int_{\Omega} \int_{\Gamma_T} \mathds{1}_{F} (\omega) \phi_1(\gamma_{t_1}) \cdots \phi_j(\gamma_{t_j})  \d \pi^{\omega} \d \mathbb{P}.
        \end{align}
        Indeed,
        \begin{multline}
             \left| \int_{\Omega} \int_{\Gamma_T}  \mathds{1}_{F} (\omega) \phi_1(\gamma_{t_1}) \cdots \phi_j(\gamma_{t_j})  \d \pi^{\omega} \d \mathbb{P} - \int_{\Omega} \int_{\Gamma_T} \mathds{1}_{F} (\omega) \phi_1(\gamma_{t_1^k}) \cdots \phi_j(\gamma_{t_j^k})   \d \pi_{n_k}^{\omega} \d \mathbb{P} \right|   \\
             \leq \left| \int_{\Omega} \int_{\Gamma_T}  \mathds{1}_{F} (\omega) \phi_1(\gamma_{t_1}) \cdots \phi_j(\gamma_{t_j})  \d \pi^{\omega} \d \mathbb{P} - \int_{\Omega} \int_{\Gamma_T} \mathds{1}_{F} (\omega) \phi_1(\gamma_{t_1}) \cdots \phi_j(\gamma_{t_j})   \d \pi_{n_k}^{\omega} \d \mathbb{P} \right| \\ 
             + \left|
             \int_{\Omega} \int_{\Gamma_T} \mathds{1}_{F} (\omega) \left[ \phi_1(\gamma_{t_1}) \cdots \phi_j(\gamma_{t_j}) - \phi_1(\gamma_{t_1^k}) \cdots \phi_j(\gamma_{t_j^k}) \right]  \d \pi_{n_k}^{\omega} \d \mathbb{P}
             \right|.
        \end{multline}
        By passing to the limit, the first term on the right-hand side goes to zero  due to the narrow convergence of $\pi_{n_k}$ to $\pi$, and the second term also vanishes by Claim 1. 
        \\
        \textbf{Claim 3.}  We claim
        \begin{equation}
        \lim_{k \to \infty} \int_{\Omega} \int_{\mathcal{Y}} \left| \phi_1(T_{t_1}^\omega) \cdots \phi_j(T_{t_j}^\omega) - \phi_1(T_{t_1^k}^{\omega}) \cdots \phi_j(T_{t_j^k}^{\omega}) \right| \d \nu \d \mathbb{P} = 0, \qquad  \forall j \in \mathbb{N}. 
        \end{equation}
        We again prove by induction:
        \\
        ``Base case $j=1$.'' As in \eqref{eq:proof_finite_dim_marginal_claim1_base}, we estimate
    \begin{equation}
     \int_{\Omega} \int_{\mathcal{Y}} \left| \phi_1(T_{t_1}^\omega) - \phi_1(T_{t^k_1}^{\omega}) \right| \d \nu \d \mathbb{P} \leq \text{Lip}(\phi_1) \underbrace{ \left(  \int_{\Omega}\int_{\mathcal{Y}}   |T_{t_1}^\omega- T_{t^k_1}^{\omega}|^p \d \nu \d \mathbb{P} \right)^{1/p} }_{= \mathbb{W}_{p,\nu}(\mu_{t_1}, \mu_{t_1^k}) \to \, 0 \, \text{as} \, k \to \infty \, \text{by continuity.}}
    \end{equation}
    ``Induction step $j \to j+1$.'' Using the same approach as in \eqref{eq:proof_finite_dim_marginal_claim1_induction}, we obtain
    \begin{align}
    \int_{\Omega} \int_{\mathcal{Y}} & \left| \phi_1(T_{t_1}^\omega) \cdots \phi_{j}(T_{t_{j}}^\omega) \phi_{j+1}(T_{t_{j+1}}^\omega) - \phi_1(T_{t_1^k}^\omega) \cdots \phi_{j}(T_{t_{j}^k}^\omega) \phi_{j+1}(T_{t_{j+1}^k}^\omega) \right| \d \nu \d \mathbb{P} \\
    & \leq \lVert \phi_{j+1} \rVert_{\infty} \int_{\Omega} \int_{\mathcal{Y}} \left|\phi_1(T_{t_1}^\omega) \cdots \phi_j(T_{t_j}^\omega) - \phi_1(T_{t_1^k}^{\omega}) \cdots \phi_j(T_{t_j^k}^{\omega}) \right|  \d \nu \d \mathbb{P} \\
    & + \lVert \phi_{1} \rVert_{\infty} \cdots \lVert \phi_{j} \rVert_{\infty}   \int_{\Omega} \int_{\mathcal{Y}} \left|   \phi_{j+1}(T_{t_{j+1}}^\omega) - \phi_{j+1}(T_{t^k_{j+1}}^\omega) \right| \d \nu \d \mathbb{P},
    \end{align}
    which tends to zero as $k \to \infty$ by the induction hypothesis and the base case.
    \\
    \textbf{Final step.} We now put everything together to conclude similarly to  \eqref{eq:proof_random_1dim_time_marginal}:
    \begin{align}
        \int_{\Omega} \int_{\Gamma_T} \mathds{1}_{F} (\omega) \phi_1(\gamma_{t_1}) \cdots \phi_j(\gamma_{t_j}) \d \pi^\omega \d \mathbb{P}  &  \overset{\text{(by Claim 2)}}{=}  \lim_{k \to \infty}  \int_{\Omega} \int_{\Gamma_T} \mathds{1}_{F} (\omega) \phi_1(\gamma_{t_1^k}) \cdots \phi_j(\gamma_{t_j^k})   \d \pi_{n_k}^{\omega} \d \mathbb{P} \\
        & \overset{\text{(by Construction)}}{=}  \lim_{k \to \infty} \int_{\Omega} \int_{\mathcal{Y}} \mathds{1}_{F} (\omega) \phi_1(T_{t_1^k}^{\omega}) \cdots \phi_j(T_{t_j^k}^{\omega}) \d \nu \d \mathbb{P} \\
        & \overset{\text{(by Claim 3)}}{=}  \int_{\Omega} \int_{\mathcal{Y}} \mathds{1}_{F} (\omega) \phi_1(T_{t_1}^{\omega}) \cdots \phi_j(T_{t_j}^{\omega}) \d \nu \d \mathbb{P} \label{eq:proof_random_multidim_time_marginal},
    \end{align}
    which proves \eqref{eq:lift_mu_Walphap_Rd_pi_random}.
\end{proof}

\begin{proof}[Proof of the remaining corollaries] The other results follow as outlined below:
    \begin{itemize}
        \item Corollary \ref{crl:lift_mu_Holder_Rd_random} directly follows from  Corollary \ref{crl:lift_mu_Walphap_Rd_random} since $C^{\upgamma\textrm{-}\mathrm{H\ddot{o}l}} \subset W^{\alpha,p}$. The estimate \eqref{eq:lift_mu_Holder_Rd_random_energy} follows with the same line of reasoning as \eqref{eq:optimal_lift_mu_Holder_compatible_random_proof} with the help of Theorem \ref{thm:lift_to_mu_Hol_random}.
        \item Then we immediately obtain Corollaries \ref{crl:lift_mu_Walphap_Rd} and \ref{crl:lift_mu_Holder_Rd} as a particular case. Note that in the deterministic setting, the lift we obtain is unique since finite-dimensional marginal distributions determine the path measure on $(C(I;\mathcal{X}),\mathcal{C})$ uniquely. 
        \item Recalling \cref{subsec:optimal_transport_R1,subsec:optimal_transport_nu_based}, all corollaries in $\mathbb{R}$ 
        simply follow by replacing $\nu$ with $\mathrm{Leb}|_{[0,1]}$ and $T_t$ with the measurable map $F_t^{-1}$ in the corresponding results in $\mathbb{R}^{\mathrm{d}}$. 
    \end{itemize}
\end{proof}

\section{Appendix}\label{sec:appendix_s}
\begin{proof}[Proof of Corollary \ref{crl:minimizing_processes_sFPE_intro}]
        The proof sketch is illustrated in Fig. \ref{fig:proof_sketch}. 
        Assume first $p>1$ and let $(X_t)$ be the stochastic process constructed by \cite{LackerShkolnikovZhang2023} on a complete filtered probability space $(\tilde{\Omega},\tilde{\mathcal{F}},\tilde{\mathbb{F}}, \tilde{\mathbb{P}})$, extending $(\Omega,\mathcal{F},\mathbb{F},\mathbb{P})$, which solves the SDE \eqref{eq:SDE_associated_sFPE}. This process provides $(\mu_t)$ with a random lift in the sense of \eqref{eq:def:random_lift_intro}. Indeed, let $\tilde{\pi}$ be the random path measure  on $C([0,T];\mathbb{R}^\mathrm{d})$ given by
        $\tilde{\pi}  \coloneqq \mathrm{Law} (X | \mathcal{F}_T)
        $ a.s. and observe that for each $t \in [0,T]$ and $\varphi \in C_b (\mathbb{R}^\mathrm{d})$, we have 
        \begin{equation}\label{eq:conditional_random_lift}
            \int_{\Gamma_T} \varphi (\gamma_t) \d \tilde\pi (\gamma) = \tilde{\mathbb{E}} [ \varphi (X_t) |\mathcal{F}_T] = \tilde{\mathbb{E}} [ \varphi (X_t) |\mathcal{F}_t]  = \int_{\mathbb{R}^\mathrm{d}} \varphi(x) \d \mu_t (x) \quad \textrm{a.s.}
        \end{equation}
        where the measure zero set depends only on $t$. 
        Next, let us observe that 
        \begin{equation}
            \mathbb{E} \left[ \int_{\Gamma_T} |\gamma_0|^p \d \tilde\pi (\gamma)\right] =
            \tilde{\mathbb{E}} \left[ \tilde{\mathbb{E}} \big[ |X_0|^p \big| \mathcal{F}_T
            \big] \right] = \tilde{\mathbb{E}} \left[ |X_0|^p \right] = \mathbb{E} \left[ \int_{\mathbb{R}^\mathrm{d}} |x|^p \d \mu_{0} (x) \right] < + \infty \label{eq:March131712},
        \end{equation}
        which is finite by the assumption $\mu_0 \in P_{p,\Omega}(\mathbb{R}^\mathrm{d})$.
        Similarly, for any $0 \leq s < t \leq T$, we have 
        \begin{align}
        \mathbb{E} \bigg[ &  \int_{\Gamma_T} |\gamma_t - \gamma_s|^p \d \tilde\pi (\gamma)\bigg]    = 
         \tilde{\mathbb{E}} \bigg[ \tilde{\mathbb{E}} \big[|X_t - X_s|^p \big| \mathcal{F}_T \big] \bigg] \\ 
         & = \tilde{\mathbb{E}} \Big[ |X_t - X_s  |^p  \Big]
         \\
         &  \leq 2^{p-1} \tilde{\mathbb{E}} \left[ \left| \int_s^t b_r(X_r,\omega) \d r  \right|^p + \left| \int_s^t \alpha_r(X_r,\omega) \d B_r + \int_s^t \sigma_r(X_r,\omega) \d W_r \right|^p \right]\\
        & \leq  c_{p,\mathrm{d}} \, \tilde{\mathbb{E}} \left[ \left| \int_s^t |b_r(X_r,\omega)| \d r  \right|^p + \left| \int_s^t  |a_r(X_r,\omega)| \d r  \right|^{\frac{p}{2}}\right] \\
        & \leq c_{p,\mathrm{d}} \,  |t-s|^{p-1} \tilde{\mathbb{E}} \left[  \int_s^t \left| b_r(X_r,\omega) \right|^p \d r  \right]  +  c_{p,\mathrm{d}} \,  |t-s|^{\frac{p-1}{2}} \tilde{\mathbb{E}} \left[ \left| \int_s^t  |a_r(X_r,\omega)|^p \d r  \right|^{\frac12} \right]\\
        & \leq c_{p,\mathrm{d}} \, |t-s|^{\frac{p-1}{2}} T^{\frac{p-1}{2}} \mathbb{E} \left[  \int_s^t \int_{\mathbb{R}^\mathrm{d}} \left| b_r  \right|^p \d \mu_r \d r  \right] + c_{p,\mathrm{d}} \,  |t-s|^{\frac{p-1}{2}} \left| \mathbb{E} \left[  \int_s^t \int_{\mathbb{R}^\mathrm{d}} |a_r|^p \d \mu_r \d r   \right] \right|^{\frac12}
         \\
        & \leq c_{p,\mathrm{d}}  \, \big|t-s\big|^{\frac{p-1}{2}} \, \mathcal{E}_{p,T} (\mu,b,a) , \label{eq:March131713}
        \end{align}
        where we used standard estimates from H\"{o}lder's inequality and Burkholder--Davis--Gundy's inequality (e.g. as in \cite[Proposition 4.4]{LackerShkolnikovZhang2023}) and $2a = \sigma \sigma^\top +\alpha \alpha^\top  $. Here, $c$'s are different constants whose dependence on the parameters is specified.
        The estimate \eqref{eq:March131713} evaluated at $s=0$ together with \eqref{eq:March131712} guarantees that $\mu_t \in P_{p,\Omega}(\mathbb{R}^\mathrm{d})$ for all $t \in [0,T]$.
        Thus, it also implies:
        \begin{align}
            \mathbb{W}^p_p (\mu_s,\mu_t) \coloneqq \mathbb{E} \big[W^p_p (\mu_s,\mu_t)\big] = \tilde{\mathbb{E}} \big[W^p_p (\mu_s,\mu_t)\big] & \leq \tilde{\mathbb{E}} \bigg[ \tilde{\mathbb{E}} \big[|X_t - X_s|^p \big| \mathcal{F}_T \big] \bigg] \\ &= \tilde{\mathbb{E}} \big[ |X_t - X_s|^p \big] \leq c_{p,\mathrm{d}} \, \mathcal{E}_{p,T} (\mu,b,a) \, |t-s|^{\frac{p-1}{2}}.
        \end{align}
        Therefore, we have $(\mu_t) \in C^{\upgamma\textrm{-} \mathrm{H\ddot{o}l}}(I;P_{p,\Omega}(\mathbb{R}^\mathrm{d}))$ with $\upgamma \coloneqq \frac{1}{2} - \frac{1}{2p}.$
        In particular, we have $(\mu_t) \in W^{\alpha,p} (I;P_{p,\Omega}(\mathbb{R}^\mathrm{d}))$ for any $\alpha \in (0,\upgamma)$, by Remark \ref{rmk:Holder-FractionalSobolev_r_paper}.
        \\
        To apply Theorem \ref{thm:existence_of_Walphap_minimizer_random_intro}, it is enough to find a random lift with finite $W^{\alpha,p}$-energy for some $\alpha \in (\frac{1}{p},1) $. The $W^{\alpha,p}$-energy of our lift $\tilde{\pi}$ is 
        \begin{align}
            \mathbb{E} \left[  \int_{\Gamma_T} | \gamma |_{W^{\alpha,p}}^p  \d \tilde{\pi}(\gamma) \right] & =  {\mathbb{E}} \left[  \int_{\Gamma_T} \iint_{[0,T]^2} \frac{|\gamma_t- \gamma_s|^p }{|t-s|^{1+\alpha p}}  \d s \d t \d \tilde{\pi}(\gamma) \right] \\
            & = \iint_{[0,T]^2} \frac{{\mathbb{E}} \left[\int_{\Gamma_T} |\gamma_t- \gamma_s|^p  \d \tilde{\pi}  (\gamma) \right]}{|t-s|^{1+\alpha p}} \d s \d t \\
            & \leq c_{p,\mathrm{d}} \, \mathcal{E}_{p,T} (\mu,b,a)  \iint_{[0,T]^2} \frac{ 1}{|t-s|^{1+\alpha p - \upgamma p}} \d s \d t \\
            & = c_{\alpha, p,\mathrm{d}, T} \, \mathcal{E}_{p,T} (\mu,b,a),  
        \end{align}
        where the last double integral is finite whenever $\alpha \in (0,\upgamma)$, as mentioned in Remark \ref{rmk:Holder-FractionalSobolev_r_paper}.
        The two conditions on $\alpha$ imply that $ \alpha \in (\frac{1}{p},\frac{1}{2} - \frac{1}{2p})$, which in turn requires $p>3$. This completes the proof.
\end{proof}

\begin{figure}
        \centering
        \begin{tikzpicture}[node distance=80pt, auto]
        \node (rect1) [draw, rectangle, text width=220 pt, align=center] 
        {\scriptsize $(\mu_t)$ solution of \eqref{eq:stochastic_Fokker-Planck} satisfying the
        
        $p$-integrability \eqref{eq:p_integrability_assumption_SFPE} and $\mu_0\in P_{p,\Omega} (\mathbb{R}^{\mathrm{d} })$ for some $p>3$};
        
        \node (rect2) [below of=rect1, draw, rectangle, text width=220 pt, align=center] 
        {\scriptsize existence of a particle representation 
        
        with finite $W^{\alpha,p}$-energy for any $\alpha \in (\frac1p , \frac12 -\frac{1}{2p})$};
        
        \node (rect3) [below of=rect2, draw, rectangle, text width=220 pt, align=center] 
        {\scriptsize existence of a particle representation 
        
        with minimum $W^{\alpha,p}$-energy for each $\alpha \in (\frac1p , \frac12 -\frac{1}{2p})$};
        
        \draw[->] (rect1) -- (rect2)  node[midway] {\scriptsize \shortstack{stochastic superposition principle \\
        of Lacker--Shkolnikov--Zhang \cite{LackerShkolnikovZhang2023} \\
        + a standard energy estimation}};
        
        \draw[->] (rect2) -- (rect3) node[midway, right] {\scriptsize Theorem \ref{thm:existence_of_Walphap_minimizer_random_intro}};
    \end{tikzpicture}
    \captionsetup{font=footnotesize}
        \caption{Proof sketch of Corollary \ref{crl:minimizing_processes_sFPE_intro} for the existence of $W^{\alpha,p}$-energy-minimizing particle representations for solutions to stochastic Fokker--Planck--Kolmogorov equations. }
    \label{fig:proof_sketch}
    \end{figure}


\bibliographystyle{abbrvurl}
\bibliography{ms}

\end{document}